\renewcommand{\ell}{{l}}  %Because jmet was using this when it wasn't used elsewhere.  sorry.
\newcommand{\bH}{\mathbf{H}}
\newcommand{\bb}{\mathbf{b}}
\newcommand{\sfH}{\mathsf{H}}
\newcommand{\sfL}{\mathsf{L}}
\newcommand{\R}{{\mathbb{R}}}
\newcommand{\Z}{{\mathbb Z}}
\newcommand{\N}{{\mathbb N}}
\renewcommand{\AA}{{\mathcal A}}
\newcommand{\EE}{{\mathcal E}}
\newcommand{\MM}{{\mathcal M}}
\newcommand{\FF}{{\mathcal F}}
\renewcommand{\SS}{{\mathcal S}}
\renewcommand{\FF}{{\mathfrak F}}
\newcommand{\tri}{|\!|\!|}
\newcommand{\tF}{{\tilde{F}}}
\newcommand{\tg}{{\tilde g}}
\newcommand{\tGa}{{\tilde \Ga}}
\newcommand{\tV}{{\tilde V}}
\newcommand{\tnu}{{\tilde \nu}}
\newcommand{\tr}{\operatorname{tr}}
\renewcommand{\Re}{\mathop{\rm Re}\nolimits}
\renewcommand{\Im}{\mathop{\rm Im}\nolimits}
\newcommand{\dH}{\dot H}
\DeclareMathOperator{\Ric}{Ric}
\theoremstyle{plain}
\newtheorem{thm}{Theorem}[section]
\newtheorem{prop}[thm]{Proposition}
\newtheorem{cor}[thm]{Corollary}
\newtheorem{lemma}[thm]{Lemma}
\theoremstyle{definition}
\newtheorem{rem}{Remark}[thm]
\newtheorem{defn}[thm]{Definition} 
\numberwithin{equation}{section}
\def\squarebox#1{\hbox to #1{\hfill\vbox to #1{\vfill}}}
\newcommand{\<}{\langle}
\renewcommand{\>}{\rangle}
\newcommand{\Fo}{F_\mathbf{1}}
\newcommand{\Sigo}{\Sigma_\mathbf{1}}
\newcommand{\go}{g^\mathbf{1}}
\newcommand{\nuo}{\nu^\mathbf{1}}
\newcommand{\nuoo}{\nu^\mathbf{1}_{1}}
\newcommand{\nut}{\nu^\mathbf{1}_{2}}
\newcommand{\Lamo}{\Lambda^\mathbf{1}}
\renewcommand{\d}{\partial}
\newcommand{\ep}{\epsilon}
\newcommand{\lV}{\lVert}
\newcommand{\rV}{\rVert}
\def\be{{\beta}}
\def\ga{\gamma}
\def\Ga{\Gamma}
\def\de{\delta}
\def\De{\Delta}
\def\ep{\epsilon}
\def\ka{\kappa}
\def\la{\lambda}
\def\La{\Lambda}
\def\si{\sigma}
\def\Si{\Sigma}
\def\om{\omega}
\def\ka{\kappa}
\def\nab{\nabla}
\def\al{\alpha}
\def\les{\lesssim}
\newcommand{\tSigma}{\tilde{\Sigma}}
\title[Skew mean curvature flow]
{Local well-posedness of the skew mean curvature flow for large data}
\author[J. Huang]
{Jiaxi Huang}
\author[D. Tataru]
{Daniel Tataru}
\address{School of Mathematics and Statistics, Beijing Institute of Technology, 
%\newline\indent
Beijing
100081, P.R. China}
\email{jiaxih@bit.edu.cn}
\address{Department of Mathematics, University of California, Berkeley \\
Berkeley, CA 94720, USA}
\email{tataru@math.berkeley.edu}
\subjclass[2020]{Primary: 35Q55; Secondary: 53E10.}
\keywords{Skew mean curvature flow, large data, local well-posedness, low regularity}
\begin{document}

\begin{abstract}
The skew mean curvature flow is an evolution equation for $d$ dimensional ma\-nifolds
embedded in $\mathbb{R}^{d+2}$ (or more generally, in a Riemannian manifold). It can be viewed as a Schr\"odinger analogue of the mean curvature 
flow, or alternatively  as a quasilinear version of the Schr\"odinger Map equation.  
In this article, we prove large data local well-posedness in low-regularity Sobolev spaces for the skew mean curvature flow in dimension $d\geq 2$. This is achieved by introducing several new ideas: (i) a time discretization method to establish
the existence of smooth solutions, (ii) constructing the orthonormal frame by a parallel transport method and a lifting criterion, (iii) introducing intrinsic fractional function spaces $X^s\subset H^s$ on a noncompact manifold for any $s>\frac{d}{2}$, such that the $X^s$-norm of the second fundamental form can be propagated well along the quasilinear Schr\"odinger flow, (iv) deriving a difference equation to prove the uniqueness result for solutions $F\in C^2$, which is independent in the choices of gauge. 
Our method turns out to be more robust for large data problem.
\end{abstract}

\date{\today}
\maketitle

\centerline{\today}

\setcounter{tocdepth}{1}
\pagenumbering{roman} \tableofcontents \newpage \pagenumbering{arabic}

\section{Introduction}

In this article we continue our study of the 
local well-posedness for the skew mean curvature flow (SMCF).
This  is a nonlinear Schr\"odinger-type flow 
modeling the evolution of a $d$ dimensional oriented manifold  embedded into a fixed oriented $d+2$ dimensional manifold. It can be seen as a Schr\"odinger analogue of the well studied mean curvature flow. 

In earlier works \cite{HT21,HT22}, we have proved the local well-posedness of (SMCF) flow for small initial data in low regularity Sobolev spaces.
This was achieved by developing a suitable gauge formulation of the  equations, which allowed us to reformulate the problem as a quasilinear Schr\"odinger evolution, and then by constructing the solutions via a Picard iteration.

In this article, we consider the local well-posedness of the skew mean curvature flow for large data, also for low regularity initial data.
As an iterative/fixed point construction method does not suffice for the large data problem, here we use a time discretization method (see \cite[Section 4.2]{IT-primer}) to construct our solution.
Also, since our earlier function spaces have issues for large data, here we introduce  new fractional function spaces $X^s \subset H^s$, in order to address these difficulties.

\subsection{The (SMCF) equations}
Let $\Sigma^d$ be a $d$-dimensional oriented manifold, and $(\mathcal{N}^{d+2},g_{\mathcal{N}})$ be a $d+2$-dimensional oriented Riemannian manifold. Let $I=[0,T]$ be an interval and $F:I\times \Sigma^d \rightarrow \mathcal{N}$ be a 
one parameter family of immersions. This induces a time dependent Riemannian structure 
on $\Sigma^d$. For each $t\in I$, we denote the submanifold by $\Sigma_t=F(t,\Sigma)$,  its tangent bundle by $T\Sigma_t$, and its normal bundle by $N\Sigma_t$ respectively. For an arbitrary vector $Z$ at $F$ we denote by $Z^\perp$
its orthogonal projection onto $N\Sigma_t$.
The mean curvature  $\mathbf{H}(F)$ of $\Sigma_t$ can be identified naturally with a section of the normal bundle $N\Sigma_t$.

The normal bundle $N\Sigma_t$ is a rank two vector bundle with a naturally induced complex structure $J(F)$ which simply rotates a vector in the normal space by $\pi/2$ positively. Namely, for any point $y=F(t,x)\in \Sigma_t$ and any normal vector $\nu\in N_{y}\Sigma_t$, we define $J(F)\in N_{y}\Sigma_t$ as the unique vector with the same length
so that 
\[
J(F)\nu\bot \nu, \qquad \omega(F_{\ast}(e_1), F_{\ast}(e_2),\cdots F_{\ast}(e_d), \nu,  J(F)\nu)>0,
\]
where $\om$ is the volume form of $\mathcal{N}$ and $\{e_1,\cdots,e_d\}$ is an oriented basis of $\Sigma^d$. The skew mean curvature flow (SMCF) is defined as the initial value problem
\begin{equation}           \label{Main-Sys}
\left\{\begin{aligned}
&(\d_t F)^{\perp}=J(F)\mathbf{H}(F),\\
&F(0,\cdot)=F_0,
\end{aligned}\right.
\end{equation}
which evolves a codimension two submanifold along its binormal direction with a speed given by its mean curvature.

The (SMCF) was derived both in physics and mathematics. 
The one-dimensional (SMCF) in the Euclidean space $\R^3$ is the well-known vortex filament equation (VFE)
\begin{align*}
\d_t \ga=\d_s \ga\times \d_s^2 \ga,
\end{align*}
where $\ga$ is a time-dependent space curve, $s$ is its arc-length parameter and $\times$ denotes the cross product in $\R^3$. The (VFE) was first discovered by Da Rios \cite{DaRios1906} in 1906 in the study of the free motion of a vortex filament.

The (SMCF) also arises in the study of asymptotic dynamics of vortices in the context of superfluidity and superconductivity. For the Gross-Pitaevskii equation, which models the wave function associated with a Bose-Einstein condensate, physics evidence indicates that the vortices would evolve along the (SMCF). An incomplete verification was attempted by Lin \cite{LinT00} for the vortex filaments in three space dimensions. For higher dimensions, Jerrard \cite{Je02} proved this conjecture when the initial singular set is a codimension two sphere with multiplicity one.

The other motivation is that the (SMCF) naturally arises in the study of the hydrodynamical Euler equation. A singular vortex in a fluid is called a vortex membrane in higher dimensions if it is supported on a codimension two subset. The law of locally induced motion of a vortex membrane can be deduced from the Euler equation by applying the Biot-Savart formula. Shashikanth \cite{Sh12} first investigated the motion of a vortex membrane in $\R^4$ and showed that it is governed by the two dimensional (SMCF), while Khesin \cite{Kh12} then generalized this conclusion to any dimensional vortex membranes in Euclidean spaces.

From a mathematical standpoint, the (SMCF) equation is a canonical geometric flow for codimension two submanifolds which can be viewed as the Schr\"odinger analogue of the well studied mean curvature flow. In fact, the infinite-dimensional space of codimension two immersions of a Riemannian manifold admits a generalized Marsden-Weinstein sympletic structure, and hence the Hamiltonian flow of the volume functional on this space is verified to be the (SMCF). Haller-Vizman \cite{HaVi04} noted this fact when they studied the nonlinear Grassmannians.
For a detailed mathematical derivation of these equations we refer the reader  to the article \cite[Section 2.1]{SoSun17}. 

The one dimensional case of this problem has been extensively studied. This is because the one dimensional (SMCF) flow
agrees with the classical  Schr\"odinger Map type equation, provided that one chooses suitable coordinates, i.e. the arclength
parametrization. As such, it exhibits many special properties 
(e.g. complete integrability) which are absent in higher 
dimensions. For more details we refer the readers to the articles \cite{BaVe20,Ve14}.

In contrast, the theory of higher-dimensional (SMCF) is far less developed. This is primarily because it falls into the class of quasilinear Schr\"odinger-type geometric flows, which present significant analytical challenges.
Song and Sun \cite{SoSun17} took an important first step towards establishing well-posedness. 
They explored the basic properties of (SMCF) and proved the first local existence result in two dimensions, taking a smooth, compact, oriented surface as the initial data. This result was later generalized by Song \cite{So19} to compact oriented manifolds of arbitrary dimension $d\geq 2$.
In \cite{So19}, Song also made a significant contribution to the earlier uniqueness result by introducing a geometrically intrinsic distance $\mathcal L(F,\widetilde{F})$, constructed via a parallel transport method, that exploits the underlying geometric structure of the (SMCF).
Subsequently, Li \cite{Li1,Li2} studied a class of transversal perturbations of Euclidean planes under the (SMCF), proving a global regularity result for small initial data and a local well-posedness for large data. 
The aforementioned works offer valuable insights for investigating (SMCF). Nevertheless, as pointed out in \cite{So19}, two key questions remain unresolved: local well-posedness for large data and global well-posedness for small data on non-compact manifolds with low regularity.

To study the well-posedness of (SMCF) on noncompact manifolds, a crucial step is to establish a rigorous and self-contained formulation. 
The first incomplete attempt in this direction was made by Gomez \cite{Go}, who derived a Schr\"odinger-type equation for the second fundamental form, along with a set of compatibility conditions.
Recently, the authors in \cite{HT21,HT22} further refined and improved Gomez's derivation.
In these works, we introduced harmonic/Coulomb and heat gauges in order to obtain a complete gauge formulation.
By combining this gauge framework with the local energy decay estimates, we established a Hadamard-style local well-posedness result in low-regularity Sobolev spaces for small initial data.
In subsequent work \cite{HLT}, together with Li, we applied Strichartz estimates and energy estimates to prove small-data global regularity for (SMCF) in dimensions 
$d\geq 4$, thereby extending the local existence result of \cite{HT21}.

In this article we continue our study of the local well-posedness for (SMCF) with large initial data.  
Precisely, we let $\Sigma^d=\R^d$ have trivial topology, and we restrict the target as the Euclidean space $\mathcal N^{d+2}=(\R^{d+2},g_{\R^{d+2}})$. Thus, the reader should visualize $\Sigma_t$ as an asymptotically flat codimension two submanifold of $\R^{d+2}$. 
A key role in both \cite{HT21,HT22,HLT} and in this article is played by our gauge choices, which are discussed next.

\subsection{Gauge choices for (SMCF)}

There are two components for the gauge choice, which are briefly discussed here and in full detail in Section~\ref{Sec-gauge}:

\begin{enumerate}
    \item The choice of coordinates on $I \times \Sigma$.
    \item The choice of an orthonormal frame on $I \times N\Sigma$.
\end{enumerate}

Indeed, as written above in \eqref{Main-Sys}, the (SMCF) equations  are independent of the choice of coordinates in $I \times \Sigma$; here we include 
the time interval $I$ to emphasize that coordinates may be chosen in a time dependent fashion. The manifold $\Sigma^d$ simply serves to provide a parametrization for the moving manifold $\Sigma_t$; it  determines the topology of $\Sigma_t$, but nothing else. Thus, the (SMCF) system written in the form \eqref{Main-Sys} should be seen as a geometric evolution, with a large gauge group, namely the group of time dependent changes of coordinates in $I \times \Sigma$.  One may think of the 
gauge choice here as having two components, (i) the choice of coordinates at the initial time,  and (ii) the time evolution 
of the coordinates. One way to describe the latter choice
is to rewrite the equations in the form 
\begin{equation}\label{Main-Sys-re}
\left\{\begin{aligned}
&(\d_t - V \partial_x) F =J(F)\mathbf{H}(F),\\
&F(0,\cdot)=F_0,
\end{aligned}\right.
\end{equation}
where the vector field $V$ can be freely chosen, and captures the time evolution of the  coordinates. 
Indeed, some of the earlier papers \cite{SoSun17} and \cite{So19} on (SMCF) use this formulation with $V = 0$,
which we will refer to as the \emph{temporal gauge}. This would seem to simplify the equations, however it introduces difficulties at the level of comparing solutions. This is because in this gauge 
the regularity of the map $F$ is no longer determined by the intrinsic regularity of the second fundamental form, and instead there is a loss of derivatives in the analysis. This loss is what prevents a complete low regularity theory in that approach.

Our ideas in \cite{HT21,HT22} were to use harmonic coordinates on $\Sigma$ at the initial time, while introducing heat coordinates for later times, i.e. a \emph{heat gauge}. This choice improves the regularity of the metric $g$ and also allows the metric to be propagated effectively. This propagation implicitly fixes $V$, which can be obtained as the solution to an appropriate parabolic equation.
The approach is robust and can even be applied to large data problems. In the present paper, however, we allow
for a more flexible choice of the initial coordinates, which is made relative to a reference, regularized manifold. Then the heat coordinates at later time 
are also chosen relative to the reference manifold.
This idea affords us greater flexibility in the choice of initial coordinates than \cite{HT22}, particularly in low dimension.

We now discuss the second component of the gauge choice, namely the orthonormal frame in the normal bundle. Such a choice is needed in order to fix the second fundamental form for $\Sigma$; indeed, the (SMCF) is most naturally interpreted as a nonlinear Schr\"odinger evolution for the second fundamental form of $\Sigma$.  
In our earlier papers \cite{HT21,HT22}, the orthonormal frame was easily constructed because the metric and the second fundamental form were small. However, this approach is no longer well-suited for the large data case. To address this, we first construct an orthonormal frame on a smooth background manifold by parallel transport method and imposing a modified Coulomb gauge. This gauge choice provides effective control over the frame. We then obtain the desired frame on $\Sigma$ via a perturbative method. At later times, we continue to use the heat gauge to propagate the frame.

\subsection{Scaling and function spaces}
To understand what are the natural thresholds for local well-posedness, it is interesting  
to consider the scaling properties of the solutions. As one might expect, a clean scaling law is obtained when $\Sigma^d = \R^d$ and $\mathcal{N}^{d+2} = \R^{d+2}$. Then we have the following scaling invariance:

\begin{prop}[Scale invariance for (SMCF)]
	Assume that $F$ is a solution of \eqref{Main-Sys} with initial data $F(0)=F_0$, then ${F}_\mu(t,x):=\mu^{-1}F(\mu^2 t,\mu x)$ is a solution of \eqref{Main-Sys} with initial data ${F}_\mu(0)=\mu^{-1}F_0(\mu x)$.
\end{prop}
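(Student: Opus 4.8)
The plan is to verify directly that each geometric object appearing in \eqref{Main-Sys} transforms in a controlled way under the parabolic rescaling $F_\mu(t,x) := \mu^{-1}F(\mu^2 t, \mu x)$, and to conclude that the two sides of the equation scale identically. I assume $\mu > 0$, so that orientations are preserved. First I would record the derivative identities coming from the chain rule: $\partial_{x_i}F_\mu(t,x) = (\partial_{x_i}F)(\mu^2 t, \mu x)$, $\partial_{x_i}\partial_{x_j}F_\mu(t,x) = \mu\,(\partial_{x_i}\partial_{x_j}F)(\mu^2 t, \mu x)$, and $\partial_t F_\mu(t,x) = \mu\,(\partial_t F)(\mu^2 t, \mu x)$.

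The first of these is the crucial one: it says that, as a linear subspace of $\R^{d+2}$, the tangent space at $(t,x)$ of the submanifold parametrized by $F_\mu$ coincides with that at $(\mu^2 t, \mu x)$ of the one parametrized by $F$. Hence the normal spaces agree as well, the orthogonal projection $Z \mapsto Z^\perp$ onto the normal bundle intertwines the two parametrizations, and — since $J$ is determined solely by the oriented splitting $T\R^{d+2} = T\Sigma \oplus N\Sigma$ and the ambient volume form, neither of which changes under a positive dilation — one gets $J(F_\mu)(t,x) = J(F)(\mu^2 t, \mu x)$. Likewise the induced metric satisfies $g_{ij}(F_\mu)(t,x) = g_{ij}(F)(\mu^2 t, \mu x)$, and therefore so does its inverse $g^{ij}$.

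Combining these, the second fundamental form $h_{ij} = (\partial_{x_i}\partial_{x_j}F)^\perp$ picks up exactly one power of $\mu$, so $h_{ij}(F_\mu)(t,x) = \mu\, h_{ij}(F)(\mu^2 t, \mu x)$, and the mean curvature $\mathbf{H}(F) = g^{ij}h_{ij}$ satisfies $\mathbf{H}(F_\mu)(t,x) = \mu\,\mathbf{H}(F)(\mu^2 t, \mu x)$. On the left of \eqref{Main-Sys}, $(\partial_t F_\mu)^\perp(t,x) = \mu\,(\partial_t F)^\perp(\mu^2 t, \mu x)$ since the projections match; on the right, $J(F_\mu)\mathbf{H}(F_\mu)(t,x) = \mu\,(J(F)\mathbf{H}(F))(\mu^2 t, \mu x)$. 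As $F$ solves \eqref{Main-Sys} at $(\mu^2 t, \mu x)$, dividing by $\mu$ shows that $F_\mu$ does too, and setting $t = 0$ gives $F_\mu(0,x) = \mu^{-1}F_0(\mu x)$. There is no real obstacle here; the only points needing a word of care are the positivity of $\mu$ (so that $J$ is preserved rather than reversed) and the fact that normal projection commutes with the rescaling, which is exactly the content of the first derivative identity above. I would present the argument in that order: derivative identities, invariance of the tangent/normal/metric data, scaling of $h$ and $\mathbf{H}$, and then the two sides of \eqref{Main-Sys}.
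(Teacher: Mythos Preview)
Your argument is correct. The paper states this proposition without proof, treating it as a routine scaling computation; your direct chain-rule verification of how the tangent/normal splitting, metric, second fundamental form, and mean curvature transform is exactly the standard way to justify it, and nothing more is needed.
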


The above scaling would suggest the critical Sobolev space for our moving surfaces 
$\Sigma_t$ to be $\dot H^{\frac{d}{2}+1}$. However, instead of working directly with the 
surfaces, it is far more convenient to track the regularity at the level of the curvature
$\mathbf{H}(\Sigma_t)$, which scales at the level of $\dot H^{\frac{d}2-1}$. 
For our main result we will use instead
inhomogeneous Sobolev spaces, and it will suffice to go one derivative above scaling.

\subsection{The main result}
Our objective in this paper is to establish the local well-posedness of skew mean curvature flow for large data at low regularity.

We begin with the ellipticity of metric and the volume form. Assume that the inverse of metric $g$ on the initial manifold $\Sigma_0$ is elliptic and $g$ is near $I$ at infinity, i.e.
\begin{align}   \label{Ell-cond}
    g^{\al\be}\xi_\al \xi_\be\geq C^{-1}|\xi|^2,\qquad \lim_{x\rightarrow \infty}(g_{\al\be})=I.
\end{align} 
This also implies that $g\leq CI$ is bounded from above. 
Moreover, the initial manifold $\Sigma_0=F_0(\R^d)$ is an immersion, so the kernel of $d F_0(x)$ is $\{0\}$, therefore using $\lim\limits_{x\rightarrow\infty}(g_{\al\be})=I$ for the exterior of a large ball $B_0(R)$ and Heine-Borel Theorem for the compact set $B_0(R)$, it follows that there exists $c>0$ such that
    \[    \inf_x \min_{\al\in\R^d,|\al|=1}\Big|\frac{\d F_0}{\d \al}\Big|^2\geq c^2,       \]
Hence, under the condition \eqref{Ell-cond} and the above analysis, there exists a $0<c_0:=\min\{c,C^{-1}\}$ such that
\begin{equation}   \label{MetBdd}
    c_0 I\leq g\leq c_0^{-1}I,\qquad c_0^d\leq \det g\leq c_0^{-d}.
\end{equation}
These have been discussed in \cite[p.35]{Li2}.

We are now ready to state our main result, which we split into three parts in a modular fashion. We begin with the case of regular data:

\begin{thm}[Existence of regular solutions]  \label{RegSol-thm}
Let $d \geq 2$ and $k>\frac{d}{2}+5$ be an integer. Let $(\Sigma_0,g_0)$ be a smooth, complete, immersed Riemannian submanifold of dimension $d$ with bounded second fundamental form
\begin{equation*}
\|\Lambda_0\|_{\sfH^k(\Sigma_0)}\leq M,
\end{equation*}
bounded Ricci curvature and bounded geometry, i.e. 
\begin{equation}
 \label{bdd-geom}   
|\Ric(\Sigma_0)|\leq  C_0, \qquad \inf_{x\in \Sigma_0} {\rm Vol}_{g(0)}(B_x(1))\geq v, \quad c_0 I\leq g_0\leq c_0^{-1}I,
\end{equation}
for some $C_0>0$, $v>0$ and $c_0>0$,
where ${\rm Vol}_{g(0)}(B_x(1),\Sigma_0)$ stands for the volume of ball $B_x(1)$ on $\Sigma_0$ with respect to $g(0)$. Then there exists a unique smooth solution $\Sigma(t)=F(t,\R^d)$ on a time interval $[0,T]$ depending on $M$, $C_0$, $c_0$ and $v$, such that
\begin{align*}
    \|\Lambda\|_{\sfH^k(\Sigma)}\les M.
\end{align*}
\end{thm}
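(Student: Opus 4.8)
The plan is to reduce the geometric evolution \eqref{Main-Sys} to a quasilinear Schr\"odinger system for the second fundamental form, to construct solutions of that system by a time discretization scheme in the spirit of \cite[Section 4.2]{IT-primer}, and finally to establish uniqueness by a gauge-independent difference argument.

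First I would fix the gauge. Starting from the reference, regularized manifold attached to $\Sigma_0$, I choose coordinates on $\R^d$ relative to it --- harmonic-type coordinates at $t=0$, propagated by heat coordinates for $t>0$ --- which fixes the vector field $V$ in \eqref{Main-Sys-re} as the solution of a parabolic equation and improves the regularity of the metric $g$. Next I construct the orthonormal frame in the normal bundle on the smooth background manifold, by parallel transport together with a modified Coulomb gauge, using the trivial topology of $\R^d$ and the lifting criterion to produce a global frame, then transfer it to $\Sigma_0$ perturbatively and propagate it in time again by the heat gauge. With these choices \eqref{Main-Sys} becomes a nonlinear Schr\"odinger equation for the second fundamental form expressed in the frame (a complex field $\psi$), coupled to elliptic and parabolic equations for the metric, the connection coefficients, and the temporal gauge terms; all estimates are carried in the intrinsic fractional spaces $X^s\subset H^s$, in which $\|\Lambda\|_{X^k}$ can be propagated along the flow.

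I then set up the discretization: partition $[0,T]$ into $N=T/h$ steps and, on each step, alternate a linear Schr\"odinger half-step with coefficients frozen at the previous node with an elliptic/parabolic correction that re-solves the gauge (recomputing $g$, the frame and $V$) and restores the compatibility constraints; this yields approximate maps $F^{(h)}$. The core of the proof is a uniform a priori bound: on a time interval $[0,T]$ with $T$ depending only on $M$, $C_0$, $c_0$ and $v$, the discretized solutions remain in a fixed ball in $\sfH^k$, the ellipticity $c_0 I\le g\le c_0^{-1} I$ and the bounded geometry of $\Sigma_0$ persist up to constants, and the scheme does not degenerate. These are obtained by high-order energy estimates for the Schr\"odinger system; since a naive energy estimate loses derivatives for a quasilinear Schr\"odinger flow, I would combine the energy estimates with local energy decay (local smoothing) bounds for the variable-coefficient operator, as in \cite{HT21,HT22}, now adapted to the $X^k$ framework so as to survive the large-data regime.

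With the uniform bounds in hand I pass to the limit $h\to 0$: the family $F^{(h)}$ is bounded in $\sfH^k$ and equicontinuous in time in a weaker norm, so by an Aubin--Lions type compactness argument it converges, along a subsequence, in a space such as $\sfH^{k-1}_{loc}$ to a limit $F$ solving \eqref{Main-Sys} in the chosen gauge; weak lower semicontinuity gives $\|\Lambda\|_{\sfH^k(\Sigma)}\les M$, and a bootstrap using the equations upgrades the solution to smooth. Uniqueness for $C^2$ solutions is then proved separately, independently of any gauge, by writing a difference equation for two solutions via the intrinsic parallel-transport distance following \cite{So19} and closing a Gronwall inequality. I expect the principal obstacle to be the uniform-in-$h$ high-regularity estimate: one must verify that the discrete scheme is compatible with the time-dependent gauge, so that coordinates and frame can be recomputed at each node without losing regularity, that the compatibility equations are propagated by the discretization, and that the derivative loss intrinsic to quasilinear Schr\"odinger energy estimates is recovered at the discrete level --- this is precisely where the $X^s$ spaces and the local smoothing estimates do the decisive work.
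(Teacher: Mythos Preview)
Your proposal misidentifies the mechanism by which the derivative loss in the Euler scheme is controlled, and it iterates the wrong object. The paper does \emph{not} discretize the Schr\"odinger equation for the second fundamental form, nor does it invoke local smoothing; it works directly at the level of the immersion $F$, and the single-step iterate consists of (i) running a sixth-order Willmore-type parabolic flow \eqref{WillFlow} on the manifold for time $\epsilon^{3/2}$, producing a regularized surface $\Sigma_\epsilon$ with $\|\Lambda_\epsilon\|_{\sfH^{k+m}}\lesssim \epsilon^{-m/4}\|\Lambda_0\|_{\sfH^k}$, and then (ii) a genuine Euler step $F_{\mathbf 1}=F_\epsilon-\epsilon\,\Im(\psi_\epsilon\bar m_\epsilon)$ in the temporal gauge $V=0$. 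The regularization (i) is what absorbs the derivative loss: the Euler step costs two derivatives, but the preceding parabolic smoothing supplies them with an $\epsilon^{1/2}$ to spare, so the single-step energy bound closes as $\|\Lambda_{\mathbf 1}\|_{\sfH^k}^2\le (1+C(M)\epsilon)\|\Lambda_0\|_{\sfH^k}^2$. Your ``linear Schr\"odinger half-step with frozen coefficients'' does not regularize at all, so the discrete energy estimate you describe would not close.

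Two further points. First, the paper explicitly avoids iterating the Schr\"odinger system for $\lambda$ because the compatibility constraints (Gauss--Codazzi--Ricci) are not preserved by such a discretization; working with $F$ keeps them automatic. Second, for this theorem $k$ is an integer and the relevant energy estimate is the elementary intrinsic one $\tfrac{d}{dt}\|\lambda\|_{\sfH^k}^2\lesssim\|\lambda\|_{L^\infty}^2\|\lambda\|_{\sfH^k}^2$ (Lemma~\ref{lem-En}); neither local energy decay nor the fractional $X^s$ spaces enter here --- those are used only for the rough-data Theorem~\ref{LWP-thm}. The heat gauge is imposed only \emph{after} the temporal-gauge solution is constructed, by solving a linear heat equation for the change of coordinates. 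Uniqueness is indeed proved separately, but via the difference equation for $\omega=(F-\tilde F)^\perp\cdot m$ in Section~\ref{Sec-lin}, not via Song's parallel-transport distance.
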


\begin{rem}
The assumptions in \eqref{bdd-geom} are made to ensure that the Sobolev embeddings hold on a noncompact manifold.
The regularity $k>\frac{d}{2}+5$ for $\Lambda$ is chosen in order to more easily control errors in our construction of solutions via an Euler scheme.
\end{rem}

While extra regularity was used for our initial existence result, we are able to match this with an uniqueness result at a much lower regularity:

\begin{thm}[Uniqueness of solutions]  \label{Uniqueness-thm}
Let $d \geq 2$. Let $(\Sigma_0,g_0)$ be a smooth, complete, immersed Riemannian submanifold of dimension $d$ satisfying \eqref{bdd-geom}, admitting a uniform $C^2$
parametrization,
\begin{equation*}
\| \partial F_0\|_{C^1} \leq M, \qquad c_0I \leq g_0 \leq c_0^{-1}I,    
\end{equation*}
and with $L^2$ bounded second fundamental form
\begin{equation*}
	\|\Lambda_0\|_{L^2(\Sigma_0)}\leq M.
\end{equation*}
 Then there exists a unique local solution $\Sigma(t)=F(t,\R^d)$ in the class of functions $F$ preserving the above properties. 
\end{thm}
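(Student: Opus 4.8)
The plan is to reduce the uniqueness statement to an $L^2$-type energy estimate for a \emph{difference equation} between two solutions $F$ and $\widetilde F$, formulated in a way that is insensitive to the gauge choices made in the construction of either solution. The first step is to set up an intrinsic, geometric comparison: rather than subtracting $F-\widetilde F$ directly (which is gauge-dependent and would lose derivatives, exactly the obstruction noted in the discussion of the temporal gauge), I would compare the two moving submanifolds via a parallel transport device in the spirit of Song's distance $\mathcal L(F,\widetilde F)$ referenced in the introduction. Concretely, for each $t$ one joins $F(t,x)$ to $\widetilde F(t,x)$ by the short geodesic in $\R^{d+2}$ (the targets are Euclidean, so this is just the straight segment), transports the normal frames along it, and measures the discrepancy of the two second fundamental forms in a common frame; this produces a scalar quantity $\mathbf d(t)$ comparable to $\|F-\widetilde F\|_{L^2}$ plus lower-order frame differences, but whose evolution can be written without reference to the gauge.

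The second step is to derive the difference equation. Both $F$ and $\widetilde F$ satisfy the quasilinear Schrödinger evolution for the second fundamental form that underlies \eqref{Main-Sys-re}; subtracting, and using the bounds $\|\partial F_0\|_{C^1}\le M$, $c_0 I\le g_0\le c_0^{-1}I$, $\|\Lambda_0\|_{L^2}\le M$ together with the fact that both solutions preserve these properties on $[0,T]$, one obtains a linear Schrödinger-type equation for the difference of curvatures with coefficients controlled by the $C^1$ data bounds, and source terms that are at most first order in the difference. The key structural point — and the reason the low regularity $\Lambda_0\in L^2$ suffices — is that the principal part is self-adjoint up to lower-order terms, so on multiplying by the conjugate of the difference and integrating one gets $\frac{d}{dt}\mathbf d(t)^2 \lesssim_{M,c_0} \mathbf d(t)^2$, with no derivative loss. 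A Gronwall argument then forces $\mathbf d(t)\equiv 0$ on $[0,T]$, hence $F=\widetilde F$.

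I would organize the write-up as: (a) a lemma recording the geometric difference functional and its equivalence with $\|F-\widetilde F\|_{L^2}$ under the standing hypotheses; (b) a lemma deriving the difference equation and identifying the structure of its coefficients and source; (c) the energy estimate and Gronwall closure; (d) remarks on why the argument is gauge-independent, namely that every quantity appearing is built from $F$, $\widetilde F$, the induced metrics, and the second fundamental forms in a parallel-transported frame, none of which depend on the particular coordinates or frame used to construct the solutions.

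The main obstacle I anticipate is \textbf{(b)}, controlling the commutator and curvature error terms that arise when one transfers both second fundamental forms into a common frame along the connecting geodesic: these terms involve derivatives of the frame rotation and of the connecting map, and at the $L^2$ regularity level for $\Lambda$ one only has $F\in C^2$, so one must be careful that all such terms are genuinely lower order (at most first derivative of the difference, with coefficients in $L^\infty$) and do not secretly require a second derivative of $F-\widetilde F$ in $L^2$. Establishing that these error terms close — exploiting the quasilinear structure and integrating by parts to move a derivative off the difference whenever a top-order term appears — is the technical heart of the proof; once that is in hand the Gronwall step is routine.
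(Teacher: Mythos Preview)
Your proposal has a genuine gap: working at the level of the second fundamental forms via a Song-style parallel transport comparison is exactly the approach the paper \emph{replaces}, because it does not close at the regularity $F\in C^2$. Comparing curvatures in a common frame forces you to differentiate the frame rotation and the connecting map; the resulting commutator terms contain expressions like $\nabla\Lambda$ paired with the curvature difference, and at $C^2$ you only have $\Lambda\in C^0$, not $\Lambda\in C^1$. The introduction of the paper is explicit that Song's method required $F\in C^4$, $\widetilde F\in C^5$, and that the present argument gains two derivatives precisely by abandoning the curvature-level comparison.

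The paper's actual argument (Proposition~\ref{Prop-DiffBound}) works one level lower, with the \emph{map} difference $\delta F=\widetilde F-F$ itself, decomposed as $\delta F=\Xi+U^\gamma\partial_\gamma F$ with normal component $\omega=\Xi\cdot m$. The two key ideas you are missing are: (i) exploit the gauge freedom in \eqref{Main-Sys-re} for $\widetilde F$ to \emph{choose} its parametrization so that $|\delta F|\approx d(F(x),\widetilde\Sigma)$ pointwise, which forces $|U|\lesssim|\omega|$ (Lemma~\ref{Lem4.4}); and (ii) derive a Schr\"odinger-type equation directly for $\omega$ (equation \eqref{dtom-Uniq}), not for $\lambda-\widetilde\lambda$. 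The principal part $i(\partial_t^B-\widetilde V^\gamma\nabla^A_\gamma)\omega+\widetilde g^{\alpha\beta}\widetilde\nabla^A_\alpha\partial^A_\beta\omega$ has coefficients controlled by $\|\partial F\|_{C^1}$, $\|\partial\widetilde F\|_{C^1}$ alone, and the source terms are schematically $\partial^2 F\cdot(\delta F)^2$ or $\delta g\cdot U\cdot\nabla\lambda$; the latter looks dangerous but is handled by one integration by parts using $\delta g=\partial\delta F\cdot(\partial F+\partial\widetilde F)$ and $U=\delta F\cdot\partial F$, converting it to $O(\|\delta F\|_{L^2}^2)$. The $L^2$ energy estimate for $\omega$ then closes with a constant depending only on the $C^2$ norms, and Gr\"onwall finishes. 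Your step (b) as written would not close; you need to drop the parallel-transport-of-curvatures scaffolding entirely and run the difference argument on $\omega$.
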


By Sobolev embeddings, this uniqueness result in particular suffices in order to  conclude that the 
solutions provided by Theorem~\ref{RegSol-thm} are unique.
This theorem can also be seen as a corollary of Proposition~\ref{Prop-DiffBound} in Section~\ref{Sec-lin},
which provides $L^2$ difference bounds for solutions 
with different initial data.

The existence result for regular solutions,
together with the uniqueness result in 
Theorem~\ref{Uniqueness-thm} and the energy estimates for linearized equations in Proposition~\ref{prop4.2}
serve as key stepping stones for our proof of local well-posedness in the rough data case.

\begin{thm}[Local well-posedness for rough data]   \label{LWP-thm}
    Let $d \geq 2$ and $s > \frac{d}{2}$. For a small parameter $0 < \delta \ll 1$, denote $\si_d=1-\de$ if $d=2$ or $\si_d=1$ if $d\geq 3$.
    Assume that the initial data $\Sigma_0$ with metric $g_0$  and mean curvature $ \mathbf{H}_0$ satisfies the condition \eqref{MetBdd} and 
\begin{equation} \label{small-datad3}
	\||D|^{\si_d}(g_0-I_d)\|_{H^{s+1-\si_d}}\leq M, \qquad \lV \mathbf{H}_0 \rV_{H^s(\Sigma_0)}\leq M,
\end{equation}	
relative to some parametrization of $\Sigma_0$. Then the skew mean curvature flow \eqref{Main-Sys} for maps from $\R^d$ to the Euclidean space $(\R^{d+2},g_{\R^{d+2}})$ is locally well-posed on the time interval $I=[0,T(M,c_0)]$ in a suitable gauge. 
\end{thm}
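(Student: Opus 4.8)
The plan is to derive \thmref{LWP-thm} from the regular existence theory of \thmref{RegSol-thm}, the low-regularity uniqueness of \thmref{Uniqueness-thm}, and the linearized and difference estimates of \propref{prop4.2} and \propref{Prop-DiffBound}, following the regularize--compactness--frequency envelope scheme. First I would fix the gauge and regularize the data. Given rough data $\Sigma_0$ satisfying \eqref{small-datad3}, I construct a reference regularized manifold $\Sigma_0^{(h)}$ by truncating $F_0$ (equivalently the second fundamental form) at frequency $h^{-1}$, and set up the initial coordinates and the modified Coulomb frame \emph{relative to this reference}, so that the metric and frame acquire the extra regularity demanded by \thmref{RegSol-thm}; in $d=2$ the loss $\si_d = 1-\de$ in \eqref{small-datad3} is precisely what makes this comparison admissible. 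This yields a family $\Sigma_0^{(n)}$ of smooth data with $\|\mathbf{H}_0^{(n)}\|_{H^s}\lesssim M$ uniformly, $\|\mathbf{H}_0^{(n)}\|_{H^k}\lesssim_n 1$, converging to $\Sigma_0$ in the natural topology, and inheriting the bounded-geometry bounds \eqref{bdd-geom} from \eqref{small-datad3} uniformly in $n$.

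Next I apply \thmref{RegSol-thm} to obtain smooth solutions $\Sigma^{(n)}(t)$ on intervals $[0,T_n]$, and the crucial step is to upgrade these to a common interval $[0,T]$ with $T = T(M,c_0)$ together with the uniform bound $\|\mathbf{H}^{(n)}\|_{C([0,T];H^s)} + \|\mathbf{H}^{(n)}\|_{X^s} \lesssim M$. This is achieved by propagating the intrinsic fractional $X^s$-norm of the second fundamental form along the quasilinear Schr\"odinger flow in the heat gauge, using the local energy decay structure built into $X^s$ to absorb the derivative loss that a naive $H^s$ energy estimate would produce; a continuity/bootstrap argument on $[0,T_n]$ then rules out blow-up before time $T$ and delivers the uniform bounds.

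With uniform bounds in hand, convergence follows from \propref{Prop-DiffBound}: the differences of $\Sigma^{(n)}$ and $\Sigma^{(m)}$ are controlled in a weaker norm (an $L^2$-type norm on $F$ and its derivatives) by the difference of the regularized data, which tends to zero, so $\{\Sigma^{(n)}\}$ is Cauchy in the weak topology and converges to a limit $\Sigma$. Interpolating this weak convergence against the uniform $H^s$ bound yields convergence in $C([0,T];H^{s'})$ for all $s'<s$, lower semicontinuity puts $\mathbf{H}\in L^\infty([0,T];H^s)$, and a frequency-envelope refinement upgrades this to $\mathbf{H}\in C([0,T];H^s)$. Passing to the limit in \eqref{Main-Sys} in the chosen gauge shows $\Sigma$ solves (SMCF) with the stated regularity, and \thmref{Uniqueness-thm} gives uniqueness. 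For continuous dependence I would run the frequency-envelope version of the same argument: for data close in $H^s$, compare each with its regularizations, exploit a common frequency envelope to get uniform high-frequency smallness, and combine the low-frequency difference bounds of \propref{Prop-DiffBound} with the uniform $X^s$ control at high frequency to conclude that the data-to-solution map is continuous; together with existence and uniqueness this gives Hadamard local well-posedness in a suitable gauge.

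The main obstacle is the uniform a priori estimate in the second step, at the genuinely low regularity $s>\frac{d}{2}$, where the quasilinear character is felt: the rough metric and connection coefficients cost a derivative in any direct energy argument, and the only way to close is through the local smoothing encoded in the spaces $X^s$, combined with a careful analysis of the heat gauge taken relative to the regularized reference manifold (and, when $d=2$, the $\de$-loss in \eqref{small-datad3}). A secondary difficulty is ensuring that all the gauge constructions --- initial coordinates, modified Coulomb frame, and the heat-gauge propagation --- remain valid and quantitatively controlled at this regularity, uniformly along the approximating sequence, so that the compactness and continuous-dependence arguments may be carried out entirely within the fixed gauge.
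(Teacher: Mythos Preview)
Your overall scheme---regularize, obtain uniform bounds via the $X^s$ energy, pass to a limit in a weak norm, upgrade via frequency envelopes, and conclude with \thmref{Uniqueness-thm}---matches the paper's strategy in Section~\ref{Sec-rough}. Two points deserve correction, one conceptual and one technical.

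First, and more importantly, your description of the mechanism behind the uniform $X^s$ bound is off. You write that the estimate closes ``through the local smoothing encoded in the spaces $X^s$'' and ``local energy decay structure built into $X^s$.'' This is precisely what the paper \emph{avoids}. The $X^s$ norm (Definition~\ref{def-Xsla}) is built from a continuous family of regularizations $\Sigma^{(h)}$, combining high integer-order intrinsic norms $\|\la^{(h)}\|_{\sfH^N}$ with $L^2$ bounds on the linearized variable $\mu^{(h)}=\partial_h\la^{(h)}$. The point of this construction is that the integer intrinsic norms $\sfH^k$ propagate \emph{directly} along the flow by the elementary energy estimate \eqref{Energy} in Lemma~\ref{lem-En}, with no local smoothing and no nontrapping hypothesis; the $\mu^{(h)}$ piece is controlled by the linearized estimate \eqref{om-Energy0}. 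This is exactly why the paper can drop the nontrapping condition from \cite{MMT5} (see the remarks after Theorem~\ref{LWP-thm}). If you try to close via local energy decay at large data you will run into trapping obstructions; the intrinsic-norm route bypasses this entirely.

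Second, for the convergence step you invoke \propref{Prop-DiffBound}, but the paper actually uses the \emph{linearized} estimates of \propref{prop4.2}, applied to $\omega^{(h)}=\partial_h F^{(h)}\cdot m^{(h)}$ and $U^{(h)}$, to show that $\{F^{(h)}\}$ is Cauchy in $L^2$ (see \eqref{om-Energy2}--\eqref{U-Energy}). \propref{Prop-DiffBound} is reserved for uniqueness at the $C^2$ level. The distinction matters because the linearized estimates are taken along a smooth one-parameter family and integrate cleanly in $h$, whereas the difference bounds compare two fixed $C^2$ solutions and require the coordinate-matching Lemma~\ref{Lem4.4}.
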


\begin{rem}
The parameter $\si_d$ is chosen such that we could bound  $g_0-I$ in $L^\infty$.
\end{rem}

In the next section we reformulate the (SMCF) equations as a quasilinear Schr\"odinger 
evolution for a good scalar complex variable $\la$, which is exactly the second fundamental form but represented in our chosen gauge.  There we provide a more complete, alternate formulation of the above result, as a well-posedness result for the $\la$ equation.
In the final section of the paper we 
close the circle and show the local well-posedness of (SMCF) for rough initial data as the limit of regular solutions.

Once our problem is rephrased as a nonlinear Schr\"odinger evolution,  one may compare its 
study with earlier results on general quasilinear Schr\"odinger evolutions. This story begins 
with the classical work of Kenig-Ponce-Vega \cite{KPV2,KPV3,KPV}, where local well-posedness is established for more regular and localized data. Lower regularity results in translation invariant Sobolev spaces were later established by Marzuola-Metcalfe-Tataru~\cite{MMT3,MMT4,MMT5}. Finally,
in the case of cubic nonlinearities
this theory was redeveloped and improved to the sharp regularity thresholds by Ifrim-Tataru~\cite{IT-qnls,IT-qnls2}.
The local energy decay properties of the Schr\"odinger equation, as developed earlier in \cite{CS,CKS,Doi,Doi1}, play a key role in these results. 
While here we are using some of the ideas in the above papers, the present problem is both more complex and exhibits additional structure. 
Because of this, new ideas and more work are required in order to close the estimates required for both the full problem and for its linearization.

In contrast to the previous works \cite{HT21,HT22} that relied on additional assumptions, our approach yields a clearer and more natural result.
(i) Our result eliminates the low-frequency assumption $\|g_0-I\|_{Y^{lo}_0}$ that was required in \cite{HT22}.
The difference arises from the method used to construct solutions.
The authors of \cite{HT22} employed an iterative method based on the coupled Schr\"odinger-parabolic system, which forced us to control the $Y$-norms of $g_0-I$ by solving an elliptic equation; this was particularly necessary in two dimensions due to the failure of the embedding $H^1 \subseteq L^\infty$. 
In this article, however, the existence theory is provided by Theorem~\ref{RegSol-thm}. Therefore, for rough solutions, it suffices to establish the uniform energy estimates in Sobolev spaces, for which the assumptions \eqref{MetBdd} and \eqref{small-datad3} are sufficient. 
(ii) The nontrapping condition introduced in \cite{MMT5} is not required for our results. Here, we introduce intrinsic fractional Sobolev spaces $X^s\subset H^s$, inspired by the favorable propagation properties of the intrinsic norm $\sfH^k$ for integer 
$k$. Using these spaces, we establish the energy estimates for the SMCF directly, without the need for an additional nontrapping condition.

The uniqueness for the SMCF is established under the assumption that solutions $F$ are merely of class $C^2$, and the proof is independent of the choice of gauge. In \cite{So19}, Song made significant progress towards this uniqueness result by employing a method of parallel transport in order to compare different solutions, one in the class $F\in C^4$ and another in the class $\tF\in C^5$.
In this article, we instead employ the general formulation~\eqref{Main-Sys-re} with the vector field $V$ left free. This allows the coordinates of the second solution $\tF$ to be chosen such that the difference $F-\tF$ is comparable to its normal component $\omega \in N\Sigma(F)$. Furthermore, we derive a Schr\"odinger-type equation for $\omega$, which enables us to establish a Gr\"onwall's inequality with a constant that depends only on the $C^2$ norms of $F$ and $\tF$.
This approach yields a two derivative improvement in the required regularity compared to the result in \cite{So19}.

\subsection{An overview of the paper}

Our first objective in this article will be to review the derivation of a self-contained formulation of the (SMCF) flow, interpreted as a nonlinear Schr\"odinger equation for a well chosen variable. This variable, denoted by $\la$, represents the second fundamental form on $\Sigma_t$, in complex notation. 
In addition, we will use several dependent variables, as follows:
\begin{itemize}
    \item The Riemannian metric $g$ on $\Sigma_t$.
    \item The magnetic potential $A$, associated to the natural connection on the 
    normal bundle $N \Sigma_t$.
\end{itemize}
These additional variables will be viewed as uniquely determined by our main variable $\la$, initial metric $g_0$ and connection $A_0$ in a dynamical fashion. 
This is first done at the initial time by retaining the original coordinates on $\Sigma_0$, while introducing a good orthonormal frame on $N \Sigma_0$ that is a small perturbation of the modified Coulomb gauge on background manifold. Finally, our dynamical gauge choice also has two
components:

\begin{enumerate}
    \item[(i)] The choice of coordinates on $\Sigma_t$; here we use heat coordinates, with 
    suitable boundary conditions at infinity.
    \item[(ii)] The choice of the orthonormal frame on $N\Sigma_t$; here we use the heat gauge, again assuming flatness at infinity.
\end{enumerate}

To begin this analysis, in the next section we describe the gauge choices, so that by the end we obtain 
\begin{enumerate}
    \item[(a)] A nonlinear Schr\"odinger equation for $\la$, see \eqref{mdf-Shr-sys-2}.  
    \item[(b)] A parabolic system \eqref{par-syst} for the dependent variables
    $\SS=(g,A)$, together with suitable compatibility conditions (constraints).
\end{enumerate}
 
Setting the stage to solve these equations, in Section~\ref{Sec3} we first introduce some notation and a range of inequalities on noncompact manifolds. These inequalities, particularly the Sobolev embeddings, will play a crucial role in the construction of regular solutions presented in Section~\ref{Sec-RegSol}. 
Then, we describe the function spaces for both $\la$ and $\SS$.
Our starting point is provided by the intrinsic Sobolev norms $\sfH^k$ of $\la$, which  are well propagated along (SMCF).  Based on these norms, we then define their fractional versions, namely the  $X^s$-norms, using a characterization which is akin to a Littlewood-Paley decomposition, or to a discretization of the $J$ method of interpolation.    
The $X^s$-norm of $\la$ for $s>\frac{d}{2}$ is almost equivalent to its $H^s$-norm and satisfies the embedding $X^s\subset H^s$. To keep consistency, we also introduce corresponding $Y^{s+1}$ and $Z^s$ norms for metric $g$ and connection $A$, respectively, which satisfy similar properties.

We organize the proofs in a modular
fashion as follows:

\medskip 
{\bf I. The linearized equations, difference estimates and the uniqueness result.} We begin our analysis in Section~\ref{Sec-lin}, where we focus on the linearized equations and the difference estimates for (SMCF). First, we derive the linearized equations for the normal and tangent components of a family of maps $F(t,x;s)$ parameterized by $s$. $L^2$-type energy estimates for these linearized variables are then readily obtained; these will be used later to construct rough solutions as limits of smooth solutions.
Second, we establish difference estimates in $L^2$ for $C^2$-solutions of (SMCF), which subsequently guarantees uniqueness. To achieve this, we compare two distinct solutions, $\Sigma(F)$ and $\tilde{\Sigma}(\tilde{F})$, in extrinsic form and define an $L^2$ distance between them.
For this we exploit the gauge freedom: the gauge for $\Sigma$ is left free, while the gauge for $\tilde{\Sigma}$ is chosen specifically so that the difference $|F - \tilde{F}|$ is controlled by its normal component $|\omega|$. Furthermore, motivated by the structure of the linearized equations, we show that the normal component $\omega$ itself satisfies a Schr\"odinger-type linearized equation with additional quadratic terms. This yields a favorable Gr\"onwall's inequality for the $L^2$ distance, from which we obtain the desired difference estimates and hence the uniqueness result.

\medskip

{\bf II. The orthonormal frame and regularized initial manifolds.} 
The Section~\ref{Sec-Ini} is devoted to an analysis of the initial data conditions. First, we fix the gauge for the normal bundle, which presents greater complexity than in the small data case of \cite{HT21,HT22}.
Here it suffices to construct a global normal frame on a smooth reference manifold
$\Sigma_\bb$, as $\Sigma_0$ can be regarded as a small perturbation of $\Sigma_\bb$. To achieve this, proceed in two parts.
Inside a large ball $B_{x_0}(R+1)$, we obtain an interior frame $\nu^{(int)}$ by parallel transport of an orthonormal frame from a fixed point $x_0$. Conversely, outside a large ball $B^c_{x_0}(R)$, we obtain an exterior frame $\nu^{(ext)}$ following the method in \cite{HT22}, which leverages the small $L^\infty$ variation of the tangent frame $\d_x F_\bb$.
The global orthonormal frame is then constructed by gluing $\nu^{(int)}$ and $\nu^{(ext)}$ and using the topology of $\Sigma_\bb$ together with an appropriate lifting criterion. Moreover, applying a rotation to this frame and imposing the modified Coulomb gauge condition yields a well-controlled smooth orthonormal frame $\nu_\bb$ in $N\Sigma_\bb$.
Second, we bound the initial data for $\lambda$, $g$, and $A$ in the spaces $X^s$, $Y^{s+1}$, and $Z^s$, respectively. Here, we construct a family of continuous regularized manifolds $\Sigma^{(h)}$ via Littlewood-Paley projection, with carefully chosen gauges. For these manifolds, we prove the norm equivalences $X^s \sim H^s$, $Y^{s+1} \sim H^{s+1}$, and $Z^s \sim H^s$ at initial time, and establish difference estimates and high-frequency bounds that are propagated by the (SMCF) flow.

\medskip

{\bf III. Energy estimates.}
In Sections~\ref{Sec-Para} and~\ref{Sec-EnEs}, we prove energy estimates for the coupled Schr\"odinger-parabolic system in low-regularity function spaces. Note that, since the second fundamental form $\la$ is propagated well in our intrinsic-type spaces $X^s$, the nontrapping condition required in \cite{MMT5} is not needed here. At the same time, in order to extend our solution later, a key step is to show that the Sobolev embedding conditions required for the estimates on noncompact manifolds remain valid. Furthermore, for a family of regularized solutions, we establish difference and high-frequency bounds, which are then used to establish convergence in the strong topology.

\medskip

{\bf IV. The existence of regular solutions.}
In Section~\ref{Sec-RegSol}, we construct the regular solutions using a time discretization method via Euler-type iterative scheme, which originally appeared in the context of semigroup theory, see e,g.  \cite{Barbu}. This method was then implemented in studying the compressible Euler equations in a physical vacuum by Ifrim-Tataru \cite{IFEuler} (see also the expository paper \cite{IT-primer} for an outline of the principle).
However, a naive implementation of Euler's method loses derivatives; to rectify this we precede the Euler step by a suitable regularization based on a Willmore-type heat flow, with spatial truncation frequency scale set to $\ep^{-1/4}$. This regularization scale is needed in order to be able to bound the error in the Euler step. In addition, we prove that the Sobolev embedding conditions are preserved throughout the construction, which allows us to establish the energy estimates. 

We note that our construction is very different from any other approaches previously
used in analyzing this problem; they all relied on parabolic regularizations.

\medskip

{\bf V. Rough solutions as limits of regular solutions.}
The last section of the paper aims
to construct rough solutions as strong limits of smooth solutions. This is achieved by considering a family of continuous regularizations of the initial data, which generates corresponding smooth solutions $F^{(h)}$ on a time interval $[0,T]$ that is independent of $h$. 
For these smooth solutions, we first control the $L^2$-type distance between consecutive ones, using the energy estimates for the linearized equations in Proposition~\ref{prop4.2}. This establishes the existence of a rough solution as the limit in $L^2$.
Second, we control the higher Sobolev norms $H^{N+2}$ using our energy estimates.
By combining these bounds with the frequency envelopes technique, we obtain the strong convergence in $H^{s+2}$.
A similar argument yields continuous dependence of the solutions in terms of the initial data also in the strong
topology, as well as our main continuation result in Theorem~\ref{LWP-thm}. We can also refer to \cite{ABITZ} for an abstract theory.

\bigskip

\section{The differentiated equations and gauge choices}
\label{Sec-gauge}

The goal of this section is to review the derivation of our differentiated equations under suitable gauge conditions, as in \cite[Section 2]{HT22}. These equations involve the main independent variable $\la$, which represents 
the second fundamental form in complex notation, as well as the 
following auxiliary variables: the metric $g$ and the connection coefficients $A$ for the normal bundle. 
Finally, we conclude the section with a gauge formulation of our main result, see Theorem~\ref{LWP-MSS-thm}.

\subsection{Notations and the compatibility conditions}\label{sec-2.1}

Let $(\Sigma^d,g)$ be a $d$-dimensional oriented manifold and let $(\R^{d+2},g_{\R^{d+2}})$ be $(d+2)$-dimensional Euclidean space. Let $\al,\be,\ga,\cdots \in\{1,2,\cdots,d\}$. Considering the immersion $F:\Sigma\rightarrow (\R^{d+2},g_{\R^{d+2}})$, we obtain the induced metric $g$, its inverse and the Christoffel symbols on $\Sigma$,
\begin{equation}        \label{g_metric}
	g_{\al\be}=\d_{x_{\al}} F\cdot \d_{x_{\be}} F,\quad (g^{\al\be})=(g_{\al\be})^{-1},\quad \Gamma^{\ga}_{\al\be}=\ g^{\ga\si}\Ga_{\al\be,\si}=\ g^{\ga\si}\d^2_{\al\be}F\cdot\d_\si F.
\end{equation}
Let $\nab$ be the cannonical Levi-Civita connection on $\Si$ associated with the induced metric $g$.

Next, we introduce a complex structure on the normal bundle $N\Sigma_t$. This is achieved 
by choosing $\{\nu_1,\nu_2\}$ to be an orthonormal basis of $N\Si_t$ such that
\[
J\nu_1=\nu_2,\quad J\nu_2=-\nu_1.  
\]
Such a choice is not unique; in making it we introduce a second component to our gauge group,  
namely the group of sections of an $SU(1)$ bundle over $I \times \R^d$. We  also complexify the normal frame $\{\nu_1,\nu_2\}$ as 
\[m=\nu_1+i\nu_2.\] 
Then the vectors $\{ F_1,\cdots, F_d,\nu_1,\nu_2\}$ form a frame at each point on the manifold $(\Sigma,g)$, where $F_\al$ is defined by
\[F_\al:=\d_\al F.\]

We define the tensors $\kappa_{\al\be},\ \tau_{\al\be}$, the connection coefficients $A_{\al}$ and the temporal component $B$ of the connection in the normal bundle by
\[
\kappa_{\al\be}:=\d^2_{\al\be} F\cdot\nu_1,\quad \tau_{\al\be}:=\d^2_{\al\be} F\cdot\nu_2,\quad A_{\al}=\d_{\al}\nu_1\cdot\nu_2,\quad B=\d_t \nu_1\cdot \nu_2.
\]
Then we obtain the complex second fundamental form $\lambda$ and the mean curvature $\psi$ by 
\begin{equation*} \lambda_{\al\be}=\kappa_{\al\be}+i\tau_{\al\be},\quad \psi:=\tr\la=g^{\al\be}\la_{\al\be}.
\end{equation*}
We remark that the action of sections of the $SU(1)$ bundle is given by 
\begin{equation}   \label{gauge-A}
\psi \to e^{i \theta}\psi, \quad \lambda \to e^{i \theta }\lambda, \quad m \to e^{i\theta} m,
\quad A_\alpha \to A_\alpha - \partial_\alpha \theta,
\end{equation}
for a real valued function $\theta$.
\medskip

Our first objective for this section will be to interpret the (SMCF) equation as 
a nonlinear Schr\"odinger evolution for $\la$, by making suitable gauge choices.

We begin by expressing the Ricci curvature and compatibility conditions in terms of $\la$. Precisely, if we differentiate the frame, we obtain a set of structure equations of the following type 
\begin{equation}\label{strsys-cpf}
\left\{\begin{aligned}
&\d^2_{\al\be} F=\Ga_{\al\be}^\ga F_\ga+\Re(\lambda_{\al\be}\bar{m})\,,\\
&\d_{\al}^A m=-\lambda^{\ga}_{\al}F_\ga\,,
\end{aligned}\right.
\end{equation}
where 
$\d_{\al}^A=\d_{\al}+iA_{\al}$.
The Ricci formula $[\nab_{\al},\nab_{\be}]\d_\ga F=R(\d_\al F,\d_\be F)\d_\ga F$, combined with structure equations \eqref{strsys-cpf}, yield
the Riemannian curvature and Ricci curvature
\begin{align}          \label{R-la}
R_{\si\ga\al\be}=\Re(\lambda_{\be\ga}\bar{\la}_{\al\si}-\la_{\al\ga}\bar{\la}_{\be\si}),\quad \Ric_{\ga\be}=\Re (\la_{\ga\be}\bar{\psi}-\la_{\ga\al}\bar{\la}_{\be}^{\al}),
\end{align}
and the compatibility condition
\begin{equation}         \label{la-commu}
\nab^A_{\al} \la_{\be\ga}=\nab^A_{\be}\la_{\al\ga}.
\end{equation}
From the relation $[\nab^A_{\al},\d^A_{\be}]m =i(\d_\al A_\be-\d_\be A_\al)m$, we could obtain \eqref{la-commu} again as well as
\begin{equation}\label{cpt-AiAj-2}
\nab_{\al} A_{\be}-\nab_{\be} A_{\al}=\Im(\la^{\ga}_{\al}\bar{\la}_{\be\ga}),
\end{equation}
where the latter can be seen as the complex form of the Ricci equations.

\subsection{The evolutions of metric  \texorpdfstring{$g$}{}, connection \texorpdfstring{$A$}{} and the second fundamental form \texorpdfstring{$\la$}{} under (SMCF)} 
Here we start with deriving the equations of motion for the frame, assuming that the immersion $F$ satisfying \eqref{Main-Sys}. Then this will yield the main Schr\"odinger equation for $\la$, as well as the evolutions of metric $g$ and the curvature relation. 

Under the frame $\{F_1,\cdots,F_d,m\}$, we rewrite the (SMCF) equations in the form
\begin{equation}   \label{sys-cpf}
\d_t F=J(F)\bH(F)+V^{\ga} F_{\ga}=-\Im (\psi\bar{m})+V^{\ga} F_\ga,
\end{equation}  
where $V^{\ga}$ is a vector field on the manifold $\Sigma$, which 
in general depends on the choice of coordinates. 
Then, applying $\d_\al$ to \eqref{sys-cpf}, by the structure equations \eqref{strsys-cpf} and the orthogonality relation $m\bot F_{\al}$ we obtain the following equations of motion for the frame
\begin{equation}              \label{mo-frame}
\left\{\begin{aligned}
&\d_t F_\al=-\Im (\d^A_{\al} \psi \bar{m}-i\la_{\al\ga}V^{\ga} \bar{m})+[\Im(\psi\bar{\la}^{\ga}_{\al})+\nab_{\al} V^{\ga}] F_\ga,\\
&\d^{B}_t m=-i(\d^{A,\al} \psi -i\la^{\al}_{\ga}V^{\ga} ) F_\al,
\end{aligned}\right.
\end{equation}
where we use the covariant time derivative $\d_t^B=\d_t +iB$.

From \eqref{mo-frame} we can derive the evolution equations for the metric $g$, the connection $A$ and the second fundamental form $\la$ directly. Indeed, by the definition of the induced metric $g$ \eqref{g_metric}  and \eqref{mo-frame}, we have
\begin{align}         \label{g_dt}
\d_t g_{\al\be}
=2\Im(\psi\bar{\la}_{\al\be})+\nab_{\al}V_{\be}+\nab_{\be}V_{\al}.
\end{align}
So far, the choice of $V$ has been unspecified; it depends on the choice of coordinates 
on our manifold as the time varies. 

Next, from the commutation relation
$[\d^{B}_t,\d^A_{\al}]m=i(\d_t A_{\al}-\d_{\al} B)m$,
by equating the tangential component we obtain the evolution equation for $\la$
\begin{equation*}%\label{main-eq-abst}
\d^{B}_t\la^{\si}_{\al}+\la^{\ga}_{\al}(\Im(\psi\bar{\la}^{\si}_{\ga})+\nab_{\ga} V^{\si})=i\nab^A_{\al}(\d^{A,\si} \psi -i\la^{\si}_{\ga}V^{\ga} ),
\end{equation*}
which yields the main Schr\"odinger equation for $\lambda$ by using the relations \eqref{la-commu} and \eqref{R-la},
\begin{equation}     \label{Sch-la}
\begin{aligned}
i(\d^{B}_t-V^\ga\nab^A_\ga)\la_{\al\be}
+\nab^A_\si\nab^{A,\si}\la_{\al\be}
&= i\la^{\ga}_{\al}\nab_{\be} V_{\ga}
+i\la_\be^\ga\nab_\al V_\ga
+\psi\Re(\la_{\al\de}\bar{\la}^\de_\be)\\
&\quad -\Re(\la_{\si\de}\bar{\la}_{\al\be}-\la_{\si\be}\bar{\la}_{\al\de})\la^{\si\de}-\la_{\al\mu}\bar{\la}^\mu_\si\la^\si_\be
.\end{aligned}
\end{equation}
By equating the normal components, we also obtain the compatibility condition (curvature relation) 
\begin{equation}\label{Cpt-A&B}
\d_t A_{\al}-\d_{\al} B = \Re(\la_{\al}^{\ga}\bar{\d}^A_{\ga}\bar{\psi})-\Im (\la^\ga_\al \bar{\la}_{\ga\si})V^\si.
\end{equation}

In addition, from \eqref{R-la}, \eqref{cpt-AiAj-2}, \eqref{g_dt} and \eqref{Cpt-A&B} we have the commutators
\begin{align}   \label{comm-nab}
&[\nab^A_\al,\nab^A_\be]=R+i\Im(\la_\al^\ga \bar{\la}_{\be\ga})\approx \la\ast \la,\\\label{comm-dt}
&[\nab^A,\d_t^B]=\nab \d_t g+i(\nab_\al B-\d_t A_\al)\approx \la\ast \nab^A \la +\nab^2 V+\la^2 V.
\end{align}

\subsection{The background manifold \texorpdfstring{$\Sigma_{\mathbf{b}}$}{}}
Here we introduce a smooth background manifold $\Sigma_\bb$,  which is a small perturbation of the initial manifold, so that for a short time the manifold $\Sigma_t$ can be seen as a small perturbation of this background manifold. This will be used later in order to construct the orthonormal frame in $\Sigma$.

Begin with the fixed initial map $F_0:\R^d\rightarrow \R^{d+2}$ with metric $g_0$ and the mean curvature $\bH_0$, and satisfying \eqref{MetBdd} and \eqref{small-datad3}. Let $N_1$ be chosen, depending on $M$, $c_0$, and $C_0$, to be sufficiently large so that $\ep_0:=2^{-N_1} \ll_M 1$. 
We decompose $F_0$ as $F_0=P_{\leq N_1}F_0+P_{>N_1}F_0$, where the frequency cutoff $N_1$ is a large parameter, to be chosen so that 
the second component is sufficiently small. 
We denote the background map $F_{\mathbf{b}}$ and corresponding background manifold $\Si_{\mathbf{b}}$ as 
\begin{equation*}
	F_{\mathbf{b}}=P_{\leq N_1}F_0,\qquad \Sigma_{\mathbf{b}}=F_{\mathbf{b}}(\R^d),
\end{equation*}
whose \emph{global coordinates} are fixed, given by $(x^1_{\mathbf{b}},\cdots,x^d_{\mathbf{b}})$.
Assuming $\ep_0$ is small enough, $F_{\mathbf{b}}$ is an immersion with  $\d^2_x F_{\mathbf{b}}\in \cap_{k=1}^\infty H^k$, the metric $g_\bb$ remains elliptic,
and the background manifold $\Sigma_{\mathbf{b}}$ is a smooth manifold.
From the above definition, we have the metric $g_\bb$ given by 
\begin{equation*}
g_{{\mathbf{b}},\al\be}=\d_{x^\al_{\mathbf{b}}}F_{\mathbf{b}}\cdot \d_{x^\be_{\mathbf{b}}}F_{\mathbf{b}},\qquad g_{\mathbf{b}}-I\in H^k.
\end{equation*}
We note that the bounds for $g_{\mathbf{b}}$ depend on the frequency cutoff $N_1$ and $k$.

On the smooth manifold $\Sigma_\bb$ we can construct a smooth orthonormal frame in $N\Sigma_\bb$. Then we obtain a fixed gauge by imposing the modified Coulomb gauge condition
\begin{align*}
	\d_\al A_{\bb,\al}=0.
\end{align*} 
The gauge condition will allow us to bound the Sobolev norms for connection $A_\bb$ and the second fundamental form $\la_\bb$ in terms of the initial data size $M$ and $\epsilon_0$, see Lemma \eqref{Lem-Gaugebb}.

\subsection{The gauge choices}    % \label{Sec-gauge}
Here we take the first step towards fixing the gauge, by choosing to work in the original coordinates at $t=0$ while using \emph{heat coordinates} for $t>0$. Precisely,
at the initial time $t=0$ we will not change the coordinates and instead adopt the original coordinates.
For later times $t>0$ we introduce the \emph{heat gauge}, where we
require the coordinate functions $\{x^\al,\al=1,\cdots,d\}$ to be global Lipschitz solutions of the  heat equations
\begin{equation*}%\label{heat-Coord}
(\partial_t - \Delta_g - V^\gamma \partial_\gamma) x^\alpha = 0.
\end{equation*}
This can be expressed in terms of the Christoffel symbols $\Ga$, namely,
\begin{equation} \label{Heat-coordinate}
g^{\al\be}\Ga^\ga_{\al\be}=V^\ga.
\end{equation}
Once a choice of coordinates is made at the initial time, the coordinates will be uniquely determined later by this gauge condition.

With the advection field $V$ fixed via the heat coordinate condition \eqref{Heat-coordinate}, we can derive a parabolic equation for the metric $g$, see \cite[Lemma 2.4]{HT22}:
\begin{equation} \label{g-Heat-Eq}
\begin{aligned}  
\d_t g_{\mu\nu}-g^{\al\be}\d^2_{\al\be}g_{\mu\nu}=&\ 2\Re (\la_{\mu\nu}\bar{\psi}-\la_{\mu\si}\bar{\la}_{\nu}^{\si})+2\Im(\psi\bar{\la}_{\mu\nu})-2g^{\al\be}\Ga_{\mu\be,\si}\Ga^\si_{\al\nu}\\
&\ +\d_{\mu}g^{\al\be}\Ga_{\al\be,\nu}+\d_{\nu}g^{\al\be}\Ga_{\al\be,\mu}\,.
\end{aligned}
\end{equation}

Now we take the next step towards fixing the gauge, and consider the choices of the orthonormal frame in normal bundle $N\Sigma$. Our starting point is provided by the curvature relations \eqref{cpt-AiAj-2} at fixed time, 
respectively \eqref{Cpt-A&B} dynamically, together with the gauge group \eqref{gauge-A}. We will fix the gauge in two steps, first in a static, elliptic fashion at the initial time, and then dynamically, using a heat flow, for later times.  

At the initial time $t=0$ we fix the gauge for $A$ by imposing the generalized Coulomb gauge condition
\begin{equation}\label{Coulomb}
	\nab^\al A_\al=\nab^\al A_{\bb,\al}\ ,
\end{equation}
where $A_\bb$ are the connection coefficients on $N\Sigma_\bb$. We remark that the condition \eqref{Coulomb} is only used to obtain a good orthonormal frame on $N\Sigma_0$.

For later times $t>0$, we adopt the heat gauge to propagate the orthonormal frame,
\begin{equation} \label{heat-gauge}
	\nab^\al A_\al=B.
\end{equation} 
Then, as in \cite[Lemma 2.2]{HT22}, we obtain the parabolic equation for $A$
\begin{equation}\label{Heat-A-pre}
\begin{aligned}%\label{Cpt-A&B}
(\d_t-\De_g)A_\al =&\ 
\nab^\si\Im(\la^\ga_\al \bar{\la}_{\si\ga})+\nab_\ga \Re(\la_{\al}^\ga \bar{\psi})-\frac{1}{2}\nab_\al |\psi|^2\\
&\ -\Re(\la_\al^\si\bar{\psi}-\la_{\al\be}\bar{\la}^{\be\si})A_\si-\Im (\la^\ga_\al \bar{\la}_{\ga\si})V^\si\,.
\end{aligned}
\end{equation}

\subsection{The modified Schr\"{o}dinger system}
Here we carry out the last step in our analysis of the equations, and 
state the main result in a suitable gauge. 
 
In conclusion, under the heat coordinate condition \eqref{Heat-coordinate} and heat gauge condition \eqref{heat-gauge}, by \eqref{Sch-la}, \eqref{g-Heat-Eq} and \eqref{Heat-A-pre}, we obtain the covariant Schr\"{o}dinger equation for the complex second fundamental form tensor $\la$
\begin{equation}        \label{mdf-Shr-sys-2}
\left\{
\begin{aligned}
&    \begin{aligned}
i(\d^{B}_t-V^\ga\nab^A_\ga)\la_{\al\be}
+\nab^A_\si\nab^{A,\si}\la_{\al\be}
&= 
i\la^{\ga}_{\al}\nab_{\be} V_{\ga}
+i\la_\be^\ga\nab_\al V_\ga
+\psi\Re(\la_{\al\de}\bar{\la}^\de_\be)\\
&\quad -\Re(\la_{\si\de}\bar{\la}_{\al\be}-\la_{\si\be}\bar{\la}_{\al\de})\la^{\si\de} 
-\la_{\al\mu}\bar{\la}^\mu_\si\la^\si_\be\,,
\end{aligned}\\
    & \la(0,x) = \la_0(x).
    \end{aligned}
\right.    
\end{equation}
These equations are fully covariant, and do not depend on the gauge choices
made earlier. On the other hand, our gauge choices imply that 
the advection field $V$ and the connection coefficient $B$ are determined by the metric $g$ and connection $A$ via \eqref{Heat-coordinate}, respectively, \eqref{heat-gauge}. In turn,
the metric $g$ and the connection coefficients $A$  are determined in a parabolic fashion via the following equations for $g_{\mu\nu}$ and $A_\al$
\begin{equation}           \label{par-syst}
\left\{\begin{aligned}
&\begin{aligned}  
\d_t g_{\mu\nu}-g^{\al\be}\d^2_{\al\be}g_{\mu\nu}=&\ 2\Re (\la_{\mu\nu}\bar{\psi}-\la_{\mu\si}\bar{\la}_{\nu}^{\si})+2\Im(\psi\bar{\la}_{\mu\nu})-2g^{\al\be}\Ga_{\mu\be,\si}\Ga^\si_{\al\nu}\\
&\ +\d_{\mu}g^{\al\be}\Ga_{\al\be,\nu}+\d_{\nu}g^{\al\be}\Ga_{\al\be,\mu}\,.
\end{aligned}\\
&\begin{aligned}%\label{Cpt-A&B}
(\d_t-\De_g)A_\al &=
\nab^\si\Im(\la^\ga_\al \bar{\la}_{\si\ga})+\nab_\ga \Re(\la_{\al}^\ga \bar{\psi})-\frac{1}{2}\nab_\al |\psi|^2\\
&\quad -\Re(\la_\al^\si\bar{\psi}-\la_{\al\be}\bar{\la}^{\be\si})A_\si-\Im (\la^\ga_\al \bar{\la}_{\ga\si})V^\si\,.
\end{aligned}\\
&V^\ga=g^{\al\be}\Ga^\ga_{\al\be}\,,\quad B=\nab^\al A_\al\,,
\end{aligned}\right.
\end{equation}
with initial data 
\begin{equation}    \label{ini-gA}
    g(0,x)=g_0,\quad A_\al(0,x)=A_0.
\end{equation}
These are determined at the initial time by using the original coordinates on $\Sigma_0$,  respectively the generalized Coulomb gauge for $A_0$.

Fixing the remaining degrees of freedom (i.e. the affine group for the choice of the 
coordinates as well as the time dependence of the $SU(1)$ connection) 
 we can assume that the  following conditions hold at infinity in an averaged sense:
 \begin{equation*}  %\label{BC-infty}
g(\infty) = I_d,\quad A(\infty) = 0.     
 \end{equation*}
These are needed to insure  the unique solvability of the above parabolic equations
in a suitable class of functions.

We have now reduced the problem to the main Schr\"odinger-Parabolic system \eqref{mdf-Shr-sys-2}-\eqref{par-syst}. This system will be the key to proving the large-data solvability of the (SMCF) system in low-regularity Sobolev spaces, which is the primary objective of the rest of this paper.

Now we can restate here the large data local well-posedness result for the (SMCF) system in 
Theorem~\ref{LWP-thm} in terms of the above system:

\begin{thm}[Local well-posedness for large data in the good gauge]   \label{LWP-MSS-thm}
Let $d\geq 2$ and $s>\frac{d}{2}$. Assume that the initial manifold $\Sigma_0$ satisfies \eqref{MetBdd} and the bounds 
\begin{equation*}%\label{Data-MS}
    \lV \la_0\rV_{X^s}+ \lV g_0\rV_{Y^{s+1}}+ \lV A_0\rV_{Z^{s}} \leq M_1,
\end{equation*}
where $M_1=C(M)$ depends on $M$.
Then there exists $T=T(M,c_0)$ sufficiently small such that the (SMCF)
is locally well-posed in $X^s\times Y^{s+1}\times Z^s$ on the time interval $I=[0,T(M,c_0)]$. Moreover, the second fundamental form $\la$, the metric $g$ and the connection coefficients $A$ satisfy the bounds
\begin{equation}\label{psi-full-reg}
    \lV \la\rV_{L^\infty([0,T]; X^s)}\leq 2M_1,\qquad   \lV (g,A)\rV_{L^\infty([0,T]; Y^{s+1}\times Z^s)}\leq 2 M_1.
\end{equation}
\end{thm}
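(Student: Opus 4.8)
\textbf{Proof plan for Theorem~\ref{LWP-MSS-thm}.}

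The strategy is to realize the rough solution as a strong limit of the regular solutions furnished by Theorem~\ref{RegSol-thm}, using frequency envelopes to upgrade weak convergence to convergence in the strong topology, together with the uniqueness statement of Theorem~\ref{Uniqueness-thm} and the linearized/difference estimates of Proposition~\ref{prop4.2} and Proposition~\ref{Prop-DiffBound}. Concretely, I would proceed as follows.

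\emph{Step 1: Regularization of the data.} Given $\la_0 \in X^s$, $g_0 \in Y^{s+1}$, $A_0 \in Z^s$ with norm $\le M_1$, satisfying the compatibility conditions \eqref{la-commu}, \eqref{cpt-AiAj-2}, construct a one-parameter family of regularized initial data $(\la_0^{(h)}, g_0^{(h)}, A_0^{(h)})$, $h \in 2^{-\N}$, via Littlewood--Paley truncation at frequency $h^{-1}$, readjusted so that the Gauss--Codazzi--Ricci constraints are exactly preserved (this is exactly the construction of the regularized manifolds $\Sigma^{(h)}$ announced in part II of the overview, Section~\ref{Sec-Ini}). These data lie in every $H^N$, satisfy uniform bounds $\lV \la_0^{(h)}\rV_{X^s} + \lV g_0^{(h)}\rV_{Y^{s+1}} + \lV A_0^{(h)}\rV_{Z^s} \lesssim M_1$, together with the high-frequency bounds $\lV \la_0^{(h)}\rV_{X^{N}} \lesssim c_N h^{-(N-s)}$ governed by a frequency envelope $\{c_N\}$ of the data, and the difference bounds $\lV \la_0^{(h)} - \la_0^{(2h)}\rV_{L^2} \lesssim c_s h^{s}$, etc. One also checks that the regularized manifolds inherit the bounded geometry hypotheses \eqref{bdd-geom} with constants depending only on $M$, $c_0$; here the hypothesis \eqref{MetBdd} and the smallness $\ep_0 = 2^{-N_1}$ of the background perturbation are used, along with the norm equivalences $X^s \sim H^s$, $Y^{s+1}\sim H^{s+1}$, $Z^s \sim H^s$ established at the initial time in Section~\ref{Sec-Ini}.

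\emph{Step 2: Uniform lifespan and uniform energy bounds.} Apply Theorem~\ref{RegSol-thm} to each $h$: since $k$ can be taken arbitrarily large, we get smooth solutions $(\la^{(h)}, g^{(h)}, A^{(h)})$ on time intervals $[0,T_h]$. The key point is that $T_h$ is bounded below uniformly in $h$ by some $T = T(M,c_0) > 0$: this follows from the a priori energy estimates of Sections~\ref{Sec-Para}--\ref{Sec-EnEs} applied at the $X^s \times Y^{s+1}\times Z^s$ regularity level, which close on a time interval depending only on the initial size $M_1$ and $c_0$, together with the fact (also proved there) that the Sobolev embedding / bounded geometry conditions needed to run those estimates on the noncompact manifold are propagated for time $\lesssim T(M,c_0)$. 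This gives the uniform bound $\lV \la^{(h)}\rV_{L^\infty([0,T]; X^s)} + \lV (g^{(h)}, A^{(h)})\rV_{L^\infty([0,T]; Y^{s+1}\times Z^s)} \le 2M_1$, i.e. the asserted bound \eqref{psi-full-reg}, uniformly in $h$. Simultaneously, propagating the high-frequency data bounds by the same energy estimates at level $H^N$ yields $\lV \la^{(h)}\rV_{L^\infty([0,T];X^N)} \lesssim c_N h^{-(N-s)}$ with the same envelope $\{c_N\}$.

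\emph{Step 3: Convergence in $L^2$ and identification of the limit.} To pass to the limit I would control consecutive differences $(\la^{(h)} - \la^{(2h)}, g^{(h)} - g^{(2h)}, A^{(h)} - A^{(2h)})$ in $L^2$ using the linearized energy estimates of Proposition~\ref{prop4.2} (equivalently the $L^2$ difference bound of Proposition~\ref{Prop-DiffBound} applied in the common good gauge), obtaining $\lV \la^{(h)} - \la^{(2h)}\rV_{L^\infty([0,T];L^2)} \lesssim c_s h^{s}$ and similarly for $g,A$. Summing the geometric series shows $(\la^{(h)}, g^{(h)}, A^{(h)})$ is Cauchy in $C([0,T]; L^2)$, hence converges to some limit $(\la, g, A)$; interpolating with the uniform $X^s$ bound of Step 2 gives convergence in $C([0,T]; H^{s'})$ for every $s' < s$, which is enough to pass to the limit in the weak formulation of \eqref{mdf-Shr-sys-2}--\eqref{par-syst} and conclude that $(\la,g,A)$ is a solution with the stated regularity, satisfying \eqref{psi-full-reg} by weak lower semicontinuity. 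Uniqueness of this solution (in the good gauge, hence by gauge-invariance as an (SMCF) solution) follows from Theorem~\ref{Uniqueness-thm}.

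\emph{Step 4: Strong convergence and continuous dependence.} To upgrade to convergence in $C([0,T]; X^s)$ (and thus genuine well-posedness rather than mere existence), use the frequency-envelope argument: decompose $\la - \la^{(h)} = P_{<h^{-1}}(\la - \la^{(h)}) + P_{\geq h^{-1}}(\la - \la^{(h)})$; the high-frequency part is controlled by $\lV P_{\ge h^{-1}}\la\rV_{X^s} + \lV P_{\ge h^{-1}}\la^{(h)}\rV_{X^s}$, the first tending to $0$ by dominated convergence for the envelope, the second bounded by $h^{N-s}\lV\la^{(h)}\rV_{X^N} \lesssim c_N \to 0$ for $N > s$; the low-frequency part is estimated via a Bernstein-type bound by $(h^{-1})^{s}\lV \la - \la^{(h)}\rV_{L^2}$, which combined with the sharpened difference bound $\lV \la-\la^{(h)}\rV_{L^2} \lesssim (\text{tail of } \{c_n\}) \, h^{s}$ also tends to $0$. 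The same scheme, comparing two solutions with nearby data via Proposition~\ref{prop4.2} and a common regularization, yields Lipschitz-in-$L^2$ / continuous-in-$X^s$ dependence on the data.

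\emph{Main obstacle.} The crux is Step 2, and specifically two intertwined issues: (a) proving that the a priori energy estimates for the coupled Schr\"odinger--parabolic system \eqref{mdf-Shr-sys-2}--\eqref{par-syst} close in the low-regularity spaces $X^s \times Y^{s+1}\times Z^s$ on a time interval depending only on $M_1, c_0$ — this is where the intrinsic fractional spaces $X^s$ and the good propagation of the second fundamental form do the essential work, replacing the nontrapping hypothesis of \cite{MMT5}; and (b) verifying that the quantitative bounded-geometry / Sobolev-embedding conditions \eqref{bdd-geom} on the noncompact evolving manifold are preserved for this uniform time, since these conditions are what make the multilinear and Moser-type estimates on $\R^d$ with the evolving metric valid in the first place. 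Both of these are carried out in Sections~\ref{Sec-Para}--\ref{Sec-EnEs} and are invoked here as black boxes; the remaining limiting argument of Steps 1, 3, 4 is then comparatively routine, modulo the care needed to enforce the differential constraints under regularization in Step 1.
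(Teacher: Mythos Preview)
Your proposal is correct and follows essentially the same architecture as the paper: regularize the data at the level of the map $F_0^{(h)}=P_{<h}F_0$ (Section~\ref{Sec-Ini}), invoke Theorem~\ref{RegSol-thm} for regular solutions, prove a uniform lifespan via the $X^s\times Y^{s+1}\times Z^s$ energy estimates of Sections~\ref{Sec-Para}--\ref{Sec-EnEs} (this is Proposition~\ref{RegSol-prop}), obtain weak convergence from the linearized estimates, and upgrade via frequency envelopes (Section~\ref{Sec-rough}).

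One point of execution differs slightly from what you wrote. In Step~3 you propose to control $\la^{(h)}-\la^{(2h)}$ directly in $L^2$ via Proposition~\ref{prop4.2}, but that proposition estimates the normal and tangential components $\omega^{(h)}=(\partial_h F^{(h)})^\perp\cdot m^{(h)}$ and $U^{(h)}$ of the variation of the \emph{map}, not differences of $\la$. The paper therefore first proves $F^{(h)}\to F$ in $L^2$ (hence in $H^{s+2}$ after the envelope argument, Proposition~\ref{prop8.2}), and only then deduces convergence of $(\la,g,A)$ from the structure equations \eqref{strsys-cpf} together with the difference/high-frequency bounds \eqref{DB-Fm}--\eqref{HFB-Fm}. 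This is not merely cosmetic: the regularization is defined at the level of $F$, so $\partial_h F^{(h)}$ is the natural linearized variable, and the Schr\"odinger-type estimate \eqref{om-Energy} for $\omega$ is what closes cleanly. Your alternative route through $\mu^{(h)}=\partial_h\la^{(h)}$ is also available in the paper (estimate \eqref{DB-est2} in Theorem~\ref{Thm-Es}), but note that bound is coupled to $\partial_h g^{(h)}$ and $\partial_h A^{(h)}$ and ultimately rests on the same map-level regularization; either way one must pass through the full triple, not $\la$ alone.
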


The function space $X^s$ appearing in the theorem is defined in the next
section, as a fractional counterpart of the intrinsic Sobolev norms  $\sfH^k$-norm of $\la$, which propagates well for any integer $k$. Thus here we use this property to define the fractional $X^s$-norm whose characterization is akin to Littlewood-Paley decomposition. This allows us to establish energy estimates more easily and without requiring a non-trapping condition.
In addition, the $X^s$-norm of $\la$ is equivalent to its $H^s$-norm at initial time and controls the $H^s$ norm of the solution for later times. Consequently, the $H^s$-norm of $\la$ is controlled by initial data.
For consistency, the corresponding $Y^{s+1}$-norms and $Z^s$-norms for $g$ and $A$ are also needed; these norms enjoy similar embedding and boundedness properties.

In the above theorem,  by well-posedness we mean a full Hadamard-type well-posedness, including the following properties:
\begin{enumerate}[label=\roman*)]
    \item Existence of solutions $\la \in C[0,1;H^s]$, with the additional regularity 
    properties \eqref{psi-full-reg}.
    \item Uniqueness in the same class.
    \item Continuous dependence of solutions with respect to the initial data 
    in the strong $H^s$ topology.
    \item Weak Lipschitz dependence  of solutions with respect to the initial data 
    in the weaker $L^2$ topology.
    \item Energy bounds and propagation of higher regularity.
\end{enumerate}

\bigskip

\section{Function spaces and notations}   \label{Sec3}
The goal of this section is twofold. First, we introduce some notations as well as  inequalities on non-compact manifolds. Second, we define the function spaces where we aim to solve 
the (SMCF) system in the good gauge, given by \eqref{mdf-Shr-sys-2} and \eqref{par-syst}.
In particular, we introduce a new function space $X^s$, different from the spaces introduced in \cite{MMT3,MMT4,MMT5}, so that we could propagate the regularity of the second fundamental form $\la$ along (SMCF) in some fractional Sobolev spaces.

\subsection{Notations and some properties on manifolds}
We begin with some constants. 
We denote $[a]$ as the largest integer such that $[a]\leq a\in \R$.
Let regularity index $s>d/2$ and $0<\de\ll 1$ be a small constant. Let the $\si_d$ and $\de_d$ be as
\begin{equation} \label{Constant}
\si_d=1-\delta_d,\qquad \de_d=\left\{\begin{aligned}
        \de,\quad d=2,\\
        0,\quad d\geq 3.
    \end{aligned}\right.
\end{equation}
Let $N_1>0$ and $h_0>0$ be sufficiently large such that 
\[\ep_0:= 2^{-N_1}\sim 2^{-h_0}\ll_M 1\,.\]

For a function $u(t,x)$ or $u(x)$, let $\hat{u}=\mathcal F u$ and $\check{u}=\mathcal F^{-1}u$ denote the Fourier transform and inverse Fourier transform in the spatial variable $x$, respectively. Fix a smooth radial function $\varphi:\R^d \rightarrow [0,1] $ supported in $\{x:|x|\leq 2\}$ and equal to 1 in $\{x:|x|\leq 1\}$, and for any $j\in \Z$, let
\begin{equation*}
	\varphi_j(x):=\varphi(x/2^j)-\varphi(x/2^{j-1}).
\end{equation*}
We then have the spatial Littlewood-Paley decomposition,
\begin{equation*}
	\sum_{j=-\infty}^{\infty}P_j (D)=1, \qquad \sum_{j=0}^{\infty}S_j (D)=1,
\end{equation*}
where $P_j$ localizes to frequency $2^j$ for $j\in \Z$ with $\mathcal F(P_j u)=\varphi_j(\xi)\hat{u}(\xi)$, 
$S_0(D)=\sum_{j\leq 0}P_j(D)$ and $S_j(D)=P_j(D)$ for $j>0$.

\begin{lemma}
Let $k\in\Z$, $1\leq q\leq p\leq \infty$ and $s>\frac{d}{2}$. We have
\begin{align}\nonumber
&\lV P_k f\rV_{L^p}\lesssim 2^{kd(\frac{1}{q}-\frac{1}{p})}\lV P_k f\rV_{L^q},\\ \label{ineqfg}
&\|fg\|_{H^s}\les \|f\|_{H^s}(\|g\|_{L^\infty}+\|P_{>0}g\|_{\dot H^s}).
\end{align}
\end{lemma}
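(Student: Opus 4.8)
\emph{The Bernstein bound.} For the first inequality I would run the standard kernel/scaling argument. Fix an auxiliary Schwartz function $\tilde\varphi$ whose Fourier transform is supported in the annulus $\{\tfrac14\le|\xi|\le 4\}$ and equals $1$ on the support of $\varphi_0$; then $\varphi_k$ is supported where $\tilde\varphi(2^{-k}\cdot)\equiv 1$, so $P_k f=K_k*P_k f$ with $K_k(x)=2^{kd}\check{\tilde\varphi}(2^k x)$. Since $\check{\tilde\varphi}\in L^r$ for every $r\in[1,\infty]$, one has $\|K_k\|_{L^r}=2^{kd(1-1/r)}\|\check{\tilde\varphi}\|_{L^r}$. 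Choosing $r$ by $1-\tfrac1r=\tfrac1q-\tfrac1p$ and applying Young's convolution inequality gives
\[
\|P_kf\|_{L^p}\le\|K_k\|_{L^r}\|P_kf\|_{L^q}\lesssim 2^{kd(\frac1q-\frac1p)}\|P_kf\|_{L^q},
\]
which is the claim; the same computation with $S_0$ in place of $P_k$ handles the low-frequency block if it is ever needed.

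\emph{The product bound: setup.} Since $s>\tfrac d2$ I would use freely the Sobolev embedding $H^s\subset L^\infty$ and the uniform bounds $\|P_{<k}h\|_{L^\infty}\lesssim\|h\|_{L^\infty}$, $\|P_{>0}h\|_{L^\infty}\lesssim\|h\|_{L^\infty}$ (consequences of the Bernstein argument above). The plan is to split $g=P_{\le 0}g+P_{>0}g=:g_{lo}+g_{hi}$ and estimate $fg_{lo}$ and $fg_{hi}$ separately. As $g_{lo}$ is localized to $\{|\xi|\lesssim 1\}$, Bernstein gives $\|g_{lo}\|_{W^{N,\infty}}\lesssim_N\|g_{lo}\|_{L^\infty}\lesssim\|g\|_{L^\infty}$ for all $N$; writing $\langle D\rangle^s(fg_{lo})=g_{lo}\langle D\rangle^s f+[\langle D\rangle^s,g_{lo}]f$ (or, equivalently, a Littlewood–Paley trichotomy in which $(P_jf)g_{lo}$ has output frequency $\sim 2^j$ once $j\gtrsim 1$) then yields $\|fg_{lo}\|_{H^s}\lesssim\|g\|_{L^\infty}\|f\|_{H^s}$.

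\emph{The product bound: the paraproduct for $fg_{hi}$.} Decompose $fg_{hi}=T_f g_{hi}+T_{g_{hi}}f+\Pi(f,g_{hi})$, with $T_f g_{hi}=\sum_k(P_{<k-4}f)(P_kg_{hi})$ and $\Pi$ the frequency-diagonal sum. In $T_{g_{hi}}f$ the summand has frequency $\sim 2^k$ and $\|P_{<k-4}g_{hi}\|_{L^\infty}\lesssim\|g\|_{L^\infty}$, so $\|T_{g_{hi}}f\|_{H^s}\lesssim\|g\|_{L^\infty}\|f\|_{H^s}$. In $T_f g_{hi}$ the summand again has frequency $\sim 2^k$ (with $k\gtrsim 1$, since $g_{hi}$ has frequency $>1$), and $\|P_{<k-4}f\|_{L^\infty}\lesssim\|f\|_{H^s}$, so $\|T_fg_{hi}\|_{H^s}\lesssim\|f\|_{H^s}\big(\sum_k 2^{2ks}\|P_kg_{hi}\|_{L^2}^2\big)^{1/2}=\|f\|_{H^s}\|P_{>0}g\|_{\dot H^s}$. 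For $\Pi(f,g_{hi})$, each output frequency $2^l$ receives only contributions with $k\gtrsim l$: for $l\ge 0$ I would bound $\|P_l\Pi\|_{L^2}\lesssim\|g\|_{L^\infty}\sum_{k\ge l-C}\|P_kf\|_{L^2}$ and sum via a discrete Young (Schur) inequality using $s>0$, getting $\lesssim\|g\|_{L^\infty}\|f\|_{H^s}$; for $l\le 0$ I would instead use the Bernstein gain $\|P_l(\cdot)\|_{L^2}\lesssim 2^{ld/2}\|\cdot\|_{L^1}$ together with $g_{hi}\in L^2$ — which holds since $\|g_{hi}\|_{L^2}\le\|P_{>0}g\|_{\dot H^s}$ for $s>0$ — to obtain $\|P_l\Pi\|_{L^2}\lesssim 2^{ld/2}\|f\|_{L^2}\|P_{>0}g\|_{\dot H^s}$, which is square-summable over $l\le 0$. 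Combining the three pieces with the $fg_{lo}$ bound gives $\|fg\|_{H^s}\lesssim\|f\|_{H^s}(\|g\|_{L^\infty}+\|P_{>0}g\|_{\dot H^s})$.

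\emph{Main difficulty.} The whole argument is routine Littlewood–Paley bookkeeping except at the very low output frequencies: because $g$ is merely bounded and need not lie in $L^2$ (think of $g$ a nonzero constant, where the inequality is sharp and trivially true), the block $g_{lo}$ must be absorbed through its $L^\infty$-controlled derivatives rather than any $L^2$-based estimate, and the high–high interaction $\Pi(f,g_{hi})$ producing low output frequencies must be closed via the $L^1\to L^2$ Bernstein gain and the elementary observation that $\|P_{>0}g\|_{\dot H^s}<\infty$ with $s>0$ already forces $P_{>0}g\in L^2$.
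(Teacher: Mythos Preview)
Your proof is correct and follows exactly the approach the paper indicates: the paper's own proof simply states that the first bound is Bernstein's inequality and that the second ``is easily obtained using a paradifferential decomposition,'' and you have carried out precisely that paradifferential decomposition with all details supplied. The low--high splitting of $g$, the Bony trichotomy on $fg_{hi}$, and the careful treatment of the low-output-frequency piece of $\Pi(f,g_{hi})$ via the $L^1\to L^2$ Bernstein gain are all standard and correctly executed.
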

\begin{proof}
The first one is Bernstein's inequality. The second one \eqref{ineqfg} is easily obtained using a paradifferential decomposition.
\end{proof}

Alternatively we will also use a continuous Littlewood-Paley decomposition
\begin{align}  \label{CLPD}
    1=\int_\R P_h\  dh= P_{<h_0} + \int_{h_0}^\infty P_h\  dh,
\end{align}
where the symbols $p_h(\xi)$ of $P_h$ are localized in the region ${2^{h-1} <|\xi|<
2^{h+1}}$ and coincide up to scaling,
\begin{align*}
    p_h(\xi)=p_0(2^{-h}\xi).
\end{align*}
We define
\begin{equation*}
    P_{<h}=\int_{-\infty}^h P_l\ dl,\quad P_{>h}=\int^{+\infty}_h P_l\ dl.
\end{equation*}
Then we have the equivalence
\begin{equation*}
\|u\|_{H^s}^2 \approx \|P_{<h_0}u\|_{H^s}^2+\int_{h_0}^\infty 2^{2hs}\|P_h u\|_{L^2}^2 dh.
\end{equation*}

Now we define the standard Sobolev spaces $H^s$ for any $s\in \R$, which is given by
\begin{align*}%  \label{StandSob}
    \|u\|_{H^s}=\|\<\xi\>^s \hat{u}(\xi)\|_{L^2}.
\end{align*} 
For the metric $g_{\al\be}$ and connection $A_\al$, we will use the function spaces
\begin{align*}
\|(g, A)\|_{\EE^s}&=\||D|^{\si_d}g\|_{L^\infty([0,T]; H^{s+1-\si_d})}+\||D|^{1+\si_d}g\|_{L^2([0,T]; H^{s+1-\si_d})}\\
&\quad +\||D|^{\de_d}A\|_{L^\infty([0,T]; H^{s-\de_d})}+\||D|^{1+\de_d} A\|_{L^2([0,T]; H^{s-\de_d})}.
\end{align*}
Ideally here one would like to set $\delta_d = 0$, but this is only possible in dimensions three and higher due to the construction of orthonormal frame in $N\Sigma$.

We also need the intrinsic Sobolev spaces on a smooth manifold $(\MM,g)$. Since the Schr\"odinger equation \eqref{mdf-Shr-sys-2} is a quasilinear equations with variable coefficients $g$, the intrinsic Sobolev spaces are effective to derive its energy estimates later. Let $A_\ga$ be a magnetic potential.
For any complex tensor $T=T^{\al_1\cdots\al_r}_{\be_1\cdots\be_s}dx^{\beta_1}\otimes...dx^{\beta_s}\otimes\frac{\partial }{\partial x^{\alpha_1}}\otimes...\otimes\frac{ \partial }{\partial x^{\alpha_r}}$, the covariant derivative is defined by
\[\nab^A_\ga T=\nab_\ga T+iA_\ga T,\]
where 
\begin{align*}%   \label{co_d}
\nab_\ga T^{\al_1\cdots\al_r}_{\be_1\cdots\be_s}=\d_\ga T^{\al_1\cdots\al_r}_{\be_1\cdots\be_s}+\sum_{i=1}^r \Ga^{\al_i}_{\ga\si} T^{\al_1\cdots\al_{i-1}\si\al_{i+1}\cdots\al_r}_{\be_1\cdots\be_s}-\sum_{j=1}^s \Ga^{\si}_{\ga\be_{j}} T^{\al_1\cdots\al_r}_{\be_1\cdots\be_{j-1}\si\be_{j+1}\cdots\be_s}.
\end{align*}
We have
\begin{align*}
|\nab^A T|^2_g=g_{\al_1\al'_1}\cdots g_{\al_r\al'_r} g^{\be_1\be'_1}\cdots g^{\be_s\be'_s} \nab^A_\ga T^{\al_1\cdots\al_r}_{\be_1\cdots\be_s}\overline{\nab^{A,\ga} T^{\al'_1\cdots\al'_r}_{\be'_1\cdots\be'_s}}.
\end{align*}
Then the intrinsic Sobolev norm $\mathsf H^k$ for nonnegative integer $k\in \mathbb N$ is defined by
\begin{equation}     \label{IntSob}
\|T\|_{\mathsf H^k}^2=\sum_{l=0}^k \int_\MM|\nab^{A,l} T|_g^2 \ dvol\,,
\end{equation}
where volume form is $dvol=\sqrt{\det g}dx$ and $\nab^{A,l}$ is the $l$-th order covariant derivatives. For convenience, we also define the associated $\sfL^p$-norm and $\sfH^{k,p}$ as
\begin{equation*}%\label{IntSob}
\|T\|_{\sfL^p}^p=\int_\MM|T|_g^p \ dvol,\qquad \|T\|_{\sfH^{k,p}}^p=\sum_{l=0}^k\int_\MM|\nab^{A,l}T|_g^p \ dvol\,.
\end{equation*}
We denote by $C^m_B(\MM)$ the space of $C^m$ functions $u:\MM\rightarrow \R$ equipped with the finite norm
\[\|u\|_{C^m}=\sum_{j=0}^m \sup_x |\nab^j u|_g.\] 

Next, we state some inequalities on Riemannian manifolds. Let us first recall the following interpolation inequality proved by Hamilton \cite[Section 12]{Ha1982}.

\begin{thm}[Theorem 12.1, p.291\cite{Ha1982}]
Let $(\MM,g)$ be a $C^2$-Riemannian manifold without boundary of dimension $d$ and let $T$ be any tensor on $\MM$. Suppose $\frac{1}{p}+\frac{1}{q}=\frac{1}{r}$ with $r\geq 1$. Then 
\begin{equation*}
    \|\nab T\|_{\sfL^{2r}}^2\leq (2r-2+d)\|\nab^2 T\|_{\sfL^p}\|T\|_{\sfL^q}.
\end{equation*}
\end{thm}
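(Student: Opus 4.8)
The plan is to establish the inequality by a combination of integration by parts and Hölder's inequality, following the classical Gagliardo--Nirenberg strategy adapted to the tensorial and Riemannian setting. The starting point is the pointwise observation that, by the Cauchy--Schwarz inequality applied to $|\nabla T|^2_g = \langle \nabla T, \nabla T\rangle_g$ and then to $\langle \nabla^2 T, T\rangle_g$, one has the pointwise bound $|\nabla T|^2 \le |\nabla^2 T|\,|T| + (\text{divergence terms})$ only in an integrated sense; more precisely one works directly with the integral $\int_\MM |\nabla T|^{2r}\, dvol$. First I would write $\int_\MM |\nabla T|^{2r} \, dvol = \int_\MM |\nabla T|^{2r-2} \langle \nabla_\alpha T, \nabla^\alpha T\rangle_g \, dvol$ and integrate by parts in the index $\alpha$, moving one derivative off the second factor. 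Since the manifold has no boundary, no boundary terms appear; the Levi-Civita connection is metric-compatible, so only the Christoffel symbols hidden inside $\nabla$ need care, but these are absorbed because we are taking covariant derivatives of the norm $|\nabla T|^{2r-2}$ as a scalar function, for which $\nabla_\alpha |\nabla T|^{2r-2} = (r-1)|\nabla T|^{2r-4} \nabla_\alpha |\nabla T|^2$.

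The integration by parts produces two types of terms: one where the derivative lands on the remaining factor $\nabla^\alpha T$, giving $\int_\MM |\nabla T|^{2r-2} \langle T, \nabla_\alpha \nabla^\alpha T\rangle_g\,dvol$, which after another Cauchy--Schwarz is bounded by $\int_\MM |\nabla T|^{2r-2}|T|\,|\nabla^2 T|\,dvol$; and one where the derivative lands on $|\nabla T|^{2r-2}$, contributing $-(r-1)\int_\MM |\nabla T|^{2r-4} (\nabla_\alpha |\nabla T|^2) \langle T, \nabla^\alpha T\rangle_g\,dvol$. For the latter, using $|\nabla_\alpha |\nabla T|^2| \le 2 |\nabla T| |\nabla^2 T|$ pointwise, one bounds it by $2(r-1)\int_\MM |\nabla T|^{2r-2}|T|\,|\nabla^2 T|\,dvol$. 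Collecting these, and tracking the combinatorial constant that arises from the number of index contractions (this is where the dimension $d$ enters, through the trace $\nabla_\alpha \nabla^\alpha$ acting on the various slots of the tensor), one arrives at
\begin{equation*}
\int_\MM |\nabla T|^{2r}\,dvol \le (2r - 2 + d)\int_\MM |\nabla T|^{2r-2}\,|T|\,|\nabla^2 T|\,dvol.
\end{equation*}
The final step is to apply Hölder's inequality to the right-hand side with the three exponents $\frac{r}{r-1}$ (for the factor $|\nabla T|^{2r-2}$), $q$ (for $|T|$), and $p$ (for $|\nabla^2 T|$); the hypothesis $\frac1p + \frac1q = \frac1r$ is exactly what guarantees $\frac{r-1}{r} + \frac1q + \frac1p = 1$, so Hölder gives
\begin{equation*}
\int_\MM |\nabla T|^{2r}\,dvol \le (2r-2+d)\, \|\nabla T\|_{\sfL^{2r}}^{2r-2}\, \|T\|_{\sfL^q}\, \|\nabla^2 T\|_{\sfL^p},
\end{equation*}
and dividing both sides by $\|\nabla T\|_{\sfL^{2r}}^{2r-2}$ yields the claimed bound $\|\nabla T\|_{\sfL^{2r}}^2 \le (2r-2+d)\|\nabla^2 T\|_{\sfL^p}\|T\|_{\sfL^q}$.

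\textbf{Main obstacle.} The delicate point is the precise bookkeeping of the constant $2r-2+d$: one must carefully account for how the covariant Laplacian $\nabla_\alpha\nabla^\alpha$ distributes over the tensor slots and how the curvature-type commutator terms and the $(r-1)$-weighted terms combine. A naive estimate would produce a larger, dimension-and-rank-dependent constant; obtaining exactly $2r-2+d$ requires exploiting that the "bad" term from differentiating $|\nabla T|^{2r-2}$ can be partially reabsorbed or that the sign structure of the integration by parts is favorable. Since we only invoke this theorem as a black-box interpolation tool (it is quoted from Hamilton~\cite{Ha1982}), for our purposes the sharp value of the constant is not essential — any bound of the form $C(r,d)\|\nabla^2 T\|_{\sfL^p}\|T\|_{\sfL^q}$ suffices downstream — but we state it in the sharp form of the original reference.
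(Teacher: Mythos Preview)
The paper does not supply its own proof of this theorem; it is quoted verbatim from Hamilton~\cite{Ha1982}, with only the remark that since Hamilton's argument relies solely on integration by parts, it extends from compact manifolds to noncompact manifolds without boundary. Your integration-by-parts-plus-H\"older sketch is exactly Hamilton's argument, so there is nothing to compare.

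One small correction to your bookkeeping: the dimension $d$ does not enter through ``the trace $\nabla_\alpha\nabla^\alpha$ acting on the various slots of the tensor.'' It enters through the pointwise bound $|\Delta T| = |\operatorname{tr}_g \nabla^2 T| \le \sqrt{d}\,|\nabla^2 T|$ (or the cruder $|\Delta T|\le d\max_i|\nabla_i\nabla_i T|$) applied to the term $\int |\nabla T|^{2r-2}\langle T,\Delta T\rangle$; the rank of $T$ plays no role in the constant. With the sharp Cauchy--Schwarz one actually gets $2r-2+\sqrt{d}$, so Hamilton's stated $2r-2+d$ is not optimal, but as you note, only the form $C(r,d)$ matters downstream.
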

\begin{rem}
Note that the Theorem 12.1 in \cite[p.291]{Ha1982} assumes the manifold $\mathcal{M}$ is compact. However, since the proof only relies on integration by parts, Theorem 12.1 still holds for smooth manifolds without boundary.
\end{rem}

As corollaries of this theorem, we have the following inequalities.
\begin{cor}[Corollary 12.6, p.293 \cite{Ha1982}]  \label{Inter-lem}
    If T is any tensor on the smooth manifold $(\MM,g)$ without boundary and if $1 \leq  i \leq  l-1$, then with a constant $C=C(d,l)$ depending only on dimensions $d=dim \ \MM$ and $l$, which is independent of the metric $g$ and the connection $\Ga$, we have the estimate
    \begin{equation}   \label{HaInterpo}
        \int_{\R^d} |\nab^i T|^{\frac{2l}{i}} \ d\mu\leq C \max_{\MM} |T|_{g}^{2(\frac{l}{i}-1)}\int_{\R^d} |\nab^l T|^2 \ d\mu.
    \end{equation}
\end{cor}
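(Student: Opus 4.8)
\textbf{Proof proposal for Corollary~\ref{Inter-lem}.}

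The plan is to deduce the multiplicative inequality \eqref{HaInterpo} from the iterated use of Hamilton's Theorem 12.1. The key observation is that \eqref{HaInterpo} is the ``balanced'' endpoint case $p = q$ (equivalently $r=2$) of the more general Gagliardo--Nirenberg family, and that by a scaling/dimensional-analysis bookkeeping all the intermediate $\sfL^p$ norms can be arranged to cancel telescopically. First I would set up the induction on $l$: the base case $l=2$, $i=1$ is exactly Theorem 12.1 with $p=q=2$, $r=2$, which gives $\|\nab T\|_{\sfL^4}^2 \leq (2+d)\|\nab^2 T\|_{\sfL^2}\|T\|_{\sfL^2}$; combined with $\|T\|_{\sfL^\infty} = \max_\MM |T|_g$ this already yields the claim for $l=2$ since $\|\nab T\|_{\sfL^4}^2 \geq$ nothing, so instead one writes $\int |\nab T|^{2l/i}\,d\mu = \int |\nab T|^4 d\mu = \|\nab T\|_{\sfL^4}^4 \leq (2+d)^2 \|\nab^2 T\|_{\sfL^2}^2\|T\|_{\sfL^2}^2$, and then bounds $\|T\|_{\sfL^2}^2 \leq \max_\MM|T|_g^2 \cdot$ (nothing integrable in general)---so in fact one must be more careful and keep $\|T\|_{\sfL^2}$ replaced via another application. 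The honest route is: apply Theorem 12.1 repeatedly to reduce $\|\nab^i T\|_{\sfL^{2l/i}}$ step by step, each time trading one order of derivative and shifting the Lebesgue exponent, until all factors are either $\|\nab^l T\|_{\sfL^2}$ or $\|T\|_{\sfL^\infty} = \max_\MM|T|_g$.

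Concretely, I would argue by downward/upward induction as in Hamilton: to estimate $\|\nab^i T\|_{\sfL^{2l/i}}$, use Theorem 12.1 in the form
\[
\|\nab^i T\|_{\sfL^{2l/i}}^2 \leq C(d,l)\,\|\nab^{i+1}T\|_{\sfL^{p}}\,\|\nab^{i-1}T\|_{\sfL^{q}},
\]
with $p,q$ chosen so that $\frac1p + \frac1q = \frac{2i}{2l} = \frac{i}{l}$ and so that $p = \frac{2l}{i+1}$, $q = \frac{2l}{i-1}$; one checks $\frac{i+1}{2l} + \frac{i-1}{2l} = \frac{2i}{2l} = \frac{i}{l}$, so these exponents are admissible. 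This reduces control of the $\nab^i$ term at exponent $2l/i$ to control of the $\nab^{i\pm1}$ terms at exponents $2l/(i\pm1)$. Iterating from $i$ both up toward $l$ (where the exponent becomes $2l/l = 2$, giving $\|\nab^l T\|_{\sfL^2}$) and down toward $0$ (where the exponent becomes $2l/0 = \infty$, giving $\|T\|_{\sfL^\infty} = \max_\MM|T|_g$), and multiplying all the resulting inequalities, the intermediate norms $\|\nab^j T\|_{\sfL^{2l/j}}$ for $1 \le j \le l-1$ appear on both sides and cancel. Tracking the exponents of the surviving factors $\|\nab^l T\|_{\sfL^2}$ and $\max_\MM|T|_g$ by the usual Gagliardo--Nirenberg counting reproduces exactly the powers $2$ and $2(\frac{l}{i}-1)$ in \eqref{HaInterpo}, and the constant $C=C(d,l)$ is the product of finitely many $(2r-2+d)$-type factors, hence depends only on $d$ and $l$ and not on $g$ or $\Ga$ since the same is true of Theorem 12.1.

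The main technical point---and the place where care is needed---is the bookkeeping of the exponents to verify that the telescoping actually closes and produces precisely the stated powers, together with checking that all intermediate exponents $\frac{2l}{j}$ lie in $[1,\infty)$ as required by the hypothesis $r \ge 1$ of Theorem 12.1 (this holds since $1 \le j \le l-1$ forces $\frac{2l}{j} \ge \frac{2l}{l-1} > 2 \ge 1$, i.e.\ the corresponding $r = \frac{l}{j} \ge \frac{l}{l-1} > 1$). A minor subtlety is the boundary step $j=0$: there one does not literally apply Theorem 12.1 but simply uses $\|\nab T\|_{\sfL^{2l}}^2 \le C \|\nab^2 T\|_{\sfL^{2l/2}} \|T\|_{\sfL^\infty}$, peeling off the $\max_\MM|T|_g$ factor directly. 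Since the argument is purely the standard interpolation-ladder computation (only integration by parts is used, so non-compactness of $\MM = \R^d$ is harmless, as noted in the Remark after Theorem 12.1), I would present it compactly: state the iteration, display the exponent identity $\frac{i+1}{2l}+\frac{i-1}{2l}=\frac{i}{l}$, and invoke the Gagliardo--Nirenberg counting for the final powers, referring to \cite[p.293]{Ha1982} for the routine details.
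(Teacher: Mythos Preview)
The paper does not give its own proof of this statement: it is simply quoted verbatim from Hamilton \cite[Corollary 12.6]{Ha1982} as a tool, with no argument supplied. Your sketch is the standard derivation (and is exactly Hamilton's): apply Theorem 12.1 to the tensor $\nab^{i-1}T$ with the exponent choice $r = l/i$, $p = 2l/(i+1)$, $q = 2l/(i-1)$, iterate, and telescope the intermediate norms $\|\nab^j T\|_{\sfL^{2l/j}}$. The exponent bookkeeping and the boundary step at $j=0$ are as you describe, so your proposal is correct and matches the cited source.
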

\begin{cor}[Corollary 12.7, p.294 \cite{Ha1982}]
    If $T$ is any tensor on the smooth manifold $(\MM,g)$ without boundary then with a constant $C=C(n,d)$ depending only on $n$ and $d = dim \ \MM$  and independent of the metric $g$ and the connection $\Ga$ we have the estimate 
    \begin{equation}  \label{L2interpolation}
        \|\nab^i T\|_{\sfL^2}\leq C\|\nab^n T\|_{\sfL^2}^{\frac{i}{n}}\|T\|_{\sfL^2}^{1-\frac{i}{n}}, \qquad 0\leq i\leq n.
    \end{equation}
\end{cor}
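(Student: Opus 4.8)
The plan is to deduce this inequality from the preceding Corollary~\ref{Inter-lem}, specialized to the case $p=q=2$ of Hamilton's theorem, by an iteration/induction on $n$. First I would record the base case: for $n=1$ the statement $\|T\|_{\sfL^2}\leq C\|\nab T\|_{\sfL^2}^0\|T\|_{\sfL^2}$ is trivial, and the only nontrivial case $i=0$ is also trivial. The first genuine step is $n=2$, $i=1$: apply Hamilton's theorem (Theorem 12.1 above) with $r=2$, $p=q=2$, i.e. $\frac1p+\frac1q=1=\frac1r$, which gives directly $\|\nab T\|_{\sfL^2}^2\leq (d+2)\|\nab^2 T\|_{\sfL^2}\|T\|_{\sfL^2}$, the desired estimate for $n=2$. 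Note the constant depends only on $d$ (and trivially on $n=2$), and is independent of $g$ and $\Gamma$, since Hamilton's theorem has this property.

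For the inductive step, suppose the inequality holds for all exponents up to some $n$, and consider $\nab^{n+1}T$. I would apply Theorem~12.1 to the tensor $S=\nab^{n-1}T$ with $r=2$, $p=q=2$, obtaining
\begin{equation*}
\|\nab^n T\|_{\sfL^2}^2 = \|\nab S\|_{\sfL^2}^2 \leq (d+2)\|\nab^2 S\|_{\sfL^2}\|S\|_{\sfL^2} = (d+2)\|\nab^{n+1}T\|_{\sfL^2}\|\nab^{n-1}T\|_{\sfL^2}.
\end{equation*}
This is a ``convexity in the index'' inequality relating three consecutive derivative orders. Combining such inequalities for $n-1, n, n+1$ down the chain, or equivalently running the standard argument that turns log-convexity of the sequence $a_k:=\|\nab^k T\|_{\sfL^2}$ (up to fixed multiplicative constants) into the two-endpoint bound, yields $a_i \leq C\, a_n^{i/n} a_0^{1-i/n}$ for $0\leq i\leq n$. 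The bookkeeping of constants is routine: at each of the finitely many steps one picks up a factor depending only on $d$, and the number of steps depends only on $n$, so the final constant is $C=C(n,d)$ as claimed, still independent of the metric and connection because every individual application of Hamilton's theorem is. (The statement as quoted writes $C=C(n,d)$ with an apparently spurious extra parameter; I would simply take $C=C(n,d)$.)

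The only mild subtlety—and the step I would flag as needing a word of care rather than being a true obstacle—is that the manifold here is noncompact (indeed $\MM=\R^d$ with an asymptotically flat metric in our applications). But as already noted in the remark following Theorem~12.1, Hamilton's proof is purely an integration-by-parts argument, so it applies verbatim on any smooth complete manifold without boundary provided $T$ and its relevant derivatives decay enough at infinity for the boundary terms to vanish and all the $\sfL^2$ norms appearing to be finite; this is exactly the setting in which we invoke the corollary (tensors built from $\la$ lying in the intrinsic Sobolev spaces $\sfH^k$). Under that hypothesis the log-convexity chain above is fully justified and the interpolation inequality \eqref{L2interpolation} follows.
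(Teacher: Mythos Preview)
The paper does not supply its own proof of this corollary; it simply quotes it from Hamilton \cite{Ha1982}, so there is nothing in the present paper to compare against. Your argument is the standard one (and is essentially Hamilton's): use Theorem~12.1 to get the three-term log-convexity $\|\nab^k T\|_{\sfL^2}^2 \le C(d)\,\|\nab^{k+1}T\|_{\sfL^2}\|\nab^{k-1}T\|_{\sfL^2}$ and then chain these together.

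One small slip worth fixing: in Theorem~12.1 the exponent on the left is $2r$, so to land in $\sfL^2$ you need $r=1$, not $r=2$; then $\tfrac1p+\tfrac1q=1$ forces $p=q=2$ and the constant is $2r-2+d=d$, not $d+2$. This does not affect the structure of the argument at all, only the bookkeeping of constants. Your remark about the noncompact setting (integration by parts plus finiteness of the norms) is exactly the caveat the paper already makes after Theorem~12.1, so that point is handled.
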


We then state the Sobolev embedding theorem for noncompact manifolds, which play a crucial role in constructing regular solutions. 
\begin{thm}
[Theorem 3.4, p.63 \cite{Hebey1999}]
\label{thm-SobEm}
Let $(\MM,g)$ be a smooth, complete Riemannian manifold of dimension $d$ with Ricci
curvature bounded from below.
Assume that 
\[\inf_{x\in M} {\rm Vol}_g(B_x(1))>0,\] 
where ${\rm Vol}_g(B_x(1))$ stands for the volume of $B_x(1)$ with respect to $g$. Given $p\geq 1$ and $m<k-\frac{d}{p}$, we have that $\sfH^{k,p}(\MM)\subset C^m_B(\MM)$, and the embedding is continuous. 
\end{thm}

We also need the following estimates concerning volumes, which are a corollary of Gromov's volume comparison theorem in \cite[Theorem 1.1, p.11]{Hebey1999}.
\begin{lemma}[p.12 \cite{Hebey1999}]
Let $(\MM,g)$ be a smooth, complete Riemannian manifold of dimension $d$ with Ricci
curvature satisfying $\Ric_{(\MM,g)}\geq kg$ for some $k$ real, then for any $x\in \MM$ and any $0<r<R$,
\begin{equation} \label{VolComp}
    {\rm Vol}_g(B_x(r))\geq e^{-\sqrt{(d-1)|k|}R} \left(\frac{r}{R} \right)^d {\rm Vol}_g(B_x(R)).
\end{equation}
\end{lemma}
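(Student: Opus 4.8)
The plan is to deduce \eqref{VolComp} from the Bishop--Gromov volume comparison theorem, quoted here as \cite[Theorem 1.1, p.11]{Hebey1999}, followed by an elementary one-variable estimate comparing the volumes of metric balls in the relevant space form. First I would reduce to the case $k\le 0$: since $\Ric_{(\MM,g)}\ge kg$ implies $\Ric_{(\MM,g)}\ge \min(k,0)\,g$, while $|\min(k,0)|\le |k|$ keeps the right-hand side of \eqref{VolComp} only smaller, it suffices to prove the inequality with $k$ replaced by $\min(k,0)$. So assume $k\le 0$ and write $k=-(d-1)a^2$ with $a=\sqrt{|k|/(d-1)}\ge 0$; then $(d-1)a=\sqrt{(d-1)|k|}$, and the hypothesis says precisely that $\Ric_{(\MM,g)}\ge (d-1)K\,g$ for $K=-a^2$, the sectional curvature of the simply connected space form $\MM_K^d$.

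Bishop--Gromov then asserts that $r\mapsto {\rm Vol}_g(B_x(r))/v_K(r)$ is non-increasing, where $v_K(r)=c_d\int_0^r \sinh(at)^{d-1}\,dt$ is the volume of the $r$-ball in $\MM_K^d$ ($c_d>0$ a dimensional constant). Evaluating this monotonicity at $r<R$ gives
\[
{\rm Vol}_g(B_x(r))\ \ge\ \frac{v_K(r)}{v_K(R)}\,{\rm Vol}_g(B_x(R))\ =\ \frac{\int_0^r \sinh(at)^{d-1}\,dt}{\int_0^R \sinh(at)^{d-1}\,dt}\,{\rm Vol}_g(B_x(R)),
\]
so it remains to bound the ratio of the two integrals from below by $e^{-(d-1)aR}(r/R)^d$.

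For the numerator I would use $\sinh(at)\ge at$, so $\int_0^r \sinh(at)^{d-1}\,dt\ge a^{d-1}r^d/d$. For the denominator I would use that $u\mapsto \sinh u/u$ is increasing on $(0,\infty)$, whence $\sinh(at)\le \tfrac{t}{R}\sinh(aR)$ for $0\le t\le R$ and therefore $\int_0^R \sinh(at)^{d-1}\,dt\le \sinh(aR)^{d-1}R/d$. Dividing,
\[
\frac{\int_0^r \sinh(at)^{d-1}\,dt}{\int_0^R \sinh(at)^{d-1}\,dt}\ \ge\ \Big(\frac{aR}{\sinh(aR)}\Big)^{d-1}\Big(\frac{r}{R}\Big)^d.
\]
The proof is then completed by the elementary inequality $aR/\sinh(aR)\ge e^{-aR}$, which after clearing denominators reads $2u-1+e^{-2u}\ge 0$ for $u=aR\ge 0$; this holds since the left-hand side vanishes at $u=0$ and has non-negative derivative $2(1-e^{-2u})$. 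Raising to the power $d-1$ and recalling $(d-1)a=\sqrt{(d-1)|k|}$ yields exactly \eqref{VolComp}.

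I do not expect a genuine obstacle here: Bishop--Gromov is invoked as a black box and the rest is a calculus estimate. The only delicate point is the bookkeeping with the normalization --- keeping straight that the scalar lower bound $k$ corresponds to comparison sectional curvature $K=k/(d-1)$, so that the exponent in the final bound comes out as precisely $\sqrt{(d-1)|k|}\,R$ rather than a cruder quantity --- together with checking that the monotonicity of $\sinh u/u$ is used in the correct direction for each of the two integrals.
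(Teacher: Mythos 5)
Your proof is correct, and it follows exactly the route the paper indicates: the paper does not prove this lemma but cites it as a corollary of Gromov's (Bishop--Gromov) volume comparison theorem in Hebey's book, and your derivation --- monotonicity of ${\rm Vol}_g(B_x(t))/v_K(t)$ followed by the elementary bounds $\sinh(at)\ge at$, $\sinh(at)\le (t/R)\sinh(aR)$, and $aR/\sinh(aR)\ge e^{-aR}$ --- is precisely the standard argument given there. The only cosmetic point is the degenerate case $a=0$ (i.e.\ $k\ge 0$ after your reduction), where one should read $v_K(r)\propto r^d$ directly rather than through the $\sinh$ formula, but this does not affect correctness.
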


\subsection{Function spaces}
Since in the Hilbertian case all interpolation methods yield the same result, for the $X^s$ norm we will use a characterization which is akin to a Littlewood-Paley decomposition, or to a discretization of the $J$ method of interpolation. 

Using the continuous Littlewood-Paley decomposition \eqref{CLPD}, we can regularize an immersed manifold $\Sigma=F(\R^d)$ and its orthonormal frame $(\nu_1,\nu_2)$ by 
\[  \Sigma^{(h)}=P_{<h}F_0(\R^d) ,\quad (\tilde\nu_1^{(h)},\tilde \nu_2^{(h)})=(P_{<h}\nu_1,P_{<h}\nu_2),     \]
where $h>h_0>0$ with $2^{-h_0}\sim \ep_0\ll_M 1$ such that the metric of $\Sigma^{(h)}$ is elliptic.
Then $\Sigma^{(h)}$ and $\Sigma$ are small perturbations of $\Sigma^{(h_0)}$. The orthonormal frame $(\nu_1^{(h)},\nu_2^{(h)})$ can be constructed from $(\tilde\nu_1^{(h)},\tilde \nu_2^{(h)})$ using projection and Schmidt orthogonalization. 
Thus we obtain a family of regularized $\la^{(h)}$, $g^{(h)}$ and $A^{(h)}$ on $\Sigma^{(h)}$, which are denoted as $[\la^{(h)}]$, $[g^{(h)}]$ and $[A^{(h)}]$ respectively. The regularization for the initial manifold will be implemented in Section~\ref{Reg-sec}.

Motivated by the above regularizations, we collect all of the regularizations as a set and define the fractional $X^s$-norm for the second fundamental form $\la$ on $\Sigma$. This norm can be propagated as it evolves along the (SMCF). We define the set of regularizations of $\la$ as smoothness in $h$,
\begin{align*}
    \text{Reg}(\lambda)=\big\{ [\lambda^{(h)}]: &\text{ the second fundamental form of regularized manifold } \Sigma^{(h)} \text{ of }\Sigma\\
    &h\in[h_0,\infty),\quad \lim_{h\rightarrow \infty}\|\la-\la^{(h)}\|_{H^s}=0\big\},
\end{align*}
as well as define the sets of regularizations of $g$ and $A$ as
\begin{align*}
&\begin{aligned}
    \text{Reg}(g)=\big\{ [g^{(h)}]: &\text{ the metric of regularized manifold } \Sigma^{(h)} \text{ of }\Sigma\\
    & h\in[h_0,\infty),\quad \lim_{h\rightarrow \infty}\||D|^{\si_d}(g-g^{(h)})\|_{H^{s+1-\si_d}}=0\big\},
\end{aligned}\\
&\begin{aligned}
    \text{Reg}(A)=\big\{ [A^{(h)}]: &\text{ the connection of regularized manifold } \Sigma^{(h)} \text{ of }\Sigma\\
    & h\in[h_0,\infty),\quad \lim_{h\rightarrow \infty}\||D|^{\de}(A-A^{(h)})\|_{H^{s-\de}}=0\big\}.
\end{aligned}
\end{align*}
Linearizing around $\Sigma^{(h)}$, we can define the linearized variables $\mu^{(h)}$ as 
\[
(\mu^{(h)})_{\al\be}=\d_h(\la^{(h)})_{\al\be},\qquad (\mu^{(h)})^{\al\be}=g^{(h)\al\si}g^{(h)\be\de}(\mu^{(h)})_{\si\de}.
\]
Then we define the fractional $X^s$-norm of $\la$ as follows.

\begin{defn} \label{def-Xsla}
Let $s>\frac{d}{2}$. For any regularization $[\la^{(h)}]\in \text{Reg}(\lambda)$, we define the intrinsic and extrinsic $s$-norm of $\lambda^{(h)}$ as 
\begin{align} \label{def-int}
\begin{aligned}
    \tri[\la^{(h)}]\tri_{s,\, int}^2=&\  
    2^{2h_0(s-[s])} \| \lambda^{(h_0)} \|_{\sfH^{[s]}}^2+2^{2h_0(s-[s]-1)} \| \lambda^{(h_0)} \|_{\sfH^{[s]+1}}^2\\
    +&\ \max_{N\in\{[2s], [2s]+1\}}\int_{h_0}^{+\infty} 
    2^{2h(s-N)}\| \la^{(h)} \|_{\sfH^N}^2 \ dh+\int_{h_0}^{+\infty} 
    2^{2hs} \|\d_h\la^{(h)}\|_{\sfL^2}^2 dh,
\end{aligned}\\ \label{def-ext}
\begin{aligned}
    \tri[\la^{(h)}]\tri_{s,\,ext}^2=&\  
    2^{2h_0(s-[s])} \| \lambda^{(h_0)} \|_{H^{[s]}}^2+2^{2h_0(s-[s]-1)} \| \lambda^{(h_0)} \|_{H^{[s]+1}}^2\\
    +&\ \max_{N\in\{[2s], [2s]+1\}}\int_{h_0}^{+\infty} 
    2^{2h(s-N)}\| \la^{(h)} \|_{H^N}^2 \ dh+\int_{h_0}^{+\infty} 
    2^{2hs} \|\d_h\la^{(h)}\|_{L^2}^2 dh.
\end{aligned}
\end{align}
Then we define the $X^s$-norm of $\lambda$ as
\begin{equation*}
\|\la\|_{X^s_{int}}=\inf_{[\lambda^{(h)}]\in \text{Reg}(\lambda)} \tri [\la^{(h)}]\tri_{s,\, int}\,,\qquad \|\la\|_{X^s_{ext}}=\inf_{[\lambda^{(h)}]\in \text{Reg}(\lambda)} \tri [\la^{(h)}]\tri_{s,\,ext}\,.
\end{equation*}
\end{defn}

\begin{rem}
i) Naively, $\mu^{(h)}=\d_h\la^{(h)}$ and $\lambda^{(h)}$ can be regarded as $\mu^{(h)} \sim P_h \lambda$ and $\lambda^{(h)} \sim P_{< h} \lambda$, respectively. In this sense, the $X^s$-norm is almost identical to the $H^s$-norm.
However, the equivalence between $X^s$ and $H^s$ does not always hold due to gauge choices.
In fact, $X^s$ is a smaller function space than $H^s$, i.e., $X^s \subset H^s$. 
ii) Due to its quasilinear structure, the intrinsic norm 
$X^s_{int}$ has better propagation properties along the (SMCF) flow, despite the equivalence of the 
$X^s_{int}$- and $X^s_{ext}$-norms in an appropriate gauge.
\end{rem}

To keep consistency, the corresponding $Y^s$-norms of $g$ and $Z^s$-norms of $A$ are also needed.
\begin{defn}
    We define the $s$-norm for metric $g$ and connection coefficients $A$ as 
\begin{align*}
    \tri [g^{(h)}]&\tri_{s+1,\, g}^2:=  \||D|^{\si_d}g^{(h_0)}\|_{H^{s+1-\si_d}}^2+\int_0^t \||D|^{1+\si_d}g^{(h_0)}(\tau)\|_{H^{s+1-\si_d}}^2 d\tau \\
    +&\max_{N\in\{[2s], [2s]+1\}}\int_{h_0}^\infty 2^{2h(s-N)}\big(\||D|^{\si_d} g^{(h)}\|_{H^{N+1-\si_d}}^2+\int_0^t\||D|^{1+\si_d}g^{(h)}\|_{H^{N+1-\si_d}}^2d\tau\big) dh\\
    +&\int_{h_0}^{\infty}2^{2hs} \|\d_h g^{(h)}\|_{H^1}^2dh+\int_0^t \int_{h_0}^{\infty} 2^{2hs} \|\d\d_h g^{(h)}\|_{H^1}^2dhd\tau,\\
    \tri [A^{(h)}]&\tri_{s,\, A}^2:=   \||D|^{\de_d}A^{(h_0)}\|_{H^{s-\de_d}}^2+\int_0^t\||D|^{1+\de_d} A^{(h_0)}(\tau)\|_{H^{s-\de_d}}^2 d\tau\\
    +&\max_{N\in\{[2s], [2s]+1\}}\int_{h_0}^\infty 2^{2h(s-N)} \big(\| |D|^{\de_d}A^{(h)}\|_{H^{N-\de_d}}^2+\int_0^t \||D|^{1+\de_d} A^{(h)}\|_{H^{N-\de_d}}^2 d\tau \big) dh\\
    +&\int_{h_0}^{\infty} 2^{2hs} \|\d_h A^{(h)}\|_{L^2}dh+\int_0^t \int_{h_0}^{\infty} 2^{2hs} \|\d\d_h A^{(h)}\|_{L^2}dhd\tau.
\end{align*}
Then the $Y^{s+1}$ and $Z^s$-norms are given by
\begin{align*}
    \| g\|_{Y^{s+1}}=\inf_{[g^{(h)}]\in \text{Reg}(g)} \tri [g^{(h)}]\tri_{s+1,\, g}\,,\qquad \| A\|_{Z^{s}}=\inf_{[A^{(h)}]\in \text{Reg}(A)} \tri [A^{(h)}]\tri_{s,\, A}\,.
\end{align*}
\end{defn}

\begin{prop}[Embeddings]
    Let $s>\frac{d}{2}$.
    For the functions $g\in Y^{s+1}$, $A\in Z^s$ and $\la\in X^s$, we have the following properties:
    
    (i) Embeddings:
    \begin{align}  \label{Emb-g}
    \||D|^{\si_d}g\|_{H^{s+1-\si_d}}&\les\|g\|_{Y^{s+1}}\,,\\  \label{Emb-A}
    \|A\|_{H^s}&\les \|A\|_{Z^s}\,,\\ \label{Emb-la}
    \|\la\|_{H^s}&\les \|\la\|_{X^s_{ext}}\,.
    \end{align} 

    (ii) If $\|g\|_{Y^{s+1}},\ \|A\|_{Z^s}\les C(M)$, then we have the equivalence
    \begin{align}  \label{Equ-la}
        \|\la\|_{X^s_{ext}}\approx_M \|\la\|_{X^s_{int}}\,
    \end{align}
    with implicit constants depending on $M$.
\end{prop}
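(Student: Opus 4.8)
The plan is to handle the three embeddings (i) first, since each follows by selecting, for a given function $u \in \{g, A, \lambda\}$, a near-optimal regularization $[u^{(h)}]$ from $\text{Reg}(u)$ and reconstructing $u$ from the telescoping identity $u = u^{(h_0)} + \int_{h_0}^\infty \partial_h u^{(h)}\, dh$, which converges in the relevant topology by the definition of $\text{Reg}(u)$. For \eqref{Emb-la}, the key observation is that $u^{(h)} \sim P_{<h}\lambda$ and $\partial_h u^{(h)} \sim P_h \lambda$ morally, so the quantity $\tri[\lambda^{(h)}]\tri_{s,\,ext}^2$ dominates the Littlewood-Paley square function $\|P_{<h_0}\lambda\|_{H^{[s]}}^2 + \int_{h_0}^\infty 2^{2hs}\|\partial_h \lambda^{(h)}\|_{L^2}^2\, dh$, which in turn controls $\|\lambda\|_{H^s}^2$ via the equivalence $\|u\|_{H^s}^2 \approx \|P_{<h_0}u\|_{H^s}^2 + \int_{h_0}^\infty 2^{2hs}\|P_h u\|_{L^2}^2\, dh$ recorded in Section~\ref{Sec3}; one then takes the infimum over $\text{Reg}(\lambda)$. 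The analogous statements \eqref{Emb-g}, \eqref{Emb-A} follow the same way after reading off the first terms $\||D|^{\si_d}g^{(h_0)}\|_{H^{s+1-\si_d}}^2$ and $\int_{h_0}^\infty 2^{2hs}\|\partial_h g^{(h)}\|_{H^1}^2\,dh$ in $\tri[g^{(h)}]\tri_{s+1,\,g}^2$ (similarly for $A$), combining them with the same Littlewood-Paley characterization at the appropriate regularity; the extra $L^2_t$ spacetime terms in those norms are simply discarded.

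For part (ii), the point is to compare the intrinsic norm $\tri[\lambda^{(h)}]\tri_{s,\,int}$, built from $\|\nabla^{A,l}\lambda^{(h)}\|_{\sfL^2}$ on the curved manifold $(\Sigma^{(h)}, g^{(h)})$, with the flat extrinsic norm. Since $\nabla^A = \partial + \Gamma + iA$, each intrinsic covariant derivative differs from the flat one by terms involving $\Gamma^{(h)}$ (hence $\partial g^{(h)}$, $g^{(h)\,-1}$) and $A^{(h)}$; under the hypothesis $\|g\|_{Y^{s+1}}, \|A\|_{Z^s} \lesssim C(M)$ — which by part (i) gives $\||D|^{\si_d}g\|_{H^{s+1-\si_d}} + \|A\|_{H^s} \lesssim C(M)$, and by \eqref{MetBdd} gives $c_0 I \le g \le c_0^{-1}I$ — these correction terms, together with the volume factor $\sqrt{\det g^{(h)}}$ and the metric contractions $g_{\al\al'}$, $g^{\be\be'}$ used to form $|\cdot|_g$, are all bounded in the relevant Sobolev/$L^\infty$ norms with constants depending only on $M$, uniformly in $h$. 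This requires knowing that $g^{(h)}, A^{(h)}$ inherit the same $C(M)$-bounds as $g, A$, which should follow from boundedness of Littlewood-Paley projections and the construction of the regularized frame $(\nu_1^{(h)},\nu_2^{(h)})$ in Section~\ref{Reg-sec}; one then applies the algebra/product inequality \eqref{ineqfg} and Bernstein to convert products of low-regularity factors into $H^N$ bounds, proving $\|\nabla^{A,l}T\|_{\sfL^2} \approx_M \sum_{j \le l}\|\partial^j T\|_{L^2}$-type comparisons termwise in the defining integrals of \eqref{def-int} and \eqref{def-ext}. Taking infima over $\text{Reg}(\lambda)$ on both sides yields \eqref{Equ-la}.

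The main obstacle is the uniformity in $h$ of the comparison in part (ii): the integrals in \eqref{def-int}–\eqref{def-ext} run over all $h \in [h_0,\infty)$, and at high frequency $h$ the factors $\Gamma^{(h)} = O(\partial g^{(h)})$ and $A^{(h)}$ are genuinely large in $\dot H^{N-1}$-type norms, so the naive product estimate would lose powers of $2^h$. The resolution is that the correction terms always appear paired with lower-order derivatives of $\lambda^{(h)}$, so the total number of derivatives is conserved; one must carefully distribute derivatives using the paradifferential form of \eqref{ineqfg} — putting high frequencies on $\lambda^{(h)}$ and keeping $g^{(h)}-I$, $A^{(h)}$ essentially at fixed low regularity $H^{s+1-\si_d}$, $H^{s-\de_d}$ (which is legitimate since $s > d/2$ and $N \ge [2s] > s$, leaving room) — so that the $2^{hs}$ and $2^{h(s-N)}$ weights are exactly absorbed. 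A secondary technical point is justifying that $[g^{(h)}], [A^{(h)}] \in \text{Reg}(g), \text{Reg}(A)$ attached to the optimal $[\lambda^{(h)}]$ genuinely satisfy $C(M)$-bounds; this is where one leans on the initial-data analysis of Section~\ref{Sec-Ini} rather than re-deriving it here.
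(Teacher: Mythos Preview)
Your plan for part (ii) is broadly on the right track and matches the paper's strategy: expand $\nab^{A,l}\la^{(h)}$ via the formula $\nab^{A,l}\la = \d^l\la + \sum \d^{k_1}(\Ga+A)\cdots\d^{k_j}(\Ga+A)\,\d^{k_{j+1}}\la$, then control the correction terms termwise using the high-regularity pieces $\int 2^{2h(s-N)}\|\cdot\|_{H^N}^2\,dh$ of the $Y^{s+1}$ and $Z^s$ norms together with interpolation. One clarification: you do not need to re-derive $C(M)$-bounds on $g^{(h)}, A^{(h)}$ from Section~\ref{Sec-Ini}; those bounds are built into the definitions of $\|g\|_{Y^{s+1}}$ and $\|A\|_{Z^s}$ themselves, which already carry the full family $[g^{(h)}], [A^{(h)}]$ with the needed $h$-weighted $H^N$ integrals.

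Part (i), however, has a genuine gap. You assert that $\|\la^{(h_0)}\|_{H^{[s]}}^2 + \int_{h_0}^\infty 2^{2hs}\|\d_h\la^{(h)}\|_{L^2}^2\,dh$ controls $\|\la\|_{H^s}^2$ by invoking the Littlewood--Paley characterization, on the grounds that $\d_h\la^{(h)}$ is ``morally'' $P_h\la$. But for an abstract regularization $[\la^{(h)}]\in\text{Reg}(\la)$ there is no frequency localization of $\mu^{(h)}=\d_h\la^{(h)}$, and the inequality $\|\int_{h_0}^\infty \mu^{(h)}\,dh\|_{H^s}^2 \lesssim \int_{h_0}^\infty 2^{2hs}\|\mu^{(h)}\|_{L^2}^2\,dh$ is false in general: take $\mu^{(h)}=\phi\cdot\mathbf{1}_{[h_0,H]}(h)$ with $\phi$ localized at frequency $2^K$, $K\gg H$, and the left side is $\sim (H-h_0)^2 2^{2Ks}\|\phi\|_{L^2}^2$ while the right is $\sim 2^{2Hs}\|\phi\|_{L^2}^2$. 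This is exactly why the norm \eqref{def-ext} also contains the high-regularity piece $\int_{h_0}^\infty 2^{2h(s-N)}\|\la^{(h)}\|_{H^N}^2\,dh$, which you discarded.

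The paper's fix is to write $\|\la\|_{H^s}^2 = \|\la^{(h_0)}\|_{H^s}^2 + \int_{h_0}^\infty \frac{d}{dh}\|\la^{(h)}\|_{H^s}^2\,dh$ and bound the integrand via the unbalanced pairing $\langle\la^{(h)},\mu^{(h)}\rangle_{H^s}\lesssim \|\la^{(h)}\|_{H^{2s}}\|\mu^{(h)}\|_{L^2}$. After Cauchy--Schwarz in $h$ this produces exactly the product of $\big(\int 2^{-2hs}\|\la^{(h)}\|_{H^{2s}}^2\,dh\big)^{1/2}$ and $\big(\int 2^{2hs}\|\mu^{(h)}\|_{L^2}^2\,dh\big)^{1/2}$; the first factor is then interpolated between the $N=[2s]$ and $N=[2s]+1$ terms in \eqref{def-ext}. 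So both the $\mu^{(h)}$ piece and the $\|\la^{(h)}\|_{H^N}$ piece are essential, and the embedding is really a $J$-method-style duality argument rather than a direct Littlewood--Paley comparison.
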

\begin{rem}
Due to the above equivalence property, we will not distinguish between the two function spaces $X^s_{int}$ and $X^s_{ext}$, and will simply denote them as $X^s$.
\end{rem}

\begin{proof}[Proof of \eqref{Emb-g}, \eqref{Emb-A} and \eqref{Emb-la}]
The proofs of the embeddings are similar, here we only focus on the bound \eqref{Emb-la}.

For any regularization $[\la^{(h)}]\in \text{Reg}(\la)$, as $\lim\limits_{h\rightarrow\infty}\|\la^{(h)}\|_{H^s}=\|\la\|_{H^s}$, the $H^s$-norm of $\la$ is expressed as
\begin{align*}
\|\la\|_{H^s}^2=\|\la^{(h_0)}\|_{H^s}^2+\int_{h_0}^\infty \frac{d}{dh}\|\la^{(h)}\|_{H^s}^2 dh.
\end{align*}
By interpolation, the first term is bounded by
\begin{align*}
    \|\la^{(h_0)}\|_{H^s}\les \|\la^{(h_0)}\|_{H^{[s]}}^{[s]+1-s} \|\la^{(h_0)}\|_{H^{[s]+1}}^{s-[s]}\les \tri[\la^{(h)}]\tri_{s,\, ext}.
\end{align*}
Using H\"older's inequality and interpolation, we can also bound the second term by
\begin{align*}
 & \ \int_{h_0}^\infty\frac{d}{dh}  \|\la^{(h)}\|_{H^s}^2 = 
 \int_{h_0}^\infty  \langle \la^{(h)}, \mu^{(h)}\rangle_{H^s} dh
 \lesssim \int_{h_0}^\infty  \| \la^{(h)}\|_{H^{2s}} \|\mu^{(h)}\|_{L^2} dh
 \\
 \lesssim & \ (\int_{h_0}^\infty  2^{-2hs}\| \la^{(h)}\|_{H^{2s}}^{2} dh)^\frac12
  (\int_0^\infty 2^{2hs}\|\mu^{(h)}\|_{L^2}^2 dh)^\frac12\\
  \les &\ (\int_{h_0}^\infty  2^{2h(s-(N-1))}\| \la^{(h)}\|_{H^{N-1}}^{2} dh)^\frac{\theta}{2} (\int_{h_0}^\infty  2^{2h(s-N)}\| \la^{(h)}\|_{H^{N}}^{2} dh)^\frac{1-\theta}{2}
  (\int_{h_0}^\infty 2^{2hs}\|\mu^{(h)}\|_{L^2}^2 dh)^\frac12\\
  \les &\ \tri[\la^{(h)}]\tri_{s,\, ext}^2\,,
\end{align*}
where $N=[2s]+1$ and $\theta=N-2s$. 
Then taking the infimum with respect to $[\la^{(h)}] \in \text{Reg}(\la)$
\begin{align*}
\|\la\|_{H^s}^2=\inf_{[\la^{(h)}]\in\text{Reg}(\la)}\bigg(\|\la^{(h_0)}\|_{H^s}^2+\int_{h_0}^\infty \frac{d}{dh}\|\la^{(h)}\|_{H^s}^2 dh\bigg)\les \inf_{[\la^{(h)}]\in\text{Reg}(\la)}\tri[\la^{(h)}]\tri_{s,\, ext}^2=\|\la\|_{X^s_{ext}}^2.
\end{align*}
Hence, the bound \eqref{Emb-la} follows.
\end{proof}

\begin{proof}[Proof of the equivalence \eqref{Equ-la}]

Firstly, we prove the bound 
\begin{align} \label{equles}
    \tri [\la^{(h)}]\tri_{s,int}\les \tri [\la^{(h)}]\tri_{s,ext}.
\end{align}

For the first two terms in \eqref{def-int}, by $\det g^{(h_0)}\sim 1$, it suffices to prove 
\begin{equation}  \label{Equ-low}
    \|\la^{(h_0)}\|_{\sfH^k}\les \|\la^{(h_0)}\|_{H^k},\qquad k=[s], [s]+1.
\end{equation}
By Sobolev embeddings and the formula
\begin{equation} \label{nabl-Form}
    \nab^{A,l} \la=\d^l\la+\sum_{j=1}^l \sum_{k_1+\cdots+k_j+k_{j+1}= l-j}\d^{k_1}(\Ga+A) \cdots \d^{k_j}(\Ga+A)\cdot \d^{k_{j+1}} \la,
\end{equation}
we have
\begin{align}  \label{Lam-EqKey}
    \|\nab^{A^{(h_0)},l} \la^{(h_0)}\|_{L^2}=\|\d^l \la^{(h_0)}\|_{L^2}+\|\Ga^{(h_0)}+A^{(h_0)}\|_{H^{\max\{s,l-1\}}}^l \|\la^{(h_0)}\|_{H^{l-1}}.
\end{align}
Then the bound \eqref{Equ-low} follows.

For the third term in \eqref{def-int}, by \eqref{nabl-Form} and interpolation \eqref{HaInterpo} we have
\begin{align}  \label{Lam-EqKey2}
\|\nab^{A^{(h)},l} \la^{(h)}\|_{L^2}&\les \|\d^l \la^{(h)}\|_{L^2}+\sum_{l-j<s}(\|\Ga^{(h)}\|_{H^s}+\||D|^{\de_d}A^{(h)}\|_{H^{s-\de_d}})^j\|\la^{(h)}\|_{H^s}\\\nonumber
&+\sum_{1\leq j\leq s}\sum_{\al\in\AA} \|\Ga^{(h)}+A^{(h)}\|_{L^\infty}^{j-1+\al}\|\la^{(h)}\|_{L^\infty}^{1-\alpha}\|\Ga^{(h)}+A^{(h)}\|_{\dot H^{l-j}}^{1-\al}\|\la^{(h)}\|_{H^{l-j}}^{\al}\\  \nonumber
&\les \|\d^l \la^{(h)}\|_{L^2}+\sum_{j}\sum_{\al}M^{j-1+\al}\|\la^{(h)}\|_{H^s}^{1-\al}\\\nonumber
&\cdot(\|\d g^{(h)}\|_{H^{l-1}}^{1-\al}+\||D|^{\de_d} A^{(h)}\|_{H^{l-1-\de_d}}^{1-\al}) \|\la^{(h)}\|_{H^{l-1}}^{\al}.
\end{align}
where $\AA=\{\frac{k}{l-j}:0\leq k\leq l-j\}$. Then we arrive at
\begin{align*}
    &\ \int_{h_0}^\infty 2^{2h(s-N)}\|\la^{(h)}\|_{\sfH^N}^2 dh\\
    &\les 
    \int_{h_0}^\infty 2^{2h(s-N)}\Big[\|\la^{(h)}\|_{H^N}^2+M^{2(N-1+\al)}\|\la^{(h)}\|_{H^s}^{2(1-\al)}\\
    &\qquad \cdot(\|\d g^{(h)}\|_{H^{N-1}}^{2(1-\al)}+\||D|^{\de_d} A^{(h)}\|_{H^{N-1-\de_d}}^{2(1-\al)}) \|\la^{(h)}\|_{H^{N-1}}^{2\al}\Big] dh\\
    &\les \tri [\la^{(h)}] \tri_{s,ext}^2+\tri[\la^{(h)}]\tri_{s,ext}^{2(1-\al)}(\tri [g^{(h)}]\tri_{s+1}+\tri [A^{(h)}]\tri_s)^{2(1-\al)} \tri [\la^{(h)}] \tri_{s,ext}^{2\al}
    \les \tri [\la^{(h)}] \tri_{s,\, ext}^2.
\end{align*}
Moreover, the last terms in \eqref{def-int} and \eqref{def-ext} are equivalent due to $cI\leq g^{(h)}\leq CI$ and $\det g^{(h)}\sim 1$,
\begin{align} \label{equ-mu}
    \int_{h_0}^\infty 2^{2hs}\|\mu^{(h)}\|_{L^2}^2 dh\sim \int_{h_0}^\infty 2^{2hs}\|\mu^{(h)}\|_{\sfL^2}^2 dh\,,
\end{align}
Hence, the estimate \eqref{equles} is obtained.

Secondly, we prove the bound 
\begin{align*} %\label{equges}
    \tri [\la^{(h)}]\tri_{s,int}\gtrsim \tri [\la^{(h)}]\tri_{s,ext}.
\end{align*}

By $cI\leq g^{(h)}\leq CI$ and $dvol_{g^{(h)}}\sim dx$ for any $h\geq h_0$, we have
\begin{align}  \label{BoundL2}
    \|\la^{(h)}\|_{L^2}^2\leq \int c^{-2}g^{(h)\al\mu}g^{(h_0)\be\nu}\la^{(h)}_{\al\be}\bar{\la}^{(h)}_{\mu\nu} dvol_{g^{(h)}}\les 
    \|\la^{(h)}\|_{\sfL^2(\Sigma^{(h)})}^2.
\end{align}
Then using \eqref{Lam-EqKey} and induction over $l$, we get
\begin{align*}
    \|\la^{(h_0)}\|_{H^l}\les \|\la^{(h_0)}\|_{\sfH^l}+\|\Ga^{(h_0)}+A^{(h_0)}\|_{H^{\max\{s,l-1\}}}^l \|\la^{(h_0)}\|_{H^{l-1}}\les \|\la^{(h_0)}\|_{\sfH^l}.
\end{align*}
Hence, we obtain that for $k=[s], [s]+1$
\begin{equation} \label{equ-lah0}
    \|\la^{(h_0)}\|_{H^k}\les \|\la^{(h_0)}\|_{\sfH^k}.
\end{equation}
Using a similar argument to \eqref{Emb-la}, and combined with \eqref{equ-lah0} and \eqref{equ-mu}, we also have
\begin{align}  \label{Emb-la2}
   \|\la\|_{H^s}^2\les \tri[\la^{(h)}]\tri_{s,\, ext}\tri[\la^{(h)}]\tri_{s,\, int}\,.
\end{align}

For the third term in \eqref{def-ext}, 
by \eqref{BoundL2}, \eqref{Lam-EqKey2}, \eqref{Emb-la2} and H\"older's inequality, we get
\begin{align*}
&\quad \int_{h_0}^\infty 2^{2h(s-N)}\|\la^{(h)}\|_{H^N}^2 dh\\
&\les 
\int_{h_0}^\infty 2^{2h(s-N)h}(\|\la^{(h)}\|_{\sfH^N}^2+M^{2(N-1+\al)}\|\la^{(h)}\|_{H^s}^{2(1-\al)}\|(\d g^{(h)}, A^{(h)})\|_{\dot H^{N-1}}^{2(1-\al)} \|\la^{(h)}\|_{H^{N-1}}^{2\al}) dh\\
&\les \tri [\la^{(h)}] \tri_{s,int}^2+
\int_{h_0}^\infty 2^{2h(s-N)}\|\la^{(h)}\|_{H^s}^{2(1-\al)}(\|\d g^{(h)}\|_{H^{N-1}}^{2(1-\al)}+\||D|^{\de_d} A^{(h)}\|_{H^{N-1-\de_d}}^{2(1-\al)})\\
&\quad \cdot\|\la^{(h)}\|_{\sfL^2}^{\frac{2\al} {N}}\|\la^{(h)}\|_{H^{N}}^{\frac{2\al(N-1)}{N}} dh\\
&\les \tri [\la^{(h)}] \tri_{s,int}^2+\tri[\la^{(h)}]\tri_{s,ext}^{1-\al}\tri [\la^{(h)}]\tri_{s,int}^{1-\al}(\tri [g^{(h)}]\tri_{s+1,g}+\tri [A^{(h)}]\tri_{s,A})^{2(1-\al)} \\
&\quad \cdot \tri [\la^{(h)}] \tri_{s,int}^{\frac{2\al}{N}}\tri [\la^{(h)}] \tri_{s,ext}^{\frac{2\al(N-1)}{N}}\\
&\leq \frac{1}{2}\tri [\la^{(h)}]\tri_{s,ext}^2+C\tri [\la^{(h)}] \tri_{s, int}^2.
\end{align*}
This, together \eqref{equ-mu} and \eqref{equ-lah0}, yields the bound $\tri [\la^{(h)}]\tri_{s,ext}\les \tri[\la^{(h)}]\tri_{s,int}$.
Hence, we obtain the equivalence $\|\la\|_{X^s_{int}}\approx \|\la\|_{X^s_{ext}}$.
\end{proof}

Next, following \cite{MMT3,MMT4,MMT5}, we define the frequency envelopes  which will be used in multilinear estimates. Consider a Sobolev-type space $U$ for which we have
\begin{equation*}
\lV u\rV_U^2=\sum_{k=0}^{\infty} \lV S_k u\rV_U^2.
\end{equation*}
A frequency envelope for a function $u\in U$ is a positive $l^2$-sequence, $\{ a_j\}$, with 
\begin{equation*}
\lV S_j u\rV_U\leq a_j.
\end{equation*}
We shall only allow slowly varying frequency envelopes. Thus, we require $a_0\approx \lV u\rV_U$ and 
\begin{equation*}%\label{FreEve-relation}
a_j\leq 2^{\delta|j-k|} a_k,\quad j,k\geq 0,\ 0<\delta\ll s-d/2.
\end{equation*}
The constant $\delta$ shall be chosen later and only depends on $s$ and the dimension $d$. We define the frequency envelopes $\{c_j \}_{j\geq h_0}$ for the initial manifold $F_0$ and its orthonormal frame $\nu_0$ as 
\begin{equation*}%\label{Freq-envelope}
\begin{aligned}
c_j &= 2^{-\de|j-h_0|}(\|P_{<h_0}\d^2 F\|_{H^s}+\|P_{<h_0}\d \nu\|_{H^s})\\
&\quad + \sum_{k
\geq h_0} 2^{-\delta|j-k|} \big(\int_k^{k+1}2^{2hs}(\| P_{h}\d^2 F\|_{L^2}^2+\|P_h \d \nu\|_{L^2}^2) dh\big)^{1/2},
\end{aligned}
\end{equation*}
which is slowly varying, i.e. $c_k\leq 2^{\de |k-j|}c_j$.
Then
$\| c_j\|_{\ell^2} \approx \|\d^2 F\|_{H^s}+\|\d \nu\|_{H^s}$.

\bigskip 
\section{The linearized equations and the uniqueness result} \label{Sec-lin}
In this section, we first derive the linearized equations for a family of maps $F(t,x;s)$, where the normal component $(\d_s F)^\perp$ and the tangent component $(\d_s F)^{\top}$ will be considered separately. 
Then we prove energy estimates for the linearized equations.
These will play a key role in constructing rough solutions. 

Next, we establish $L^2$ difference bounds for solutions, which could be viewed as difference versions of the estimates for the linearized equations. As a corollary, this will yield the uniqueness result in Theorem~\ref{Uniqueness-thm}.

\subsection{The Linearized equations}
Here we consider a family of maps $F(t,x;s)$ with parameter $s$, which evolves along the (SMCF). Let $(\nu_1,\nu_2)$ be the corresponding orthonormal frame in normal bundle. Assume that $\d_s F$ can be expressed as
\begin{align*}
\d_s F=\Xi+U^\ga \d_\ga F,\qquad \Xi\in N\Sigma\,,
\end{align*}
and we define the complex normal vector $\omega$ to be
\[\om=\Xi\cdot m,\qquad m=\nu_1+i\nu_2\,.\]
Then we obtain the following linearized equations.
\begin{lemma}  \label{Lin-Lem}
The normal component $\om$ and tangential component $U$ of $\d_s F$ satisfy
\begin{align}\label{LinEq}
&i(\d^B_t-V^\ga\nab^A_\ga)\om+\nab^{A,\al}\nab^A_\al \om=-\Re(\la^{\al\be}\bar{\om})\la_{\al\be},\\\label{dt-Tangent}
&\d_t U_{\al}=g_{\al\be}\d_s V^\be+\Im(\psi \overline{\nab_\al^A\om})-\Im(\d^A_\al \psi\bar{\om})+2\Im(\psi\bar{\la}^\ga_\al)U_{\ga}
+\nab_\al V^\ga U_{\ga}+V^\si \nab_\si U_{\al}.
\end{align}
\end{lemma}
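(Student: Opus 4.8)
\textbf{Proof proposal for Lemma~\ref{Lin-Lem}.}
The plan is to differentiate the structure equations and the (SMCF) system \eqref{sys-cpf} with respect to the parameter $s$, exactly mirroring how the equation \eqref{Sch-la} for $\la$ was derived by differentiating in $x$, but now treating $\d_s$ as the new direction. First I would record the analogue of \eqref{mo-frame} in the $s$-variable: writing $\d_s F = \Xi + U^\ga F_\ga$ with $\Xi \in N\Sigma$, and choosing the frame transport along $s$ so that $\d_s \nu_1 \cdot \nu_2 =: A_s$ plays the role of $B$ (one is free to fix $A_s$ since it is pure gauge in the $s$-direction), I would apply $\d_s$ to $\d_\al F$ and to $m$, use the structure equations \eqref{strsys-cpf} and the orthogonality $m \perp F_\al$, and obtain evolution equations for $\d_s F_\al$ and $\d_s^{A_s} m$ in terms of $\om = \Xi \cdot m$, $U$, and the derivative $\d_s V$. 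The tangential part of the first of these, after contracting with $F_\be$ and using $g_{\al\be} = F_\al \cdot F_\be$, gives \eqref{dt-Tangent} directly: the $g_{\al\be}\d_s V^\be$ term comes from the free choice of coordinates, the $\Im$-terms come from differentiating the normal velocity $-\Im(\psi\bar m)$ in \eqref{sys-cpf}, and the $U$-terms are the commutator/transport contributions.

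For \eqref{LinEq}, the key point is to use the commutation structure exactly as in Section~2.2. The normal component $\om$ satisfies a transport-Schr\"odinger equation whose principal part is $i(\d_t^B - V^\ga \nab^A_\ga)\om + \nab^{A,\al}\nab^A_\al \om$; this is forced by the fact that $\om$ is the $s$-linearization of a Schr\"odinger flow and the Schr\"odinger operator is self-adjoint in the relevant sense, so the linearization inherits the same principal operator. I would derive this by commuting $\d_s$ past the equation of motion \eqref{mo-frame} for $m$: from $[\d_s^{A_s}, \d_t^B]m = i(\d_s B - \d_t A_s)m$ and $[\d_s^{A_s}, \d_\al^A]m = i(\d_s A_\al - \d_\al A_s)m$, one gets second-order-in-$\nab^A$ terms, and the curvature commutators \eqref{comm-nab}, \eqref{comm-dt} convert the lower-order pieces into $\la \ast \la \ast \om$ expressions. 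The task is then to check that all the quadratic-in-$\la$ terms collapse to exactly $-\Re(\la^{\al\be}\bar\om)\la_{\al\be}$; this is the same algebraic simplification (using \eqref{la-commu}, \eqref{R-la}, \eqref{cpt-AiAj-2}) that produced the cubic terms on the right of \eqref{Sch-la}, specialized to the linearized setting where only terms linear in $\om$ survive.

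Concretely, I would organize the computation as: (1) fix the $s$-gauge for the frame and write out $\d_s F_\al$, $\d_s^{A_s} m$ as in \eqref{mo-frame}; (2) extract \eqref{dt-Tangent} from the tangential component; (3) take the normal component of $\d_t^B(\d_s^{A_s}m)$, equivalently differentiate the $\la$-equation \eqref{Sch-la} in $s$ and project, tracking that $\d_s \la_{\al\be}$ relates to $\nab^A_\al \nab^A_\be \om$ plus $\la\ast$ (difference of frames) via the second structure equation; (4) use the two commutators \eqref{comm-nab}--\eqref{comm-dt} to reduce all curvature corrections to $\la\ast\la$ form and verify the precise coefficient. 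The main obstacle I expect is bookkeeping step (4): ensuring the cancellations among the several $\Re(\cdots)\la$ and $\la\bar\la\la$-type terms indeed leave only the single clean term $-\Re(\la^{\al\be}\bar\om)\la_{\al\be}$, with no leftover first-order terms in $\nab^A\om$ beyond those absorbed into the covariant operators $\d_t^B$, $\nab^A$ and the transport term $V^\ga\nab^A_\ga$. This is purely algebraic but delicate, and is where the self-adjointness of the normal-bundle connection Laplacian and the Codazzi-type identity \eqref{la-commu} must be invoked carefully; the choice of $A_s$ (heat-type gauge along $s$, consistent with the $t$-gauge) is what makes the $\d_s B - \d_t A_s$ term disappear at leading order and keeps the equation gauge-covariant.
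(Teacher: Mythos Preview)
Your overall strategy --- linearize the (SMCF) structure by differentiating in $s$ --- is right, but the organization you propose is more tangled than needed, and your route for \eqref{dt-Tangent} is actually misidentified.

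For \eqref{dt-Tangent}: you say it follows from the tangential part of $\d_s F_\al$ contracted with $F_\be$. That contraction is $\d_s F_\al \cdot F_\be = \tfrac12 \d_s g_{\al\be}$, which carries no time derivative at all and gives you the variation of the metric, not an evolution equation for $U_\al$. The paper instead computes $\d_t U_\al = \d_t\langle \d_s F, \d_\al F\rangle$ directly: expand by the product rule into three pieces, use $\d_t F = -\Im(\psi\bar m)+V^\ga F_\ga$ and the frame equation \eqref{mo-frame} for $\d_t F_\al$, and the expression on the right of \eqref{dt-Tangent} falls out after a short computation with the structure equations. No $s$-direction frame equations or commutators are needed here.

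For \eqref{LinEq}: your plan to go via $\d_t^B(\d_s^{A_s}m)$ or, ``equivalently,'' by differentiating the $\la$-equation \eqref{Sch-la} in $s$, is a genuine detour. Differentiating \eqref{Sch-la} gives an equation for $\d_s\la_{\al\be}$, a $(0,2)$-tensor, which you would then have to relate back to the scalar $\om$ through $\d_s\la_{\al\be} \sim \nab^A_\al\nab^A_\be\om + \cdots$ and then trace; this introduces exactly the bookkeeping you flag as the ``main obstacle.'' The paper bypasses all of that: write $\d_t^B\om = \d_s(\d_t F)\cdot m + \d_s F\cdot \d_t^B m$, substitute $\d_t F = -\Im(\psi\bar m)+V^\ga F_\ga$ with $J\mathbf{H} = (\De_g F\cdot\nu_1)\nu_2 - (\De_g F\cdot\nu_2)\nu_1$, and then apply the Leibniz rule to $\d_s$ acting on $g^{\al\be}$, on $\d^2_{\al\be}F$, and on the $\nu_i$ separately. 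The principal term $i\nab^{A,\al}\nab^A_\al\om$ appears immediately from $g^{\al\be}\d^2_{\al\be}(\d_s F)\cdot m$, the transport term $V^\ga\nab^A_\ga\om$ from $\d_s(V^\ga F_\ga)\cdot m$, and all the $U$-dependent contributions cancel without any appeal to \eqref{la-commu} or \eqref{R-la}; the single surviving lower-order term $-\Re(\la^{\al\be}\bar\om)\la_{\al\be}$ comes from $\d_s g^{\al\be}$ and $\la^{\be\ga}\Xi\cdot\d^2_{\be\ga}F$. Your observation that the $s$-gauge component $A_s$ drops out is correct and is visible in the paper's computation (the two $A_0\psi$ terms cancel), but you do not need to impose any gauge along $s$ to see it.
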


We can now state the energy estimates for the linearized equations.
\begin{prop} \label{prop4.2}
If $\|\la\|_{H^s},\|A\|_{L^\infty},\|g\|_{W^{1,\infty}}\les M$ on $[0,T(M)]$, the normal component $\om$ and the tangent vector $U$ satisfy the estimates
\begin{align} \label{om-Energy0}
\frac{d}{dt}\|\om\|_{\sfL^2}^2&\leq C(M)\|\om\|_{\sfL^2}^2,\\ \label{om-Energy}
\frac{d}{dt}\|\om\|_{\sfH^1}^2&\leq C(M)\|\om\|_{\sfH^1}^2,\\\label{U-Energy0}
\frac{d}{dt}\|U\|_{\sfL^2}
&\leq  C(M)(\|\d_h g\|_{H^1}+\|\om\|_{\sfH^1}+\|U\|_{\sfL^2}).
\end{align}
\end{prop}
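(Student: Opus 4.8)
\textbf{Proof strategy for Proposition~\ref{prop4.2}.} The plan is to prove the three energy bounds by differentiating the natural intrinsic norms in time and using the covariant Schr\"odinger structure of the linearized equations \eqref{LinEq}--\eqref{dt-Tangent}. The key technical point throughout is that, because the connections $\nab^A$ and $\d_t^B$ are compatible with the metric $g$ and the Hermitian structure, integration by parts produces no boundary-like losses, and the remaining terms can all be estimated by $C(M)$ times the relevant norm using the hypotheses $\|\la\|_{H^s},\|A\|_{L^\infty},\|g\|_{W^{1,\infty}}\les M$ together with the Sobolev embedding $H^s\subset L^\infty$ for $s>d/2$. One preliminary observation I would record first: $\frac{d}{dt}dvol = \frac12 g^{\al\be}\d_t g_{\al\be}\, dvol$, and by \eqref{g_dt} (or \eqref{g-Heat-Eq}) together with \eqref{Heat-coordinate} this time-derivative of the volume form is controlled by $C(M)$; this handles the contribution of the moving volume element in every energy identity below.

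\emph{Step 1: the $\sfL^2$ bound for $\om$, \eqref{om-Energy0}.} I would compute $\frac{d}{dt}\int_\Sigma |\om|^2\, dvol$. Writing the time derivative as the covariant one, $\d_t|\om|^2 = 2\Re(\overline{\om}\,\d_t^B\om)$ since $B$ is real, and using \eqref{LinEq} to substitute $\d_t^B\om = V^\ga\nab_\ga^A\om + i\nab^{A,\al}\nab^A_\al\om + i\Re(\la^{\al\be}\bar\om)\la_{\al\be}$. The term $i\Re(\overline\om \cdot i\nab^{A,\al}\nab_\al^A\om) = -\Re(\overline\om\,\nab^{A,\al}\nab^A_\al\om)$ integrates by parts against $dvol$ to $\int |\nab^A\om|^2$ up to a commutator/volume term; crucially its real part contributes a \emph{perfect} term $\int|\nab^A\om|_g^2$ that must cancel — in fact $\Re\int \overline\om\,i\nab^{A,\al}\nab^A_\al\om$ is purely imaginary after integration by parts, so it drops out entirely. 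The transport term $\Re\int \overline\om\, V^\ga\nab_\ga^A\om\, dvol$ is handled by integrating by parts, moving $\nab_\ga$ onto $V^\ga$ and $dvol$; this costs $\nab V = \nab(g^{\al\be}\Ga^\ga_{\al\be})$ which is $\les C(M)$ by the metric bounds, times $\|\om\|_{\sfL^2}^2$. The zeroth-order term $\Re\int \overline\om\, i\Re(\la^{\al\be}\bar\om)\la_{\al\be}$ is pointwise bounded by $|\la|^2|\om|^2$, and $\|\la\|_{L^\infty}\les\|\la\|_{H^s}\les M$ closes it. Collecting, $\frac{d}{dt}\|\om\|_{\sfL^2}^2\leq C(M)\|\om\|_{\sfL^2}^2$.

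\emph{Step 2: the $\sfH^1$ bound for $\om$, \eqref{om-Energy}.} Here I would differentiate $\int_\Sigma(|\om|^2 + |\nab^A\om|_g^2)\,dvol$. The $|\om|^2$ piece is Step 1. For $\int|\nab^A\om|_g^2$, I commute $\nab^A_\al$ past \eqref{LinEq}: $\d_t^B\nab^A_\al\om$ equals $\nab^A_\al(\d_t^B\om)$ plus the commutator $[\d_t^B,\nab^A_\al]\om$, which by \eqref{comm-dt} is $\approx (\la\ast\nab^A\la + \nab^2 V + \la^2 V)\ast\om$, all factors controlled by $C(M)$ after Sobolev embedding (note $\nab^2 V$ and $\la\ast\nab^A\la$ sit in $H^{s-1}\subset L^?$ — one uses $s>d/2$ and, if needed, the slightly stronger hypothesis that $\|\la\|_{H^s}$ controls $\|\nab\la\|_{L^{q}}$ for suitable $q$, i.e. a fractional Leibniz/Sobolev step). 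Then $\nab^A_\al$ applied to the principal term $i\nab^{A,\be}\nab^A_\be\om$ generates $i\nab^{A,\be}\nab^A_\be(\nab^A_\al\om)$ plus a commutator $i[\nab^A_\al,\nab^{A,\be}\nab^A_\be]\om$; by \eqref{comm-nab} this commutator is $\approx \la\ast\la\ast\nab^A\om + (\nab\la\ast\la)\ast\om$, again $C(M)$-bounded. The leading piece $\Re\int \overline{\nab^{A,\al}\om}\cdot i\nab^{A,\be}\nab^A_\be\nab^A_\al\om$ integrates by parts to something purely imaginary and drops out, exactly as in Step 1. The curvature term $\nab^A_\al\big(\Re(\la^{\be\ga}\bar\om)\la_{\be\ga}\big)$ is $\nab\la\ast\la\ast\om + \la\ast\la\ast\nab^A\om$, controlled by $C(M)(\|\om\|_{\sfH^1}^2)$. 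Summing gives \eqref{om-Energy}.

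\emph{Step 3: the $\sfL^2$ bound for $U$, \eqref{U-Energy0}.} From \eqref{dt-Tangent}, $\d_t U_\al$ contains the source terms $g_{\al\be}\d_s V^\be$, $\Im(\psi\overline{\nab_\al^A\om})$, $\Im(\d_\al^A\psi\,\bar\om)$, the zeroth-order terms $2\Im(\psi\bar\la_\al^\ga)U_\ga + \nab_\al V^\ga U_\ga$, and the transport term $V^\si\nab_\si U_\al$. I would estimate $\frac{d}{dt}\|U\|_{\sfL^2}$ directly (not the square), using $\frac{d}{dt}\|U\|_{\sfL^2}\leq \|\d_t U\|_{\sfL^2} + (\text{volume term})\|U\|_{\sfL^2}$. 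The transport term contributes only after integration by parts in the $\|U\|_{\sfL^2}^2$ identity, giving $\nab V\cdot|U|^2\les C(M)\|U\|_{\sfL^2}^2$. The terms $\Im(\psi\overline{\nab^A_\al\om})$ and $\Im(\d^A_\al\psi\,\bar\om)$ are bounded in $\sfL^2$ by $\|\psi\|_{L^\infty}\|\nab^A\om\|_{\sfL^2} + \|\nab^A\psi\|_{L^\infty\ \text{or}\ L^d}\|\om\|_{\sfL^{2^*}}\les C(M)\|\om\|_{\sfH^1}$. The delicate term is $g_{\al\be}\d_s V^\be$: one must show $\|\d_s V\|_{\sfL^2}\les C(M)(\|\d_h g\|_{H^1}+\|\om\|_{\sfH^1})$. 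Since $V^\ga = g^{\al\be}\Ga^\ga_{\al\be}$ depends on $g$ and its first derivatives, $\d_s V$ is a first-order operator in $\d_s g$; and the linearized metric equation (obtained by differentiating \eqref{g_dt} in $s$) expresses $\d_t(\d_s g)$ in terms of $\om$, $\nab^A\om$, $U$, and lower-order terms, so along the flow $\d_s g$ is controlled by $\|\d_h g\|_{H^1}$ at a reference time plus integrated contributions of $\om$ and $U$. I would therefore either invoke the corresponding metric difference/linearization bound established elsewhere in the paper, or absorb it: write $\d_s V = \d(\d_s g)\ast(\ldots) + (\ldots)$, bound it by $\|\d_s g\|_{H^1}$, and identify $\|\d_s g\|_{H^1}$ with $\|\d_h g\|_{H^1}$ in the statement. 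Combining all pieces yields $\frac{d}{dt}\|U\|_{\sfL^2}\leq C(M)(\|\d_h g\|_{H^1}+\|\om\|_{\sfH^1}+\|U\|_{\sfL^2})$.

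\textbf{Main obstacle.} The genuinely non-routine point is keeping the principal second-order (resp. third-order, in Step 2) term purely imaginary so that it cancels, while \emph{simultaneously} verifying that every commutator spawned by \eqref{comm-nab}, \eqref{comm-dt} and by commuting $\nab^A$ past the transport and curvature terms lands in a space controlled by $C(M)$ using only $\|\la\|_{H^s}$, $\|A\|_{L^\infty}$, $\|g\|_{W^{1,\infty}}$ — this is tight in low dimension $d=2$, $s$ just above $1$, and is where the fractional/Sobolev bookkeeping has to be done carefully (in particular controlling $\nab\la$ and $\nab^2 V$ in the right Lebesgue spaces via $s>d/2$). The second, more structural, obstacle is Step 3's control of $\d_s V$, which forces one to invoke the linearized-metric estimate rather than argue purely within \eqref{LinEq}--\eqref{dt-Tangent}.
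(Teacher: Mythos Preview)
Your overall strategy is the paper's, but there is a genuine gap in how you handle the transport and volume-form contributions. You claim that $\tfrac12 g^{\al\be}\d_t g_{\al\be}$ (and, separately, the integration-by-parts residue $\nab_\ga V^\ga$ from the transport term) are each ``controlled by $C(M)$ by the metric bounds.'' But $\tfrac12 g^{\al\be}\d_t g_{\al\be}=\Im(\psi\bar\psi)+\nab_\al V^\al=\nab_\al V^\al$ by \eqref{g_dt}, and $\nab_\al V^\al$ involves $\d^2 g$, which is \emph{not} controlled by the hypothesis $\|g\|_{W^{1,\infty}}\les M$. The paper does not bound these two contributions separately: it observes that they cancel \emph{exactly}. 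In the $\sfL^2$ identity, $\int 2\Re(V^\ga\nab^A_\ga\om\,\bar\om)\,dvol=-\int(\nab_\ga V^\ga)|\om|^2\,dvol$ and the volume term is $+\int(\nab_\al V^\al)|\om|^2\,dvol$, so their sum is zero. In the $\sfH^1$ identity the same mechanism operates: the terms $\nab_\al V^\ga\,\nab^A_\ga\om\,\overline{\nab^{A,\al}\om}$ and $-\tfrac12\nab_\ga V^\ga\,|\nab^A\om|^2$ coming from the transport piece of $I_1$ cancel against the $\d_t g^{\al\be}$ and volume pieces in $I_3$; similarly in the $U$ estimate the $\nab_\al V^\ga U_\ga U^\al$, $V^\si\nab_\si U_\al U^\al$, $\d_t g^{\al\be}U_\al U_\be$, and volume contributions combine to zero. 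After these cancellations only $\la$-dependent terms survive, and those are what the hypotheses actually bound. Without the cancellation your argument would need $\d^2 g\in L^\infty$, which is an extra assumption.

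A second, smaller point in Step~2: for the \emph{scalar} $\om$ the commutator $[\d_t^B,\d^A_\al]$ is just $i(\d_t A_\al-\d_\al B)$, and by \eqref{Cpt-A&B} this equals $\Re(\la_\al^\ga\overline{\d^A_\ga\psi})-\Im(\la_\al^\ga\bar\la_{\ga\si})V^\si$, with no $\nab^2 V$ contribution. Your appeal to the schematic \eqref{comm-dt} (which is written for tensors and includes $\nab\d_t g$) introduces a spurious $\nab^2 V$ term that would again require $\d^3 g$ control. Using the correct scalar commutator, the $V^\si$ piece above cancels against the corresponding $V$-term that falls out of $I_1$, and the only commutator contribution that survives is $\Re(\la_\al^\ga\overline{\d^A_\ga\psi})$, which is bounded by $\|\la\|_{L^\infty}\|\la\|_{H^s}(1+\|A\|_{L^\infty})$ as in the paper.
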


We begin with the derivations of \eqref{LinEq} and \eqref{dt-Tangent}, and then prove the estimates in Proposition~\ref{prop4.2}.

\begin{proof}[Proof of the formula \eqref{LinEq}]
Applying $\d^B_t$ to $\om$, then using \eqref{sys-cpf} and \eqref{mo-frame} we have
\begin{align*}
    &\ \d^B_t\om=\d^B_t(\d_s F\cdot m)
    =\d_s \d_t F\cdot m+\d_s F\cdot \d^B_t m\\
    &=\d_s (J(F)\bH(F)+V^\ga F_\ga)\cdot m-i U^\ga (\nab^{A}_\ga \psi-i\la_{\ga\si}V^\si)\\
    &=\d_s \big(  (\De_g F\cdot \nu_1)\nu_2-(\De_g F\cdot\nu_2)\nu_1\big)\cdot m+V^\ga \d_\ga \d_s F\cdot m-i U^\ga (\nab^{A}_\ga \psi-i\la_{\ga\si}V^\si)\\
    &=\d_s \big(  g^{\al\be}(\d_{\al\be}^2 F\cdot \nu_1)\nu_2-g^{\al\be}(\d_{\al\be}^2 F\cdot\nu_2)\nu_1\big)\cdot m+V^\ga \d_\ga \d_s F\cdot m-i U^\ga (\nab^{A}_\ga \psi-i\la_{\ga\si}V^\si).
\end{align*}
Next we calculate the right-hand side one by one.

Firstly, we consider the case that $\d_s$ is applied to $g^{\al\be}$. Since $\Xi\perp \d_\al F$, we have
\begin{align*}  
\d_s g_{\mu\nu}&=\d_s (\d_\mu F\cdot\d_\nu F)=\d_\mu (\Xi+U^\ga F_\ga)\cdot \d_\nu F+ \d_\mu F \cdot\d_\nu (\Xi+U^\ga F_\ga)\\
&= -2\Xi\cdot \d^2_{\mu\nu}F+\nab_\mu U_\nu+\nab_\nu U_\mu=-2\Re(\la_{\mu\nu}\bar{\om})+\nab_\mu U_\nu+\nab_\nu U_\mu.
\end{align*}
Then 
\begin{align*}
    \d_s g^{\al\be}=-g^{\al\mu}\d_s g_{\mu\nu} g^{\nu\be}=2\Re(\la^{\al\be}\bar{\om})-\nab^\mu U^\nu-\nab^\nu U^\mu,
\end{align*}
therefore
\begin{align*}
&\ \d_s g^{\al\be}\big( (\d_{\al\be}^2 F\cdot \nu_1)\nu_2-(\d_{\al\be}^2 F\cdot\nu_2)\nu_1\big)\cdot m\\
&=2(\Re(\la^{\al\be}\bar{\om})-\nab^\al U^\be) (\ka_{\al\be}\nu_2-\tau_{\al\be}\nu_1)\cdot m=i2\la_{\al\be} (\Re(\la^{\al\be}\bar{\om})-\nab^\al U^\be).
\end{align*}

Secondly, we consider the case that $\d_s$ is applied to $\d^2_{\al\be}F$. By the expression of $\d_s F$, we have
\begin{align}  \label{LinEq-keyT}
\big(g^{\al\be}(\d^2_{\al\be}\d_s F\cdot \nu_1)\nu_2-g^{\al\be}(\d^2_{\al\be}\d_s F\cdot \nu_2)\nu_1\big)\cdot m
=ig^{\al\be}\d^2_{\al\be}(\Xi+U^\ga F_\ga)\cdot m
\end{align}
where 
\begin{align*}
&\ ig^{\al\be}\d^2_{\al\be}\Xi\cdot m=ig^{\al\be} \big(\d^A_\al (\d_\be \Xi\cdot m)-\d_\be \Xi\cdot \d^A_\al m\big)\\
&=ig^{\al\be}\big( \d_\al^A(\d_\be \Xi\cdot m)+\d_\be \Xi\cdot\la_{\al}^\ga F_\ga  \big)
=ig^{\al\be}\d^A_\al\d^A_\be \om -i\la^{\be\ga} \Xi\cdot\d^2_{\be\ga}F\\
&=i\nab^A_\al\nab^{A,\al} \om +ig^{\al\be}\Ga_{\al\be}^\ga \d^A_\ga \om-i\la^{\be\ga} \Re(\la_{\be\ga}\bar{\om}),
\end{align*}
and 
\begin{align*}
    &\ ig^{\al\be}\d^2_{\al\be}(U^\ga F_\ga)\cdot m=ig^{\al\be}\nab_{\al}\d_\be(U^\ga F_\ga)\cdot m+ig^{\al\be}\Ga_{\al\be}^\si \d_\si(U^\ga F_\ga)\cdot m\\
    &= ig^{\al\be} \big(\nab_\al^A(U^\ga \la_{\be\ga})-\nab_\be U^\ga F_\ga\cdot \nab^A_\al m \big) +ig^{\al\be}\Ga_{\al\be}^\si U^\ga \la_{\si\ga}\\
    &= ig^{\al\be} \big(\nab_\al^A(U^\ga \la_{\be\ga})+\nab_\be U^\ga \la_{\al\ga} \big) +ig^{\al\be}\Ga_{\al\be}^\si U^\ga \la_{\si\ga}\\
    &= i(2\nab^\al U^\ga \la_{\al\ga}+U^\ga \nab^A_\ga \psi+g^{\al\be}U^\ga \Ga_{\al\be}^\de \la_{\de\si}).
\end{align*}

Thirdly, when $\d_s$ is applied to $\nu_i$, we get
\begin{align*}
    &\ \big(  g^{\al\be}(\d_{\al\be}^2 F\cdot \d_s\nu_1)\nu_2-g^{\al\be}(\d_{\al\be}^2 F\cdot\d_s\nu_2)\nu_1\big)\cdot m\\
    &= i g^{\al\be}(\d_{\al\be}^2 F\cdot \d_s m)
    =i g^{\al\be}\d_{\al\be}^2 F\cdot (-iA_0 m-(\d^{A,\ga}\om+U^\si \la_\si^\ga) F_\ga)\\
    &=A_0\psi-ig^{\al\be}\Ga_{\al\be}^\ga (\d^A_\ga\om+U^\si \la_{\ga\si}).
\end{align*}
and 
\begin{align*}
&\ \big(  g^{\al\be}(\d_{\al\be}^2 F\cdot \nu_1)\d_s\nu_2-g^{\al\be}(\d_{\al\be}^2 F\cdot\nu_2)\d_s\nu_1\big)\cdot m\\
&=\big(-g^{\al\be}\ka_{\al\be}A_0\nu_1-g^{\al\be}\tau_{\al\be}A_0\nu_2\big)\cdot m
=-A_0 g^{\al\be}\la_{\al\be}=-A_0\psi.
\end{align*}

Finally, by \eqref{strsys-cpf} we have
\begin{align*}
&V^\ga \d_\ga \d_s F\cdot m=V^\ga \big(\d^A_\ga (\Xi\cdot m)- \Xi\cdot \d^A_\ga m\big)+V^\ga U^\si \nab_\ga F_\si\cdot m
=V^\ga \d^A_\ga \om+V^\ga U^\si \la_{\ga\si}.
\end{align*}

Hence, collecting the above calculations yields
\begin{align*}
    \d^B_t\om =i\nab^{A,\al}\d_\al^A\om+i\la_{\al\be}\Re(\la^{\al\be}\bar{\om})+V^\ga \nab^A_\ga \om,
\end{align*}
which implies the linearized equation \eqref{LinEq}.
\end{proof}

\begin{proof}[Proof of the formula \eqref{dt-Tangent}]
    Apply $\d_t$ to $U_{\al}=\<\d_s F,\d_\al F\>$, we have
    \begin{align*}
        &\ \d_t U_{\al}=\<\d_t \d_s F,\d_\al F\>+\<\d_s F,\d_\al \d_t F\>\\
        &=\d_s \<\d_t F,\d_\al F\>-\<\d_t F,\d_\al \d_s F\>+\<\d_s F,\d_\al \d_t F\>=:I_1+I_2+I_3.
    \end{align*}
    Then by \eqref{sys-cpf}, the first term $I_1$ is written as
    \begin{align*}
        I_1&=\d_s \<\d_t F,\d_\al F\>=\d_s\<-\Im(\psi\bar{m})+V^\ga F_\ga,\d_\al F\>=\d_s V_\al\\
        &=g_{\al\be}\d_s V^\be +V^\be (-2\Re(\la_{\al\be}\bar{\om})+\nab_\al U_\be+\nab_\be U_\al).
    \end{align*}
    By the formula $\d_t F_\al$ in \eqref{mo-frame} and the formula $\d_s F$, we rewrite the third term $I_3$ as
    \begin{align*}
        I_3&=\<\d_s F,\d_\al \d_t F\>=\<\Xi+U^\ga F_\ga,-\Im(\d^A_\al \psi \bar{m}-i\la_{\al\ga}V^\ga\bar{m})+(\Im(\psi\bar{\la}^\ga_\al)+\nab_\al V^\ga)F_\ga\>\\
        &=-\Im(\d^A_\al \psi \bar{\om})+\Re(\la_{\al\ga}\bar{\om})V^\ga+(\Im(\psi\bar{\la}^\ga_\al)+\nab_\al V^\ga)U_\ga.
    \end{align*}
    Finally, we deal with the term $I_2$. Apply $\d_\al$ to $\d_s F$, we have
    \begin{align*}
        \d_\al \d_s F&=\d_\al \big(\Re(\om \bar{m})+U^\ga F_\ga\big)
        =\Re(\d^A_\al \om\bar{m}+\om\overline{\d^A_\al m})+\nab_\al U^\ga F_\ga+U^\ga \nab_\al F_\ga\\
        &=\Re(\d^A_\al \om \bar{m}+U_\ga \la_\al^\ga \bar{m})-\Re(\om \bar{\la}^\si_\al)F_\si+\nab_\al U^\ga F_\ga.
    \end{align*}
    This, together with $\d_t F$ in \eqref{mo-frame}, yields
    \begin{align*}
        I_2=&\ -\<-\Im(\psi\bar{m}),\Re(\d^A_\al \om \bar{m}+U_\ga \la_\al^\ga \bar{m})\>
        -\<V^\ga F_\ga,-\Re(\om \overline{\la}^\si_\al)F_\si+\nab_\al U^\ga F_\ga\>\\
        =&\ \Im(\psi \overline{\d^A_\al \om})+\Im(\psi \bar{\la}^\ga_\al)U_\ga+\Re(\om\bar{\la}^\si_\al)V_\si-V^\ga \nab_\al U_\ga.
    \end{align*}
    Inserting the expressions of $I_1,\ I_2$ and $I_3$ into $\d_t U_\al$, the formula \eqref{dt-Tangent} is obtained.
\end{proof}

\begin{proof}[Proof of \eqref{om-Energy0}]
From the linearized equation \eqref{LinEq} and \eqref{g_dt}, we have
\begin{align*}
    &\ \frac{d}{dt}\|\omega\|_{\sfL^2}^2=\int 2\Re(\d_t^B\omega\cdot \bar{\omega})+ |\omega|^2\frac{1}{2}g^{\al\be}\d_t g_{\al\be}\  dvol\\
    &=\int 2\Re\big[(V^\ga \nab^A_\ga \omega+i\nab^{A,\al}\nab^A_\al \omega+i\Re(\la^{\al\be}\bar{\om})\la_{\al\be})\bar{\om}\big]+|\omega^{(h)}|^2 \nab^\al V_\al \ dvol\\
    &=\int -2\Re i|\nab^A \om|^2 -2\Re(\la^{\al\be}\bar{\om})\Im(\la_{\al\be}\bar{\om}) \ dvol \\
    &\leq 2\|\la\|_{L^\infty}^2 \|\om\|_{\sfL^2}^2\leq C(M) \|\om\|_{\sfL^2}^2.
\end{align*}
Then the estimate \eqref{om-Energy0} follows.
\end{proof}
\begin{proof}[Proof of \eqref{om-Energy}]
We consider the covariant derivative of $\om$,
\begin{align*}
    &\ \frac{1}{2}\frac{d}{dt}\|\d^A\om\|_{\sfL^2}^2=\frac{1}{2}\frac{d}{dt}\int g^{\al\be}\d^A_\al \om \overline{\d^A_\be \om}\  dvol\\
    &=\int \Re(\d^A_\al \d_t^B \om \overline{\d^{A,\al}\om}) dvol+\int \Re([\d_t^B,\d^A_\al ] \om \overline{\d^{A,\al}\om}) \ dvol\\
    &\quad +\int \frac{1}{2}\d_t g^{\al\be}\d^A_\al \om \overline{\d^A_\be \om}+|\d^A\om|^2_g \frac{1}{4}g^{\al\be}\d_t g_{\al\be} dvol=:I_1+I_2+I_3.
\end{align*}
For the first integral $I_1$, by \eqref{LinEq} we have
\begin{align*}
I_1&=\Re \int \nab^A_\al(V^\ga \nab^A_\ga \om+i\De^A_g \om+i\Re(\la^{\mu\nu}\bar{\om})\la_{\mu\nu})\overline{\nab^{A,\al}\om}\  dvol\\
&= \Re\int \nab_\al V^\ga \nab^A_\ga \om \overline{\nab^{A,\al}\om}+V^\ga \nab^A_\ga \nab^A_\al \om \overline{\nab^{A,\al}\om}+V^\ga [\nab^A_\al,\nab^A_\ga] \om \overline{\nab^{A,\al}\om} \ dvol\\
&\quad +\Re \int -i\De^A_g \om \overline{\De^A_g\om} +i\nab^A_\al(\Re(\la^{\mu\nu}\bar{\om})\la_{\mu\nu})\overline{\nab^{A,\al}\om} \ dvol\\
&=\Re\int \nab_\al V^\ga \nab^A_\ga \om \overline{\nab^{A,\al}\om}-\frac{1}{2}\nab_\ga V^\ga |\nab^A \om|^2 +iV^\ga \Im(\la_{\al\de}\bar{\la}^\de_\ga)  \om \overline{\nab^{A,\al}\om} \ dvol\\
&\quad +\Re \int i\nab^A_\al(\Re(\la^{\mu\nu}\bar{\om})\la_{\mu\nu})\overline{\nab^{A,\al}\om} \ dvol.
\end{align*}
The terms $I_2$ and $I_3$ are written as
\begin{align*}
    I_2&=\Re\int i(\d_t A_\al -\d_\al B)\om \overline{\d^{A,\al}\om} \ dvol\\
    &=\Re \int \big(i\Re (\la_\al^\ga \overline{\d^A_\ga \psi})-i\Im(\la_\al^\ga \bar{\la}_{\ga\si})V^\si\big)\om \overline{\d^{A,\al}\om}\  dvol,\\
    I_3&=\int (-\Im(\psi \bar{\la}^{\al\be})-\nab^\al V^\be)\nab^A_\al \om \overline{\nab^A_\be \om}+|\nab^A \om|^2_g \frac{1}{2}\nab_\al V^\al \ dvol.
\end{align*}
Then we obtain
\begin{align*}
    \frac{1}{2}\frac{d}{dt}\|\d^A\om\|_{\sfL^2}^2 =&\ \int -\Im(\psi \bar{\la}^{\al\be})\nab^A_\al \om \overline{\nab^A_\be \om}-\Re(\la_{\al}^\ga \overline{\d^A_\ga \psi})\Im(\om \overline{\d^{A,\al}\om})\\
    &\ -\Im(\nab^A_\al (\Re(\la^{\mu\nu}\bar{\om})\la_{\mu\nu} )\overline{\nab^{A,\al}\om}) \ dvol .
\end{align*}
Thus 
\begin{align*}
    \frac{d}{dt}\|\om\|_{\sfH^1}^2 &\les \|\la\|_{L^\infty}^2\|\om\|_{\sfH^1}^2+\|\la\|_{L^\infty}\| \la\|_{H^{s}}(1+\|A\|_{L^\infty})\|\om\|_{H^1}\|\om\|_{\sfH^1}
    \les C(M)\|\om\|_{\sfH^1}^2,
\end{align*}
which further implies that $\|\om(t)\|_{\sfH^1}\les \|\om(0)\|_{\sfH^1}$.
\end{proof}

\begin{proof}[Proof of \eqref{U-Energy0}]
    By the formula \eqref{dt-Tangent} of $U_\al$, we derive
\begin{align*}
&\ \frac{1}{2}\frac{d}{dt}\|U\|_{\sfL^2}^2=\int \d_t U_\al U^\al +U_\al U_\be \frac{1}{2}\d_t g^{\al\be}+|U|_g^2 \frac{1}{4}g^{\al\be}\d_t g_{\al\be} \ dvol_g\\
&=\int \d_h V^\al U_\al+\big(\Im(\psi\overline{\d^A_\al \om})-\Im(\d^A_\al\psi \bar{\om})+2\Im(\psi\bar{\la}_\al^\ga)U_\ga\big)U^\al\ dvol_g\\
&\quad+\int (\nab_\al V^\ga U_\ga+V^\si \nab_\si U_\al)U^\al+ U_\al U_\be (-\Im(\psi\bar{\la}^{\al\be})-\nab^\al V^\be)+|U|_g^2 \frac{1}{2}\nab_\al V^\al dvol_g\\
&=\int \d_h V^\al U_\al+\big(\Im(\psi\overline{\d^A_\al \om})-\Im(\d^A_\al\psi \bar{\om})+\Im(\psi\bar{\la}_\al^\ga)U_\ga\big)U^\al\ dvol_g.
\end{align*}
Then we obtain
\begin{align*}
\frac{1}{2}\frac{d}{dt}\|U\|_{\sfL^2}^2&\leq (\|\d_h V_\al\|_{L^2}+C(M)\|\om\|_{\sfH^1}+\|\la\|_{L^\infty}^2\|U\|_{L^2})\|U\|_{L^2}\\
&\les C(M)(\|\d_h g\|_{H^1}+\|\om\|_{\sfH^1}+\|U\|_{\sfL^2})\|U\|_{\sfL^2} .
\end{align*}
This implies the inequality \eqref{U-Energy0}.
\end{proof}

\subsection{The difference bounds and the uniqueness result}
To compare two surfaces $\Sigma$, $\tSigma$ at fixed time we need some notion of $L^2$ distance between the two surfaces. 
One choice would be 
\begin{equation*}
d_{L^2}^2(\Sigma,\tSigma) = \int_{\Sigma} d(x,\tSigma)^2 dvol_{\Sigma}     
\end{equation*}
This definition is not perfect in that it is not symmetric
and possibly not a distance. However, under uniform $C^2$ bounds for the two surfaces and small $L^2$ distances,
these two properties can be seen to hold up to constants, which is all we need
in the sequel.

\begin{prop}[The difference bounds for (SMCF)]\label{Prop-DiffBound}
Suppose $\Sigma_t$, $\tSigma_t$  are $C^2$ solutions of (SMCF) in a time interval $[0,T]$,  with 
of size $\leq M$, in the sense that there exist parametrizations $F,\ \tF$ so that 
\begin{equation*}
\| \d F\|_{C^1},\|\d\tF\|_{C^1} \leq M, \qquad g, \tilde g \geq M^{-1} I.     
\end{equation*}
Assume in addition that the two surfaces are initially close,
\begin{equation*}
d_{L^2}(\Sigma_0,\tSigma_0)\leq \ep \ll_M 1. 
\end{equation*} 
Then within the time interval $[0,T]$ we have
\begin{align}   \label{DB-FtF}
 d_{L^2}(\Sigma_t,\tSigma_t)   \les d_{L^2}(\Sigma_0,\tSigma_0).
\end{align}
\end{prop}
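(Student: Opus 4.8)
The plan is to follow the strategy outlined in the introduction: compare the two solutions extrinsically, exploit the gauge freedom on $\tSigma$ to make the difference $F-\tF$ comparable to its normal component, and then derive a Schr\"odinger-type equation for that normal component which closes a Gr\"onwall argument. Concretely, fix the parametrization of $\Sigma$ arbitrarily (say in the temporal gauge $V=0$, or any convenient gauge), and then choose coordinates on $\tSigma$ using the free advection field $\tilde V$ in the formulation \eqref{Main-Sys-re} so that, at each time $t$, $\tF(t,x)$ is the nearest-point projection of $F(t,x)$ onto $\tSigma_t$ — equivalently, so that $F-\tF \perp T\tSigma_t$ up to quadratic errors. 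Under the uniform $C^2$ bounds and the smallness assumption $d_{L^2}(\Sigma_0,\tSigma_0) \le \ep \ll_M 1$, this projection is well-defined and bi-Lipschitz, so that
\[
d_{L^2}(\Sigma_t,\tSigma_t)^2 \approx_M \int_{\R^d} |F(t,x)-\tF(t,x)|^2 \, dvol_{\tSigma_t} =: \| \omega(t)\|_{\sfL^2}^2 + (\text{lower order}),
\]
where $\omega = (F-\tF)\cdot \tilde m$ is the complex normal component of the difference, measured in the frame of $\tSigma$. The first step is therefore to make this reduction rigorous: show the projection gauge exists on $[0,T]$, that $d_{L^2}$ is comparable to $\|F-\tF\|_{L^2}$ and to $\|\omega\|_{\sfL^2}$ up to multiplicative constants depending on $M$, and that the choice of $\tilde V$ this requires is admissible (solves an appropriate transport/ODE with $C^1$-controlled coefficients).

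The second step is to derive the evolution equation for $\omega$. Writing $F-\tF = \Re(\omega \bar{\tilde m}) + U^\gamma \tF_\gamma + (\text{quadratic in } F-\tF)$, one differentiates in $t$ using that both $F$ and $\tF$ solve \eqref{Main-Sys-re}, and subtracts. The linear-in-$\omega$ part of this computation is governed by exactly the linearized operator already computed in Lemma~\ref{Lin-Lem}: one expects
\[
i(\d_t^{\tilde B} - \tilde V^\gamma \nab^{\tilde A}_\gamma)\omega + \nab^{\tilde A,\alpha}\nab^{\tilde A}_\alpha \omega = -\Re(\tla^{\alpha\beta}\bar\omega)\tla_{\alpha\beta} + \mathcal{Q},
\]
where $\mathcal{Q}$ collects genuinely quadratic error terms — products of $F-\tF$ (or its first derivatives) with the second fundamental forms and their derivatives, together with the curvature-type commutator errors from comparing the two frames. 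The key structural point, and the reason this works at $C^2$ regularity, is that every term in $\mathcal{Q}$ is either quadratic in $\omega$ with at most one derivative falling on $\omega$, or is a product of $\omega$ (undifferentiated) with quantities that are bounded by the $C^2$ norms of $F,\tF$. In particular, no term forces two derivatives onto $\omega$ uncompensated; the Schr\"odinger structure absorbs the would-be loss. One must be careful here to track the difference of Christoffel symbols $\Ga - \tilde\Ga$, the difference of connection coefficients $A - \tilde A$, and the difference of normal frames $m - \tilde m$, each of which is $O(|\d F - \d\tF|) = O(|\omega| + |\d\omega|)$ modulo controlled lower-order contributions, and to verify the gauge choice keeps $U$ and these frame differences slaved to $\omega$.

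The third and final step is the energy estimate. Compute $\tfrac{d}{dt}\|\omega\|_{\sfL^2}^2$ using the $\omega$-equation and $\d_t \tilde g$, exactly as in the proof of \eqref{om-Energy0}: the leading $\nab^{\tilde A}$ terms integrate by parts to contribute a purely imaginary quantity whose real part vanishes, the metric-evolution term cancels against the divergence of $\tilde V$, and one is left with
\[
\frac{d}{dt}\|\omega\|_{\sfL^2}^2 \le C(M)\,\|\omega\|_{\sfL^2}^2,
\]
where the constant depends only on $\|\d F\|_{C^1}$, $\|\d\tF\|_{C^1}$, $\|\Lambda\|_{L^2}$, $\|\tilde\Lambda\|_{L^2}$ and $M$. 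Gr\"onwall's inequality then gives $\|\omega(t)\|_{\sfL^2} \lesssim_M \|\omega(0)\|_{\sfL^2}$, and undoing the comparison from Step~1 yields \eqref{DB-FtF}. The main obstacle is Step~2: one has to organize the difference computation so that \emph{all} error terms genuinely have the advertised $C^2$-controlled quadratic structure — this is where the gauge choice on $\tSigma$ is essential, since a naive comparison (e.g.\ temporal gauge on both) would leave a term like $(\d_t - \tilde V\d)$ acting on $U^\gamma \tF_\gamma$ that hides a derivative loss, and it is precisely the projection gauge that converts this into a manageable quadratic term. A secondary technical point is maintaining the smallness of $d_{L^2}(\Sigma_t,\tSigma_t)$ on the whole interval $[0,T]$ so that the comparison in Step~1 does not degenerate; this is handled by a standard continuity/bootstrap argument, using that the Gr\"onwall bound with constant $C(M)$ keeps $d_{L^2}$ below the threshold $\ep \ll_M 1$ after possibly shrinking $T$ or $\ep$ depending only on $M$.
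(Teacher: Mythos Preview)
Your proposal is correct and follows essentially the same three-step approach as the paper: gauge-fix one of the two solutions so that $|F-\tF|\approx|\omega|$, derive a Schr\"odinger-type equation for the normal component $\omega$ of $\delta F$, and close a Gr\"onwall inequality at the $L^2$ level.

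A few execution details differ from the paper and are worth flagging. First, the paper works in the frame of $\Sigma$ rather than $\tSigma$: it sets $\omega=\langle\tF-F,m\rangle$ with $m$ the normal frame on $\Sigma$, leaves the gauge on $\Sigma$ free, and chooses coordinates on $\tSigma$ (Lemma~\ref{Lem4.4}) not by exact nearest-point projection but by a graph-matching construction glued via a partition of unity. This yields only $|U|\lesssim|\omega|$ rather than $U\equiv 0$, so the resulting equation \eqref{dtom-Uniq} for $\omega$ retains $U$-dependent source terms. Second, the operator in \eqref{dtom-Uniq} is genuinely mixed---$i(\d_t^B-\tilde V^\gamma\nab^A_\gamma)+\tilde g^{\alpha\beta}\tilde\nab^A_\alpha\d^A_\beta$, with $B,A$ from $\Sigma$ but $\tilde V,\tilde g,\tilde\nab$ from $\tSigma$---rather than the clean linearized operator on $\tSigma$ that you wrote. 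Third, and most substantively, one of the source terms in \eqref{dtom-Uniq}, namely $-\delta g^{\alpha\beta}U^\gamma\nab^A_\alpha\la_{\beta\gamma}$, contains $\nab\la$, which is \emph{not} bounded by the $C^2$ norm; in the energy estimate the paper handles this term (labeled $I_2$) by an integration by parts that moves the derivative off $\la$, after which only $\d^2 F$ and $\d\delta F$ appear. Your description of $\mathcal Q$ as ``quadratic with $C^2$-bounded coefficients'' glosses over this point. If your exact-projection gauge really forces the tangential component to vanish, this term drops out and the argument simplifies---but you should then verify that the resulting $\tilde V$ is $C^0$-bounded with constant depending only on $M$.
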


Here the gauge of $\Sigma(F)$ is free, while the gauge of the solution $\tilde\Sigma(\tF)$ must be chosen such that we have a good Gr\"onwall's inequality. In the frame $(\d_1F,\cdots,\d_d F,\nu_1,\nu_2)$, the difference $\tF-F$ can be expressed as
\begin{align*}
    \de F=\tF-F=\Xi+U^\ga \d_\ga F,\qquad \om:=\Xi\cdot m.
\end{align*}
The first step of the proof is to favourably choose the 
gauge of $\tSigma$ in order to guarantee that $|\delta F| \lesssim |\omega|$:

\begin{lemma} \label{Lem4.4}
Under the assumptions of the Proposition~\ref{Prop-DiffBound}, we can choose the
parametrization $\tF$ for $\tSigma$ so that we still have the uniform $C^2$ bound
\begin{equation*}
 \| \tF\|_{C^2} \lesssim_M 1,   
\end{equation*}
and so that we have the pointwise equivalence 
\begin{equation*} 
|F(x)-\tF(x)| \approx d(F(x),\tSigma) \approx d(\tF(x),\Sigma).  
\end{equation*}
\end{lemma}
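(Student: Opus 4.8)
The plan is to construct $\tF$ by composing the original parametrization of $\tSigma$ with a time-dependent change of coordinates on $\R^d$, chosen so that the difference $\delta F = \tF - F$ becomes essentially normal to $\Sigma$. Recall that the gauge formulation \eqref{Main-Sys-re} with free vector field $V$ means that any reparametrization of $\tSigma$ is again an admissible solution, and the $C^2$ bounds only constrain the geometry of the surface, not the choice of coordinates. So I have complete freedom in how to parametrize $\tSigma$, subject to keeping $\|\d\tF\|_{C^1}$ and the ellipticity of $\tilde g$ under control.

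First I would set up the nearest-point projection. Since $d_{L^2}(\Sigma_0,\tSigma_0) \leq \ep \ll_M 1$ and both surfaces have uniform $C^2$ bounds, the surface $\tSigma$ lies within an $O(\ep)$-tubular neighborhood of $\Sigma$ in an $L^2$-averaged sense; combined with the uniform second fundamental form bound $\|\d F\|_{C^1}\leq M$, the normal exponential map of $\Sigma$ is a diffeomorphism on a tube of radius $c(M)$, and one shows (using the $C^1$ bound on $\partial\tF$ to propagate the smallness pointwise via a continuity/bootstrap argument, or by working on the set where $\tSigma$ is genuinely close and handling the rest separately) that $\tSigma$ is a graph over $\Sigma$ in this tube. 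Concretely, for each $x$ there is a unique $y = \Phi(x) \in \R^d$ with $F(x) - \tF_{\mathrm{old}}(\Phi(x)) \in N_{F(x)}\Sigma$, and $\Phi$ is a $C^1$ diffeomorphism of $\R^d$ with $\|\Phi - \mathrm{id}\|_{C^1} \lesssim_M \ep^{1/2}$ or so. Then I define $\tF := \tF_{\mathrm{old}} \circ \Phi$. By construction $\delta F(x) = F(x) - \tF(x) \in N_{F(x)}\Sigma$, so $U^\ga = 0$ and $\delta F = \Xi = \Re(\omega\bar m)$, giving $|\delta F| = |\omega|$ exactly.

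Next I would verify the three claimed properties. The $C^2$ bound $\|\tF\|_{C^2}\lesssim_M 1$ follows from the chain rule since $\tF_{\mathrm{old}}$ has uniform $C^2$ bounds and $\Phi$ has uniform $C^1$ bounds with nondegenerate Jacobian (so $\Phi^{-1}$ is also controlled); one checks $\tilde g = D\Phi^T (g\circ\Phi) D\Phi$ stays elliptic because $D\Phi$ is close to the identity. For the pointwise equivalence, note $|F(x)-\tF(x)| \geq d(F(x),\tSigma)$ trivially, while the reverse $d(F(x),\tSigma)\gtrsim |F(x)-\tF(x)|$ uses that $\tF(x)$ realizes the normal ray from $F(x)$ and that, because $\tSigma$ is a small $C^1$ graph over $\Sigma$, the nearest point of $\tSigma$ to $F(x)$ is comparable to $\tF(x)$ — quantitatively, any point of $\tSigma$ near $F(x)$ is within a narrow cone around the normal direction, so the transversal displacement dominates. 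The symmetric statement $d(F(x),\tSigma)\approx d(\tF(x),\Sigma)$ follows similarly by swapping roles and using that $\Phi$ is bi-Lipschitz close to the identity.

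The main obstacle I anticipate is making the "graph over $\Sigma$" step rigorous globally on the noncompact $\R^d$: the hypothesis $d_{L^2}(\Sigma_0,\tSigma_0)\leq\ep$ controls the distance only in an averaged sense, not pointwise, so a priori there could be regions where $\tSigma$ wanders far from $\Sigma$. The resolution is to exploit the uniform $C^2$ (really $C^1$ on $\partial F$) bounds: the set where $d(F(x),\tSigma)$ is small is open, and on it the pointwise distance satisfies a differential inequality in $x$ with coefficients controlled by $M$, which combined with the $L^2$ smallness and a covering/continuity argument forces the distance to stay small everywhere — or, alternatively, one first establishes the difference bound \eqref{DB-FtF} for the subset where closeness holds and notes that the measure-theoretic contribution of the bad set is negligible. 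This is where most of the real work lies; the construction of $\Phi$ and the verification of its properties are then routine consequences of the inverse function theorem with uniform constants.
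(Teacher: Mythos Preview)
Your construction has a genuine regularity gap. The map $\Phi$ you define via the normal projection condition $(F(x) - \tF_{\mathrm{old}}(\Phi(x)))\cdot \partial_\alpha F(x) = 0$ involves $\partial F$ in its defining equation; differentiating once to solve for $\partial\Phi$ already brings in $\partial^2 F$, so under the hypothesis $\|\partial F\|_{C^1}\leq M$ (i.e.\ $F\in C^2$ only) you obtain $\Phi\in C^1$ but no better. You then write that ``the $C^2$ bound $\|\tF\|_{C^2}\lesssim_M 1$ follows from the chain rule since $\tF_{\mathrm{old}}$ has uniform $C^2$ bounds and $\Phi$ has uniform $C^1$ bounds'' --- but the chain rule for $\partial^2(\tF_{\mathrm{old}}\circ\Phi)$ contains the term $(\partial\tF_{\mathrm{old}})(\partial^2\Phi)$, which you cannot control. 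So $\tF$ is only $C^1$, and the subsequent difference estimate (which uses $\tGa$, hence $\partial^2\tF$) breaks down. This is not a cosmetic issue: the whole point of Theorem~\ref{Uniqueness-thm} is to establish uniqueness for $C^2$ solutions, so demanding $F\in C^3$ to make the normal projection $C^2$ would defeat the purpose.

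The paper avoids this loss by \emph{not} using the normal direction of $\Sigma$ at all. Instead it covers $\Sigma$ by balls $B_j$ of an intermediate scale $\delta$ with $\epsilon\ll_M\delta\ll_M 1$, and in each ball fixes a single Euclidean frame in which both surfaces are graphs $y''=G_j(y')$, $y''=\tilde G_j(y')$ over the \emph{same} linear subspace. The new parametrization is then $\tF_{\mathrm{new}}(x)=(y'(x),\tilde G_j(y'(x)))$, where $y'(x)$ is just a linear projection of $F(x)$. Every ingredient here is $C^2$ and composed without reference to the (only $C^1$) normal frame of $\Sigma$, so no derivative is lost; the local pieces are then glued by a partition of unity. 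The price is that $\delta F$ is only \emph{approximately} normal, giving $|U|\lesssim|\omega|$ rather than your $U=0$ --- but that weaker conclusion is all that is needed downstream.
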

The last property guarantees that $|F-\tF| \lesssim |\omega|$,
which will allow us to simply estimate the time evolution of $\omega$.
\begin{proof}
First we localize the problem, covering $\Sigma$ with balls $B_j$ of size $\delta$, centered at $F(x_j)$ where $\delta$ is an intermediate scale so that 
\[
\epsilon \ll_M \delta \ll_M 1.
\]
Within each such ball, $\Sigma$ is nearly flat. Due to the $L^2$
closeness assumption, this collection of  balls must also cover $\tSigma$, and their intersection  with $\tSigma$ is also almost flat. Then by the implicit function theorem and $rank(\frac{\d F}{\d x})=rank(\frac{\d \tF}{\d x})=d$, in a well chosen orthonormal frame adapted to $B_j$ we may represent both surfaces as graphs,
\[
\Sigma \cap B_j = \{ (y, G_j(y))\}, \qquad  \tSigma \cap B_j = \{ (y, \tilde G_j(y))\}
\]
where $\|G_j\|_{C^2}, \| \tilde G_j\|_{C^2} \lesssim_M 1$, with small gradients 
\[
\|\partial_y G_j\|\lesssim_M \delta
\]
and the $L^2$ closeness condition is expressed as
\[
d_{L^2}^2(\Sigma,\tSigma) \approx \sum_j \|G_j -\tilde G_j\|_{L^2}^2 
\]
Within each $B_j$ we can simply define new $C^2$ coordinates $\tilde x_j = \tilde x_j(x)$ on $\tilde \Sigma$ via 
\[
F(x) =  (y, G_j(y)) \Longrightarrow \tilde F(\tilde x_j)= (y, \tilde G_j(y)),
\]
which have the desired properties in the Lemma. It remains to 
assemble these coordinates together, which is easily achieved 
using a partition of unit associated to the $B_j$ covering.
We note here that neighboring frames are at angle $\lesssim \delta$, which implies that we have $|\tilde x_j - \tilde x_k|\lesssim \delta d(x,\tSigma)$, allowing us to gain 
local smallness for the difference of $F$ and $\tilde F$ in the $C^1$ norm in the new coordinates.

The argument above applies not only at fixed time, but also uniformly on time intervals $O(\delta)$, where the same local covering and frames can be used.
\end{proof}

We now continue the proof of the Proposition~\ref{Prop-DiffBound}, using the matched coordinates on the two surfaces given by the above Lemma. Since $\de F\in C^2$ is also small on a short time interval, we can define the normal vectors by
\begin{align*}
    \bar{\nu}_{j}=\nu_{j}-\tg^{\al\be}\<\nu_{j},\d_\al \de F\>\d_\be \tF. 
\end{align*}
Then the orthonormal frame $(\tnu_{1},\tnu_{2})$ in $N\tilde\Sigma(\tF)$ is given by
\begin{align*} %\label{nutF}
\tnu_{1}=\frac{\bar{\nu}_{1}}{|\bar{\nu}_{1}|},\quad \tnu_{2}=\frac{\bar{\bar{\nu}}_{2}}{|\bar{\bar{\nu}}_{2}|}, \qquad \text{with}\quad \bar{\bar{\nu}}_{2}=\bar{\nu}_{2}-\<\bar{\nu}_{2},\tnu_{1}\>\tnu_{1}.
\end{align*}
Now we have the following Lemma.

\begin{lemma}
The normal component $\om$ of difference $\de F$ satisfies the following formula
\begin{align}\label{dtom-Uniq}
&\begin{aligned}
    &\ i(\d_t^B -\tilde V^\ga\nab^A_\ga)\om+\tilde g^{\al\be}\tilde \nab^A_\al \d^A_\be \om\\
    =&\ -\de g^{\al\be}U^\ga \nab^A_\al \la_{\be\ga}-\Re(\la^{\al\be}\bar{\om})\la_{\al\be}+\de g^{\al\be}\la_\al^\si\Re(\la_{\be\si}\bar{\om})\\
	&\ +i\de V^\ga U^\si \la_{\ga\si}+\tg^{\al\be}\de\Ga_{\al\be}^\mu U^\ga\la_{\mu\ga}-2\de g^{\al\be}\nab_\al U^\ga \la_{\be\ga}+O(\d^2 F |\d \de F|_{\tg}^2)\,,
\end{aligned}
\end{align}
where $\de g^{\al\be}=\tg^{\al\be}-g^{\al\be}$, $\de V^\ga=\tV^\ga-V^\ga$ and $\de \Ga^{\mu}_{\al\be}=\tGa_{\al\be}^\mu-\Ga_{\al\be}^\mu$.
\end{lemma}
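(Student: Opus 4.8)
~\textbf{Proof strategy for the formula \eqref{dtom-Uniq}.}
The plan is to imitate the derivation of the linearized equation \eqref{LinEq}, but now with $\d_s$ replaced by the finite difference between the two solutions, and with the gauge for $\tSigma$ fixed as in Lemma~\ref{Lem4.4}, so that $\de F=\tF-F=\Xi+U^\ga\d_\ga F$ with $|\de F|\lesssim|\om|$. First I would compute $\d_t^B\om=\d_t^B(\de F\cdot m)=\d_t\de F\cdot m+\de F\cdot\d_t^B m$, where $\d_t\de F=\d_t\tF-\d_t F$. Using the (SMCF) equations \eqref{sys-cpf} for both $F$ and $\tF$ (with advection fields $V$ and $\tV$ respectively), one writes $\d_t\tF=-\Im(\tilde\psi\bar{\tilde m})+\tV^\ga\d_\ga\tF$, and then re-expresses $\tilde\psi$, $\tilde m$, $\d_\ga\tF$ in terms of the $\Sigma$-frame $(\d_1F,\dots,\d_d F,m)$ plus correction terms governed by $\de F$ and its derivatives; the quadratic remainder $O(\d^2F|\d\de F|_{\tg}^2)$ collects all such corrections that are quadratic or higher in $\de F$. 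This is exactly the finite-difference analogue of the term-by-term bookkeeping done in the proof of \eqref{LinEq}: the places where $\d_s$ previously hit $g^{\al\be}$, $\d^2_{\al\be}F$ and $\nu_i$ now become the differences $\de g^{\al\be}$, $\de\Ga^\mu_{\al\be}$ and the difference of normal frames built from $\bar\nu_j$.

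The key computational steps, in order, are: (i) record the difference identities $\de g_{\mu\nu}=-2\Re(\la_{\mu\nu}\bar\om)+\nab_\mu U_\nu+\nab_\nu U_\mu+O(|\d\de F|^2)$ and hence $\de g^{\al\be}=2\Re(\la^{\al\be}\bar\om)-\nab^\mu U^\nu-\nab^\nu U^\mu+O(|\d\de F|^2)$, exactly as in the linearized case but keeping the quadratic error; (ii) differentiate the structure equation for $\tF$ and subtract that for $F$ to get $\d^2_{\al\be}\de F\cdot m$ in terms of $\nab^A_\al\d^A_\be\om$, $\Re(\la_{\be\ga}\bar\om)$, and the tangential $U$-contributions $\nab^A_\al(U^\ga\la_{\be\ga})$, $\nab_\be U^\ga\la_{\al\ga}$, $\Ga^\si_{\al\be}U^\ga\la_{\si\ga}$; (iii) handle the Laplacian coefficient: since $\tF$ solves its equation with $\tilde g$, the operator that naturally appears is $\tg^{\al\be}\tilde\nab^A_\al\d^A_\be$ rather than $g^{\al\be}\nab^A_\al\d^A_\be$, which is why $\de g^{\al\be}$ and $\de\Ga^\mu_{\al\be}$ survive as explicit lower-order terms multiplying $\nab^A_\al\la_{\be\ga}$, $U^\ga\la_{\be\ga}$, etc.; (iv) collect the advection terms: $\tV^\ga\d_\ga\de F\cdot m=\tV^\ga\nab^A_\ga\om+\tV^\ga U^\si\la_{\ga\si}$, and split $\tV^\ga=V^\ga+\de V^\ga$ to isolate $\de V^\ga U^\si\la_{\ga\si}$; (v) the normal-frame difference terms (the analogues of the $A_0$-terms in the proof of \eqref{LinEq}) contribute only quadratically in $\de F$ because $\bar\nu_j-\nu_j=O(|\d\de F|)$ and they are paired against factors that are themselves $O(|\d\de F|)$, so they are absorbed into $O(\d^2F|\d\de F|_{\tg}^2)$; (vi) finally substitute the expansion of $\de g^{\al\be}$ from step (i) wherever it multiplies a factor of $\om$ or $\la$ of lower order — this is how the term $\de g^{\al\be}\la_\al^\si\Re(\la_{\be\si}\bar\om)$ and the cancellation producing the clean $-\Re(\la^{\al\be}\bar\om)\la_{\al\be}$ on the right arise.

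Assembling steps (i)--(vi) and moving the $\tg^{\al\be}\tilde\nab^A_\al\d^A_\be\om$ term to the left gives precisely \eqref{dtom-Uniq}, with all cubic-in-$(\la,\om)$ and all quadratic-in-$\de F$ contributions either displayed explicitly or hidden in the remainder. The main obstacle I anticipate is bookkeeping rather than conceptual: one must carefully track which quantities are evaluated with respect to $g$ versus $\tg$ (and $\nab^A$ versus $\tilde\nab^A$), ensure that every place where the two solutions' frames or metrics are swapped produces only a $\de g$, $\de\Ga$, $\de V$ factor or a genuinely quadratic $O(\d^2F|\d\de F|_{\tg}^2)$ error, and verify that the would-be dangerous terms — those linear in $\de F$ but with no compensating structure — actually cancel, exactly as the $\pm A_0\psi$ terms cancelled in the proof of \eqref{LinEq}. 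A secondary point requiring care is that the difference of the complex frames $m$ and $\tilde m$, built through $\bar\nu_j\mapsto\bar{\bar\nu}_j\mapsto\tnu_j$, must be shown to differ from $m$ by $O(|\d\de F|)$ with $C^1$-controlled coefficients, so that the resulting connection-type terms $B-\tilde B$ are likewise lower order; this uses only the uniform $C^2$ bounds on $F$ and $\tF$ and the smallness of $\de F$.
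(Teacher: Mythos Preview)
Your overall strategy matches the paper's: compute $\d_t^B\om$ via the (SMCF) equations for both $F$ and $\tF$, split the difference $J(\tF)\bH(\tF)-J(F)\bH(F)$ according to which factor ($g^{\al\be}$, $\d^2_{\al\be}F$, or the normal frame) carries the variation, and collect. Steps (i), (ii), (iv), (vi) are correct and mirror the paper's bookkeeping.

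Step (v), however, contains a genuine error that also undermines step (iii). You claim the normal-frame difference terms are all $O(\d^2F\,|\d\de F|_{\tg}^2)$. This is true for the \emph{outer} frame difference (the paper's $I_1$, where $\tnu_j-\nu_j$ is dotted against $m$), but not for the \emph{inner} one (the paper's $I_2$), where $\tilde m-m$ is dotted against $\tg^{\al\be}\d^2_{\al\be}\tF$. One has $\tilde m-m=O(|\d\de F|_{\tg}^2)-\tg^{\mu\nu}\langle m,\d_\mu\de F\rangle\,\d_\nu\tF$, and the tangential piece pairs against the \emph{tangential} component $\tGa$ of $\d^2\tF$, which is $O(1)$, not $O(|\d\de F|)$. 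This produces the linear-in-$\de F$ contribution
\[
-i\,\tg^{\al\be}\tGa^\mu_{\al\be}\bigl(\d^A_\mu\om+U^\si\la_{\mu\si}\bigr).
\]
This term is essential. Combined with $i\tg^{\al\be}\d^A_\al\d^A_\be\om$ from your step (ii), it is exactly what converts the flat second derivative into $i\tg^{\al\be}\tilde\nab^A_\al\d^A_\be\om$; this is the mechanism you allude to in step (iii) but never identify. Combined with the term $+i\tg^{\al\be}\Ga^\si_{\al\be}U^\ga\la_{\si\ga}$ also coming from step (ii), it produces the $\tg^{\al\be}\de\Ga^\mu_{\al\be}U^\ga\la_{\mu\ga}$ term on the right-hand side. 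If you absorb the frame difference entirely into the quadratic remainder as written, you cannot assemble the principal operator on the left as stated, and uncancelled $\tGa$ and $\Ga$ Christoffel terms will remain.
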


\begin{proof}
Applying $\d_t^B$ to $\om$ yields
\begin{align*}
&\ \d^B_t \om=\d_t^B \<\de F, m\>=\<\d_t(\tF-F), m\>+\<\tF-F, \d^B_t m\>\\
=&\ \<J(\tF)\bH(\tF)-J(F)\bH(F),m\>+\<\tV^\ga \tF_\ga-V^\ga F_\ga,m\>+\<\tF-F, \d^B_t m\>.
\end{align*}
By $F_\ga \perp m$, $\de F$ and \eqref{mo-frame}, we express the last two terms above as
\begin{align*}
\<\tV^\ga \tF_\ga-V^\ga F_\ga,m\>&=\tV^\ga \<\d_\ga (\tF-F),m\>=\tV^\ga\<\d_\ga(\Xi+U^\si F_\si),m\>
=\tV^\ga (\d^A_\ga \om +U^\si \la_{\ga\si}),\\
\<\tF-F, \d^B_t m\>&=\<\Xi+U^\si F_\si,-i(\d^{A,\al}\psi-i\la^\al_\ga V^\ga)F_\al\>
=-iU^\si (\d^A_\si\psi-i\la_{\ga\si}V^\ga).
\end{align*}
Next, we consider the first term. Using the expression for $J(F)\bH(F)$, this is written as
\begin{align*}
	&\ \<J(\tF)\bH(\tF)-J(F)\bH(F),m\>\\
	=&\ \<(\tg^{\al\be}\d^2_{\al\be}\tF\cdot \tnu_1\ \tnu_2-\tg^{\al\be}\d^2_{\al\be}\tF\cdot \tnu_2\ \tnu_1)-(g^{\al\be}\d^2_{\al\be}F\cdot \nu_1\ \nu_2-g^{\al\be}\d^2_{\al\be}F\cdot \nu_2\ \nu_1),m\>\\
	=&\ \<\tg^{\al\be}\d^2_{\al\be}\tF\cdot \tnu_1\ (\tnu_2-\nu_2)-\tg^{\al\be}\d^2_{\al\be}\tF\cdot \tnu_2\ (\tnu_1-\nu_1),m\>\\
	&\ + \<\tg^{\al\be}\d^2_{\al\be}\tF\cdot (\tnu_1-\nu_1)\ \nu_2-\tg^{\al\be}\d^2_{\al\be}\tF\cdot (\tnu_2-\nu_2)\ \nu_1,m\>\\
	&\ + \<\tg^{\al\be}\d^2_{\al\be}(\tF-F)\cdot \nu_1\ \nu_2-\tg^{\al\be}\d^2_{\al\be}(\tF-F)\cdot \nu_2\ \nu_1,m\>\\
	&\ + (\tg^{\al\be}-g^{\al\be})\<\d^2_{\al\be}F\cdot \nu_1\ \nu_2-\d^2_{\al\be}F\cdot \nu_2\ \nu_1,m\>\\
	=&:I_1+I_2+I_3+I_4.
\end{align*}

\emph{a) Estimates for $I_1$ and $I_2$.}
Since $1-|\bar{\nu}_1|^2=|\nu_1\cdot \d \de F|_{\tg}^2=O(|\d\de F|_{\tg}^2)$, it follows that
\begin{align*}
\tnu_1-\nu_1=\frac{1-|\bar{\nu}_1|^2}{|\bar{\nu}_1|(1+|\bar{\nu}_1|)}\bar{\nu}_1-\tg^{\al\be}\<\nu_1,\d_\al \de F\>\d_\be \tF=O(|\d\de F|_{\tg}^2)-\tg^{\al\be}\<\nu_1,\d_\al \de F\>\d_\be \tF.
\end{align*}
Since $\<\bar{\nu}_2,\tnu_1\>=|\bar{\nu}_1|^{-1}\<\bar{\nu}_2,\bar{\nu}_1\>=-|\bar{\nu}_1|^{-1}\tg^{\al\be}\<\nu_1,\d_\al \de F\>\<\nu_2,\d_\be \de F\>=O(|\d\de F|_{\tg})^2$ and $1-|\bar{\bar{\nu}}_2|^2=1-|\bar{\nu}_2|^2+|\bar{\nu}_2\cdot \tnu_1|^2=|\nu_2\cdot \d\de F|_\tg^2+|\bar{\nu}_2\cdot \tnu_1|^2=O(|\d\de F|_{\tg})^2$, then
\begin{align*}
	\tnu_2-\nu_2&=\frac{1-|\bar{\bar{\nu}}_2|^2}{|\bar{\bar{\nu}}_2|(1+|\bar{\bar{\nu}}_2|)}\bar{\bar{\nu}}_2+\bar{\bar{\nu}}_2-\nu_2=O(|\d\de F|_{\tg})^2-\tg^{\al\be}\<\nu_2,\d_\al \de F\>\d_\be \tF-\<\bar{\nu}_2,\tnu_1\>\tnu_1\\
	&=O(|\d\de F|_{\tg})^2-\tg^{\al\be}\<\nu_2,\d_\al \de F\>\d_\be \tF.
\end{align*}

Thus by $\d \tF\cdot m=\d \de F\cdot m$, we obtain
\begin{align*}
I_1&=\Re\tilde \psi(\tnu_2-\nu_2)\cdot m-\Im\tilde\psi (\tnu_1-\nu_1)\cdot m=O(\tilde \psi|\d\de F|_{\tg}^2).
\end{align*}
Further, by $\de F=\Xi+U^\ga F_\ga$, we arrive at
\begin{align*}
I_2&=i\tg^{\al\be}\d^2_{\al\be}\tF\cdot (\tilde m-m)=i\tg^{\al\be}\d^2_{\al\be}\tF\cdot \big(O(|\d\de F|_{\tg}^2)-\tg^{\mu\nu}\<m,\d_\mu \de F\>\d_\nu \tF\big)\\
&=O(\tg^{\al\be} \d^2_{\al\be} \tF |\d\de F|_{\tg}^2)-i\tg^{\al\be}\tGa_{\al\be}^\mu(\d^A_\mu\om+U^\si \la_{\mu\si}).
\end{align*}

\emph{b) Estimate for $I_3$.} This term $I_3$ is expressed in the same manner as \eqref{LinEq-keyT}. Then we also have
\begin{align*}
I_3=i\tg^{\al\be}\big(\d^A_\al\d^A_\be \om -\la_\al^\si \Re(\la_{\be\si}\om)\big)+i\tg^{\al\be}\big(2\nab_\al U^\ga \la_{\be\ga}+U^\ga \nab^A_\al \la_{\be\ga}+\Ga_{\al\be}^\si U^\ga \la_{\si\ga}\big).
\end{align*}

\emph{c) Estimate for $I_4$.}
By the expression of $\de F$, we have
\begin{align*}
\tg_{\mu\nu}-g_{\mu\nu}&=\<\d_\mu \de F,\d_\nu \tF\>+\<\d_\mu F,\d_\nu \de F\>
=\<\d_\mu\de F,\d_\nu F\>+\<\d_\mu F,\d_\nu\de F\>+\d_\mu \de F \d_\nu\de F\\
&=-2\Re(\la_{\mu\nu}\bar{\om})+\nab_\mu U_\nu+\nab_\nu U_\mu+\d_\mu \de F \d_\nu\de F.
\end{align*}
Then we obtain
\begin{align*}
	I_4&=(\tg^{\al\be}-g^{\al\be})i\la_{\al\be}=-i\la_{\al\be}(g^{\al\mu}\de g_{\mu\nu}+\de g^{\al\mu}\de g_{\mu\nu})g^{\nu\be}\\
	&=2i\la^{\mu\nu}(\Re(\la_{\mu\nu}\bar{\om})-\nab_\mu U_\nu)-i\la^{\mu\nu}\d_\mu\de F \d_\nu\de F-i\la_\al^\nu \de g^{\al\mu}\de g_{\mu\nu}.
\end{align*}

Hence, from the above estimates, we obtain
\begin{align*}
	\d^B_t \om&=-iU^\si (\d^A_\si\psi-i\la_{\ga\si}V^\ga)+\tV^\ga (\d^A_\ga \om +U^\si \la_{\ga\si})
	-i\tg^{\al\be}\tGa_{\al\be}^\mu (\d^A_\mu \om+U^\ga \la_{\mu\ga})\\
	& +i\tg^{\al\be}\big(\d^A_\al\d^A_\be \om -\la_\al^\si \Re(\la_{\be\si}\bar{\om})\big)+i\tg^{\al\be}\big(2\nab_\al U^\ga \la_{\be\ga}+U^\ga \nab^A_\al \la_{\be\ga}+\Ga_{\al\be}^\si U^\ga \la_{\si\ga}\big)\\
	& +2i\la^{\mu\nu}(\Re(\la_{\mu\nu}\bar{\om})-\nab_\mu U_\nu)-i\la^{\mu\nu}\d_\mu\de F \d_\nu\de F-i\la_\al^\nu \de g^{\al\mu}\de g_{\mu\nu}+O(\d^2 F(\d\de F)^2)\\
	&=\tV^\ga \d^A_\ga \om+i\tg^{\al\be}\tilde\nab^A_\al \d^A_\be \om+i\la^{\mu\nu}\Re(\la_{\mu\nu}\bar{\om})-i(\tg^{\al\be}-g^{\al\be})\la_\al^\si\Re(\la_{\be\si}\bar{\om})\\
	& +i(\tg^{\al\be}-g^{\al\be})U^\ga \nab^A_\al \la_{\be\ga}+(\tV^\ga-V^\ga)U^\si \la_{\ga\si}-i\tg^{\al\be}(\tilde \Ga_{\al\be}^\mu-\Ga_{\al\be}^\mu)U^\ga\la_{\mu\ga}\\
	&+2i(\tg^{\al\be}-g^{\al\be})\nab_\al U^\ga \la_{\be\ga}+O(\d^2 F|\d\de F|_{\tg}^2).
\end{align*}
Hence the formula \eqref{dtom-Uniq} is obtained.
\end{proof}

\begin{proof}[Proof of Proposition~\ref{Prop-DiffBound}]
\ 

From the formula \eqref{dtom-Uniq} of $\om$ and \eqref{g_dt}, we derive
\begin{align*}
&\ \frac{1}{2}\frac{d}{dt}\|\om\|_{L^2(dvol_\tg)}^2=\Re \int \d^B_t \om\cdot \bar{\om}+|\om|^2 \frac{1}{4}\tg^{\al\be}\d_t \tg_{\al\be}\ dvol_\tg \\
&=\Re \int \big[ \tV^\ga \d^A_\ga \om+i\tg^{\al\be}\tilde\nab^A_\al \d^A_\be \om
\big] \bar{\om} +|\om|^2 \frac{1}{2}\tilde \nab_\al \tV^\al\ dvol_\tg
+\Re \int i\de g^{\al\be} U^\ga \nab^A_\al \la_{\be\ga}\bar{\om}\ dvol_\tg\\
&\quad +\Re \int \big[\de V^\ga U^\si \la_{\ga\si}+  i\la^{\mu\nu}\Re(\la_{\mu\nu}\bar{\om})-i\de g^{\al\be}\la_\al^\si\Re(\la_{\be\si}\bar{\om})\\
&\quad -i\tg^{\al\be}\de \Ga_{\al\be}^\mu U^\ga\la_{\mu\ga}+2i\de g^{\al\be} \nab_\al U^\ga \la_{\be\ga} +O(\d^2 F|\d \de F|_{\tg}^2) \big]\bar{\om}\ dvol_\tg \\
&=: I_1+I_2+I_3.
\end{align*}
Here the first term $I_1$ vanishes by integration by parts,
\begin{align*}
	I_1=\Re \int \tV \frac{1}{2}\d_\ga |\om|^2 +\frac{1}{2}\tilde \nab_\al \tV^\al|\om|^2 -i\tg^{\al\be} \d^A_\al \om \overline{\d^A_\be \om} \ dvol_\tg =0.
\end{align*}
The second term $I_2$ can also be estimated using integration by parts
\begin{align*}
    I_2&=-\int \tg^{\al\mu}\tg^{\be\nu}(\tg_{\mu\nu}-g_{\mu\nu})U^\ga \Im(\tilde\nab^A_\al \la_{\be\ga}\bar{\om})-(\tg^{\al\be}-g^{\al\be})U^\ga\Im((\Ga-\tGa)\la\bar{\om})\ dvol_\tg\\
    &=\Im\int \tg^{\al\mu}\tg^{\be\nu}\la_{\be\ga}\tilde\nab^A_\al\big((\tg_{\mu\nu}-g_{\mu\nu})U^\ga  \bar{\om}\big)\ dvol_\tg+O(\|U\|_{L^2}\|\om\|_{L^2})\\
    &=\Im\int \tg^{\al\mu}\tg^{\be\nu}\la_{\be\ga}\tilde\nab^A_\al\big(\d\de F\cdot(\d \tF+\d F)\de F\cdot \d F \de F\cdot m \big)\ dvol_\tg+O(\|U\|_{L^2}\|\om\|_{L^2})\\
    &=O\big(\int \d^2 F (\de F)^2+|\d\de F|_{\tg}^2 \de F \ dvol_{\tg}\big)+O(\|U\|_{L^2}\|\om\|_{L^2})\\
    &\leq C\|\de F\|_{L^2}^2.
\end{align*}
The last term $I_3$ is bounded by 
\begin{align*}
I_3 &\les \|\de V\|_{L^\infty}\|\la\|_{L^\infty}\|U\|_{L^2}\|\om\|_{L^2}+  \|(\tg,g)\|_{L^\infty}\|\la\|_{L^\infty}^2\|\om\|_{L^2}^2\\
&+\|\tg\|_{L^\infty}\|(\tGa,\Ga)\|_{L^\infty}\|\la\|_{L^\infty} \|U\|_{L^2}\|\om\|_{L^2}\\
&+\|(\d \de F\cdot \d F )\d( \de F\cdot \d F )\|_{L^2}\|\om\|_{L^2}\|\la\|_{L^\infty}+\|\d\de F\ \d\de F\|_{L^2}\|\om\|_{L^2}\\
&\leq C(\|U\|_{L^2}+\|\om\|_{L^2})^2+C(\|\d\de F\ \d\de F\|_{L^2}+\|\d\de F\ \de F\|_{L^2})\|\om\|_{L^2}\\
&\leq C\|\de F\|_{L^2}^2.
\end{align*}

Hence, we have
\begin{align*}
	\frac{1}{2}\frac{d}{dt}\|\om\|_{L^2(dvol_\tg)}^2\leq C\|\de F\|_{L^2}^2\leq C(\|\om\|_{L^2}^2+\|U\|_{L^2}^2)\leq C\|\om\|_{L^2}^2,
\end{align*}
where the last inequality is obtained using the property $|U|\les |\om|$ from Lemma~\ref{Lem4.4} on a short time interval,
and the constant $C$ only depends on $M$. Then the difference bound \eqref{DB-FtF} of (SMCF) follows by Gr\"onwall's inequality.
\end{proof}

\bigskip
\section{The initial data}   \label{Sec-Ini}
Our evolution begins at time $t = 0$, for which we must make a suitable gauge choice for the initial submanifold $\Sigma$. The original coordinates remain unchanged, which is sufficient for our purposes. The primary task is to select an orthonormal frame in $N\Sigma$ such that the bounds for $\lambda$ and $A$ are independent of the specific geometry of $\Sigma$. This issue reduces to the gauge choice on the background manifold $\Sigma_\bb$, where we will employ the modified Coulomb gauge.
Once this is done, we have the frame in the tangent space and the frame $(\nu_1,\nu_2)$ in the normal bundle. In turn, as described in Section \ref{Sec-gauge}, these generate the metric $g$, the second fundamental form $\la$ with trace $\psi$ and the connection $A$, all at the initial time $t = 0$. 

Here we will first carry out the construction of the orthonormal frame $\nu_\bb$ in $N\Sigma_\bb$, which is obtained using parallel transport method and the lifting criterion Proposition in \cite[p.61]{Hatcher}. 
Since $\Sigma$ is a small perturbation of $\Sigma_\bb$, we then use this to define the frame $\nu$ in $N\Sigma$.
Next, we prove bounds for the connections $A$ and the second fundamental form $\lambda$ that depend only on $M$.
The final objective of this section is to construct a family of regularized approximations to $\Sigma$. This allows us to estimate the norms of $(\lambda, g, A)$ in the function spaces $X^s$, $Y^{s+1}$, $Z^s$, respectively, and thus justify the initial data condition \eqref{psi-full-reg} for the Schr\"odinger-parabolic system \eqref{mdf-Shr-sys-2}-\eqref{par-syst}.

The main result of this section is stated below:

\begin{prop}[Initial data]\label{Global-harmonic}
Let $d\geq 2$, $s>\frac{d}{2}$ and $\si_d$ be given in \eqref{Constant}. Let
$F:(\R^d_x,g)\rightarrow (\R^{d+2},g_{\R^{d+2}})$
be an immersion with induced metric satisfying \eqref{MetBdd}. 
Assume that the metric $g$ and the mean curvature $\bH$ are finite, i.e.
\begin{equation*}%\label{h0}
\| |D|^{\si_d}g\|_{H^{s+1-\si_d}}+\|\bH\|_{H^s}\leq M.
\end{equation*}

i) There exists a global orthonormal frame $\nu:=(\nu_1,\nu_2)$ on $\Sigma$ such that 
\begin{align}   \label{ini}
	\|\la\|_{H^s}\leq M,\qquad \| |D|^{\de_d} A \|_{H^{s-\de_d}} \lesssim M,\qquad \|\d\nu\|_{\dot H^{2\de_d}\cap \dot H^s}\les C(M).
\end{align}
    
ii) There exists a family of regularized submanifolds of $\Sigma$, denoted as $\Sigma^{(h)}$ with $h\in [h_0,\infty)$, such that the ellipticity and Sobolev embedding conditions are satisfied
\begin{gather}  \label{Reg-met}
	\frac{9}{10}c_0\leq (g^{(h)})\leq \frac{11}{10}c_0^{-1}I,\\ \label{SECs}
	|\Ric^{(h)}|\leq C(M),\qquad \inf_{x\in\Sigma^{(h)}} {\rm Vol}_{g^{(h)}}(B_x(e^{C(M)2^{-h_0}}))\geq v e^{-C(M)2^{-h_0}}.
\end{gather}
Moreover, we have the uniform bounds 
\begin{align}  \label{Reg-gAla}
\|g\|_{Y^{s+1}}+\|A\|_{Z^{s}}+\|\la\|_{X^{s}}\les C(M).
\end{align} 
\end{prop}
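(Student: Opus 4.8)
The plan is to split the argument along the same lines as the statement: first constructing a good global orthonormal frame on the background manifold $\Sigma_\bb$ and transferring it perturbatively to $\Sigma$ to obtain part (i), then building the family of regularized manifolds $\Sigma^{(h)}$ and establishing the ellipticity, geometry and function-space bounds of part (ii).

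\textbf{Part (i): the orthonormal frame and the bounds on $\lambda$ and $A$.} First I would work on $\Sigma_\bb = P_{\leq N_1}F_0(\R^d)$, which is a smooth immersed manifold with $g_\bb - I \in H^k$ for all $k$ (the bounds depending on $N_1$). The construction of a global normal frame proceeds in two regions, as sketched in the overview: inside a large ball $B_{x_0}(R+1)$ we parallel transport an orthonormal pair from the center $x_0$ along radial geodesics to get $\nu^{(int)}$; outside $B_{x_0}(R)$, where $\|g_\bb - I\|_{L^\infty}$ and the variation of the tangent frame $\partial_x F_\bb$ are small (using $\lim_{x\to\infty}g_{\al\be}=I$ and $2^{-N_1}\ll_M 1$), we construct $\nu^{(ext)}$ directly by projecting a fixed Euclidean frame onto $N\Sigma_\bb$ and orthonormalizing, as in \cite{HT22}. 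On the overlap annulus these two frames differ by a rotation-valued map $e^{i\theta}$; because $\R^d$ minus a ball is simply connected for $d\ge 2$ (this is where the lifting criterion \cite[p.61]{Hatcher} and the trivial topology of $\Sigma^d=\R^d$ enter), $\theta$ lifts to a global smooth real-valued function, and a partition of unity glues $\nu^{(int)}$ and $\nu^{(ext)}$ into a global smooth orthonormal frame. Applying the gauge transformation \eqref{gauge-A} with a suitable $\theta$, i.e. solving the elliptic equation $\Delta\theta = \partial^\al A_{\bb,\al}$ for the modified Coulomb condition $\partial_\al A_{\bb,\al}=0$, yields a controlled frame $\nu_\bb$; then using \eqref{cpt-AiAj-2} in Coulomb gauge, $\|A_\bb\|$ is recovered from $\la_\bb$ by elliptic regularity, and $\|\la_\bb\|_{\sfH^k} = \|\Lambda_\bb\|_{\sfH^k}$ is controlled by $\|\bH_0\|_{H^s}$ and $\ep_0$ (Lemma~\ref{Lem-Gaugebb}, referenced but not proven in this excerpt). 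Finally, since $\Sigma = F_0(\R^d)$ differs from $\Sigma_\bb$ only by the high-frequency piece $P_{>N_1}F_0$, which is small in the relevant norms, the frame $\nu$ on $N\Sigma$ is obtained from $\nu_\bb$ by orthogonal projection and Gram--Schmidt, a perturbative step; the bounds \eqref{ini} on $\|\la\|_{H^s}$, $\||D|^{\de_d}A\|_{H^{s-\de_d}}$, and $\|\partial\nu\|_{\dot H^{2\de_d}\cap\dot H^s}$ then follow from \eqref{R-la} (which expresses $\Ric$, hence $\|\la\|$, in terms of $\bH$ and $\la$ quadratically), \eqref{cpt-AiAj-2}, the Coulomb gauge, the paradifferential estimate \eqref{ineqfg}, and \eqref{MetBdd}.

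\textbf{Part (ii): the regularized manifolds and the function-space bounds.} For $h\ge h_0$ I would set $\Sigma^{(h)} = P_{<h}F_0(\R^d)$ with normal frame obtained from $(P_{<h}\nu_1, P_{<h}\nu_2)$ by projection and Schmidt orthogonalization, exactly as in the definition of $\mathrm{Reg}(\cdot)$ in Section~\ref{Sec3}; since $\ep_0 = 2^{-h_0}\ll_M 1$, each $\Sigma^{(h)}$ is a small $H^k$-perturbation of $\Sigma^{(h_0)}$, so the metric stays $\frac{9}{10}c_0 I \leq g^{(h)} \leq \frac{11}{10}c_0^{-1}I$, giving \eqref{Reg-met}. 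The Ricci and volume bounds \eqref{SECs} follow from \eqref{R-la} applied to $\la^{(h)}$ (whose $H^s$, hence $L^\infty$, norm is $\lesssim C(M)$ by part (i) and Sobolev embedding $s>d/2$), together with the volume comparison \eqref{VolComp} to transfer the lower volume bound $v$ from $\Sigma_0$ to $\Sigma^{(h)}$ at the slightly dilated radius $e^{C(M)2^{-h_0}}$. For the norm bounds \eqref{Reg-gAla}, the point is that the family $\{\la^{(h)}\}$, $\{g^{(h)}\}$, $\{A^{(h)}\}$ is an admissible competitor in the infimum defining $X^s$, $Y^{s+1}$, $Z^s$: one estimates $\|\la^{(h_0)}\|_{H^{[s]}}$, $\|\la^{(h_0)}\|_{H^{[s]+1}}$, the integrals $\int_{h_0}^\infty 2^{2h(s-N)}\|\la^{(h)}\|_{H^N}^2\,dh$ for $N\in\{[2s],[2s]+1\}$, and $\int_{h_0}^\infty 2^{2hs}\|\partial_h\la^{(h)}\|_{L^2}^2\,dh$, using the standard Littlewood-Paley facts that $\partial_h P_{<h} F_0$ behaves like $P_h F_0$ and that $\|P_h\partial^2 F_0\|_{L^2}$, $\|P_h\partial\nu\|_{L^2}$ are controlled by the frequency envelope $c_j$ with $\|c_j\|_{\ell^2}\approx \|\partial^2 F_0\|_{H^s}+\|\partial\nu\|_{H^s}\lesssim C(M)$; the chain rule through the projection/Gram--Schmidt steps plus the product estimate \eqref{ineqfg} converts $\partial^2 F_0$-bounds into $\la^{(h)}$-bounds. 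The analogous computations for $g^{(h)}$ and $A^{(h)}$ — using the parabolic smoothing so the $\int_0^t$ terms in the $Y,Z$ norms are handled by the equations \eqref{g-Heat-Eq}, \eqref{Heat-A-pre} — complete \eqref{Reg-gAla}, and then \eqref{Equ-la} gives $X^s_{int}\approx X^s_{ext}$.

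\textbf{Main obstacle.} I expect the principal difficulty to be the frame construction in part (i): the gluing of $\nu^{(int)}$ and $\nu^{(ext)}$ requires genuinely checking the topological lifting criterion and controlling the Coulomb-gauge rotation $\theta$ in $\dot H^{s}$ (and $\dot H^{2\de_d}$, which in $d=2$ is the reason $\de_d\ne 0$), since $\theta$ solves an elliptic equation with right-hand side only in a negative-order space; this is precisely the low-dimensional ($d=2$) subtlety flagged in the introduction, and it is also where the loss relative to setting $\si_d=1$ comes from. The perturbative transfer from $\Sigma_\bb$ to $\Sigma$ and the regularization estimates, while lengthy, are comparatively routine consequences of \eqref{ineqfg}, interpolation \eqref{HaInterpo}--\eqref{L2interpolation}, and the frequency-envelope bookkeeping.
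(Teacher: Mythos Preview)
Your overall architecture matches the paper's closely, but there is a genuine gap in the topological step of part (i), and two smaller confusions.

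\textbf{The topological gap.} You write that ``$\R^d$ minus a ball is simply connected for $d\ge 2$'', and use this to lift the transition function $e^{i\theta}$ on the annulus $A(R)$ to a real-valued $\theta$. This is false when $d=2$: the annulus is homotopy equivalent to $S^1$, so $\pi_1(A(R))\cong\Z$, and a map $\theta:A(R)\to S^1$ need not lift. The paper handles the cases separately. For $d\ge 3$ your argument works since $\pi_1(S^{d-1})=0$. For $d=2$ an additional step is required: one computes the winding number
\[
I(r)=\frac{1}{2\pi}\int_{T_r}\nu_1\cdot\partial_x\nu_2\,dx
\]
of the interior frame $\nu^{(int)}$ on circles $T_r$ of radius $r\in[0,R]$, observes that $I(0)=0$ since $\nu^{(int)}$ is smooth at the center, and concludes $I(R)=0$ by continuity and integer-valuedness. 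Only then does the lifting criterion apply. Without this the gluing step fails in two dimensions.

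\textbf{Two smaller points.} First, you invoke \eqref{R-la} to bound $\|\la\|_{H^s}$ from $\|\bH\|_{H^s}$, but \eqref{R-la} goes the wrong way (it expresses $\Ric$ in terms of $\la$). The paper instead first proves $\|\partial^2 F\|_{H^s}\lesssim C(M)$ directly from the hypotheses on $g$ and $\bH$ via integration by parts and the identity $g^{\al\be}\partial^2_{\al\be}F=\bH+g^{\al\be}\Gamma_{\al\be}^\ga\partial_\ga F$ (Lemma~\ref{d2F-lem}); then $\la=\partial^2 F\cdot m$ is bounded using \eqref{ineqfg} and the frame bounds. Second, in part (ii) you mention using the parabolic equations \eqref{g-Heat-Eq}, \eqref{Heat-A-pre} to handle the $\int_0^t$ terms in the $Y^{s+1}$, $Z^s$ norms; but Proposition~\ref{Global-harmonic} concerns only the initial time $t=0$, where those integrals vanish identically. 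The actual work is the purely spatial Littlewood--Paley analysis you describe in the rest of that paragraph.
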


We remark that the bounds in \eqref{ini} are the only way the generalized Coulomb gauge condition at $t = 0$ enters this paper. 
Later, for the analysis of the Schr\"odinger-parabolic system \eqref{mdf-Shr-sys-2}-\eqref{par-syst} that follows, we instead assume the initial data $(\lambda, g, A)$ satisfies the conditions \eqref{Reg-gAla}, which provide uniform control of the Sobolev norms for this data.

\subsection{Global orthonormal frame and the initial data $(\la,g,A)$}
The $\la$ and $A$ are determined by the initial manifold $\Sigma$ given a gauge choice, which consists 
of choosing (i) a good set of coordinates
on $\Sigma$, where the original coordinates are used,
and (ii) a good orthonormal frame in $N \Sigma$,
where we will use the generalized Coulomb gauge.

In our previous article \cite{HT21,HT22}, the orthonormal frame in $N\Sigma$ was easily constructed due to the small data. However, this issue would be more complicated for large data, as the topology of submanifold must also be taken into account. To address this, we first construct a smooth modified Coulomb frame in the smooth normal bundle $N\Sigma_\bb$. This allow us to define $\lambda_\bb$ and $A_\bb$ and directly establish $H^s$ bounds for them. 
Then the manifold $\Sigma$ and its orthonormal frame are treated as the small perturbations of background manifold $\Sigma_\bb$ and $\nu_\bb$, respectively. 

Here we start with the following lemma: by choosing $N_1$ sufficiently large, we fix the background manifold $\Sigma_\bb$ and bound the differences $\d(F-F_\bb)$ and $g-g_\bb$ in $H^{s}$.

\begin{lemma} \label{d2F-lem}
Let $d\geq 2$, $s>\frac{d}{2}$ and $\si_d$ be given in \eqref{Constant}. 
Let $F:(\R^d,g)\rightarrow (\R^{d+2},g_{\R^{d+2}})$
be an immersion with metric $c_0I\leq g\leq c_0^{-1}I$, $\||D|^{\si_d}g\|_{H^{s+1-\si_d}}\leq M$ and mean curvature $\|\bH\|_{H^s}\leq M$ in some coordinates. Then we have
\begin{equation}     \label{d2F}
\|\d^2 F\|_{H^s}\lesssim C(M).
\end{equation}
Moreover, for background manifold $F_\bb=P_{\leq N_1}F$ and $\ep_0:=2^{-N_1}\ll 1$, then we have
\begin{align*}
\|\d(F-F_\bb)\|_{H^{s}}\les \ep_0,\qquad \|g-g_\bb\|_{H^{s}}\les \ep_0.
\end{align*}
\end{lemma}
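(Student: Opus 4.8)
The plan is to realize $F$ as the solution of a \emph{linear} elliptic equation driven by $\bH$, and then invoke elliptic regularity. Tracing the structure equation \eqref{strsys-cpf} with $g^{\al\be}$ and using $\psi=g^{\al\be}\la_{\al\be}$ gives the classical identity
\begin{equation*}
g^{\al\be}\d^2_{\al\be}F = V^\ga\,\d_\ga F + \Re(\psi\bar m) = V^\ga\,\d_\ga F + \bH,\qquad V^\ga := g^{\al\be}\Ga^\ga_{\al\be},
\end{equation*}
that is $\De_g F=\bH$: the tangential parts of the two sides match, and the normal part of $g^{\al\be}\d^2_{\al\be}F$ is exactly the mean curvature vector $\Re(\psi\bar m)$. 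First I would record the elementary consequences of the hypotheses. The ellipticity \eqref{MetBdd} gives $g^{-1}\in L^\infty$ and $g-I\in L^\infty$ with bounds depending only on $c_0$; the bound $\||D|^{\si_d}g\|_{H^{s+1-\si_d}}\le M$ gives $\d g\in H^s$ with $\|\d g\|_{H^s}\les M$, since by \eqref{Constant} the fractional weight $\si_d=1-\de_d$ only touches the lowest-order part of $\d g$ and costs nothing; and $|\d_\al F|^2=g_{\al\al}\le c_0^{-1}$ gives $\d F\in L^\infty$ with $\|\d F\|_{L^\infty}\les c_0^{-1/2}$. Feeding the metric bounds into the Moser estimate \eqref{ineqfg} yields $\|\Ga\|_{H^s}+\|V\|_{H^s}\les C(M)$, because $\Ga$ and $V$ are smooth functions of $g$, $g^{-1}$ and $\d g$.

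\textbf{The elliptic estimate.} With this in place, \eqref{d2F} follows from elliptic regularity for the uniformly elliptic, non-divergence-form operator $g^{\al\be}\d_\al\d_\be$, whose coefficients lie in $H^{s+1}\cap L^\infty$, in particular in $C^0$. At the $L^2$ level, Calder\'on--Zygmund theory for elliptic operators with continuous coefficients gives
\begin{equation*}
\|\d^2 F\|_{L^2}\les_{c_0}\|g^{\al\be}\d^2_{\al\be}F\|_{L^2}=\|\bH+V^\ga\d_\ga F\|_{L^2}\le \|\bH\|_{L^2}+\|V\|_{L^2}\|\d F\|_{L^\infty}\les C(M).
\end{equation*}
For the top order I would differentiate the equation and freeze coefficients on a partition of $\R^d$ into balls on which $g$ oscillates by at most a small $\ep_1(c_0,M)$, using the constant-coefficient Calder\'on--Zygmund bound on each ball and controlling the commutator errors by $\|\Ga\|_{H^s}\les C(M)$ together with \eqref{ineqfg}; this produces
\begin{equation*}
\|\d^2 F\|_{\dot H^s}\les \|\bH\|_{\dot H^s}+\|V^\ga\d_\ga F\|_{\dot H^s}+\tfrac12\|\d^2 F\|_{\dot H^s}+C(M)\|\d^2 F\|_{L^2}.
\end{equation*}
Here the first-order term obeys $\|V^\ga\d_\ga F\|_{\dot H^s}\les \|V\|_{\dot H^s}\|\d F\|_{L^\infty}+\|V\|_{L^\infty}\|\d^2 F\|_{\dot H^{s-1}}$, and since $s-1<s$ the last factor is absorbed by interpolation between $L^2$ and $\dot H^s$ followed by Young's inequality. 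Combining with the $L^2$ bound and choosing $\ep_1$ small closes the estimate and gives $\|\d^2 F\|_{H^s}\les C(M)$, which is \eqref{d2F}.

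\textbf{The difference bounds.} Since $\d(F-F_\bb)=\d P_{>N_1}F=P_{>N_1}\d F$, Bernstein's inequality and \eqref{d2F} give
\begin{equation*}
\|\d(F-F_\bb)\|_{H^s}=\|P_{>N_1}\d F\|_{H^s}\les 2^{-N_1}\|\d^2 F\|_{H^s}\les C(M)\,\ep_0\les \ep_0,
\end{equation*}
the last step using that $N_1$ was chosen large depending on $M$, as in $\ep_0=2^{-N_1}\ll_M 1$. Finally $g_{\al\be}-g_{\bb,\al\be}=\d_\al(F-F_\bb)\cdot\d_\be F+\d_\al F_\bb\cdot\d_\be(F-F_\bb)$, so applying \eqref{ineqfg} with $\|\d F\|_{L^\infty},\|\d F_\bb\|_{L^\infty}\les c_0^{-1/2}$ and $\|P_{>0}\d F\|_{\dot H^s}\les\|\d^2 F\|_{H^s}\les C(M)$ gives $\|g-g_\bb\|_{H^s}\les C(M)\,\|\d(F-F_\bb)\|_{H^s}\les \ep_0$.

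\textbf{Main obstacle.} The delicate point is the large-data elliptic estimate in the second step: because the data is large, $g-I$ is not small and one genuinely has to localize and freeze coefficients rather than perturb off $\De$; the coefficient $g$ sits only in $H^{s+1}$ with $s$ barely above the critical exponent $d/2$, the map $F$ and its gradient are merely bounded (not in $L^2$), so one must work with homogeneous norms, and in dimension $d=2$ one has to carry the mild $\si_d=1-\de$ loss in the metric regularity through the paraproduct bookkeeping so that $\Ga$ and $V$ still land in $H^s$ and the first-order term $V^\ga\d_\ga F$ can be absorbed. All the remaining steps --- the structure-equation identity, the Moser bounds, and the Littlewood--Paley difference estimates --- are routine.
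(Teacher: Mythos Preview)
Your strategy is correct and rests on the same key identity the paper uses, namely $g^{\al\be}\d^2_{\al\be}F=\bH+V^\ga\d_\ga F$. The difference is in how the elliptic estimate is executed. You invoke Calder\'on--Zygmund theory and coefficient freezing; the paper instead proves both the $L^2$ and the $\dot H^s$ bounds by a direct energy (integration-by-parts) argument. For the $L^2$ step the paper writes
\[
\|\d^2 F\|_{L^2}^2\les \int g^{\al\be}\d_\al\d F\cdot\d_\be\d F\,dx
= \int g^{\al\be}\d^2_{\al\be}F\cdot\d^2 F\,dx + O\big(\|\d g^{-1}\|_{L^2}\|\d F\|_{L^\infty}\|\d^2 F\|_{L^2}\big),
\]
then substitutes the identity; the $\dot H^s$ step is the same with an extra commutator and an interpolation $\|\d^2 F\|_{\dot H^{s-1}}\les\|\d^2 F\|_{L^2}^{1/s}\|\d^2 F\|_{\dot H^s}^{(s-1)/s}$ to close.

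What this buys the paper is precisely the point you flag as the main obstacle: since $F,\d F\notin L^2$, the homogeneous bound $\|\d^2 F\|_{L^2}\les_{c_0}\|g^{\al\be}\d^2_{\al\be}F\|_{L^2}$ is not a black-box consequence of Calder\'on--Zygmund theory (the usual localization produces lower-order terms $\|\d\chi_B\,\d F\|_{L^2}$ and $\|\d^2\chi_B\,F\|_{L^2}$ that do not sum). The integration-by-parts argument yields the estimate directly, with the only lower-order term being $\|\d g\|_{L^2}\|\d F\|_{L^\infty}$, which is already of the same shape as your $\|V\|_{L^2}\|\d F\|_{L^\infty}$. So your route can be made to work, but the paper's is more elementary and self-contained. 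Your treatment of the difference bounds matches the paper's exactly.
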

\begin{proof}[Proof]
	By $c_0I\leq g\leq c_0^{-1}I$ and Sobolev embeddings, we have
	\begin{align*}
		\|\d^2 F\|_{L^2}^2&\les \int g^{\al\be}\d_\al \d F\cdot \d_\be \d F\ dx =\int -g^{\al\be}\d \d^2_{\al\be}F\cdot \d F-\d_\be g^{\al\be}\d_\al \d F \cdot \d F\ dx \\
		&\les \int g^{\al\be} \d^2_{\al\be}F\cdot \d^2 F+|\d g^{-1} \d^2 F \cdot \d F|\ dx\\
		&=\int (\bH+g^{\al\be}\Ga_{\al\be}^\ga \d_\ga F)\cdot \d^2 F+|\d g^{-1} \d^2 F \cdot \d F|\ dx\\
		&\leq (\|\bH\|_{L^2}+\|g^{-1}\|_{L^\infty}^2\|\d g\|_{L^2}\|\d F\|_{L^\infty}+\|\d g^{-1}\|_{L^2}\|\d F\|_{L^\infty})\|\d^2 F\|_{L^2}\\
		&\leq (M+M^4+M^2)\|\d^2 F\|_{L^2}
	\end{align*}
	Then we obtain $\|\d^2 F\|_{L^2}\les M^4$.
	For high regularity $\dot H^s$, we similarly  have
	\begin{align*}
		&\ \|\d^2 F\|_{\dot H^s}\lesssim  \int g^{\al\be}\d_\al \d^{s+1} F\cdot \d_\be \d^{s+1} F\ dx\\
		&=\int \d^s(\bH+g^{\al\be}\Ga_{\al\be}^\ga \d_\ga F)\cdot \d^{s+2} F+|\d g \d^{s+2} F \cdot \d^{s+1} F|+|[g,\d^{s+1}]\d^2 F\d^{s+1}F|\ dx\\
		&\les  (\|\bH\|_{H^s}+C(M)\|\d F\|_{L^\infty\cap \dot H^s})\|\d^2 F\|_{\dot H^s}+\|\d g\|_{H^s}\|\d^2 F\|_{H^s}\|\d^2 F\|_{\dot H^{s-1}}\\
		&\leq \big(\|\bH\|_{H^s}+C(M)(1+\|\d^2 F\|_{L^2}^{\frac{1}{s}}\|\d^2 F\|_{\dot H^{s}}^{\frac{s-1}{s}}\big)\|\d^2 F\|_{\dot H^s}\\
		&\ +\|\d g\|_{H^s}(C(M)+\|\d^2 F\|_{\dot H^s})\|\d^2 F\|_{L^2}^{\frac{1}{s}}\|\d^2 F\|_{\dot H^{s}}^{\frac{s-1}{s}}\\
		&\leq C(M)+\ep \|\d^2 F\|_{\dot H^s}^2.
	\end{align*}
	where the last term can be absorbed. Thus the bound \eqref{d2F} is obtained.
	
	From $F_\bb=P_{\leq N_1}F$ and the bound \eqref{d2F}, for any $0<\ep_0\ll 1$ we choose $2^{-N_1}\sim \ep_0$, then
	\begin{align*}
		\|\d (F-F_\bb)\|_{H^{s}}=\|\d P_{>N_1}F\|_{H^{s}}\les 2^{-N_1}\|\d^2 P_{>N_1}F\|_{H^s}\les \ep_0.
	\end{align*}
	Moreover, for the metric we have
	\begin{align*}
		\|g-g_\bb\|_{H^{s}}=&\ \|\d P_{>N_1}F \d F+\d P_{\leq N_1}F \d P_{>N_1}F\|_{H^{s}}\\
		\les &\ \|\d P_{>N_1}F\|_{H^{s}}\|\d F\|_{L^\infty\cap \dot H^{s}}\les C(M)\ep_0.
	\end{align*}
	This completes the proof of lemma.
\end{proof}

Now we construct the orthonormal frame in $N\Sigma_\bb$.
\begin{lemma}[Modified Coulomb gauge on $\Sigma_\bb$]\label{Lem-Gaugebb}
On the smooth background submanifold $\Sigma_\bb$ in $\R^{d+2}$, there exists a smooth orthonormal frame $\nu=(\nu_1,\nu_2)$  in $N \Sigma_\bb$ such that $\d \nu\in H^k$ for any $k\geq 0$.
Moreover, there exists a modified Coulomb gauge $\nu_\bb=(\nu_{\bb,1},\nu_{\bb,2})$ with $\d_\al A_{\bb,\al}=0$ by rotating the frame $\nu$. We then have the following bounds
\begin{equation} \label{Smnubb}
\|\d \nu_\bb\|_{L^{\frac{2d}{d-2\de_d}}}+\|\d \nu_\bb\|_{\dot H^{2\de_d}\cap\dot H^{\si}}+\||D|^{\de_d}A_{\bb}\|_{H^{\si+1-\de_d}}+\|\la_\bb\|_{H^{\si}}\les 2^{(\si-s)^+ N_1}C(M),\quad \si\geq s,
\end{equation}
where $\de_d$ is given in \eqref{Constant} and $(\si-s)^+=\max\{0,\si-s\}$.
\end{lemma}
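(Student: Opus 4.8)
The plan is to separate the two assertions of the lemma. First I would produce \emph{some} smooth global orthonormal frame $\nu=(\nu_1,\nu_2)$ of $N\Sigma_\bb$ with $\d\nu\in H^k$ for all $k\ge0$; then I would rotate $\nu$ to the modified Coulomb frame $\nu_\bb=e^{i\theta}\nu$ by solving a Poisson equation for $\theta$. The quantitative bounds \eqref{Smnubb} are extracted at the very end from the single structural fact that $F_\bb=P_{\le N_1}F_0$ is frequency localized, combined with the bound $\|\d^2 F_\bb\|_{H^s}\les C(M)$ coming from \lemref{d2F-lem}.

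For the first step, the construction is local-to-global. Fix a base point $x_0$ and a radius $R$ large enough that $|g_\bb-I|\ll 1$ on $\{|x-x_0|>R\}$; such $R$ exists since $g_\bb\to I$ and $\d^2F_\bb$ decays at infinity. On the ball $B_{x_0}(R+1)$ I define $\nu^{(int)}$ by parallel transport in the normal bundle along rays issuing from $x_0$, starting from a fixed orthonormal basis of $N_{x_0}\Sigma_\bb$; along each ray this is a linear ODE whose coefficient is the normal connection of $\Sigma_\bb$ (an expression in $\d^2F_\bb$), so Gr\"onwall controls $\nu^{(int)}$ and all its derivatives on this fixed compact set in terms of the norms of $\d^2F_\bb$ there. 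On the exterior $\{|x-x_0|>R\}$ I build $\nu^{(ext)}$ as in \cite{HT22}: orthogonally project a fixed pair $\nu_\infty$ spanning the limiting normal plane onto $N\Sigma_\bb$ and Gram--Schmidt; since the denominators are $\approx 1$ there, $\d\nu^{(ext)}=O(\d^2F_\bb)$ and $\nu^{(ext)}\to\nu_\infty$ at infinity. On the overlap annulus one has $\nu^{(ext)}=e^{i\phi}\nu^{(int)}$ for an $S^1$-valued $\phi$; since $\nu^{(int)}$ extends across the contractible ball and $\nu^{(ext)}$ is asymptotically constant (equivalently, $N\Sigma_\bb$ is a trivial oriented rank two bundle over the contractible base $\R^d$), the loop class of $\phi$ is trivial, and the lifting criterion \cite[p.61]{Hatcher} produces a single valued smooth $\phi$ with $e^{i\phi}$ the transition. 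Patching $\nu^{(int)}$ (rotated by a cutoff of $\phi$) with $\nu^{(ext)}$ via a partition of unity yields the desired $\nu$; its interior part is smooth and compactly supported while its exterior part is $O(\d^2F_\bb)$, so indeed $\d\nu\in H^k$ for every $k$.

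For the second step, let $A_\al=\d_\al\nu_1\cdot\nu_2\in\cap_k H^k$ be the connection of $\nu$. I set $\nu_\bb=e^{i\theta}\nu$ with $\De\theta=\d^\al A_\al$; since $\nab\theta=\nab\De^{-1}\d^\al A_\al$ has the bounded Fourier symbol $\xi\xi^\al|\xi|^{-2}$ (a composition of Riesz transforms), one gets $\|\nab\theta\|_{\dot H^\tau}\les\|A\|_{\dot H^\tau}$ for all $\tau\ge0$, in particular $\|\nab\theta\|_{L^\infty}\les\|A\|_{H^s}\les C(M)$. Then $A_{\bb,\al}=A_\al-\d_\al\theta$ satisfies the modified Coulomb condition $\d_\al A_{\bb,\al}=0$, and $|\d\nu_\bb|\le|\d\nu|+|\nab\theta|$ pointwise; Sobolev norms of $\d\nu_\bb=e^{i\theta}(\d\nu+i\nu\,\nab\theta)$ are handled by Moser-type estimates using $\nab\theta\in L^\infty\cap H^k$. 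Finally I read off \eqref{Smnubb}: Bernstein applied to $F_\bb=P_{\le N_1}F_0$ gives $\|\d^2F_\bb\|_{\dot H^\si}\les 2^{(\si-s)^+N_1}\|\d^2F_0\|_{H^s}\les 2^{(\si-s)^+N_1}C(M)$, which, fed through Step~1 (the interior piece is on a fixed compact set, the rotation costs only $C(M)$), controls $\|\d\nu_\bb\|_{\dot H^{2\de_d}\cap\dot H^\si}$ and hence, by $\dot H^{\de_d}\hookrightarrow L^{\frac{2d}{d-2\de_d}}$ and interpolation, also $\|\d\nu_\bb\|_{L^{\frac{2d}{d-2\de_d}}}\les C(M)$. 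For $\la_\bb$ I use that in the Coulomb frame $|\nab^{A_\bb,l}\la_\bb|$ agrees with the $l$-th derivative of the second fundamental form of $F_\bb$ up to terms quadratic in $\Ga_\bb+A_\bb$, so the comparison underlying \eqref{Lam-EqKey} yields $\|\la_\bb\|_{H^\si}\les 2^{(\si-s)^+N_1}C(M)$. For $A_\bb$ I combine the Ricci equation \eqref{cpt-AiAj-2} with $\d_\al A_{\bb,\al}=0$ to obtain $\De A_{\bb,\al}=\d^\be\Im(\la^\ga_\be\bar\la_{\al\ga})+(\text{Christoffel corrections})$, which makes $A_\bb$ one derivative smoother than $\la_\bb\ast\la_\bb$ and gives $\||D|^{\de_d}A_\bb\|_{H^{\si+1-\de_d}}\les\|\la_\bb\ast\la_\bb\|_{\dot H^{\de_d-1}\cap\dot H^\si}+\cdots\les 2^{(\si-s)^+N_1}C(M)$.

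The hard part is twofold. Topologically, the one place the geometry of $\Sigma_\bb$ really enters is the verification that the transition function between the interior and exterior frames is null-homotopic so that the lifting criterion applies; this is exactly where the triviality of $N\Sigma_\bb$ over the contractible base $\R^d$, together with the asymptotic constancy built into $\nu^{(ext)}$, is essential. Analytically, the remaining subtlety is the low-frequency control of $A_\bb$ when $d=2$: the elliptic gain produces $\la_\bb\ast\la_\bb$ in $\dot H^{-1}$, which fails at low frequency in $\R^2$, and this is precisely why the weight $|D|^{\de_d}$ with $\de_d=\de>0$ appears in \eqref{Smnubb} for $d=2$ while $\de_d=0$ is admissible when $d\ge3$.
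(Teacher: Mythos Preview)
Your overall architecture matches the paper's: interior frame by radial parallel transport, exterior frame by projecting an asymptotic constant, glue via the lifting criterion, then rotate into the modified Coulomb gauge. The paper does exactly this in four steps, and your identification of the two analytical subtleties (the $d=2$ topology and the low-frequency control of $A_\bb$ in $d=2$) is on target.

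There is, however, a genuine gap in how you extract the quantitative bounds \eqref{Smnubb}. You assert $\|\nabla\theta\|_{L^\infty}\lesssim\|A\|_{H^s}\lesssim C(M)$ and that ``the interior piece is on a fixed compact set, the rotation costs only $C(M)$'', and then try to bound $\|\d\nu_\bb\|$ via $\d\nu_\bb=e^{i\theta}(\d\nu+i\nu\nabla\theta)$. But the radius $R$ of the interior ball is chosen so that $|\d_x F_\bb-\d_x F_\bb(\infty)|$ is small on the exterior, which depends on the \emph{profile} of $F_0$, not merely on $M$; the Gr\"onwall bound for the radial parallel transport then depends on $R$, so neither $\|\d\nu\|_{H^s}$ nor $\|A\|_{H^s}$ (for the glued, pre-Coulomb frame) is a priori $C(M)$-controlled. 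The paper explicitly flags this: ``these bounds would depend on the specific profile of $\Sigma_\bb$''. The fix, which the paper implements and you only invoke at the very end, is to reverse the order: first derive the elliptic equation $\Delta A_{\bb,\al}=\d_\be\Im(\la_{\bb,\be\si}\bar\la^\si_{\bb,\al})$ (no Christoffel corrections, since the gauge condition is the flat $\d_\al A_{\bb,\al}=0$) and use only the pointwise bound $|\la_\bb|\le|\d^2F_\bb|$ to get $\||D|^{\de_d}A_\bb\|_{H^{1-\de_d}}+\|A_\bb\|_{L^\infty}\lesssim\|\la_\bb\|_{L^2\cap L^\infty}^2\lesssim C(M)$. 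Only then, from $\d\nu_\bb=-iA_\bb\nu_\bb-\la_\bb^\ga\d_\ga F_\bb$, does one bootstrap $\d\nu_\bb\in L^{2d/(d-2\de_d)}\cap\dot H^{2\de_d}$ and close the higher $\dot H^\si$ bounds by an absorption argument (the paper interpolates $\|P_{>0}\nu_\bb\|_{\dot H^\si}\lesssim\|\d\nu_\bb\|_{\dot H^{2\de_d}}^{1/(\si+1)}\|\d\nu_\bb\|_{\dot H^\si}^{\si/(\si+1)}$ to absorb). Your ordering --- $\d\nu_\bb$ first, then $\la_\bb$, then $A_\bb$ --- cannot close with $C(M)$ constants.

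On the topology: your appeal to triviality of $N\Sigma_\bb$ over the contractible base is valid in principle, but the paper in $d=2$ makes this concrete via a winding-number continuity argument ($I(r)$ integer-valued, continuous in $r$, $I(0)=0$), which is worth including since for $d=2$ the annulus has $\pi_1(S^1)=\Z$ and one must actually verify the degree vanishes.
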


\begin{rem}
(i) Note that the gauge condition $\partial_\alpha A_{\bb,\al}= 0$ depends strongly on the choice of coordinates. However, it ensures that the bounds for $\nu_\bb$ and $A_\bb$ are independent of the construction of $(\nu_1,\nu_2)$ and depend only on $M$. (ii) The bounds for $\nu_\mathbf{b}$ and $A_\mathbf{b}$ are worse in two dimensions because we have to solve $\Delta A_\mathbf{b} = \d(\lambda_\mathbf{b}^2)$. Furthermore, we must deal with their low-frequency part carefully.
\end{rem}

\begin{proof}
\emph{Step 1: We construct a normal frame $\nu^{(int)}$ on $F_\bb(B_{x_0}(R+1))$, which is a topologically trivial compact manifold with boundary.} 

Choose $x_0$ and a normal frame $\nu(x_0)=(\nu_1(x_0),\nu_2(x_0))$ at $F_\bb(x_0)$, extend the frame in all directions. Look on a ray 
$x = x_0 + h \omega$ and construct $\nu_1(x)$ by
\[
\frac{d}{dh} \nu_1(x) = v(x)
\]
so that $v(x)$ is tangent and $\partial_h (\nu_1 \cdot \d_\al F_\bb(x)) = 0$ for any $\al=1,\cdots ,d$. This gives
\[
\partial_h (\nu_1 \cdot \d_\al F_\bb(x)) = v \cdot \d_\al F_\bb + \nu_1 \cdot \partial_h \d_\al F_\bb=v \cdot \d_\al F_\bb + \nu_1 \cdot \omega^\ga  \d^2_{\al\ga} F_\bb=0.
\]
So we get 
\begin{align*}
v& = v^\al \d_\al F_\bb=g^{\al\be}(v\cdot \d_\be F_\bb)\d_\al F_\bb=-g^{\al\be}(\nu_1\cdot \omega^\ga \d^2_{\be\ga} F_\bb)\d_\al F_\bb\\
&=-g^{\al\be}\omega^\ga \d_\al F_\bb \d^2_{\be\ga}  F_\bb^T \nu_1=:G(x) \nu_1,
\end{align*}
where $G(x)\in \R^{(d+2)\times (d+2)}$ is a smooth matrix. Then we obtain a linear ODE
\[\frac{d}{dh}\nu_1(x)=G(x)\nu_1(x),\quad x=x_0+h\omega,\]
which has a unique solution along any ray for given initial data $\nu(x_0)$,
\[ \nu_1(x_0+h\omega)=e^{\int_0^h G(x_0+\tau\omega) d\tau}\nu_1(x_0).\] 

In a similar way, we construct $\nu_2(x)$ by
\[  \frac{d}{dh}\nu_2(x)=w(x),  \]
so that $w\in span\{\nu_1,\d_1 F_\bb,\cdots,\d_d F_\bb\}$ and $\nu_2 \perp \nu_1$, $\nu_2\perp \d_\al F_\bb$. Then we have
\[\frac{d}{dh}(\nu_2\cdot \nu_1)=w\cdot \nu_1+\nu_2 \cdot G(x)\nu_1=0,  \]
\[  \frac{d}{dh}(\nu_2\cdot \d_\al F_\bb)=w\cdot \d_\al F_\bb+\nu_2 \cdot \omega^\ga \d^2_{\al\ga}F_\bb=0.    \]
So we get
\begin{align*}  
w&=(w\cdot \nu_1)\nu_1+g^{\al\be}(w\cdot \d_\be F_\bb)\d_\al F_\bb= -\nu_1 \nu_1^T G^T \nu_2+G(x)\nu_2\\
&= (-\nu_1 \nu_1^T G^T \nu_2+G(x))\nu_2=:H(x)\nu_2,
\end{align*}
where $H(x)$ is a smooth matrix. Then $\nu_2$ is given by
\[  \nu_2(x_0+h\omega)=e^{\int_0^h H(x_0+\tau \omega)d\tau} \nu_2(x_0).   \]

Hence, we obtain $\nu^{(int)}:=(\nu_1,\nu_2)$ on $F_\bb(B_{x_0}(R+1))$. Moreover, since the matrices $G(x)$ and $H(x)$ are smooth, we also have the Sobolev bound 
\[\|\nu^{(int)} \|_{H^s(B_{x_0}(R+1))} \les C.\] 

\emph{Step 2. We construct a normal frame $\nu^{(ext)}$ on $F_\bb(\R^d\setminus B_{x_0}(R))$, where our manifold is almost flat.}

Since the vector $\d_x F_\bb(x)$ converges to $\d_x F_\bb(\infty)$ as $x\rightarrow \infty$, then there exists a large number $R$, such that
\[   |\d_x F_\bb(x)-\d_x F_\bb(\infty)|\leq \epsilon, \quad x\in B_{x_0}^c(R):=\R^d \setminus B_{x_0}(R).      \]
This means that $\d_x F_\bb$ has a small variation in $L^\infty$ on $\R^d \setminus B_{x_0}(R)$. So we can choose $\tilde\nu$ constant uniformly transversal to $T\Sigma(B_{x_0}^c(R))$ where $\Sigma(B_{x_0}^c(R))=F_\bb(B_{x_0}^c(R))$. Projecting $\tilde\nu$ on the normal bundle $N\Sigma(B_{x_0}^c(R))$ and normalizing we obtain a normalized section $\nu^{(ext)}_1$ of the normal bundle with the same regularity as $\d F_\bb$. Then we continuously choose $\nu^{(ext)}_2$ in $N\Sigma(B_{x_0}^c(R))$ perpendicular to $\nu^{(ext)}_1$. We obtain the orthonormal frame $\nu^{(ext)}=(\nu^{(ext)}_1,\nu^{(ext)}_2)$ in $N\Sigma(B_{x_0}^c(R))$, which again has the same regularity and bounds as $\d_x F_\bb$, namely 
\[  \|\d \nu^{(ext)}\|_{H^s(B^c_{x_0}(R))}\les M.  \]

\medskip
\emph{Step 3: Gluing the two normal vectors $\nu^{(int)}$ and $\nu^{(ext)}$ smoothly on the annulus $\{ y: R\leq |y-x_0|\leq R+1  \}$.}

In the annulus $A(R)=\{ y: R\leq |y-x_0|\leq R+1  \}$, outside the large ball $B_{x_0}(R)$, we have two frames:

\begin{itemize}
    \item $\nu^{(int)}=(\nu^{(int)}_1,\nu^{(int)}_2)$, which has large Sobolev norm;

    \item $\nu^{(ext)}=(\nu^{(ext)}_1,\nu^{(ext)}_2)$, which is almost constant ( $\epsilon$ close to constant)
\end{itemize}
Then we'd like to smoothly deform $\nu^{(int)}$ into $\nu^{(ext)}$, that is, enter the annulus with $\nu^{(int)}$ and exit with $\nu^{(ext)}$.

The relation between $\nu^{(int)}$ and $\nu^{(ext)}$ is given by
\[
\nu^{(int)}_1+i\nu^{(int)}_2 = e^{i\theta} (\nu^{(ext)}_1+i\nu^{(ext)}_2), \qquad \theta: A(R) \to S^1 = \R/2\pi Z.
\]
Here $\theta$ is smooth. We claim that: there exists a unique lifting to the universal covering smoothly 
\begin{align}  \label{lift}
\tilde\theta: A(R) \to \R    
\end{align}
such that $\theta=p\circ \tilde \theta$, where $p:\mathbb R\rightarrow S^1$ is the covering map.
Then we can obtain a global orthonormal frame $\nu$ on $N\Sigma_\bb$ by defining 
\[
\nu_1+i\nu_2 = e^{i \chi \theta} (\nu^{(ext)}_1+i\nu^{(ext)}_2)\,,
\]
where $\chi:\mathbb R^d\rightarrow [0,1]$ is a smooth function with $\chi= 1$ on inside sphere $B_{x_0}(R)$ and $\chi = 0$ on the outside $\mathbb R^d\setminus B_{x_0}(R+1)$.

Now we prove the existence of the lifting \eqref{lift}. Lifting is a topological problem. Since the fundamental group $\pi_1(\R)=0$ is trivial, we have $p_*(\pi_1(\mathbb R))=0$. 
By the lifting criterion, i.e. Proposition 1.33 in \cite[p.61]{Hatcher}, and since $\pi_1(A(R))\cong\pi_1(S^{d-1})$, a lift $\tilde\theta:A(R) \rightarrow \mathbb R$ of $\theta:A(R) \rightarrow S^1$ exists if and only if $\theta_*(\pi_1(S^{d-1}))\subset p_*(\pi_1(\mathbb R))=0$.
Then we consider the following two cases:

\emph{a) $d \geq 3$.} 
Here the homotopy group $\pi_{1}(S^{d-1})=0$ for $d\geq 3$, which is trivial, therefore we have $\theta_*(\pi_1(S^{d-1}))=0\subset p_*(\pi_1(\mathbb R))$. 

\emph{b) $d=2$.} Since the homotopy group $\pi_1(S^1)\cong\mathbb Z$, not all maps $\theta : S^1 \to S^1$ are topologically trivial
as characterized by the rotation number. Therefore, we need to prove that the frame $\nu^{(int)}=(\nu_1,\nu_2)$ on $B_{x_0}(R+1)$ is topologically trivial.
By the winding number formula, we have
\[
I(R)=\frac{1}{2\pi i}\int_{T_R} \frac{d(\nu_1+i\nu_2)}{\nu_1+i\nu_2} =\frac{1}{4\pi} \int_{T_R} -\d_x \nu_1\cdot \nu_2+\d_x \nu_2 \cdot \nu_1 dx=\frac{1}{2\pi} \int_{T_R}  \nu_1\cdot \d_x\nu_2\ dx,
\]
where $T_R:= \{y:|y-x_0|=R\}$. 
Now consider the same integral over smaller circles
\[
I(r) = \frac{1}{2\pi}\int_{T_r} \nu_1 \cdot \d_x \nu_2 dx,
\qquad r \in [0,R],
\]
which is continuous for $r\in [0,R+1]$ since the frame $\nu_1,\nu_2$ are constructed smoothly.
We know that
\[
I(0) = 0.
\]
Since $I(r)$ takes values in $\mathbb Z$, then $I(r) \equiv 0$, and hence $\nu^{(int)}$ is topologically trivial. 
Therefore, from the lifting criterion, there exists a unique lifting $\tilde\theta: A(R)\rightarrow \mathbb R$ of $\theta:A(R)\rightarrow \mathbb Z$ such that $\theta=p\circ \tilde \theta$ for all $d\geq 2$.

\medskip
\emph{Step 4: Constructing the Coulomb frame $\nu_\bb$ in $N \Sigma_\bb$ by rotating the frame $\nu$.}

The bound $\d \nu\in H^k$ in particular implies that the associated connection and the second fundamental form are also finite in $H^k$. However, these bounds would depend on the specific profile of $\Sigma_\bb$.
Hence we should rotate it to get a suitable frame $\nu_\bb=(\nu_{\bb,1},\nu_{\bb,2})$, i.e. we define
\[\nu_{\bb,1} + i \nu_{\bb,2} = e^{i\theta} (\nu_1+ i\nu_2),\]
where we impose the modified Coulomb gauge condition $\d_\al A_{\bb,\al}=0$.
Then we have $A_{\bb,\al} = A_\al -\partial_\al \theta$,
and the rotation angle $\theta$ must solve 
$\Delta \theta = \d_\alpha A_{\al}$.     
It directly follows that $\d\nu_\bb$ and $A_\bb$ are finite in $H^k$.

Next, we prove that $\nu_{\bb}$ and $A_\bb$ also satisfy the bounds \eqref{Smnubb}.
In the modified Coulomb gauge, the connection $A_\bb$ satisfies 
\[  \d_\al A_{\bb,\al}=0, \qquad \d_\al A_{\bb,\be}-\d_\be A_{\bb,\al}=\Im(\la_{\bb,\al\ga}\bar{\la}^\ga_{\bb,\be}).   \]
Using these equations we derive a second order elliptic equation for $A_\bb$, namely
\begin{equation} \label{Ellp-Ab}
    \Delta A_{\bb,\al}=\d_\be \Im(\la_{\bb,\be\si}\bar{\la}^\si_{\bb,\al}),
\end{equation}
where $\Delta$ is the standard Laplacian operator. 

From \eqref{Ellp-Ab} we have
\[  \||D|^{\delta_d} A_\bb\|_{H^{1-\de_d}}+\|A_\bb\|_{L^\infty}\les \|\la_\bb\|_{L^2\cap L^\infty}^2\les C(M),  \]
then we obtain the first bound in \eqref{Smnubb} for $\d \nu_\bb$
\begin{align*}
    &\ \|\d \nu_\bb\|_{L^{\frac{2d}{d-2\de_d}}\cap L^\infty}\les \|A_\bb\nu_\bb\|_{L^{\frac{2d}{d-2\de_d}}\cap L^\infty}+\|\la_\bb \d F_\bb\|_{L^{\frac{2d}{d-2\de_d}}\cap L^\infty}\\
    &\les \|A_\bb\|_{L^{\frac{2d}{d-2\de_d}}\cap L^\infty}+\|\la_\bb\|_{L^{\frac{2d}{d-2\de_d}}\cap L^\infty}\|\d F_\bb\|_{L^\infty}
    \les C(M)+\|\d^2 F_\bb\|_{L^{\frac{2d}{d-2\de_d}}\cap L^\infty}M^{1/2}\les C(M),
\end{align*}
and hence the bound for $\d\nu_\bb\in \dot H^{2\de_d}$
\begin{align*}
&\ \|\d\nu_\bb\|_{\dot H^{2\de_d}}\les \|A_\bb \nu_\bb\|_{\dot H^{2\de_d}}+\|\la_\bb \d F_\bb\|_{\dot H^{2\de_d}}=\|A_\bb \nu_\bb\|_{\dot H^{2\de_d}}+\|\d^2 F_\bb \nu_\bb \d F_\bb\|_{\dot H^{2\de_d}}\\
&\les (\|A_\bb\|_{\dot H^{2\de_d}\cap \dot H^{\de_d}}+\|\d^2 F_\bb \d F_\bb\|_{\dot H^{2\de_d}\cap \dot H^{\de_d}})(\|\nu_\bb\|_{L^\infty}+\|\d \nu_\bb\|_{L^{\frac{2d}{d-2\de_d}}})\\
&\les (C(M)+\|\d^2 F_\bb\|_{H^{2\de_d}}\|\d F_\bb\|_{L^\infty})C(M)
\les C(M).
\end{align*}
To bound the higher derivatives
of $A_\bb$ and $\nu_\bb$, for any $\si\geq s$ we have
\begin{align*}
    \|A_\bb\|_{\dot H^{\si+1}}\les \|\la_\bb^2\|_{\dot H^\si}\les \|\la_\bb\|_{\dot H^\si}\|\la_\bb\|_{L^\infty}.
\end{align*}
and by \eqref{ineqfg} we have
\begin{align*}
    &\ \|\la_\bb\|_{\dot H^\si}\les \|\d^2 F_\bb \cdot \nu_\bb\|_{\dot H^\si}\les \|\d^2 F_\bb\|_{ H^\si}\|\nu_\bb\|_{L^\infty}+\|\d^2 F_\bb\|_{L^\infty}\|P_{>0}\nu_\bb\|_{\dot H^\si}\\
    &\les 2^{(\si-s)^+N}C(M)+C(M)\|P_{>0} \nu_\bb\|_{L^2}^{\frac{1}{\si+1}}\|P_{>0} \nu_\bb\|_{\dot H^{\si+1}}^{\frac{\si}{\si+1}}\\
    &\les 2^{(\si-s)^+N}C(M)+C(M)\|P_{>0}\d \nu_\bb\|_{\dot H^{2\de_d}}^{\frac{1}{\si+1}}\|\d \nu_\bb\|_{\dot H^{\si}}^{\frac{\si}{\si+1}}.
\end{align*}
Then we have
\begin{align*}
    &\ \|\d \nu_\bb\|_{\dot H^\si}\les \|A_\bb\nu_\bb+\la_\bb \d F_\bb\|_{\dot H^\si}\\
    &\les \|A_\bb\|_{\dot H^\si}\|\nu_\bb\|_{L^\infty}+\|A_\bb\|_{\dot H^{\de_d}\cap L^\infty}(\|\d P_{\leq 0}\nu_\bb\|_{\dot H^{2\de_d}}+\|P_{>0}\nu_\bb\|_{\dot H^\si})\\
    &\quad\  +\|\la_\bb\|_{\dot H^\si}\|\d F_\bb\|_{L^\infty}+\|\la_\bb\|_{L^\infty}\|\d F_\bb\|_{\dot H^\si}\\
    &\les C(M)\|\la_\bb\|_{\dot H^{\si}}+C(M)2^{(\si-s)^+N}+C(M)\|\d \nu_\bb\|_{\dot H^{2\de_d}}^{\frac{1}{\si+1}}\|\d \nu_\bb\|_{\dot H^\si}^{\frac{\si}{\si+1}}\\
    &\leq C(M)2^{(\si-s)^+ N}+\frac{1}{2}\|\d \nu_\bb\|_{\dot H^\si},
\end{align*}
which yields the second bound in \eqref{Smnubb} for $\d \nu_\bb$. Combining with the previous estimates of $\la_\bb$ and $A_\bb$, we can also obtain the other two bounds in \eqref{Smnubb}.
This concludes the proof of Lemma~\ref{Lem-Gaugebb}.
\end{proof}

Now we construct the normal frame $(\nu_1,\nu_2)$ in $N\Sigma$ as the small perturbation of $(\nu_{\bb, 1},\nu_{\bb, 2})$ using projections and Schmidt orthogonalization, and then bound the $H^s$-norms for $\lambda$ and $A$.
Since the manifold $\Sigma$ is a perturbation of $\Sigma_\bb$, let
\begin{align*}
    \bar{\nu}_j:=\nu_{\bb,j}-g^{\al\be}\<\nu_{\bb,j},\d_\al (F-F_\bb)\>\d_\be F \in N\Sigma,
\end{align*}
which are  normal vectors in $N\Sigma$. Then by Schmidt orthogonalization, we can construct the orthonormal frame $(\nu_1,\nu_2)$ in $N\Sigma$ as 
\begin{align} \label{ConsNu}
    \nu_1=\frac{\bar{\nu}_1}{|\bar{\nu}_1|},\quad \nu_2=\frac{\bar{\bar{\nu}}_2}{|\bar{\bar{\nu}}_2|},\qquad \text{with  }\  \bar{\bar{\nu}}_2:=\bar{\nu}_2-\<\bar{\nu}_2, \nu_1\>\nu_1.
\end{align}

We have the following lemma:
\begin{lemma}
\begin{equation} \label{nu510}
    \||\bar{\nu}_1|^{-1}\|_{\dot H^{[s]+1}}+\||\bar{\bar{\nu}}_2|^{-1}\|_{\dot H^{[s]+1}}\les C(M).
\end{equation}
\end{lemma}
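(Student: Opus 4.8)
The strategy is to show that $\bar\nu_1$ and $\bar{\bar\nu}_2$ are uniformly bounded away from $0$ and that their higher Sobolev norms are controlled, so that inverting and then composing with the smooth function $t\mapsto t^{-1/2}$ stays in $\dot H^{[s]+1}$. The starting point is the pointwise estimate: since $\Sigma$ is an $\ep_0$-perturbation of $\Sigma_\bb$ by Lemma~\ref{d2F-lem}, we have $\|\d(F-F_\bb)\|_{H^s}\les \ep_0$, hence $\|\d(F-F_\bb)\|_{L^\infty}\les \ep_0$, and therefore from the definition $\bar\nu_j=\nu_{\bb,j}-g^{\al\be}\<\nu_{\bb,j},\d_\al(F-F_\bb)\>\d_\be F$ together with $|\nu_{\bb,j}|=1$ and the ellipticity \eqref{MetBdd}, one gets $\big||\bar\nu_1|^2-1\big|\les \ep_0^2$, so $|\bar\nu_1|\geq \tfrac12$ uniformly; the same bound for $\bar{\bar\nu}_2$ follows from the Gram--Schmidt step since $|\<\bar\nu_2,\nu_1\>|=O(\ep_0)$, giving $1-|\bar{\bar\nu}_2|^2 = O(\ep_0^2)$. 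This provides the lower bound needed to make all subsequent compositions licit.

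Next I would estimate $\bar\nu_j$ in the relevant Sobolev space. Write $\bar\nu_j=\nu_{\bb,j}+E_j$ where $E_j=-g^{\al\be}\<\nu_{\bb,j},\d_\al(F-F_\bb)\>\d_\be F$. We have $\|\d\nu_{\bb}\|_{\dot H^{s}}\les C(M)$ from \eqref{Smnubb} (taking $\si=s$), hence $\|\nu_{\bb,j}\|_{\dot H^{[s]+1}}\les C(M)$ since $[s]+1\leq s+1$ and $\dot H^{[s]+1}\hookrightarrow$ can be interpolated between $\dot H^{1}$ (controlled by $\|\d\nu_\bb\|_{L^2}$, which by \eqref{Smnubb} with $\de_d$ small is finite via $\dot H^{2\de_d}$) and $\dot H^{s+1}$; for the error term, $\|E_j\|_{\dot H^{[s]+1}}$ is controlled by the algebra/product inequality \eqref{ineqfg} using $\|g^{-1}\|_{H^s}$-type bounds (finite by \eqref{Emb-g}-type estimates and \eqref{MetBdd}), $\|\nu_\bb\|_{L^\infty}+\|\d\nu_\bb\|_{H^s}\les C(M)$, $\|\d(F-F_\bb)\|_{H^{s}}\les \ep_0$, and $\|\d F\|_{L^\infty}+\|\d^2 F\|_{H^s}\les C(M)$ from \eqref{d2F}. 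This yields $\|\bar\nu_j\|_{\dot H^{[s]+1}}\les C(M)$, and similarly $\|\bar{\bar\nu}_2\|_{\dot H^{[s]+1}}\les C(M)$ after absorbing the $\<\bar\nu_2,\nu_1\>\nu_1$ correction, which is a product of three $\dot H^{[s]+1}\cap L^\infty$ factors with at least one small.

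To finish, I would pass from $|\bar\nu_1|^2$ to $|\bar\nu_1|^{-1}$. Set $P_1:=|\bar\nu_1|^2=\bar\nu_1\cdot\bar\nu_1$, so that $P_1-1\in \dot H^{[s]+1}$ with $\|P_1-1\|_{\dot H^{[s]+1}}\les C(M)\ep_0$ (each term carries at least one factor of $\d(F-F_\bb)$) and $\|P_1-1\|_{L^\infty}\les \ep_0^2\ll 1$. Since $t\mapsto t^{-1/2}$ is smooth on a neighborhood of $1$ and $|\bar\nu_1|^{-1}=\phi(P_1-1)$ for the smooth function $\phi(u)=(1+u)^{-1/2}$ with $\phi(0)=1$, the Moser-type composition estimate (a standard consequence of \eqref{ineqfg} applied iteratively, valid here because $[s]+1>d/2$ so $\dot H^{[s]+1}\cap L^\infty$ is an algebra) gives $\||\bar\nu_1|^{-1}-1\|_{\dot H^{[s]+1}}\les C(\|P_1-1\|_{L^\infty})\|P_1-1\|_{\dot H^{[s]+1}}\les C(M)$; the identical argument applied to $P_2:=|\bar{\bar\nu}_2|^2$ gives $\||\bar{\bar\nu}_2|^{-1}\|_{\dot H^{[s]+1}}\les C(M)$. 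Adding the two bounds yields \eqref{nu510}.

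\textbf{Main obstacle.} The delicate point is the composition/Moser estimate at the $\dot H^{[s]+1}$ level: one must either establish a clean Moser inequality for $\phi\circ(\cdot)$ in the \emph{homogeneous} space $\dot H^{[s]+1}$ (not just $H^{[s]+1}$), which requires care with low frequencies — this is exactly why the statement uses $\dot H^{[s]+1}$ and why the lower bound $|\bar\nu_1|\geq \tfrac12$ and the $L^\infty$ smallness of $P_1-1$ are needed — or reduce to the inhomogeneous estimate \eqref{ineqfg} by splitting $P_1-1=P_{\le 0}(P_1-1)+P_{>0}(P_1-1)$ and handling the low-frequency part separately using that $\phi$ is Lipschitz. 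Bookkeeping the product estimates so that every term retains a factor of $\|\d(F-F_\bb)\|_{H^s}\les\ep_0$ (to be absorbed or to keep constants of the form $C(M)$) is the other place where one must be somewhat careful, but it is routine given \eqref{ineqfg}, \eqref{d2F}, and \eqref{Smnubb}.
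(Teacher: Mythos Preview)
Your approach is correct but takes a somewhat different route from the paper. The paper does not invoke a Moser composition lemma as a black box; instead it proves the needed bound directly by writing out the Fa\`a di Bruno expansion
\[
\partial^N(|v|^{-1}) \;=\; \sum_{1\le j\le N}\ \sum_{\substack{l_1+\cdots+l_j=N\\ l_i\ge 1}} c_{j,l}\,|v|^{-2j-1}\,\partial^{l_1}|v|^2\cdots\partial^{l_j}|v|^2,
\]
applying H\"older with exponents $\tfrac{2N}{l_i}$, and then using Hamilton's Gagliardo--Nirenberg interpolation \eqref{HaInterpo} to conclude $\||v|^{-1}\|_{\dot H^N}\lesssim \|v\|_{\dot H^N}$ whenever $|v|\sim 1$. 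This yields the reusable lemma \eqref{HNv}, which is invoked again later (e.g.\ for \eqref{nuh520}). Note that this argument needs only the nondegeneracy $|\bar\nu_1|,|\bar{\bar\nu}_2|\sim 1$, not the $\ep_0$-smallness of $P_1-1$ in $\dot H^{[s]+1}$ that you track; in fact your claimed bound $\|P_1-1\|_{\dot H^{[s]+1}}\lesssim C(M)\ep_0$ is a bit optimistic (some terms only carry a fractional power of $\ep_0$), though the weaker bound $\lesssim C(M)$ is clear and suffices for your Moser argument. Since $[s]+1$ is an integer, the ``main obstacle'' you flag --- a Moser estimate in the homogeneous space $\dot H^{[s]+1}$ --- is resolved precisely by the paper's direct computation; there is no low-frequency subtlety because the chain rule gives only \emph{derivatives} of $|v|^2$, never $|v|^2$ itself. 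Finally, a small correction: $\partial\nu_\bb$ is not asserted to lie in $L^2$ in dimension two (only in $\dot H^{2\delta_d}$ and $L^{2d/(d-2\delta_d)}$); to get $\|\nu_\bb\|_{\dot H^{[s]+1}}\lesssim C(M)$ one interpolates $\partial\nu_\bb$ between $\dot H^{2\delta_d}$ and $\dot H^{s}$ from \eqref{Smnubb}, since $2\delta_d\le [s]\le s$.
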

\begin{proof}
For any vector $|v|\sim 1$, by interpolation \eqref{HaInterpo} it holds
\begin{equation} \label{HNv}
\begin{aligned}
    &\ \||v|^{-1}\|_{\dot H^N}\les \sum_{1\leq j\leq N}\sum_{\substack{
         l_1+\cdots +l_j=N,  \\
          l_i\geq 1
    }}\||v|^{-2j-1}\d^{l_1}|v|^2\cdots \d^{l_j}|v|^2\|_{L^2}\\
    &\les \sum_{1\leq j\leq N}\sum_{\substack{
         l_1+\cdots +l_j=N,  \\
          l_i\geq 1
    }}\||v|^{-2j-1}\|_{L^\infty}\|\d^{l_1}|v|^2\|_{L^{\frac{2N}{l_1}}}\cdots \|\d^{l_j}|v|^2\|_{L^{\frac{2N}{l_j}}}\\
    &\les \|\d^N |v|^2\|_{L^2}\les \|v\|_{\dot H^N}.
\end{aligned}
\end{equation}
Then this can be used to bound 
\begin{align*}
\||\bar{\nu}_{1}|^{-1}\|_{\dot H^{[s]+1}}&
\les \| \bar{\nu}_{1}\|_{\dot H^{[s]+1}},\qquad \||\bar{\bar{\nu}}_{2}|^{-1}\|_{\dot H^{[s]+1}}\les \| \bar{\bar{\nu}}_{2}\|_{\dot H^{[s]+1}}.
\end{align*}
By the formula for $\bar{\bar{\nu}}_{2}$, we further have
\begin{align*}
    \|\bar{\bar{\nu}}_{2}\|_{\dot H^{[s]+1}}&\les \|\bar{\nu}_{2}\|_{\dot H^{[s]+1}}+\||\bar{\nu}_{1}|^{-2}\<\bar{\nu}_{2},\bar{\nu}_{1}\>\bar{\nu}_{1}\|_{\dot H^{[s]+1}}\\
    &\les \|\bar{\nu}_{2}\|_{\dot H^{[s]+1}}+\||\bar{\nu}_{1}|^{-1}\|_{\dot H^{[s]+1}}\||\bar{\nu}_{1}|^{-1}\bar{\nu}_{1}\<\bar{\nu}_{2},\bar{\nu}_{1}\>\bar{\nu}_{1}\|_{L^\infty}\\
    &+ \|\bar{\nu}_{2}\|_{\dot H^{[s]+1}}\||\bar{\nu}_{1}|^{-2}\bar{\nu}_{1}\bar{\nu}_{1}\|_{L^\infty}+\| \bar{\nu}_{1}\|_{\dot H^{[s]+1}}\||\bar{\nu}_{1}|^{-2}\bar{\nu}_{2}\bar{\nu}_{1}\|_{L^\infty}\\
    &\les \|\bar{\nu}\|_{\dot H^{[s]+1}}.
\end{align*}
Since
\begin{align*}
    &\ \|\bar{\nu}\|_{\dot H^{[s]+1}}\les \|\nu_\bb\|_{\dot H^{[s]+1}}+\|g_\bb \<\nu_\bb, \d(F- F_\bb)\>\d F_\bb \|_{\dot H^{[s]+1}}\\
    &\les \|\nu_\bb\|_{\dot H^{[s]+1}}+\|g_\bb\|_{\dot H^{[s]+1}}\|\<\nu_\bb, \d (F-F_\bb)\>\d F_\bb \|_{L^\infty}\\
    &\quad+\|\nu_\bb\|_{\dot H^{[s]+1}}\|g_\bb \d(F-F_\bb)\d F_\bb \|_{L^\infty}
    +\|\d(F-F_\bb) \|_{\dot H^{[s]+1}}\|g_\bb \nu_\bb \d F_\bb\|_{L^\infty}\\
    &\quad +\|\d F_\bb\|_{\dot H^{[s]+1}}\|g_\bb \nu_\bb \d(F-F_\bb)\|_{L^\infty}\\
    &\les \|\nu_\bb\|_{\dot H^{[s]+1}}+\|g_\bb\|_{\dot H^{[s]+1}}+\|\d F\|_{\dot H^{[s]+1}}
    \les C(M).
\end{align*}
Hence the estimates \eqref{nu510} are obtained.
\end{proof}

Then we can also obtain the estimate:
\begin{lemma}\label{Lemreg-tA}
    The connection coefficients $A_\mu=\d_\mu \nu_1\cdot \nu_2$ have the following properties:
    \begin{equation} \label{reg-tA}
        \| A-A_\bb\|_{H^s}\les_M \ep_0.
    \end{equation}
\end{lemma}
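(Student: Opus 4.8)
The goal is to estimate $A-A_\bb = \d\nu_1\cdot\nu_2 - \d\nu_{\bb,1}\cdot\nu_{\bb,2}$ in $H^s$ by $\ep_0$, using the perturbative construction \eqref{ConsNu} of $(\nu_1,\nu_2)$ from $(\nu_{\bb,1},\nu_{\bb,2})$. The key point is that the whole construction is driven by $\d(F-F_\bb)$, which Lemma~\ref{d2F-lem} controls by $\ep_0$ in $H^s$.

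\begin{proof}
Write $\de F = F-F_\bb$, so that by Lemma~\ref{d2F-lem} we have $\|\d \de F\|_{H^s}\les \ep_0$, and recall from Lemma~\ref{Lem-Gaugebb} and \eqref{nu510} that $\|\d\nu_\bb\|_{L^\infty\cap \dot H^s}\les C(M)$, $\||\bar\nu_1|^{-1}\|_{\dot H^{[s]+1}}+\||\bar{\bar\nu}_2|^{-1}\|_{\dot H^{[s]+1}}\les C(M)$, together with the pointwise bounds $\||\bar\nu_1|-1|+\||\bar{\bar\nu}_2|-1|\les \ep_0^2$ coming from $1-|\bar\nu_1|^2 = |g^{\al\be}\<\nu_{\bb,1},\d_\al\de F\>|^2_g$ and the analogous identity for $\bar{\bar\nu}_2$. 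The first step is to record the difference formulas. From the definition $\bar\nu_j = \nu_{\bb,j} - g^{\al\be}\<\nu_{\bb,j},\d_\al\de F\>\d_\be F$ we get
\begin{equation*}
\bar\nu_j - \nu_{\bb,j} = -g^{\al\be}\<\nu_{\bb,j},\d_\al\de F\>\d_\be F =: E_j, \qquad \|E_j\|_{H^s}\les C(M)\ep_0,
\end{equation*}
the last bound following from \eqref{ineqfg}, the embedding $H^s\subset L^\infty$, and the $H^s$ bounds on $g_\bb,\d F_\bb,\nu_\bb$ (note $\d F = \d F_\bb + \d\de F$ is bounded in $L^\infty\cap\dot H^s$). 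Then from \eqref{ConsNu}, writing $\nu_1-\nu_{\bb,1} = \bar\nu_1(|\bar\nu_1|^{-1}-1) + E_1$ and $\bar{\bar\nu}_2 - \nu_{\bb,2} = E_2 - \<\bar\nu_2,\nu_1\>\nu_1 + \<\nu_{\bb,2},\nu_{\bb,2}-\nu_1\>\ldots$, and using that $\<\bar\nu_2,\nu_1\> = O(\ep_0^2)$ in $H^s$ (since $\bar\nu_2\cdot\bar\nu_1$ is quadratic in $\d\de F$), one obtains
\begin{equation*}
\|\nu_1-\nu_{\bb,1}\|_{\dot H^s\cap L^\infty} + \|\nu_2-\nu_{\bb,2}\|_{\dot H^s\cap L^\infty}\les C(M)\ep_0,
\end{equation*}
where the $\dot H^s$ bound on the scalar factors $|\bar\nu_1|^{-1}-1$ etc.\ is handled exactly as in \eqref{HNv}, now keeping track that the numerator $1-|\bar\nu_j|^2$ is quadratically small.

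The second step is to pass from the frame difference to the connection difference. Since $A_\mu = \d_\mu\nu_1\cdot\nu_2$ and $A_{\bb,\mu}=\d_\mu\nu_{\bb,1}\cdot\nu_{\bb,2}$, we write
\begin{equation*}
A_\mu - A_{\bb,\mu} = \d_\mu(\nu_1-\nu_{\bb,1})\cdot\nu_2 + \d_\mu\nu_{\bb,1}\cdot(\nu_2-\nu_{\bb,2}),
\end{equation*}
and estimate each term in $H^s$ by the algebra-type inequality \eqref{ineqfg}: the first term is bounded by $\|\d(\nu_1-\nu_{\bb,1})\|_{H^s}(\|\nu_2\|_{L^\infty}+\|P_{>0}\nu_2\|_{\dot H^s})\les C(M)\ep_0$, using $\|\d(\nu_1-\nu_{\bb,1})\|_{H^s} = \|\d(\nu_1-\nu_{\bb,1})\|_{L^2} + \|\nu_1-\nu_{\bb,1}\|_{\dot H^{s+1}}$ — and here one needs the $\dot H^{s+1}$-type control, which is available because $\bar\nu_j - \nu_{\bb,j}$ is expressed through $\d\de F\in H^{s}$ (hence $\de F$ contributes one extra derivative relative to $\d^2 F\in H^s$, matching the $H^{[s]+1}$ bounds already used in \eqref{nu510}). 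The second term is bounded by $\|\d\nu_\bb\|_{H^s\cap L^\infty}\|\nu_2-\nu_{\bb,2}\|_{L^\infty\cap\dot H^s}\les C(M)\ep_0$. Summing over $\mu$ gives \eqref{reg-tA}.

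The main obstacle is bookkeeping at top regularity: the factor $|\bar\nu_1|^{-1}$ and its analogue must be differentiated $s+1$ times when it multiplies $\bar\nu_1$ inside $\nu_1$, and one must verify that every resulting term still carries at least one factor of $\d\de F$ (so that the overall bound is $O(\ep_0)$ rather than merely $O_M(1)$). This is done as in the proof of \eqref{nu510}, distributing derivatives via the interpolation inequality \eqref{HaInterpo} and \eqref{ineqfg} so that the highest-order derivative always lands on a quantity bounded by $C(M)$ while a low-order factor of $\de F$ or $\d\de F$ supplies the smallness; the quadratic smallness of $1-|\bar\nu_j|^2$ gives room to spare. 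The remaining manipulations are routine product estimates.
\end{proof}
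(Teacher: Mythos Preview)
There is a genuine gap at the top-order term. Your decomposition
\[
A_\mu - A_{\bb,\mu} = \d_\mu(\nu_1-\nu_{\bb,1})\cdot\nu_2 + \d_\mu\nu_{\bb,1}\cdot(\nu_2-\nu_{\bb,2})
\]
followed by a product estimate treating $\nu_2$ as a mere bounded multiplier forces you to control $\|\d(\nu_1-\nu_{\bb,1})\|_{H^s}$ by $\ep_0$. But $\nu_1-\nu_{\bb,1}$ contains the term $E_1 = -g^{\al\be}\<\nu_{\bb,1},\d_\al\de F\>\d_\be F$, and $\d_\mu E_1$ in turn contains $g^{\al\be}\<\nu_{\bb,1},\d_\mu\d_\al\de F\>\d_\be F$. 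Here $\d^2\de F = \d^2 P_{>N_1}F_0$ is only $O_M(1)$ in $H^s$, not $O(\ep_0)$: the smallness in Lemma~\ref{d2F-lem} comes from trading one derivative for the frequency gain, and that derivative has already been spent. Your remark that ``$\de F$ contributes one extra derivative relative to $\d^2 F\in H^s$, matching the $H^{[s]+1}$ bounds in \eqref{nu510}'' conflates the $C(M)$ bound of \eqref{nu510} with the $\ep_0$ bound you need.

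The paper recovers the missing smallness by \emph{not} separating the factor $\nu_2$. It uses $\bar\nu_1\perp\nu_2$ to write $A_\mu=|\bar\nu_1|^{-1}\d_\mu\bar\nu_1\cdot\nu_2$ and then tracks the dangerous piece $g^{\al\be}\<\nu_{\bb,1},\d_\mu\d_\al\de F\>\,\d_\be F\cdot\nu_2$ as a whole: since $\|\nu_2-\nu_{\bb,2}\|_{H^s}\les\ep_0$ and $\d_\be F_\bb\perp\nu_{\bb,2}$, one has $\d_\be F\cdot\nu_2 = \d_\be\de F\cdot\nu_{\bb,2}+O_{H^s}(\ep_0)$, so the inner product with the second frame vector supplies an extra factor $\d\de F=O_{H^s}(\ep_0)$ that compensates the loss on $\d^2\de F$. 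Your product-rule split discards exactly this cancellation; you need to keep the contraction with $\nu_2$ (or $\nu_{\bb,2}$) through the estimate of the top-order piece.
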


\begin{proof}
    Since $\bar{\nu}_1 \perp \nu_2$, we have $ A_j=\d_j   \nu_1\cdot   \nu_2=\frac{\d_j \bar{\nu}_1}{|\bar{\nu}_1|}\cdot   \nu_2$.
    We can rewrite the $  A-A_\bb$ as 
\begin{align*}
    &\   A_\mu-A_{\bb,\mu}=\d_\mu   \nu_1\cdot   \nu_2-\d_\mu \nu_{\bb,1}\cdot \nu_{\bb,2}\\
    &=\frac{1}{|\bar{\nu}_1||\bar{\bar{\nu}}_2|}[\d_j \nu_{\bb,1}-\d_j (g^{\al\be}\<\nu_{\bb,1},\d_\al (F-F_\bb)\>\d_\be F)]\\
    &\quad \cdot [\nu_{\bb,2}-g^{\al\be}\<\nu_{\bb,2},\d_\al (F-F_\bb)\>\d_\be F-\<\bar{\nu}_2,  \nu_1\>  \nu_1]-\d_\mu \nu_{\bb,1}\cdot \nu_{\bb,2}
\end{align*}
\emph{a) We estimate the term 
\begin{align*}
    \|\frac{1}{|\bar{\nu}_1||\bar{\bar{\nu}}_2|}\d_\mu \nu_{\bb,1}\cdot \nu_{\bb,2}-\d_\mu \nu_{\bb,1}\cdot \nu_{\bb,2}\|_{H^s}=\|A_{\bb,j}\frac{1-|\bar{\nu}_1|^2|\bar{\bar{\nu}}_2|^2}{|\bar{\nu}_1||\bar{\bar{\nu}}_2|(1+|\bar{\nu}_1||\bar{\bar{\nu}}_2|)}\|_{H^s}\les \ep_0^2.
\end{align*}}

Since $A_\bb\in L^\infty\cap \dot H^s$, $|\bar{\nu}_1|\sim | \bar{\bar{\nu}}_2|\sim 1$, and by \eqref{nu510} we have $P_{>0}|\bar{\nu}_1|^{-1},\ P_{>0}|\bar{\bar{\nu}}_2|^{-1},\ P_{>0}(1+|\bar{\nu}_1||\bar{\bar{\nu}}_2|)^{-1}\in \dot H^s$, they are all bounded by $C(M)$. Then by \eqref{ineqfg}, it suffices to bound $1-|\bar{\nu}_1|^2|\bar{\bar{\nu}}_2|^2$ in $H^s$. We denote 
\[X_j:=\<\nu_{\bb,j},\d(F-F_\bb)\>.\] 
From $\bar{\nu}_1$, $\bar{\bar{\nu}}_2$ and $\nu_{\bb,j}\perp \d F_\bb$, we have
\begin{align*}
    |\bar{\nu}_1|^2=1-|X_1|^2_g,\qquad 
    |\bar{\bar{\nu}}_2|^2=1-|X_2|_g^2-\<\bar{\nu}_2,  \nu_1\>^2=1-|X_2|_g^2-|\bar{\nu}_1|^{-2}\<X_1,X_2\>_g^2,
\end{align*}
which yields
\begin{align*}
    1-|\bar{\nu}_1|^2|\bar{\bar{\nu}}_2|^2=|X_1|_g^2+|X_2|_g^2+|\bar{\nu}_1|^{-2}\<X_1,X_2\>_g^2-|X_1|_g^2(|X_2|_g^2+|\bar{\nu}_1|^{-2}\<X_1,X_2\>_g^2).
\end{align*}
    Since 
    \begin{align*}
        &\|X_j\|_{H^s}\les (\|\nu_{\bb,j}\|_{L^\infty}+\|P_{>0}\nu_{\bb,j}\|_{\dot H^s})\|\d(F-F_\bb)\|_{H^s}\les_M \ep_0,\\
        &\|X_j\|_{L^\infty}\les \|\d(F-F_\bb)\|_{L^\infty}\les \ep_0,\\
        &\|\<\bar{\nu}_2, {\nu}_1\>\|_{H^s}=\||\bar{\nu}_1|^{-1}\<X_1,X_2\>_g\|_{H^s}\les_M \ep_0^2,
    \end{align*}
    then we obtain 
    \[\|1-|\bar{\nu}_1|^2|\bar{\bar{\nu}}_2|^2\|_{H^s}\les_M \ep_0^2.\]

    \emph{b) We estimate the term}
    \begin{align*}
        \|\d_\mu \nu_{\bb,1}\cdot [g^{\al\be}\<\nu_{\bb,2},\d_\al (F-F_\bb)\>\d_\be F+\<\bar{\nu}_2,  \nu_1\>  \nu_1]\|_{H^s}\les_M \ep_0.
    \end{align*}
    By \eqref{ineqfg}, $\d_j \nu_{\bb,1}\in L^\infty\cap \dot H^s$, and $\|\<\bar{\nu}_2,  \nu_1\>\|_{H^s}\les \ep_0$, it suffices to bound 
    \begin{align*}
        \|g^{\al\be}\<\nu_{\bb,2},\d_\al (F-F_\bb)\>\d_\be F\|_{H^s}\les \|g^{\al\be}\|_{L^\infty\cap \dot H^s}\|X_2\|_{H^s}\|\d_\be F\|_{L^\infty\cap \dot H^s}
        \les_M \ep_0 .
    \end{align*}

    \emph{c) We estimate the term}
    \begin{align*}
        \|\d_\mu (g^{\al\be}\<\nu_{\bb,1},\d_\al (F-F_\bb)\>\d_\be F) \cdot   \nu_2\|_{H^s} \les_M \ep_0.
    \end{align*}
    When $\d_\mu$ is applied to $g^{\al\be}\nu_{\bb,1} \d_\be F$, by $\|P_{>0}\nu_2\|_{\dot H^s}\les C(M)$ we have
    \begin{align*}
        &\ \|\d_\mu(g^{\al\be}\nu_{\bb,1}\d_\be F)\d_\al(F-F_\bb)\cdot   \nu_2 \|_{H^s}\\
        \les&\  \|\d_\mu(g^{\al\be}\nu_{\bb,1}\d_\be F)\|_{L^\infty\cap \dot{H}^s}\|\d_\al (F-F_\bb)\|_{H^s}\| ( \nu_2,P_{>0}\nu_2) \|_{L^\infty\times \dot{H}^s}\les_M \ep_0.
    \end{align*}
    When $\d_\mu$ is applied to $\d_\al(F-F_\bb)$, by $\|\tilde \nu_2-\nu_{\bb,2}\|_{H^s}\les \ep_0$, it suffices to bound
    \begin{align*}
        &\ \| g^{\al\be}\<\nu_{\bb,1},\d_\mu\d_\al (F-F_\bb)\>\d_\be F\cdot \nu_{\bb,2}\|_{H^s}\\
        =&\ \| g^{\al\be}\<\nu_{\bb,1},\d_\mu\d_\al (F-F_\bb)\>\<\d_\be (F-F_\bb)\cdot \nu_{\bb,2}\>\|_{H^s}\\
        \les &\ \|g^{\al\be}\|_{L^\infty\cap \dot H^s}\|(\nu_{\bb,1},P_{>0}\nu_{\bb,1})\|_{L^\infty\times \dot H^s}\|\d^2(F-F_\bb)\|_{H^s} \|X_2\|_{H^s}
        \les_M  \ep_0.
    \end{align*}
Therefore, we obtain the estimate \eqref{reg-tA}. 
\end{proof}

Now it directly follows from \eqref{Smnubb} and \eqref{reg-tA} that $A$ also satisfy the bound
\begin{equation*}%\label{reg-A}
	\||D|^{\de_d} A\|_{H^{s-\de_d}}\les \||D|^{\de_d} A_\bb\|_{H^{s-\de_d}}+\| A-A_\bb\|_{H^s} \lesssim C(M).
\end{equation*}
Projecting the second fundamental form $\Lambda$ on the frame as in Section~\ref{sec-2.1} we obtain the complex second fundamental form $\lambda$. By \eqref{nu510} we have 
\begin{align*}
    \|\nu\|_{\dot H^{[s]+1}}\les \|(\bar{\nu}_1,\bar{\bar{\nu}}_2)\|_{\dot H^{[s]+1}}\les C(M).
\end{align*}
Then $\la$ has the same regularity as $\d^2 F$,
\begin{equation*}
\| \lambda \|_{H^s}\les \|\d^2 F\cdot \nu\|_{H^s}\les \|\d^2 F\|_{H^s}(\|\nu\|_{L^\infty}+\|P_{>0}\nu\|_{\dot H^s})\les C(M)(1+\|\nu\|_{\dot H^{[s]+1}})\les C(M).
\end{equation*}
Moreover, for the frame we have
\begin{align*}
    \|\d \nu\|_{L^{\frac{2d}{d-2\de_d}}}\les \|A\|_{L^{\frac{2d}{d-2\de_d}}}+\|\la\|_{L^{\frac{2d}{d-2\de_d}}}\|\d F\|_{L^\infty}\les C(M).
\end{align*}
This can be used to bound
\begin{align*}
    \|\d \nu\|_{\dot H^{2\de_d}}\les \|(A,\d^2 F \d F)\|_{\dot H^{2\de_d}\cap \dot H^{\de_d}}(\|\nu\|_{L^\infty}+\|\d\nu\|_{L^{\frac{2d}{d-2\de_d}}})\les C(M)
\end{align*}
and 
\begin{align*}
    \|\d\nu\|_{\dot H^s}&\les \|A\|_{\dot H^s}\|\nu\|_{L^\infty}+\|A\|_{\dot H^{\de_d}\cap L^\infty}(\|\d P_{\leq 0}\nu\|_{\dot H^{2\de_d}}+\|P_{>0}\nu\|_{\dot H^s})+\|\la\|_{H^s}\|\d F\|_{L^\infty\cap \dot H^s}\\
    &\les C(M)+C(M)\|P_{>0}\nu\|_{L^2}^{\frac{1}{s+1}}\|P_{>0}\nu\|_{\dot H^{s+1}}^{\frac{s}{s+1}}\\
    &\leq C(M)+\frac{1}{2}\|\d \nu\|_{\dot H^s}.
\end{align*}
Then we get $\|\d\nu\|_{\dot H^{2\de_d}\cap \dot H^s}\les C(M)$.
Thus the estimates in \eqref{ini} are obtained.

\subsection{Regularization of initial manifold $\Sigma$}\label{Reg-sec}
In the previous subsection, we have obtained a rough initial manifold
$\Sigma$ with gauge fixed, on which the data $g,\ A$ and $\la$ have finite Sobolev norms.
Our goal here is to construct a family of regularized initial manifolds and to show that the $X^s$-norm of $\lambda$, the $Y^{s+1}$-norm of $g$, and the $Z^s$-norm of $A$ are each equivalent to the standard Sobolev norms of these respective quantities.

Given an initial submanifold $\Sigma$ with an orthonormal frame $(\nu_1,\nu_2)$. By frequency projection, we regularize the manifold and denote its associated variables as
\begin{align} \label{RegMfd-0}
    (\Sigma^{(h)}:=F^{(h)}(\R^d), g^{(h)}, A^{(h)}, \la^{(h)}),\qquad F^{(h)}:=P_{<h}F,\ \ h\geq h_0.
\end{align}
where the coordinates remain fixed and are identical to those of $\Sigma$. The corresponding metric is given by
\begin{equation*}
    g^{(h)}_{\al\be}=\<\d_\al F^{(h)},\d_\be F^{(h)}\>.
\end{equation*}
To obtain the connection and second fundamental form, we first  regularize the orthonormal frame $(\nu_1,\nu_2)$ as 
\begin{align*}
    (\tilde \nu^{(h)}_{1},\tilde \nu^{(h)}_{2}):=(P_{<h}\nu_1,P_{<h}\nu_2),
\end{align*}
and obtain the normal vectors
\begin{align}  \label{nuba(h)}
    \bar{\nu}^{(h)}_{j}=\tilde \nu^{(h)}_{j}-g^{(h)\al\be}\<\tilde \nu^{(h)}_{j},\d_\al F^{(h)}\>\d_\be F^{(h)}. 
\end{align}
Then the orthonormal frame $(\nu^{(h)}_{1},\nu^{(h)}_{2})$ on $\Sigma^{(h)}$ is given by
\begin{align} \label{nuhini}
\nu^{(h)}_{1}=\frac{\bar{\nu}^{(h)}_{1}}{|\bar{\nu}^{(h)}_{1}|},\quad \nu^{(h)}_{2}=\frac{\bar{\bar{\nu}}^{(h)}_{2}}{|\bar{\bar{\nu}}^{(h)}_{2}|}, \qquad \text{with}\quad \bar{\bar{\nu}}^{(h)}_{2}=\bar{\nu}^{(h)}_{2}-\<\bar{\nu}^{(h)}_{2},\nu^{(h)}_{1}\>\nu^{(h)}_{1}.
\end{align}
Hence, the connection $A^{(h)}$ and the second fundamental form $\la^{(h)}$ are defined as
\begin{equation*}
    A^{(h)}_{\al}=\d_\al \nu^{(h)}_{1}\cdot \nu^{(h)}_{2},\quad \la^{(h)}_{\al\be}=\d^2_{\al\be}F^{(h)}\cdot m^{(h)},\quad m^{(h)}:=\nu^{(h)}_{1}+i\nu^{(h)}_{2}.
\end{equation*}

\begin{rem}
    Different from the construction \eqref{ConsNu} of the orthonormal frame on $\Sigma$, which relies on the smooth frame $\nu_\bb$, we begin instead with the frame $(\nu_1,\nu_2)$ defined on $\Sigma$. This frame is then regularized via frequency projection. Subsequently, by applying projection and Gram-Schmidt orthogonalization, we obtain an orthonormal frame on $\Sigma^{(h)}$. This procedure ensures the convergence of $g^{(h)}$, $A^{(h)}$, and $\lambda^{(h)}$ in suitable Sobolev spaces as $h \to \infty$.
\end{rem}

Next, we consider the proof of the  properties \eqref{Reg-met}, \eqref{SECs} and the bounds \eqref{Reg-gAla}.

\begin{proof}[Proof of \eqref{Reg-met}]
By the definition $g^{(h)}=\d P_{<h}F\cdot \d P_{<h}F$, we have
\begin{align*}
&\ \|g-g^{(h)}\|_{H^s}=\|\d F\cdot \d F-\d P_{<h}F\cdot \d P_{<h}F\|_{H^s}\\
&=\|\d P_{>h} F\cdot \d F+\d P_{<h}F\cdot \d P_{>h}F\|_{H^s} \les \|\d P_{>h} F\|_{H^s}\|\d F\|_{L^\infty\cap \dot H^s}\\
&\les 2^{-h}\|\d^2 P_{>h} F\|_{H^s} C(M)\les C(M)2^{-h}.
\end{align*}
Then for any vector $X$ we have
\begin{align*}
|(g^{(h)}_{\al\be}-g_{\al\be})X^\al X^\be|\les \|g^{(h)}-g\|_{L^\infty}|X|^2\les \|g^{(h)}-g\|_{H^s}|X|^2\les C(M)\ep_0 |X|^2.
\end{align*}
Hence, by $c_0I\leq g\leq c_0^{-1}I$ we obtain the ellipticity property  \eqref{Reg-met}.
\end{proof}

\begin{proof}[Proof of \eqref{SECs}]
By the definitions of $g^{(h)}$ and $\la^{(h)}$, we have
\begin{align*}
    \|\la^{(h)}\|_{L^\infty}\les \|\d^2 P_{<h}F\|_{L^\infty}\les \|\d^2 F\|_{H^s}\les C(M),\\
    \|g^{(h)}\|_{L^\infty}\les \|\d P_{<h}F\cdot \d P_{<h}F\|_{L^\infty}\les C(M).
\end{align*}
Then from the formula \eqref{R-la}, for any vectors $X$ we get the boundness of Ricci curvature
\begin{align*}
    |\Ric^{(h)}_{\al\be}X^\al X^\be|=|\Re(\la^{(h)}_{\al\be}\bar{\psi}^{(h)}-\la^{(h)}_{\al\si}\bar{\la}^{(h)\si}_\be)X^\al X^\be|\les \|\la^{(h)}\|_{\sfL^\infty}^2 |X|_{g^{(h)}}^2\les C(M)|X|_{g^{(h)}}^2.
\end{align*}

We now turn to the proof of the second bound in \eqref{SECs}. 
Here we first should consider the bound for $\d_h g^{(h)}$ with $h\geq h_0$:
\begin{align}   \label{dhg}
    &\ \int_{h}^\infty \|\d_h g^{(h)}\|_{L^\infty}dh\les\int_{h}^\infty \|P_h \d F\cdot P_{<h}\d F\|_{L^\infty}dh \\      \nonumber
    \les &\ \int_{h}^\infty 2^{-h}2^{(s+1)h}\|P_h \d F\|_{L^2}\|P_{<h}\d F\|_{L^\infty}dh\\   \nonumber
    \les &\  C(M) 2^{-h_0}\big(\int_{h}^\infty 2^{2(s+1)h}\|P_h \d F\|_{L^2}^2 dh\big)^{1/2}\les C(M)2^{-h_0}.
\end{align}
Then we claim that:
\begin{align}  \label{vol-reg}
    &e^{-C(M)2^{-h_0}}dvol_{g} \leq dvol_{g^{(h)}}  \leq e^{C(M)2^{-h_0}}dvol_{g},\\ \label{balls-reg}
    &B_x(r_0,\Sigma)\subset B_x(r_0 e^{C(M)2^{-h_0}},\Sigma^{(h)}).
\end{align}

\emph{a) Proof of claim \eqref{vol-reg}.} From the derivative of $\det g^{(h)}$, we know that
\begin{align*}
    |\d_h \sqrt{\det g^{(h)}}|=|\frac{1}{2}g^{(h)\al\be}\d_h g^{(h)}_{\al\be} \sqrt{\det g^{(h)}}| \leq \|\frac{1}{2}g^{(h)\al\be}\d_h g^{(h)}_{\al\be}\|_{L^\infty} \sqrt{\det g^{(h)}},
\end{align*}
which implies that
\begin{align*}
    |\d_h \ln \sqrt{\det g^{(h)}}|\leq \|\frac{1}{2}g^{(h)\al\be}\d_h g^{(h)}_{\al\be}\|_{L^\infty}.
\end{align*}
Integrating over $[h,\infty)$, this combined with \eqref{dhg} yields
\begin{align*}
    \sqrt{\det g}\  e^{-C(M)2^{-h_0}}\leq \sqrt{\det g^{(h)}}\leq \sqrt{\det g} \ e^{C(M)2^{-h_0}}
\end{align*}
Hence, by the volume form $dvol_{g^{(h)}}=\sqrt{\det g^{(h)}} dx$ we obtain the estimate \eqref{vol-reg}.

\smallskip 
\emph{b) Proof of claim \eqref{balls-reg}.} For any two points $F(x)$ and $F(y)$ in $\Sigma$, there exists a geodesic $\gamma:[0,1]\rightarrow \Sigma$ such that $\ga(0)=x$ and $\ga(1)=y$, whose distance is denoted as $l(\ga)$. Then we replace $F$ by $F^{(h)}$, and define the length of curve $\ga$ as
\begin{align*}
    l(\ga,h)=\int_0^1 |\dot{\gamma}(\tau)|_{g^{(h)}} d\tau=\int_0^1 \Big(g^{(h)}_{\al\be}\frac{\d \ga_\al}{\d\tau} \frac{\d \ga_\be}{\d\tau}\Big)^{1/2} d\tau.
\end{align*}
Since the metric $g^{(h)}$ varies with $h$, the length $l(\ga,h)$ would also change. Then we have
\begin{align*}
    \big|\frac{d}{dh}l(\ga,h)\big|=\big|\int_0^1 \frac{1}{2|\dot\ga|}\Big(\d_h g^{(h)}_{\al\be}\frac{\d \ga_\al}{\d\tau} \frac{\d \ga_\be}{\d\tau}\Big) d\tau\big|\leq \frac{1}{2}\|\d_h g^{(h)}\|_{L^\infty} l(\gamma,h),
\end{align*}
which yields
\begin{equation*}
    \big|\frac{d}{dh}\ln l(\ga,h) \big|\leq \frac{1}{2}\|\d_h g^{(h)}\|_{L^\infty} .
\end{equation*}
Integrating over $[h,\infty)$, this combined with \eqref{dhg} gives
\begin{align*}
    l(\ga)e^{-C(M)2^{-h_0}}\leq l(\ga,h)\leq l(\ga)e^{C(M)2^{-h_0}}.
\end{align*}
Hence, we obtain that the distance $d_h(x,y)$ between $F^{(h)}(x)$ and $F^{(h)}(y)$ for $h\in[h_0,\infty)$ satisfies the bound
\begin{equation*}
    d_h(x,y)\leq l(\ga,h) \leq l(\ga) e^{C(M)2^{-h_0}}=d(x,y) e^{C(M)2^{-h_0}}.
\end{equation*}
Hence the claim \eqref{balls-reg} follows.

With the two claims \eqref{vol-reg} and \eqref{balls-reg} at hand,  we  obtain
\begin{equation*}
    \begin{aligned}
    &{\rm Vol}_{g^{(h)}}(B_x(e^{C(M)2^{-h_0}}))= \int_{B_x(e^{C(M)2^{-h_0}},h)} 1\  dvol_{g^{(h)}}\geq \int_{B_x(1)}   e^{-C(M)2^{-h_0}}dvol_{g}\\
    =&\    e^{-C(M)2^{-h_0}} {\rm Vol}_{g}(B_x(1))
    \geq  e^{-C(M)2^{-h_0}} v.
\end{aligned}
\end{equation*}
Hence, the second bound in \eqref{SECs} also follows.
\end{proof}

\begin{proof}[Proof of the bound for the metric in \eqref{Reg-gAla}: $\|g\|_{Y^{s+1}}\les C(M)$.]
\

First, we consider the convergence of $g^{(h)}$. In a same way as the proof of \eqref{Reg-met}, we have
\begin{align} \label{g-gh}
    &\ \|g-g^{(h)}\|_{H^{s+1}}\les \|\d P_{>h}F(\d F+\d P_{<h}F)\|_{H^{s+1}}\\\nonumber
    &\les \|\d P_{>h}F\|_{H^{s+1}}\|\d F\|_{L^\infty}+\|\d P_{>h}F\|_{L^\infty}\|\d F\|_{\dot H^{s+1}}\les C(M)\|\d P_{>h}F\|_{H^{s+1}}.
\end{align}
In view of $\|\d^2 F\|_{H^s}\les C(M)$, this implies the convergence $\|g-g^{(h)}\|_{H^{s+1}}\rightarrow 0$ as $h\rightarrow\infty$. Hence, the family of regularization $[g^{(h)}]\in \mathrm{Reg}(g)$.

Next, we prove the bound $\tri[g^{(h)}]\tri_{s+1,g}\les C(M)$. By the estimate \eqref{g-gh}, we can bound the low-frequency part by
\begin{align*}
\||D|^{\si_d}g^{(h)}\|_{H^{s+1-\si_d}}\les \||D|^{\si_d}g\|_{H^{s+1-\si_d}}+\|g-g^{(h)}\|_{H^{s+1}}\les M.
\end{align*}
For the high derivatives $N\geq [s]+1$, we have
\begin{align*}
    &\ \int_{h_0}^\infty 2^{2(s-N)h} \| g^{(h)}\|_{\dot H^{N+1}}^2 dh=\int_{h_0}^\infty 2^{2h(s-N)} \|(P_{<h}\d F\cdot P_{<h}\d F)\|_{\dot H^{N+1}}^2 dh\\
    \les&\ \int_{h_0}^\infty 2^{2(s-N)h} \|P_{<h}\d F\|_{\dot H^{N+1}}^2 \|P_{<h}\d F\|_{L^\infty}^2 dh
    \les  C(M) \|\d F\|_{\dot H^{s+1}}\les C(M).
\end{align*}
This yields that for any $N\geq [s]+1$ 
\begin{align*}
    &\ \int_{h_0}^\infty 2^{2h(s-N)} \||D|^{\si_d}g^{(h)}\|_{H^{N+1-\si_d}}^2 dh
    \les  \int_{h_0}^\infty 2^{2h(s-N)} (\||D|^{\si_d}g^{(h)}\|_{L^2}^2+\|g^{(h)}\|_{\dot H^{N+1}}^2) dh \\
    &\les \int_{h_0}^\infty 2^{2h(s-N)}C(M) dh+C(M)\les  C(M).
\end{align*}

Finally, we bound the linearized part $\int_{h_0}^\infty 2^{2sh}\|\d_h g\|_{H^1}^2 dh$. 
Since
\begin{equation}\label{DB-011}
    \begin{aligned} 
    \|\d_h g^{(h)}\|_{H^1}&=\|\d_h (P_{<h}\d F\cdot P_{<h}\d F)\|_{H^1}\les \|P_{h}\d F\cdot P_{<h}\d F\|_{H^1}\\
    &\les \|\d P_h F\|_{H^1}\|\d P_{<h}F\|_{W^{1,\infty}}\les C(M)\|\d P_h F\|_{H^1},
\end{aligned}
\end{equation}
then we have 
\begin{align*}
    \int_{h_0}^\infty 2^{2hs}\|\d_h g^{(h)}\|_{H^1}^2 dh \les C(M)\int_{h_0}^\infty 2^{2hs}\|\d P_h F\|_{H^1}^2 dh\les C(M)\|P_{>h_0}\d F\|_{H^{s+1}}^2\les C(M).
\end{align*}
Thus, the term $\tri[g^{(h)}]\tri_{s+1,g}$, and hence the $Y^{s+1}$-norm of $g$, are bounded by $C(M)$.
\end{proof}

To bound $A\in Z^s$ and $\la\in X^s$,
we need the following estimates for $\bar{\nu}_1^{(h)}$ and $\bar{\bar{\nu}}_2^{(h)}$.
\begin{lemma}
Suppose $\|\d\nu\|_{\dot H^{2\de_d}\cap \dot H^s}\les C(M)$, then we have
\begin{align} \label{nuh519}
&\|\bar{\nu}_1^{(h)}\|_{\dot H^{[s]+1}}+\|\bar{\bar{\nu}}_2^{(h)}\|_{\dot H^{[s]+1}}\les C(M),\\ \label{nuh520}
&\|P_{>0}(|\bar{\nu}_1^{(h)}|^{-1},|\bar{\bar{\nu}}_2^{(h)}|^{-1},(1+|\bar{\nu}_1^{(h)}||\bar{\bar{\nu}}_2^{(h)}|)^{-1})\|_{\dot H^s}\les C(M),\\ \label{nuh522}
&\|1-|\bar{\nu}_1^{(h)}|^2|\bar{\bar{\nu}}_2^{(h)}|^2\|_{H^s}\les 2^{-h}C(M),\\ \label{nuh523}
&\|m^{(h)}-m\|_{H^s}\les 2^{-h}C(M).
\end{align}
\end{lemma}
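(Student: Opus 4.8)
The plan is to reduce all four estimates to bounds for the three ``defect quantities''
\[
X_j^{(h)} := \langle \tilde\nu_j^{(h)}, \d F^{(h)}\rangle = \langle P_{<h}\nu_j,\, P_{<h}\d F\rangle, \quad
Y_j^{(h)} := |P_{<h}\nu_j|^2 - 1, \quad
Y_{12}^{(h)} := \langle P_{<h}\nu_1,\, P_{<h}\nu_2\rangle,
\]
which measure the failure of the exact relations $\langle\nu_j,\d_\al F\rangle=0$, $|\nu_j|^2=1$, $\langle\nu_1,\nu_2\rangle=0$ (valid on $\Sigma$) to survive the frequency truncation. Expanding \eqref{nuba(h)}--\eqref{nuhini} gives the algebraic identities $|\bar\nu_j^{(h)}|^2 = 1 + Y_j^{(h)} - |X_j^{(h)}|_{g^{(h)}}^2$ and $\langle\bar\nu_2^{(h)},\bar\nu_1^{(h)}\rangle = Y_{12}^{(h)} - \langle X_1^{(h)},X_2^{(h)}\rangle_{g^{(h)}}$, together with $|\bar{\bar\nu}_2^{(h)}|^2 = |\bar\nu_2^{(h)}|^2 - |\bar\nu_1^{(h)}|^{-2}\langle\bar\nu_2^{(h)},\bar\nu_1^{(h)}\rangle^2$; thus every quantity in \eqref{nuh519}--\eqref{nuh523} is a convergent series of products of the $X_j^{(h)}$, $Y_j^{(h)}$, $Y_{12}^{(h)}$ with ``harmless'' factors — $g^{(h)}$, $\d F^{(h)}=P_{<h}\d F$, $P_{<h}\nu_j$, and (once $|\bar\nu_1^{(h)}|\sim 1$ is known) the inverses $|\bar\nu_1^{(h)}|^{-1}$ etc. — each of which is bounded in $L^\infty$ with $P_{>0}$-part bounded in $\dot H^s$ and $\dot H^{[s]+1}$. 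The available inputs are $\|\d^2 F\|_{H^s}\les C(M)$ from Lemma~\ref{d2F-lem}, the hypothesis $\|\d\nu\|_{\dot H^{2\de_d}\cap\dot H^s}\les C(M)$ (which also yields $\|\d\nu\|_{L^\infty}\les C(M)$ since $2\de_d<d/2<s$), and the ellipticity \eqref{Reg-met} with $\det g^{(h)}\sim 1$.

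The key claim is
\[
\|X_j^{(h)}\|_{H^s} + \|Y_j^{(h)}\|_{H^s} + \|Y_{12}^{(h)}\|_{H^s} \les 2^{-h}C(M), \qquad \|X_j^{(h)}\|_{\dot H^{[s]+1}}\les C(M),
\]
together with the analogue of the first bound in $L^\infty$. The gain $2^{-h}$ is the crucial point and is invisible to a naive product estimate: it comes from combining the exact relations with the observation that each defect, being a product of two functions of frequency $<2^h$, is supported in $\{|\xi|<2^{h+1}\}$. Applying $P_{<h+1}$ to $0=\langle\nu_j,\d F\rangle$ and splitting $\nu_j=P_{<h}\nu_j+P_{\ge h}\nu_j$, $\d F = P_{<h}\d F + P_{\ge h}\d F$ gives
\[
X_j^{(h)} = -P_{<h+1}\langle P_{[h,h+1]}\nu_j, P_{<h}\d F\rangle - P_{<h+1}\langle P_{<h}\nu_j, P_{[h,h+1]}\d F\rangle - P_{<h+1}\langle P_{\ge h}\nu_j, P_{\ge h}\d F\rangle,
\]
since only the dyadic blocks of the high-frequency factors near $2^h$ survive the outer cutoff. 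Using $\|P_{[h,h+1]}\nu_j\|_{L^2}\les 2^{-h(s+1)}\|\d\nu\|_{\dot H^s}$, $\|P_{[h,h+1]}\d F\|_{L^2}\les 2^{-h(s+1)}\|\d^2 F\|_{\dot H^s}$, the bounds $\|P_{<h}\nu_j\|_{L^\infty},\|P_{<h}\d F\|_{L^\infty}\les C(M)$, $\|P_{\ge h}\nu_j\|_{H^s}\les 2^{-h}C(M)$, $\|P_{\ge h}\d F\|_{L^\infty}\les 2^{-h}C(M)$, and the product rule \eqref{ineqfg} for the last term, each piece is $\les 2^{-h}C(M)$ in $H^s$; running the same computation in $\dot H^{[s]+1}$ costs at most a factor $2^{h([s]-s)}\le1$ and so stays bounded by $C(M)$. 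The estimates for $Y_j^{(h)}$ and $Y_{12}^{(h)}$ are identical, starting instead from $|\nu_j|^2=1$ and $\langle\nu_1,\nu_2\rangle=0$ (now the high$\times$high term gains twice). Low-frequency bookkeeping such as $\|P_{\le0}\nu_j\|_{\dot H^{[s]+1}}\les C(M)$ uses $\d\nu\in\dot H^{2\de_d}$ and $[s]+1\ge2>2\de_d$.

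Granting the claim, the four bounds follow. For $h\ge h_0$ large we get $|\bar\nu_1^{(h)}|^2 = 1+e_1$ and $|\bar{\bar\nu}_2^{(h)}|^2 = 1+e_3$ with $\|e_i\|_{H^s}\les 2^{-h}C(M)$ and $\|e_i\|_{L^\infty}\ll1$, so $|\bar\nu_1^{(h)}|\sim|\bar{\bar\nu}_2^{(h)}|\sim1$; expanding $(1+e_i)^{-1/2}$ and $(1+|\bar\nu_1^{(h)}||\bar{\bar\nu}_2^{(h)}|)^{-1}$ in Neumann series and applying \eqref{ineqfg} gives \eqref{nuh520}, while $1-|\bar\nu_1^{(h)}|^2|\bar{\bar\nu}_2^{(h)}|^2 = -e_1-e_3-e_1e_3$ gives \eqref{nuh522}. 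For \eqref{nuh523}, write $\bar\nu_1^{(h)}-\nu_1 = -P_{\ge h}\nu_1 - g^{(h)\al\be}X_{1,\al}^{(h)}\d_\be F^{(h)}$ (and similarly for $\bar{\bar\nu}_2^{(h)}-\nu_2$, which also involves $\langle\bar\nu_2^{(h)},\nu_1^{(h)}\rangle = |\bar\nu_1^{(h)}|^{-1}(Y_{12}^{(h)}-\langle X_1^{(h)},X_2^{(h)}\rangle_{g^{(h)}})$), so $\|\bar\nu_1^{(h)}-\nu_1\|_{H^s}\les 2^{-h}C(M)$, and then $\nu_j^{(h)}-\nu_j = |\bar{\cdot}|^{-1}\big((\bar{\cdot}-\nu_j)+\tfrac{1-|\bar{\cdot}|^2}{1+|\bar{\cdot}|}\nu_j\big)$ with $\|P_{>0}\nu_j\|_{\dot H^s}\les C(M)$ yields $\|m^{(h)}-m\|_{H^s}\les 2^{-h}C(M)$. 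Finally \eqref{nuh519}: $\|\bar\nu_1^{(h)}\|_{\dot H^{[s]+1}}\le\|P_{<h}\nu_1\|_{\dot H^{[s]+1}}+\|g^{(h)\al\be}X_{1,\al}^{(h)}\d_\be F^{(h)}\|_{\dot H^{[s]+1}}\les C(M)$ by the low/high splitting of $\nu_1$ and the $\dot H^{[s]+1}$ part of the claim, and $\|\bar{\bar\nu}_2^{(h)}\|_{\dot H^{[s]+1}}\les\|\bar\nu_2^{(h)}\|_{\dot H^{[s]+1}}+\|\langle\bar\nu_2^{(h)},\nu_1^{(h)}\rangle\|_{H^s\cap\dot H^{[s]+1}}\|\nu_1^{(h)}\|_{\dot H^{[s]+1}}\les C(M)$, using $\||v|^{-1}\|_{\dot H^{[s]+1}}\les\|v\|_{\dot H^{[s]+1}}$ from \eqref{HNv} to control $\nu_1^{(h)}=\bar\nu_1^{(h)}/|\bar\nu_1^{(h)}|$. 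The main obstacle is exactly the $2^{-h}$ gain in the key claim: without exploiting both the exact orthonormality relations on $\Sigma$ and the frequency localization forced by the truncation, one would only get $O(1)$ bounds and \eqref{nuh522}, \eqref{nuh523} would fail.
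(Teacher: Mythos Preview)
Your proposal is correct and follows essentially the same strategy as the paper: reduce everything to the ``defect quantities'' $X_j^{(h)}$, $Y_j^{(h)}$, $Y_{12}^{(h)}$, and extract the $2^{-h}$ gain by exploiting the exact relations $\langle\nu_j,\d F\rangle=0$, $|\nu_j|=1$, $\langle\nu_1,\nu_2\rangle=0$ via add-and-subtract against the untruncated quantities.

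The one place where you work harder than necessary is the frequency-localization argument in the key claim. You observe that $X_j^{(h)}$ is supported at frequencies $\lesssim 2^h$, apply an outer $P_{<h+1}$, and then argue that only the blocks $P_{[h,h+C]}$ of the high-frequency factors survive (incidentally, $[h,h+1]$ should be $[h,h+C]$ for some absolute $C$, given the overlap in the Littlewood--Paley cutoffs). This step is correct but unneeded: once you have written
\[
X_j^{(h)} = -\langle P_{\ge h}\nu_j,\, P_{<h}\d F\rangle - \langle \nu_j,\, P_{\ge h}\d F\rangle,
\]
the product estimate \eqref{ineqfg} already gives $\|X_j^{(h)}\|_{H^s}\les \|P_{\ge h}\nu_j\|_{H^s}\cdot C(M) + \|P_{\ge h}\d F\|_{H^s}\cdot C(M)\les 2^{-h}C(M)$ directly, since $\|P_{\ge h}\nu\|_{H^s}\les 2^{-h}\|\d\nu\|_{\dot H^s}$ and $\|P_{\ge h}\d F\|_{H^s}\les 2^{-h}\|\d^2 F\|_{H^s}$. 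This is precisely the paper's computation \eqref{XX}; the same simplification applies to $Y_j^{(h)}$ and $Y_{12}^{(h)}$. Your extra localization step does yield the additional information that the defects are genuinely frequency-localized near $2^h$, but that is not used anywhere in the lemma.
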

\begin{proof}
By the same argument as \eqref{nu510}, and also by the bound \eqref{ini} for $\nu$, we get the estimate \eqref{nuh519} as follows
\begin{align*}
\|\bar{\nu}_1^{(h)}\|_{\dot H^{[s]+1}}+\|\bar{\bar{\nu}}_2^{(h)}\|_{\dot H^{[s]+1}}&\les \|(\tilde \nu^{(h)},g^{(h)},\d F^{(h)})\|_{\dot H^{[s]+1}}\\
&\les C(M)\|(P_{<h}\nu,P_{<h}\d F\|_{\dot H^{[s]+1}}\les C(M).
\end{align*}
Combined with \eqref{HNv}, this yields the second estimate \eqref{nuh520}.

Next, we prove the estimate \eqref{nuh522}. This term can be rewritten as
\begin{align*}
    1-|\bar{\nu}_1^{(h)}|^2|\bar{\bar{\nu}}_2^{(h)}|^2=&\ 1-|\tilde \nu_1^{(h)}|^2|\tilde \nu_2^{(h)}|^2+|\tilde \nu_1^{(h)}|^2|X_2|_{g^{(h)}}^2+|\tilde \nu_2^{(h)}|^2 |X_1|_{g^{(h)}}^2\\
    &+|\tilde \nu_1^{(h)}|^2|\bar{\nu}_1|^{-2}|\<\bar{\nu}_2^{(h)},\bar{\nu}_1^{(h)}\>|^2+|X_1|_{g^{(h)}}^2(|X_2|_{g^{(h)}}^2+|\bar{\nu}_1|^{-2}|\<\bar{\nu}_2^{(h)},\bar{\nu}_1^{(h)}\>|^2),
\end{align*}
where $X_j:=\<\tilde \nu_j^{(h)},\d F^{(h)}\>$. Since $|\nu_1|^2=|\nu_2|^2=1$, $\tilde \nu^{(h)}=P_{<h}\nu$ and $\|(\nu,P_{>0}\nu)\|_{L^\infty\times \dot H^s}\les C(M)$, the first term in the above is estimated by
\begin{align*}
    \|1-|\tilde \nu_1^{(h)}|^2|\tilde \nu_2^{(h)}|^2\|_{H^s}\les \|P_{>h}\nu (\tilde \nu^{(h)},\nu)\|_{H^s}\les \|P_{>h}\nu\|_{H^s} \|(\nu,P_{>0}\nu)\|_{L^\infty\times \dot H^s}\les 2^{-h}C(M).
\end{align*}
Since $\nu\perp \d F$, we can estimate $X_j$ by
\begin{align} \label{XX}
    \|X\|_{H^s}&=\|\<\tilde\nu^{(h)}-\nu+\nu,\d F^{(h)}\>\|_{H^s}\les \|\<P_{>h}\nu,\d F^{(h)}\>\|_{H^s}+\|\<\nu,\d(F^{(h)}-F)\>\|_{H^s}\\\nonumber
    &\les \|P_{>h}(\nu,\d F)\|_{H^s}(\|(\d F^{(h)},\nu)\|_{L^\infty}+\|P_{>0}(\d F^{(h)},\nu)\|_{\dot H^s})\les 2^{-h}C(M).
\end{align}
Then we can bound the following terms
\begin{align}\nonumber
    \||\tilde \nu^{(h)}|^2 |X|_{g^{(h)}}^2\|_{H^s}&\les \|X\|_{H^s}^2 \|g^{(h)}\|_{L^\infty\cap \dot H^s}\|(\tilde \nu^{(h)},P_{>0}\tilde \nu^{(h)})\|_{L^\infty\times \dot H^s}^2\les 2^{-2h}C(M),\\\label{nu2nu1}
    \|\<\bar{\nu}_2^{(h)},\bar{\nu}_1^{(h)}\>\|_{H^s}&=\|\tilde\nu_2^{(h)}\cdot \tilde\nu_1^{(h)}-\<X_1,X_2\>_{g^{(h)}}\|_{H^s}\\\nonumber
    &\les\|P_{>h}\nu\|_{H^s}\|(\nu,P_{>0}\nu)\|_{L^\infty\times \dot H^s}+2^{-2h}C(M)\les 2^{-h}C(M).
\end{align}
Hence, $\|1-|\bar{\nu}_1^{(h)}|^2|\bar{\bar{\nu}}_2^{(h)}|^2\|_{H^s}$ is bounded by $2^{-h}C(M)$.

Finally, we prove the estimate \eqref{nuh523}. The difference $m^{(h)}-m$ is expressed as
\begin{align*}
&\ m^{(h)}-m=\frac{\bar{\nu}_1^{(h)}}{|\bar{\nu}_1^{(h)}|}-\nu_1+i(\frac{\bar{\bar{\nu}}_2^{(h)}}{|\bar{\bar{\nu}}_2^{(h)}|}-\nu_2)\\
=&\ \frac{1-|\bar{\nu}_1^{(h)}|^2}{|\bar{\nu}_1^{(h)}|(1+|\bar{\nu}_1^{(h)}|)}\bar{\nu}_1^{(h)}+\bar{\nu}_1^{(h)}-\nu_1+i\big( \frac{1-|\bar{\bar{\nu}}_2^{(h)}|^2}{|\bar{\bar{\nu}}_2^{(h)}|(1+|\bar{\bar{\nu}}_2^{(h)}|)}\bar{\bar{\nu}}_2^{(h)}+\bar{\nu}_2^{(h)}-\nu_2 -\<\bar{\nu}_2^{(h)},\nu_1^{(h)}\>\nu_1^{(h)} \big)
\end{align*}
Similar to \eqref{nuh522}, we also have $\|1-|\bar{\nu}_1^{(h)}|^2\|_{H^s}+\|1-|\bar{\bar{\nu}}_2^{(h)}|^2\|_{H^s}\les 2^{-h}C(M)$, then by \eqref{ineqfg} and \eqref{nuh519}, we get 
\begin{align*}
    \|\frac{1-|\bar{\nu}_1^{(h)}|^2}{|\bar{\nu}_1^{(h)}|(1+|\bar{\nu}_1^{(h)}|)}\bar{\nu}_1^{(h)}\|_{H^s}+\|\frac{1-|\bar{\bar{\nu}}_2^{(h)}|^2}{|\bar{\bar{\nu}}_2^{(h)}|(1+|\bar{\bar{\nu}}_2^{(h)}|)}\bar{\bar{\nu}}_2^{(h)}\|_{H^s}\les 2^{-h}C(M).
\end{align*}
For the difference $\bar{\nu}_j^{(h)}-\nu_j$, by $\d\nu\in \dot H^s$ and \eqref{XX} we have
\begin{align*}
    &\ \|\bar{\nu}_j^{(h)}-\nu_j\|_{H^s}\les\|P_{>h}\nu_j\|_{H^s}+\|g^{(h)}X_j \d F^{(h)}\|_{H^s}\\
    \les&\  2^{-h}\|\d P_{>h}\nu_j\|_{H^s}+\|X_j\|_{H^s}\|g^{(h)}\d F^{(h)}\|_{L^\infty\cap \dot H^s}\les 2^{-h}C(M).
\end{align*}
The last term $\<\bar{\nu}_2^{(h)},\nu_1^{(h)}\>\nu_1^{(h)}$ is also bounded by $2^{-h}C(M)$ using \eqref{ineqfg}, \eqref{nuh519} and \eqref{nu2nu1}. Hence, the estimate \eqref{nuh523} follows.
\end{proof}

Now we continue our proof of \eqref{Reg-gAla} for the connection $A$ and the second fundamental form $\la$.
\begin{proof}[Proof of the connection bound in \eqref{Reg-gAla}: $\|A\|_{X^{s}_A}\les C(M)$.]

\ 
\medskip

\emph{Step 1. We show that}
\[
\||D|^{\de_d}A^{(h)}\|_{H^{s-\de_d}}\les C(M), \qquad h\geq h_0
\]
Since $\||D|^{\de_d}A\|_{H^{s-\de_d}}\les C(M)$, it suffices to show that 
\begin{equation}\label{diffAh-A}
    \|A^{(h)}-A\|_{H^s}\les C(M)\|P_{>h} \d\nu\|_{H^s}+2^{-h}C(M),
\end{equation}
whose proof is similar to Lemma~\ref{Lemreg-tA}.

For any $h\geq h_0$, by $\bar{\nu}^{(h)}_{1}\cdot \bar{\bar{\nu}}^{(h)}_{2}=0$, the term $A^{(h)}_\mu-A_\mu$ is expressed as 
\begin{align*}
&\ A^{(h)}_\mu-A_\mu =|\bar{\nu}_1^{(h)}|^{-1}|\bar{\bar{\nu}}_2^{(h)}|^{-1}\d_\mu \bar{\nu}_1^{(h)}\cdot \bar{\bar{\nu}}_2^{(h)}-\d_\mu \nu_1\cdot \nu_2\\
=&\ |\bar{\nu}_1^{(h)}|^{-1}|\bar{\bar{\nu}}_2^{(h)}|^{-1}[\d_\mu \tilde \nu_1^{(h)}-\d_\mu (g^{(h)\al\be}\<\tilde \nu_1^{(h)},\d_\al F^{(h)}\>\d_\be F^{(h)})]\\
&\ \cdot [\tilde \nu_2^{(h)}-g^{(h)\al\be}\<\tilde \nu_2^{(h)},\d_\al F^{(h)}\>\d_\be F^{(h)}-\<\bar{\nu}_2^{(h)},\nu_1^{(h)}\>\nu_1^{(h)}]-\d_\mu \nu_1\cdot \nu_2\\
=&\ (\d_\mu \tilde\nu_1^{(h)}\cdot \tilde \nu_2^{(h)}-\d_\mu \nu_1\cdot \nu_2)+ (|\bar{\nu}_1^{(h)}|^{-1}|\bar{\bar{\nu}}_2^{(h)}|^{-1}-1)\d_\mu \tilde \nu_1^{(h)}\cdot \tilde \nu_2^{(h)}\\
&-|\bar{\nu}_1^{(h)}|^{-1}|\bar{\bar{\nu}}_2^{(h)}|^{-1} \d_\mu \tilde \nu_1^{(h)}\cdot (g^{(h)\al\be}\<\tilde \nu_2^{(h)},\d_\al F^{(h)}\>\d_\be F^{(h)}+\<\bar{\nu}_2^{(h)},\nu_1^{(h)}\>\nu_1^{(h)})\\
&-|\bar{\nu}_1^{(h)}|^{-1}|\bar{\bar{\nu}}_2^{(h)}|^{-1} \d_\mu (g^{(h)\al\be}\<\tilde \nu_1^{(h)},\d_\al F^{(h)}\>\d_\be F^{(h)})\cdot \bar{\bar{\nu}}_2^{(h)}\\
=&:\ I_1+I_2+I_3+I_4.
\end{align*}
The terms on the right hand are estimated one by one.

\emph{a) Bound for $I_1$:}
\begin{equation}\label{I1nu}
\|I_1\|_{H^s}\les C(M)\|\d P_{>h}\nu\|_{H^s}\les C(M).
\end{equation}
This is obtained  using \eqref{ini} for $\nu$ and $h\geq h_0\gg 1$
\begin{align*}
    \|I_1\|_{H^s}&=\|\d P_{>h}\nu_1\cdot \tilde\nu_2^{(h)}\|_{H^s}+\|\d \nu_1\cdot P_{>h}\nu_2\|_{H^s}\\
    &\les \|\d P_{>h}\nu_1\|_{H^s}(\|\tilde\nu_2^{(h)}\|_{L^\infty}+\|P_{>0}\tilde\nu_2^{(h)}\|_{\dot H^s})+\|\d \nu_1\|_{L^\infty\cap \dot H^s}\|P_{>h}\nu_2\|_{H^s}\\
    &\les C(M)\|P_{>h}\d\nu\|_{H^s}.
\end{align*}

\emph{b) Bound for $I_2$:}
\begin{align*}
    \|I_2\|_{H^s}=\lV\frac{1-|\bar{\nu}_1^{(h)}|^2|\bar{\bar{\nu}}_2^{(h)}|^2}{|\bar{\nu}_1^{(h)}||\bar{\bar{\nu}}_2^{(h)}|(1+|\bar{\nu}_1^{(h)}||\bar{\bar{\nu}}_2^{(h)}|)}\d_\mu \tilde \nu_1^{(h)}\cdot \tilde \nu_2^{(h)}\rV_{H^s}\les C(M)2^{-h}.
\end{align*}
By $\d_\mu \tilde \nu_1^{(h)}\cdot \tilde\nu_2^{(h)}=A_\mu+I_1$, the estimate \eqref{ini} for $A$ and the estimate \eqref{I1nu}, we have $\|\d_\mu \tilde \nu_1^{(h)}\cdot \tilde\nu_2^{(h)}\|_{L^\infty\cap \dot H^s}\les C(M)$.
Moreover, we have $(|\bar{\nu}_1^{(h)}||\bar{\bar{\nu}}_2^{(h)}|(1+|\bar{\nu}_1^{(h)}||\bar{\bar{\nu}}_2^{(h)}|))^{-1}\in L^\infty$ and the estimate \eqref{nuh520} for high-frequency part. Hence, by \eqref{ineqfg} it suffices to consider the bound for $1-|\bar{\nu}_1^{(h)}|^2|\bar{\bar{\nu}}_2^{(h)}|^2$ in $H^s$, which has been provided by \eqref{nuh523}. This yields  the desired estimate for $I_2$.

\smallskip 
\emph{c) Bound for $I_3$: $\|I_3\|_{H^s}\les 2^{-h}C(M)$.}

By \eqref{ineqfg}, \eqref{nuh520}, \eqref{ini}, \eqref{XX} and \eqref{nu2nu1}, we have
\begin{align*}
    \|I_3\|_{H^s}\les&\  \big(\|(|\bar{\nu}_1^{(h)}|^{-1},|\bar{\bar{\nu}}_2^{(h)}|^{-1})\|_{L^\infty}+\|P_{>0}(|\bar{\nu}_1^{(h)}|^{-1},|\bar{\bar{\nu}}_2^{(h)}|^{-1})\|_{\dot H^s}\big)\|\d_\mu \tilde \nu_1^{(h)}\|_{L^\infty\cap \dot H^s}\\
    &\ \cdot \big(\|g^{(h)\al\be}X_2\d_\be F^{(h)}\|_{H^s}+\|\<\bar{\nu}_2^{(h)},\nu_1^{(h)}\>\nu_1^{(h)}\|_{H^s}\big)\\
    \les&\ C(M) (\|X_2\|_{H^s}+\|\<\bar{\nu}_2^{(h)},\bar{\nu}_1^{(h)}\>\|_{H^s})\les 2^{-h}C(M).
\end{align*}

\emph{d) Bound for $I_4$: $\|I_4\|_{H^s}\les 2^{-h}C(M)$.}

By \eqref{ineqfg} and \eqref{nuh520} we have
\begin{align*}
    &\ \|I_4\|_{H^s}\les C(M)\|\d_\mu (g^{(h)\al\be}X_1\d_\be F^{(h)})\cdot \bar{\bar{\nu}}_2^{(h)}\|_{H^s}\\
    \les&\  C(M)\big(\|\d_\mu (g^{(h)\al\be}X_1)\|_{L^\infty\cap \dot H^s} \|\d_\be F^{(h)}\cdot \bar{\bar{\nu}}_2^{(h)}\|_{H^s}\\
    &\ +\|g^{(h)}X_1\|_{H^s}\|\d^2 F^{(h)}\|_{H^s}\|(\bar{\bar{\nu}}_2^{(h)},P_{>0}\bar{\bar{\nu}}_2^{(h)})\|_{L^\infty\times \dot H^s}\big)\\
    \les &\ C(M)\|\d_\be F^{(h)}\cdot \bar{\bar{\nu}}_2^{(h)}\|_{H^s}+2^{-h}C(M),
\end{align*}
where the term $\d_\be F^{(h)}\cdot \bar{\bar{\nu}}_2^{(h)}$ can be estimated using the bound for  difference $\bar{\bar{\nu}}_2^{(h)}-\nu_2 \in H^s$,
\begin{align*}
    \|\bar{\bar{\nu}}_2^{(h)}-\nu_2\|_{H^s}&\les \|P_{>h}\nu_2\|_{H^s}+\|g^{(h)}X_2\d F^{(h)}\|_{H^s}+\|\<\bar{\nu}_2^{(h)},\bar{\nu}_1^{(h)}\>\bar{\nu}_1^{(h)}|\bar{\nu}_1^{(h)}|^{-2}\|_{H^s}\\
    &\les 2^{-h}C(M)\les 2^{-h}C(M),
\end{align*}
and 
\begin{align*}
    \|\d_\be F^{(h)}\cdot \nu_2\|_{H^s}=\|\d_\be (F^{(h)}-F)\cdot \nu_2\|_{H^s}\les \|P_{>h}\d F\|_{H^s}C(M)\les 2^{-h}C(M).
\end{align*}
Hence, the $H^s$-norm of $I_4$ is bounded by $2^{-h}C(M)$.

To conclude, the difference bound \eqref{diffAh-A} follows; thus we obtain $\||D|^{\de_d}A^{(h)}\|_{H^{s-\de_d}}\les C(M)$. Moreover, the estimate \eqref{diffAh-A} also implies the convergence $\lim_{h\rightarrow\infty} \|A^{(h)}-A\|_{H^s}=0$.

\smallskip 

\emph{Step 2. We prove that}
\[
\int_{h_0}^\infty 2^{2h(s-N)} \||D|^{\de_d}A^{(h)}\|_{H^{N-\de_d}}^2 dh\les C(M).
\]
Since $\||D|^{\de_d}A^{(h)}\|_{H^{s-\de_d}}\les C(M)$, it suffices to consider the term $\|P_{>0}A^{(h)}\|_{\dot H^N}$. For any integer $k\geq 1$ we have
\begin{align*}
    \|P_{>0}A^{(h)}\|_{\dot H^k}&=\|P_{>0}(\d \nu^{(h)}_{1}\cdot \nu^{(h)}_{2})\|_{\dot H^k}\les \|P_{>0} \nu^{(h)}\|_{\dot H^{k+1}}\|\nu^{(h)}\|_{L^\infty}
    \les  \| \nu^{(h)}\|_{\dot H^{k+1}}.
\end{align*}
By \eqref{HNv}, we further bound $\|\nu^{(h)}\|_{\dot H^{k+1}}$ by
\begin{align*}
    \|\nu^{(h)}_1\|_{\dot H^{k+1}}&=\|\frac{\bar{\nu}^{(h)}_{1}}{|\bar{\nu}^{(h)}_{1}|}\|_{\dot H^{k+1}}\les \|\bar\nu^{(h)}_{1}\|_{\dot H^{k+1}}\||\bar\nu^{(h)}_{1}|^{-1}\|_{L^\infty}+\|\bar\nu^{(h)}_{1}\|_{L^\infty}\||\bar\nu^{(h)}_{1}|^{-1}\|_{\dot H^{k+1}}\\
    &\les \|\bar\nu^{(h)}_{1}\|_{\dot H^{k+1}}
\end{align*}
and 
\begin{align*}
    \|\nu^{(h)}_{2}\|_{\dot H^{k+1}}&=\|\frac{\bar{\bar{\nu}}^{(h)}_{2}}{|\bar{\bar{\nu}}^{(h)}_{2}|}\|_{\dot H^{k+1}}
    \les \|\bar{\bar\nu}^{(h)}_{2}\|_{\dot H^{k+1}}\les \|\bar\nu^{(h)}_{2}\|_{\dot H^{k+1}}+\||\bar{\nu}^{(h)}_{1}|^{-2}\<\bar\nu^{(h)}_{2},\bar{\nu}^{(h)}_{1}\>\bar{\nu}^{(h)}_{1}\|_{\dot H^{k+1}}\\
    &\les \|\bar\nu^{(h)}_{2}\|_{\dot H^{k+1}}+\|\bar\nu^{(h)}_{1}\|_{\dot H^{k+1}}.
\end{align*}
From the formula of $\bar{\nu}^{(h)}$  in \eqref{nuba(h)}, we have
\begin{align*}
    &\ \|\bar{\nu}^{(h)}\|_{\dot H^{k+1}}\les \|\tilde \nu^{(h)}\|_{\dot H^{k+1}}+\|g^{(h)\al\be} \<\tilde \nu^{(h)}, \d_\al F^{(h)}\> \d_\be F^{(h)}\|_{\dot H^{k+1}}\\
    &\les \|\tilde \nu^{(h)}\|_{\dot H^{k+1}}+\|(g^{(h)})^{-1}\|_{\dot H^{k+1}}+\|\d F^{(h)}\|_{\dot H^{k+1}}\les \|P_{<h} \nu\|_{\dot H^{k+1}}+\|\d P_{<h} F\|_{\dot H^{k+1}},
\end{align*}
where $(g^{(h)})^{-1}$ is the inverse matrix of $g^{(h)}$, which is easily seen to satisfy the estimate $\|(g^{(h)})^{-1}\|_{\dot H^{k+1}}\les C(M)\|g^{(h)}\|_{\dot H^{k+1}}$.
Then we obtain for any integer $k\geq 1$,
\begin{align} \label{hfb-A0}
    \|P_{>0}A^{(h)}\|_{\dot H^k} \les \|\nu^{(h)}\|_{\dot H^{k+1}}\les \|P_{<h} \d\nu\|_{\dot H^{k}}+\| P_{<h}\d^2 F\|_{\dot H^k}.
\end{align}
Hence, we arrive at
\begin{align*}
    &\ \int_{h_0}^\infty 2^{2h(s-N)}\||D|^{\de_d}A^{(h)}\|_{H^{N-\de_d}}^2 dh\\
    \les&\ \int_{h_0}^\infty 2^{2h(s-N)} (\||D|^{\de_d}A^{(h)}\|_{H^{s-\de_d}}^2+\|P_{>0}A^{(h)}\|_{\dot H^N}^2) dh
    \les C(M).
\end{align*}

\medskip

\emph{Step 3. We prove 
that}
\[
\int_{h_0}^\infty 2^{2hs}\|\d_h A^{(h)}\|_{L^2}^2 dh\les C(M).
\]

By the formula $A^{(h)}=\d \nu^{(h)}_{1}\cdot \nu^{(h)}_{2}$, we have
\begin{align*}
    \|\d_h A^{(h)}\|_{L^2}&=\|\d_h(\d \nu^{(h)}_{1}\cdot \nu^{(h)}_{2})\|_{L^2}\les\|\d \d_h \nu^{(h)}_{1}\|_{L^2}\|\nu^{(h)}_{2}\|_{L^\infty}+\|\d \nu^{(h)}_{1}\|_{L^\infty}\| \d_h \nu^{(h)}_{2}\|_{L^2}\\
    &\les \|\d_h \nu^{(h)}_{1}\|_{H^1}+C(M)\|\d_h \nu^{(h)}_{2}\|_{L^2}.
\end{align*}
We estimate the first term by
\begin{align*}
    \|\d_h \nu^{(h)}_{1}\|_{H^1}=\|\d_h (\frac{\bar{\nu}^{(h)}_{1}}{|\bar{\nu}^{(h)}_{1}|})\|_{H^1}\les \|\frac{\d_h \bar{\nu}^{(h)}_{1}}{|\bar{\nu}^{(h)}_{1}|}\|_{L^2}+\|\frac{\d\d_h \bar{\nu}^{(h)}_{1}}{|\bar{\nu}^{(h)}_{1}|}\|_{L^2}+\|\frac{\d_h \bar{\nu}^{(h)}_{1} \d \bar{\nu}^{(h)}_{1}}{|\bar{\nu}^{(h)}_{1}|^2}\|_{L^2}\les \|\d_h \bar{\nu}^{(h)}_{1}\|_{H^1},
\end{align*}
and estimate the second term by
\begin{align*}
    &\ \|\d_h \nu^{(h)}_{2}\|_{L^2}=\|\d_h (\frac{\bar{\bar{\nu}}^{(h)}_{2}}{|\bar{\bar{\nu}}^{(h)}_{2}|})\|_{L^2}\les \|\frac{\d_h \bar{\bar{\nu}}^{(h)}_{2}}{|\bar{\bar{\nu}}^{(h)}_{2}|}\|_{L^2}\les \|\d_h \bar{\bar{\nu}}^{(h)}_{2}\|_{L^2}\\
    &\les \|\d_h \bar{\nu}^{(h)}_{2}\|_{L^2}+\|\frac{\d_h \bar{\nu}^{(h)}_{2}\cdot \bar{\nu}^{(h)}_{1}}{|\bar{\nu}^{(h)}_{1}|^2}\bar{\nu}^{(h)}_{1}\|_{L^2}+\|\frac{ \bar{\nu}^{(h)}_{2}\cdot \d_h\bar{\nu}^{(h)}_{1}}{|\bar{\nu}^{(h)}_{1}|^2}\bar{\nu}^{(h)}_{1}\|_{L^2}+\|\frac{\bar{\nu}^{(h)}_{2}\cdot \bar{\nu}^{(h)}_{1}}{|\bar{\nu}^{(h)}_{1}|^2}\d_h\bar{\nu}^{(h)}_{1}\|_{L^2}\\
    &\les \|\d_h \bar{\nu}^{(h)}_{2}\|_{L^2}+\|\d_h \bar{\nu}^{(h)}_{1}\|_{L^2}.
\end{align*}
By the formula \eqref{nuba(h)}, we further bound the $\d_h\bar{\nu}^{(h)}\in H^1$ by
\begin{align*}
    \|\d_h \bar{\nu}^{(h)}\|_{H^1}&\les \|\d_h \tilde \nu^{(h)}\|_{H^1}+\|\d_h g^{(h)}\|_{H^1}\|\tilde \nu^{(h)} \d F^{(h)} \d F^{(h)}\|_{W^{1,\infty}}\\
    &\quad 
    +\|\d_h \tilde \nu^{(h)}\|_{H^1}\|g^{(h)}\d F^{(h)}\d F^{(h)}\|_{W^{1,\infty}}
    +\|\d_h \d F^{(h)} \|_{H^1}\|g^{(h)}\tilde \nu^{(h)} \d F^{(h)}\|_{W^{1,\infty}}\\
    &\les \|\d_h \tilde \nu^{(h)}\|_{H^1}+\|\d_h g^{(h)}\|_{H^1}+\|\d_h \d F^{(h)}\|_{H^1}
    \les  \|\d_h \tilde \nu^{(h)}\|_{H^1}+\|\d_h \d F^{(h)}\|_{H^1}.
\end{align*}
Hence, we obtain
\begin{equation}\label{DB-012}
    \begin{aligned}
    \|\d_h A^{(h)}\|_{L^2}&\les \|\d_h \nu^{(h)}_{1}\|_{H^1}+\|\d_h \nu^{(h)}_{2}\|_{L^2}\les\|\d_h \tilde \nu^{(h)}\|_{H^1}+\|\d_h \d F^{(h)}\|_{H^1}\\
    &\les \|P_h\nu\|_{H^1}+\| \d P_h F\|_{H^1}.
\end{aligned}
\end{equation}
Since $h>h_0$ is positive, this also gives
\begin{align*}
    &\ \int_{h_0}^\infty 2^{2hs} \|\d_h A^{(h)}\|_{L^2}^2 dh\les \int_{h_0}^\infty 2^{2hs}(\|P_h\nu\|_{H^1}^2+\| \d P_h F\|_{H^1}^2 )dh\\
    &\les \int_{h_0}^\infty 2^{2hs}(\|\d P_h\nu\|_{L^2}^2+\| \d^2 P_h F\|_{L^2}^2 )dh
    \les \|\d \nu\|_{\dot H^s}^2+\|\d^2 F\|_{H^s}^2 \les C(M).
\end{align*}
This completes the proof of \eqref{Reg-gAla} for $A$.
\end{proof}

\begin{proof}[Proof of  the second fundamental form bound in \eqref{Reg-gAla}: $\|\la\|_{X^s}\les C(M)$.]

\ 

First we consider the convergence of $\la^{(h)}$ in $H^s$. By \eqref{nuh520}, \eqref{nuh519} and \eqref{nuh523}, the difference between $\la^{(h)}$ and $\la$ is bounded by
\begin{align*}
    &\ \|\la^{(h)}-\la\|_{H^s}\les \|\d^2 F^{(h)}\cdot m^{(h)}-\d^2 F\cdot m\|_{H^s}\\
    &\les \|\d^2 P_{>h}F\cdot m^{(h)}\|_{H^s}+\|\d^2 F\cdot (m^{(h)}-m)\|_{H^s}\\
    &\les \|\d^2 P_{>h}F\|_{H^s}\|(m^{(h)},P_{>0}m^{(h)})\|_{L^\infty\times \dot H^s}+\|\d^2 F\|_{H^s}\|m^{(h)}-m\|_{H^s}\\
    &\les \|\d^2 P_{>h}F\|_{H^s}C(M)+2^{-h}C(M).
\end{align*}
Hence, the $\la^{(h)}$ converges to $\la$ in $H^s$ as $h\rightarrow\infty$. This guarantees that $[\la^{(h)}]\in \mathrm{Reg}(\la)$.

Next, we prove the estimate $\|\la\|_{X^s}\les C(M)$. By the equivalence \eqref{Equ-la}, $[\la^{(h)}]\in \mathrm{Reg}(\la)$ and the definition of $X^s$, it suffices to prove the bound $\tri[\la^{(h)}]\tri_{s,{ext}}\les C(M)$.

For any $k\geq 1$ and $h\geq h_0$, by \eqref{hfb-A0} we have
\begin{align*}
\|\la^{(h)}\|_{H^k}&\les \|\d^2 F^{(h)}\cdot m^{(h)}\|_{H^k}\les \|\d^2 P_{<h}F\|_{H^k}+\|\d P_{<h}F\|_{L^\infty}\|m^{(h)}\|_{\dot H^{k+1}}\\
&\les \|\d^2 P_{<h}F\|_{H^k}+C(M)(\|P_{<h}\nu\|_{\dot H^{k+1}}+\|\d P_{<h}F\|_{\dot H^{k+1}})\\
&\les C(M)(\|\d^2 P_{<h}F\|_{H^k}+\|\d P_{<h}\nu\|_{\dot H^k}).
\end{align*}
Then, for low-frequency part $\la^{(h_0)}$ we get 
\begin{align*}
    2^{(s-[s])h_0}\|\la^{(h_0)}\|_{H^{[s]}}\les 2^{(s-[s])h_0} C(M)(\|\d^2 P_{<h_0}F\|_{H^{[s]}}+\|\d P_{<h_0}\nu\|_{\dot H^{[s]}})\les C(M),
\end{align*}
and 
\begin{align*}
    &\ 2^{(s-[s]-1)h_0}\|\la^{(h_0)}\|_{H^{[s]+1}}\les 2^{(s-[s]-1)h_0} C(M)(\|\d^2 P_{<h_0}F\|_{H^{[s]+1}}+\|\d P_{<h_0}\nu\|_{\dot H^{[s]+1}})\\
    \les&\  2^{(s-[s]-1)h_0} C(M)2^{([s]+1-s)h_0}(\|\d^2 P_{<h_0} F\|_{H^{s}}+\|\d P_{<h_0}\nu\|_{\dot H^s})
    \les C(M).
\end{align*}
For high frequency, we also have
\begin{align*}
    &\ \int_{h_0}^\infty 2^{2h(s-N)} \|\la^{(h)}\|_{H^N}^2 dh
    \les C(M)\int_{h_0}^\infty 2^{2h(s-N)}(\|P_{<h}\d^2 F\|_{H^N}^2+\|P_{<h}\d\nu\|_{\dot H^N}^2) dh\\
    &\les C(M)\int_{h_0}^\infty  2^{2h(s-N)} \sum_{l\leq [h]+1,l\in \N} 2^{2l(N-s)}(\|P_l\d^2 F\|_{H^s}^2+\|P_{l}|D|^{\de_d}\d\nu\|_{H^{s-2\de_d}}^2) dh\\
    &\les C(M) \sum_{l\in \N} (\|P_l\d^2 F\|_{H^s}^2+\|P_{l}|D|^{\de_d}\d\nu\|_{H^{s-2\de_d}}^2) \int_{h_0}^\infty 2^{2(N-s)(l-h)} {\bf 1}_{>l-1}(h) dh\\
    &\les C(M) (\|\d^2 F\|_{H^s}^2+\||D|^{\de_d}\d\nu\|_{H^{s-2\de_d}}^2)\les C(M).
\end{align*}
Finally, we consider the linearized part $\int_{h_0}^\infty 2^{2sh} \|\d_h\la^{(h)}\|_{L^2}^2 dh$.
Since $\la^{(h)}_{\al\be}=\d^2_{\al\be}P_{<h}F\cdot m^{(h)}$, then by \eqref{DB-012} we have
\begin{equation} \label{DB-013}
\begin{aligned}
&\ \|\d_h \la^{(h)}\|_{L^2}\les\|\d_h P_{<h}\d^2 F\|_{L^2}+\|\d^2 P_{<h}F\|_{L^\infty}\|\d_h m^{(h)}\|_{L^2}\\
\les&\ \|P_h\d^2 F\|_{L^2}+C(M)\|(\d_h\nu^{(h)}_{1},\d_h\nu^{(h)}_{2})\|_{L^2}
\les C(M)(\|P_h\nu\|_{H^1}+\| \d P_h F\|_{H^1}).
\end{aligned}
\end{equation}
This yields
\begin{align*}
    \int_{h_0}^\infty 2^{2sh}\|\d_h \la^{(h)}\|_{L^2}^2 dh &\les \int_{h_0}^\infty 2^{2sh} C(M)(\|P_h\nu\|_{H^1}+\| \d P_h F\|_{H^1})^2 dh\les C(M).
\end{align*}
Hence, the bound \eqref{Reg-gAla} for $\la$ follows.
\end{proof}

Finally, we need the following lemma about difference bounds and high frequency bounds for the regularized initial manifolds $\Sigma^{(h)}$.
\begin{lemma}
For the regularized manifolds $\Sigma^{(h)}$ in \eqref{RegMfd-0}, we have the following properties:

(i) Difference bounds: for any $j\geq h_0$
\begin{align}  \label{DB-01}
    \int_j^{j+1}\|\d_h g^{(h)}\|_{H^1} +\|\d_h A^{(h)}\|_{L^2} +\|\d_h \la^{(h)}\|_{L^2} dh\les_M 2^{-sj}c_j\,,\\ \label{DB-02}
    \|\d F^{(j+1)}-\d F^{(j)}\|_{L^2} +\|m^{(j+1)}-m^{(j)} \|_{L^2}\les_M 2^{-(s+1)j} c_j\,.
\end{align}

(ii) High frequency bounds: for any $N>s$ and any $j\geq h_0$:
\begin{align}\label{HFB-02}
    \|\d F^{(j)}\|_{\dot H^{N+1}\cap \dot H^N}+\|m^{(j)}\|_{\dot H^{N+1}\cap \dot H^N}\les_M 2^{(N-s)j}c_j,\\  \label{HFB-01}
    \|\d g^{(j)}\|_{H^{N}}+\||D|^{\de_d}A^{(j)}\|_{H^{N-\de_d}}+\|\la^{(j)}\|_{\sfH^N\cap H^N}\les_M 2^{(N-s)j}c_j.
\end{align}
\end{lemma}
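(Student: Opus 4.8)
The plan is to reduce every estimate to a frequency--localized $L^2$ bound on $\d^2 F$ and $\d\nu$, and then to recognize the resulting quantities inside the definition of the envelope $c_j$. The tools are: the pointwise--in--$h$ bounds already established, namely $\|\d_h g^{(h)}\|_{H^1}\les_M\|\d^2 P_h F\|_{L^2}$ from \eqref{DB-011}, $\|\d_h A^{(h)}\|_{L^2}+\|\d_h m^{(h)}\|_{L^2}\les_M\|\d^2 P_h F\|_{L^2}+\|\d P_h\nu\|_{L^2}$ from (the proof of) \eqref{DB-012}, $\|\d_h\la^{(h)}\|_{L^2}\les_M\|\d^2 P_h F\|_{L^2}+\|\d P_h\nu\|_{L^2}$ from \eqref{DB-013}, and $\|P_{>0}A^{(h)}\|_{\dot H^k}+\|\nu^{(h)}\|_{\dot H^{k+1}}\les_M\|P_{<h}\d^2 F\|_{\dot H^k}+\|P_{<h}\d\nu\|_{\dot H^k}$ from \eqref{hfb-A0} and its proof; the continuous Littlewood--Paley identity $P_{<j+1}-P_{<j}=\int_j^{j+1}P_h\,dh$ from \eqref{CLPD}; the elementary facts that a frequency--$2^h$ piece ($h>h_0>0$) obeys $\|P_h u\|_{L^2}\sim 2^{-h}\|\d P_h u\|_{L^2}$ and that $\|g^{(h)}\|_{W^{1,\infty}}+\|\d F^{(h)}\|_{L^\infty}+\|m^{(h)}\|_{L^\infty}\les_M 1$ uniformly in $h\ge h_0$; the slowly--varying property $c_k\le 2^{\delta|k-j|}c_j$; and the fact that $h_0$ is a fixed $M$--dependent constant, so any factor $2^{Ch_0}$ is $\les_M 1$.

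\textbf{The difference bounds.} For \eqref{DB-01}, Cauchy--Schwarz in $h$ gives, for instance,
\[
\int_j^{j+1}\|\d_h g^{(h)}\|_{H^1}\,dh\les_M\Big(\int_j^{j+1}\|\d^2 P_h F\|_{L^2}^2\,dh\Big)^{1/2}\les 2^{-js}\Big(\int_j^{j+1}2^{2hs}\|P_h\d^2 F\|_{L^2}^2\,dh\Big)^{1/2}\le 2^{-js}c_j,
\]
the last quantity being the $k=j$ term in the definition of $c_j$; the $\d_h A^{(h)}$ and $\d_h\la^{(h)}$ integrals are handled in exactly the same way, now also invoking the $\|\d P_h\nu\|_{L^2}$ part of $c_j$. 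For \eqref{DB-02} one needs the extra factor $2^{-j}$. For $\d F$ it is automatic: $\|\d F^{(j+1)}-\d F^{(j)}\|_{L^2}\le\int_j^{j+1}\|\d P_h F\|_{L^2}\,dh\les 2^{-j}\int_j^{j+1}\|\d^2 P_h F\|_{L^2}\,dh\les 2^{-(s+1)j}c_j$. For $m$ one refines the proof of \eqref{DB-012}: the leading part of $\d_h m^{(h)}$ is $\d_h\tilde\nu^{(h)}=P_h\nu$, which already carries $\|P_h\nu\|_{L^2}\sim 2^{-h}\|\d P_h\nu\|_{L^2}$, while every Gram--Schmidt correction term in \eqref{nuba(h)}--\eqref{nuhini} is a product in which at least one factor is $P_h\nu$, $\d P_h F$, or the orthogonality defect $X^{(h)}_\al=\<P_{<h}\nu,\d_\al P_{<h}F\>$; since $\<\nu,\d F\>=0$ one has $X^{(h)}_\al=-\<P_{>h}\nu,\d_\al P_{<h}F\>-\<\nu,\d_\al P_{>h}F\>$, which is $O(2^{-h})$ in the norms needed (small in $L^\infty$ by Sobolev, with $\|\d_h X^{(h)}\|_{L^2}\les_M 2^{-h}(\|\d^2 P_h F\|_{L^2}+\|\d P_h\nu\|_{L^2})$). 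Hence $\|\d_h m^{(h)}\|_{L^2}\les_M 2^{-h}(\|\d^2 P_h F\|_{L^2}+\|\d P_h\nu\|_{L^2})$, and integrating over $[j,j+1]$ and applying Cauchy--Schwarz as above yields \eqref{DB-02}.

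\textbf{The high--frequency bounds.} Using \eqref{hfb-A0} and its $\nu$--analogue, together with the product rules $\|\d g^{(j)}\|_{H^N}\les_M\|P_{<j}\d^2 F\|_{H^N}$, $\|\la^{(j)}\|_{H^N}\les_M\|P_{<j}\d^2 F\|_{H^N}+\|P_{<j}\d\nu\|_{\dot H^N}$, and the norm equivalence $\|\la^{(j)}\|_{\sfH^N}\les_M\|\la^{(j)}\|_{H^N}+(\text{lower order})$ from \eqref{Lam-EqKey2}, all four quantities in \eqref{HFB-02}--\eqref{HFB-01} are reduced to bounding $\|P_{<j}\d^2 F\|_{\dot H^N}$ and $\|P_{<j}\d\nu\|_{\dot H^N}$ (together with their $H^s$ and $L^2$/low--order parts, which by the previous paragraph's pointwise estimates and the fixed size of $h_0$ are $\les_M c_{h_0}\le 2^{(N-s)j}c_j$). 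For the top--order piece write $\|P_{<j}u\|_{\dot H^N}^2\approx\|P_{<h_0}u\|_{\dot H^N}^2+\int_{h_0}^j 2^{2hN}\|P_h u\|_{L^2}^2\,dh$: the first summand is $\les_M 2^{2h_0(N-s)}c_{h_0}^2\le 2^{2j(N-s)}c_j^2$ since $h_0$ is fixed and $c$ is slowly varying with $N-s>\delta$; the integral is
\[
\int_{h_0}^j 2^{2h(N-s)}\cdot 2^{2hs}\|P_h u\|_{L^2}^2\,dh\les\sum_{l\ge h_0}2^{2l(N-s)}\Big(\int_l^{l+1}2^{2hs}\|P_h u\|_{L^2}^2\,dh\Big)\les\sum_{l\ge h_0}2^{2l(N-s)}2^{2\delta|l-j|}c_j^2\les 2^{2j(N-s)}c_j^2,
\]
the geometric sum converging because $N-s-\delta>0$ (which holds for the relevant integers $N>s$ once $\delta$ is chosen small). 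This gives \eqref{HFB-02}--\eqref{HFB-01}; the $\dot H^{N+1}$ statements are the $\dot H^N$ statements with one extra derivative and are handled identically.

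\textbf{Main obstacle.} The only genuinely delicate point is the gain of the extra power $2^{-j}$ in \eqref{DB-02} (and the matching sharp scaling of $\|m^{(j)}\|_{\dot H^{N+1}}$): it forces one to follow the Gram--Schmidt orthonormalization through the Littlewood--Paley decomposition and to see that every correction produced by replacing $\nu\mapsto P_{<h}\nu$, $\d F\mapsto\d P_{<h}F$ is governed by the small orthogonality defect $X^{(h)}$, whose smallness rests on the exact identity $\<\nu,\d F\>=0$. Once this refined pointwise bound on $\d_h m^{(h)}$ is in hand, the remainder is a routine combination of the estimates already proved, Cauchy--Schwarz in the continuous frequency variable $h$, and geometric summation against the slowly--varying envelope $c_j$.
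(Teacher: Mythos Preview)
Your proof is correct and follows essentially the same route as the paper: you invoke the pointwise-in-$h$ bounds \eqref{DB-011}, \eqref{DB-012}, \eqref{DB-013}, \eqref{hfb-A0} already established, then apply Cauchy--Schwarz in $h$ and geometric summation against the slowly varying envelope $c_j$, exactly as the paper does.

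The one place where you work harder than necessary is the $\|m^{(j+1)}-m^{(j)}\|_{L^2}$ bound in \eqref{DB-02}. You extract the extra factor $2^{-j}$ by tracing the orthogonality defect $X^{(h)}=\langle P_{<h}\nu,\d P_{<h}F\rangle$ through the Gram--Schmidt construction. The paper takes a shorter path: the $L^2$ portion of the chain of inequalities already proved inside \eqref{DB-012} directly gives $\|\d_h m^{(h)}\|_{L^2}\les_M\|P_h\nu\|_{L^2}+\|\d P_h F\|_{L^2}$ (no orthogonality argument needed), and since $h>h_0>0$ this is automatically $\sim 2^{-h}(\|\d P_h\nu\|_{L^2}+\|\d^2 P_h F\|_{L^2})$. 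So the ``main obstacle'' you identify is not in fact an obstacle---the $2^{-j}$ gain is free once you notice that the relevant $L^2$ bound was already contained in the proof of \eqref{DB-012}. Your argument is not wrong, just more elaborate than required.
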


\begin{proof}
(i) From the estimates \eqref{DB-011}, \eqref{DB-012} and \eqref{DB-013}, we have
\begin{align*}
    &\ \int_j^{j+1} \|\d_h g^{(h)}\|_{H^1}+\|\d_h A^{(h)}\|_{L^2}+\|\d_h \la^{(h)}\|_{L^2} dh\\
    &\les C(M)\int_j^{j+1} \|P_h\nu\|_{H^1}+\| \d P_h F\|_{H^1} dh\\
    &\les C(M) 2^{-sj} \big(\int_j^{j+1} 2^{2sh}(\|P_h\nu\|_{H^1}+\| \d P_h F\|_{H^1})^2 dh\big)^{1/2} \\
    &\les C(M)2^{-sj}c_j.
\end{align*}
The bound \eqref{DB-02} for $\d F^{(j+1)}-\d F^{(j)}$ follows from $F^{(j)}=P_{<j}F$. The second term in \eqref{DB-02} is obtained using \eqref{DB-012}
\begin{align*}
    &\ \|m^{(j+1)}-m^{(j)}\|_{L^2}\les \int_j^{j+1} \|\d_h m^{(h)}\|_{L^2} dh\les \int_j^{j+1} \|\d_h \nu^{(h)}_1\|_{L^2}+\|\d_h \nu^{(h)}_2\|_{L^2} dh\\
    &\les \int_j^{j+1} \|P_h \nu\|_{L^2}+\|\d P_h F \|_{L^2} dh\les  C(M)2^{-(s+1)j}c_j.
\end{align*}

(ii) We prove the high frequency bounds. By $F^{(j)}=P_{<j}F$ and \eqref{hfb-A0}, we easily have 
\[\|\d^2 F^{(j)}\|_{H^N}+\|\d F^{(j)}\|_{\dot H^{N+1}}+\|\d F^{(j)}\|_{\dot H^{N}}\les_M 2^{(N-s)j}c_j,\]
and 
\begin{align*}
    \|m^{(j)}\|_{\dot H^{N+1}\cap \dot H^N}\les \|\d P_{<h} \nu\|_{\dot H^{N}\cap \dot H^{N-1}}+\|\d^2 P_{<h} F\|_{H^{N}}
    \les 2^{(N-s)j}c_j.
\end{align*}
Thus the estimate \eqref{HFB-02} follows.

For the metric $\d g^{(j)}$, we have 
\begin{align*}
&\ \|\d g^{(j)}\|_{H^{N}}=\|\d(\d P_{<j}F\cdot \d P_{<j}F)\|_{H^{N}}\les \|\d^2 P_{<j}F\|_{H^N}\|\d P_{<j}F\|_{L^\infty}\\
&\les (\sum_{k\leq j} 2^{2(N-s)k} c_k^2)^{1/2}C(M)\les(\sum_{k\leq j} 2^{2(N-s)(k-j)} 2^{2(N-s)j} 2^{2\de(j-k)} c_j^2)^{1/2}C(M)\\
&\les  C(M) 2^{(N-s)j}c_j.
\end{align*}
Next, from \eqref{hfb-A0} the connection $A^{(j)}$ is estimated by
\begin{align*}
\||D|^{\de_d}A^{(j)}\|_{H^{N-\de_d}}\les \||D|^{\de_d}A^{(j)}\|_{H^{s-\de_d}}+\|P_{>0}A^{(j)}\|_{\dot H^{N}} \les_M 2^{(N-s)j}c_j,
\end{align*}
Finally, for the second fundamental form $\la^{(j)}$ in the extrinsic Sobolev spaces, we have
\begin{align*}
\|\la^{(j)}\|_{H^N}=\|\d^2 P_{<j} F\|_{H^N}\| m^{(j)}\|_{L^\infty}+\|\d P_{<j}F\|_{L^\infty}\|m^{(j)}\|_{\dot H^{N+1}}\les 2^{(N-s)j}c_j.
\end{align*}
Moreover, using the formula \eqref{nabl-Form}, we can bound the $\la^{(j)}$ in the intrinsic space $\sfH^N$ by
\begin{align*}
    \|\la^{(j)}\|_{\sfH^N}\les \|\la^{(j)}\|_{H^N}+C(M)(\|\d g^{(j)}\|_{H^{N-1}}+\||D|^{\de_d}A^{(j)}\|_{H^{N-1-\de_d}})\les C(M)2^{(N-s)j}c_j.
\end{align*}
This completes the proof of the lemma.
\end{proof}

\bigskip 
\section{Estimates for Parabolic equations}    \label{Sec-Para}
In this section, we consider the energy estimates for the parabolic system \eqref{par-syst}. For this purpose, we view $\la\in L^\infty X^s$
as a parameter and show the energy estimates for the solutions $(g, A)\in Y^{s+1}\times Z^s$ on $[0,T]$ for $T$ sufficiently small.

\begin{thm}\label{Para-thmS}
Let $d \geq 2$, $s > d/2$, and let $\si_d$ and $\de_d$ be given in \eqref{Constant}. 
Then the solutions $(g,A)$ of parabolic system \eqref{par-syst}-\eqref{ini-gA} have the following properties:

i) If $\||D|^{\si_d}g_0\|_{H^{s+1-\si_d}}+ \||D|^{\de_d}A_0\|_{H^{s-\de_d}}\leq M_1$ and $\|\la\|_{L_T^\infty H^s}\leq CM_1$ on $[0,T]$, then we have energy estimates on $[0,\min\{T,CM_1^{-6}\}]$:
\begin{align}   \label{para-bd0}
	&\||D|^{\si_d}g\|_{L^\infty H^{s+1-\si_d}}+ \| |D|^{1+\si_d} g \|_{L_T^2 H^{s+1-\si_d}}\leq 2M_1,\\ \label{para-bdA}
	&\| |D|^{\de_d}A\|_{L^\infty H^{s-\de_d}}+ \| |D|^{1+\de_d}A\|_{L^2_T H^{s-\de_d}}\leq 2M_1.
\end{align}
and the ellipticity bound
\begin{align}   \label{Para-met}
    &\frac{4}{5}c_0I\leq (g(t))\leq \frac{6}{5}c_0^{-1}I,\\ \label{Para-vol}
    &\inf_{x\in \Sigma}{\rm Vol}_{g(t)}(B_x(e^{tC_4 M_1^6}))\geq  e^{-tC_4 M_1^6}v,\quad |\Ric|\leq CM_1^2.
\end{align}

ii) Let $N=[2s]+1$. If $\tri[g_0^{(h)}]\tri_{s+1,g}+\tri[A_0^{(h)}]\tri_{s,A}\leq M_1$ and $\tri[\la^{(h)}]\tri_{s,int}\leq 8M_1$ on $[0,T]$, then we have energy estimates on $[0,\min\{T,CM_1^{-2N-8}\}]$:
\begin{equation}   \label{Para-S}
\tri [g^{(h)}] \tri_{s+1,g}\leq 8M_1,\qquad \tri [A^{(h)}] \tri_{s,A} \leq 8M_1\,.
\end{equation}
Moreover, we have the estimates for linearized terms and high-frequency terms
\begin{align} \label{DB-est1}
&\begin{aligned}
\frac{d}{dt}(\|\d_h g^{(h)}\|_{H^1}^2&+\|\d_h A^{(h)}\|_{L^2}^2)+c(\|\d\d_h g^{(h)}\|_{H^1}^2+\|\d\d_h A^{(h)}\|_{L^2}^2)\\ 
&\les C(M) (\|\d_h g^{(h)}\|_{H^1}^2+\|\d_h A^{(h)}\|_{L^2}^2+\|\d_h \la^{(h)}\|_{L^2}^2),
\end{aligned}\\ \label{HFB-est1}
&\begin{aligned}
\frac{d}{dt}\big(\|\d g^{(j)}\|_{H^N}^2&+\||D|^{\de_d}A^{(j)}\|_{H^{N-\de_d}}^2\big)\\
&\les C(M) 2^{2(N-s)j}c^2_j+C(M)\big(\|(\d g^{(j)},\la^{(j)})\|_{H^N}^2+\||D|^{\de_d}A^{(j)}\|_{H^{N-\de_d}}^2\big).
\end{aligned}
\end{align}
\end{thm}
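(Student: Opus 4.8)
The plan is to regard $\la$ (and, in part ii), its regularizations $\la^{(h)}$) as a prescribed source, so that \eqref{par-syst} becomes a \emph{triangular} quasilinear parabolic system: the metric equation \eqref{g-Heat-Eq} does not involve $A$, so one estimates $g$ first and then treats $A$ in \eqref{Heat-A-pre} with $g$, $\Ga$, $V=g^{\al\be}\Ga^\ga_{\al\be}$ and $\De_g$ regarded as known coefficients. Throughout we run a continuity argument on a short time interval on which the ellipticity $\tfrac45 c_0 I\le g\le \tfrac65 c_0^{-1}I$ is maintained; on such an interval $g^{\al\be}\d^2_{\al\be}$ and $\De_g$ are uniformly elliptic, the existence of $(g,A)$ in the stated classes is classical parabolic theory, and the only real work is the quantitative bounds.

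\textbf{Part i).} For $g$ we apply $|D|^{\si_d}\<D\>^{s+1-\si_d}$ to \eqref{g-Heat-Eq}, pair with $|D|^{\si_d}\<D\>^{s+1-\si_d}g$ and integrate. The second-order term yields, after commuting the rough coefficient $g^{\al\be}$ past the derivatives, the coercive quantity $\gtrsim c_0\||D|^{1+\si_d}g\|_{H^{s+1-\si_d}}^2$ (the commutator being controlled by $\|g\|_{H^{s+1}}$ and, in $d=2$, by the $\si_d$-weighted version). On the right of \eqref{g-Heat-Eq} the curvature terms $\la\bar\psi$, $\la\bar\la$, $\psi\bar\la$ are bounded in $H^s$ by $\|\la\|_{H^s}^2\le CM_1^2$ via \eqref{ineqfg} and $s>d/2$, while the quadratic-gradient terms $g^{\al\be}\Ga_{\mu\be,\si}\Ga^\si_{\al\nu}$ and $\d_\mu g^{\al\be}\Ga_{\al\be,\nu}$ — schematically $(g^{-1})^2(\d g)^2$ and $g^{-1}(\d g)^2$ — are absorbed into a fixed fraction of the coercive gain with the help of the interpolation bounds of \corref{Inter-lem}; the time threshold $CM_1^{-6}$ comes from the worst such product, dominated after interpolation by $CM_1^6$ times a sublinear power of the energy, so a continuity argument on $[0,\min\{T,CM_1^{-6}\}]$ closes \eqref{para-bd0}. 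The bound \eqref{para-bdA} for $A$ is analogous, from \eqref{Heat-A-pre}: the sources $\nab(\la^2)$, $\nab(\la\bar\psi)$, $\nab|\psi|^2$, $\la^2A$, $\la^2V$ are bounded in $H^{s-1}$ by $C(M_1)$ (using $V\sim g^{-1}\d g$ and \eqref{para-bd0}), and the parabolic half-derivative smoothing recovers the $H^s$ level, with an extra care for the low frequencies of $A$ when $d=2$. Once \eqref{para-bd0}--\eqref{para-bdA} hold, the ellipticity \eqref{Para-met} follows from $g(t)-g_0=\int_0^t\d_t g\,d\tau$ and the bound on $\|\d_t g\|_{L^1_tL^\infty_x}$ supplied by the equation and the $L^2_t$-gain, after shrinking $T$; the Ricci bound $|\Ric|\le CM_1^2$ is immediate from \eqref{R-la} and $\|\la\|_{L^\infty}\les\|\la\|_{H^s}$; and the volume bound \eqref{Para-vol} is obtained exactly as in \eqref{vol-reg}--\eqref{balls-reg}, by controlling $\|\d_t g\|_{L^1_tL^\infty_x}$ and comparing $\det g(t)$ and geodesic lengths with those of $g_0$.

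\textbf{Part ii).} The norm $\tri[g^{(h)}]\tri_{s+1,g}^2$ (resp. $\tri[A^{(h)}]\tri_{s,A}^2$) splits into a low-frequency piece at $h=h_0$, a weighted high-frequency piece over $N\in\{[2s],[2s]+1\}$, and a linearized piece in $\d_h g^{(h)}$ (resp. $\d_h A^{(h)}$), and we bound each. The low-frequency piece is part i) applied to \eqref{par-syst} with $\la$ replaced by $\la^{(h_0)}$, whose $H^s$-norm is $\le CM_1$ by $\tri[\la^{(h)}]\tri_{s,int}\le 8M_1$ together with \eqref{Emb-la}--\eqref{Equ-la}, giving $2M_1\le 8M_1$ on $[0,\min\{T,CM_1^{-6}\}]$. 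For the high-frequency piece we prove \eqref{HFB-est1}: differentiating \eqref{par-syst} (for the $\la^{(j)}$-system) once in $x$ and pairing with $\<D\>^N\d g^{(j)}$ and $|D|^{\de_d}\<D\>^{N-\de_d}A^{(j)}$, the second-order terms give coercivity which absorbs the top-order nonlinear contributions (keeping a fixed fraction on the left to supply the $L^2_t$-components of the norm), the remaining curvature/source terms are dominated by $C(M)\big(\|(\d g^{(j)},\la^{(j)})\|_{H^N}^2+\||D|^{\de_d}A^{(j)}\|_{H^{N-\de_d}}^2\big)$ plus $C(M)2^{2(N-s)j}c_j^2$, the last using the high-frequency bounds \eqref{HFB-01} for $\la^{(j)}$ that follow from $\tri[\la^{(h)}]\tri_{s,int}\le 8M_1$; Gronwall on $[0,\min\{T,CM_1^{-2N-8}\}]$, then multiplication by $2^{2j(s-N)}$ and integration $dj$ — using $\sum_j 2^{2j(s-N)}2^{2(N-s)j}c_j^2=\sum_j c_j^2\les M_1^2$ — gives the high-frequency part of \eqref{Para-S}. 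For the linearized piece we differentiate \eqref{par-syst} in $h$, obtaining a \emph{linear} parabolic system for $(\d_h g^{(h)},\d_h A^{(h)})$ with coefficients built from the controlled $g^{(h)}$ and sources linear in $\mu^{(h)}=\d_h\la^{(h)}$ and in $(\d_h g^{(h)},\d_h A^{(h)})$ themselves; the $\sfH^1$--$\sfL^2$ energy estimate gives \eqref{DB-est1}, and Gronwall followed by $\int_{h_0}^\infty 2^{2hs}(\cdot)\,dh$, together with $\int_{h_0}^\infty 2^{2hs}\|\d_h\la^{(h)}\|_{L^2}^2\,dh\le\tri[\la^{(h)}]\tri_{s,int}^2\le(8M_1)^2$, closes the linearized part of \eqref{Para-S} (the coercive terms in \eqref{DB-est1} directly supplying the corresponding $L^2_t$-components). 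Throughout, the geometric conditions \eqref{Para-met}--\eqref{Para-vol} are maintained along the whole family by part i), legitimizing the use of the intrinsic norms $\sfH^N$, the Sobolev embeddings of \thmref{thm-SobEm}, and the equivalences $\sfH^N\sim H^N$.

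\textbf{Main obstacle.} The delicate point is the high-frequency estimate \eqref{HFB-est1} together with, in two dimensions, its interplay with the $\si_d$, $\de_d$ losses. Since $s$ is only slightly above $d/2$, the coefficient $g^{\al\be}$ — controlled merely at the $H^{s+1}$ level (in $d=2$ only as $|D|^{\si_d}g\in H^{s+1-\si_d}$) — must be commuted past $N+1\approx 2s+2$ derivatives, producing borderline commutator terms $[\,\<D\>^N\d,\,g^{\al\be}\d^2_{\al\be}\,]g$ in which $\|\d^2 g^{(j)}\|_{L^\infty}$ is \emph{not} available from the $L^\infty_t$ energy; these, like the quadratic terms $\d_\mu g^{\al\be}\Ga_{\al\be,\nu}$ and $g^{\al\be}\Ga\Ga$, can be closed only by a careful paraproduct splitting in which one part is swallowed by the coercive quantity $c\|\d\d g^{(j)}\|_{H^N}^2$ and the other is paired against the $L^2_t$-smoothing gain. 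One must moreover arrange that the residual source is \emph{exactly} of size $2^{2(N-s)j}c_j^2$ so that the $dj$-integration converges, which forces the use of the sharp high-frequency control of $\la^{(j)}$ and the slowly-varying property of $\{c_j\}$, while keeping the two-dimensional derivative loss strictly below the available parabolic gain at every step.
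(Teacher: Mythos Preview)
Your proposal is correct and follows essentially the same route as the paper: a bootstrap argument on a short time interval, triangular treatment (first $g$, then $A$), $|D|^{\si}$--energy estimates with commutator control for part i), and in part ii) the three-block decomposition (low-frequency $h=h_0$, high-frequency $\dot H^{N+1}$--energy, linearized $\d_h$--energy) yielding \eqref{DB-est1} and \eqref{HFB-est1}.

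Two small remarks where the paper is simpler than you anticipate. First, for the quadratic-gradient terms $(g^{-1})^2(\d g)^2$ and $g^{-1}(\d g)^2$ in part i), the paper does not absorb anything into the coercive gain via interpolation; since $s>d/2$ it bounds $\|N(g)\|_{H^s}\lesssim M_1^4$ directly by the algebra property, obtaining $\tfrac{d}{dt}\||D|^{\si_d}g\|_{H^{s+1-\si_d}}^2\le C_1 M_1^8$ and hence the threshold $CM_1^{-6}$ by trivial integration rather than a continuity argument. Second, your ``main obstacle'' overstates the high-frequency commutator difficulty: the paper controls $[\d^{N+1},g^{\al\be}]\d_\be g$ simply by $\|g\|_{\dot H^{N+2}}\,\|g\|_{\dot H^{N+1}}\|\d g\|_{L^\infty}$, using only $\d g\in L^\infty$ (available from $\|\d g\|_{H^s}$), with no need for $\d^2 g\in L^\infty$ or paraproduct splitting. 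Finally, the $C(M)2^{2(N-s)j}c_j^2$ term in \eqref{HFB-est1} does not come from the high-frequency bound \eqref{HFB-01} on $\la^{(j)}$; it arises from the constant lower-order source $M_1^{2N+4}$ in the $\dot H^N$--energy inequalities, which the paper rewrites using $1\lesssim 2^{2(N-s)j}c_j^2$ (the $\la^{(j)}$ contribution stays as the $\|\la^{(j)}\|_{H^N}^2$ term on the right of \eqref{HFB-est1}).
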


\subsection{Energy estimates in Sobolev spaces} Here we prove the standard energy estimates \eqref{para-bd0} for parabolic equations \eqref{par-syst}. We start with the following bounds for the inverse $g^{-1}$.

\begin{lemma} 
Let $d\geq 2$, $s>d/2$ and $\si_d$ be given in \eqref{Constant}. Assume that $\|g-g_0\|_{H^{s}}\les \ep_0$ and $\||D|^{\si_d}g_0\|_{H^{s+1-\si_d}}\les M$. Then we have the bounds
\begin{align}   \label{Equi-h}
&\|g^{-1}-g_0^{-1}\|_{H^{s}} \les  \|g-g_0\|_{H^{s}},\qquad 
\|g^{-1}-g_0^{-1}\|_{H^{s+1}} \les  \|g-g_0\|_{H^{s+1}},
\end{align}
with implicit constants depending on  $M$.
\end{lemma}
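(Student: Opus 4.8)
The estimate is a routine consequence of the algebra properties of $H^s$ and $H^{s+1}$ for $s>d/2$, together with the a priori $L^\infty$-ellipticity encoded in \eqref{MetBdd} (and its perturbed counterpart for $g$, which follows once $\|g-g_0\|_{H^s}\lesssim\ep_0$ is small because $H^s\hookrightarrow L^\infty$). The first step is to write the difference of inverses via the resolvent-type identity
\begin{equation*}
g^{-1}-g_0^{-1} = -\,g^{-1}\,(g-g_0)\,g_0^{-1},
\end{equation*}
valid pointwise in $x$ as an identity of $d\times d$ matrices, since both $g(x)$ and $g_0(x)$ are uniformly elliptic: $c_0 I\le g,g_0\le c_0^{-1}I$ (the bound for $g$ coming from $\|g-g_0\|_{L^\infty}\lesssim\|g-g_0\|_{H^s}\lesssim\ep_0\ll c_0$). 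In particular $\|g^{-1}\|_{L^\infty},\|g_0^{-1}\|_{L^\infty}\lesssim c_0^{-1}$, and more generally, by expanding $g^{-1}=\mathrm{adj}(g)/\det g$ and using that $\det g\geq c_0^d$ together with the algebra property, one controls $\|g^{-1}\|_{L^\infty}$ and, via the chain rule for the smooth map $g\mapsto g^{-1}$ on the compact set $\{c_0 I\le g\le c_0^{-1}I\}$, one gets $\|P_{>0}(g^{-1})\|_{\dot H^\sigma}\lesssim_M \|P_{>0}g\|_{\dot H^\sigma}$ for $\sigma\in\{s,s+1\}$. Here the bound $\||D|^{\si_d}g_0\|_{H^{s+1-\si_d}}\lesssim M$ (hence $\|g_0\|_{L^\infty}+\|P_{>0}g_0\|_{\dot H^{s+1}}\lesssim_M 1$ using $\si_d\le 1$ and interpolation, after subtracting the constant $I$ at infinity) is exactly what is needed to place $g_0^{-1}-I$ and $g^{-1}-I$ in $H^{s+1}$ in the first place.

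The second step applies the bilinear estimate \eqref{ineqfg} (or rather its trilinear iterate) to the identity above. Writing $A:=g^{-1}-I$, $A_0:=g_0^{-1}-I$, $h:=g-g_0$, we have
\begin{equation*}
A-A_0 = -(I+A)\,h\,(I+A_0),
\end{equation*}
so that, using $\|fg\|_{H^\sigma}\lesssim \|f\|_{H^\sigma}(\|g\|_{L^\infty}+\|P_{>0}g\|_{\dot H^\sigma})$ twice,
\begin{equation*}
\|A-A_0\|_{H^\sigma}\lesssim \big(1+\|A\|_{L^\infty}+\|P_{>0}A\|_{\dot H^\sigma}\big)\big(1+\|A_0\|_{L^\infty}+\|P_{>0}A_0\|_{\dot H^\sigma}\big)\,\|h\|_{H^\sigma},
\end{equation*}
for $\sigma=s$ and $\sigma=s+1$. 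By the first step the two parenthetical factors are $\lesssim_M 1$ (for $\sigma=s+1$ one uses the a priori smallness $\|P_{>0}A\|_{\dot H^{s}}\lesssim\ep_0$ to close a fixed-point/continuity argument bounding $\|P_{>0}A\|_{\dot H^{s+1}}\lesssim_M \|P_{>0}g\|_{\dot H^{s+1}}$ from the identity $A = -g^{-1}h g_0^{-1}+A_0$ rewritten as $A=-A h -h A_0 - h + A_0$, solving for $A$). This yields \eqref{Equi-h} with the stated dependence on $M$.

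\textbf{Main obstacle.} The only genuinely delicate point is the $H^{s+1}$ bound on the inverse when $g-I$ is merely in $\dot H^{s+1}$ at top order but in $L^\infty$ (not $\dot H^s$ with smallness) at low frequencies; one cannot simply iterate the algebra estimate at level $s+1$ because $\dot H^{s+1}$ is not an algebra and the low-frequency part of $g-I$ is only bounded, not small. The remedy is the paradifferential structure already built into \eqref{ineqfg}: the low frequencies of the ``coefficient'' factor enter only through their $L^\infty$ norm (which is $\lesssim_M 1$), while the $\dot H^{s+1}$ norm falls on the ``good'' factor $h=g-g_0$; and the bootstrap $\|P_{>0}(g^{-1})\|_{\dot H^{s+1}}\lesssim_M \|P_{>0}g\|_{\dot H^{s+1}}$ is closed precisely because the $s+1$-derivative terms appear linearly on the right once the smallness $\ep_0\ll_M 1$ absorbs the quadratic self-interaction $\|A\|_{\dot H^{s}}\|A\|_{\dot H^{s+1}}$ back into the left-hand side. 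Apart from this bootstrap, everything is mechanical.
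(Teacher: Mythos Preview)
Your approach is correct and close in spirit to the paper's, but the paper's route is slightly more direct. You use the resolvent identity $g^{-1}-g_0^{-1}=-g^{-1}(g-g_0)g_0^{-1}$, which forces you to separately control $g^{-1}$ in $L^\infty\cap\dot H^\sigma$ (via the adjugate/determinant formula and chain rule), and then run a bootstrap at level $s+1$. The paper instead expands $(g_0^{-1}+G^{-1})(g_0+G)=I$ with $G=g-g_0$, $G^{-1}=g^{-1}-g_0^{-1}$, to obtain the fixed-point form
\[
G^{-1}=-g_0^{-1}\,G\,g_0^{-1}-G^{-1}\,G\,g_0^{-1},
\]
in which only $g_0^{-1}$ (already controlled by the hypothesis on $g_0$) appears as a coefficient, and the unknown $G^{-1}$ on the right is multiplied by the small factor $G$. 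The algebra property then gives $\|G^{-1}\|_{H^\sigma}\lesssim_M \|G\|_{H^\sigma}+\|G^{-1}\|_{H^\sigma}\|G\|_{H^s}$ for $\sigma\in\{s,s+1\}$, and the second term is absorbed using $\|G\|_{H^s}\lesssim\ep_0\ll_M 1$. This avoids any separate analysis of $g^{-1}$ and makes the bootstrap you describe unnecessary; otherwise the two arguments are equivalent.
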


\begin{proof}
Let $G_{\al\be}=g_{\al\be}-g_{0\al\be}$ and $G^{\al\be}=g^{\al\be}-g_0^{\al\be}$. Then the $G^{\al\be}$ and $G_{\al\be}$ satisfy the relation
\begin{align*}
\de^\al_\ga =g^{\al\be}g_{\be\ga}=(g_0^{\al\be}+G^{\al\be})(g_{0\be\ga}+G_{\be\ga})=\de^\al_\ga+g_0^{\al\be} G_{\be\ga}+G^{\al\be}g_{0\be\ga}+G^{\al\be}G_{\be\ga}.
\end{align*}
Multiplying $g_0^{\ga\si}$ yields 
\begin{align*}
G^{\al\si}=-g_0^{\al\be} G_{\be\ga}g_0^{\ga\si}-G^{\al\be}G_{\be\ga}g^{\ga\si}_0.
\end{align*}
Then the bounds in \eqref{Equi-h} are obtained by algebra property and the assumptions on $g-g_0$ and $g_0$.
\end{proof}

\begin{proof}[Proof of the bound \eqref{para-bd0} for the metric $g$]
\
	
We assume that $\||D|^{\si_d}g\|_{H^{s+1-\si_d}}\leq 2M_1$. It suffices to consider the general form:
\begin{align}    \label{Linz-h}
\d_t g-\d_\al(g^{\al\be} \d_\be g)=\la^2+(g^{-1})^2 \d g\d g+\d g^{-1} \d g=:N(g).
\end{align}	
Since $\||D|^{\si_d}g\|_{H^{s+1-\si_d}}\approx \|g\|_{\dot H^{\si_d}}+\|g\|_{\dot H^{s+1}}$, it suffices to consider the bound for $\||D|^{\si}g\|_{L^2}$ with $\si\in \{\si_d,s+1\}$. For the equation \eqref{Linz-h}, we derive
\begin{align}   \nonumber
&\ \frac{1}{2}\frac{d}{dt}\||D|^{\si}g \|_{L^2}^2= \int |D|^{\si} g \cdot |D|^{\si}  \d_t g \ dx
= \int |D|^{\si}  g \cdot |D|^{\si}  (\d_\al(g^{\al\be} \d_\be g )+N(g) )\ dx\\\nonumber
&=\int -g^{\al\be} \d_\al |D|^{\si}  g \cdot \d_\be |D|^{\si}  g 
-\d_\al |D|^{\si}  g \cdot  [|D|^{\si} ,g^{\al\be}] \d_\be g +|D|^{\si}g\cdot |D|^{\si} N(g) \ dx\\\label{comm-h}
&\leq - c\||D|^{\si}  g \|_{\dot H^1}^2+\||D|^{\si} g \|_{\dot H^1}\big(\|[|D|^{\si} ,g^{\al\be}] \d_\be g \|_{L^2}+\||D|^{\si-1}f \|_{L^2}\big).
\end{align}
Since $\si=\si_d$ or $s+1$, the commutator $[|D|^{\si},g^{\al\be}] \d_\be g $ in \eqref{comm-h} is bounded by 
\begin{align*}
    \|[|D|^{\si} ,g^{\al\be}] \d_\be g \|_{L^2}\les \||D|^{\si_d}g^{-1}\|_{H^{s+1-\si_d}}\||D|^{\si_d}g\|_{H^{s+1-\si_d}}\les \||D|^{\si_d}g\|_{H^{s+1-\si_d}}^2.
\end{align*}
Thus we obtain the energy estimate
\begin{align}\label{EEst-s}
\frac{1}{2}\frac{d}{dt}\||D|^{\si_d}g\|_{H^{s+1-\si_d}}^2+ c\||D|^{1+\si_d} g \|_{H^{s+1-\si_d}}^2
\les  \||D|^{\si_d}g\|_{H^{s+1-\si_d}}^4+\|N(g) \|_{H^{s}}^2.
\end{align}
The nonlinearities are bounded by 
\begin{align*}
	\|N(g)\|_{H^s}\les \|\la\|_{H^s}^2+\|g^{-1}\|_{L^\infty \cap \dot H^s}^2 \|\d g\|_{H^s}^2+\|\d g^{-1}\|_{H^s}\|\d g\|_{H^s}\leq CM_1^4
\end{align*}
Then from \eqref{EEst-s} we have
\begin{align}  \label{EE-glow}
	\frac{d}{dt}\||D|^{\si_d}g\|_{H^{s+1-\si_d}}^2+ 2c\||D|^{1+\si_d} g \|_{H^{s+1-\si_d}}^2
	\leq C_1 M_1^8.
\end{align}
This yields an improved bound on the time interval $t\in [0,\frac{5}{4C_1 M_1^6}]$
\begin{align*}
	\||D|^{\si_d}g\|_{H^{s+1-\si_d}}^2+ 2c\||D|^{1+\si_d} g \|_{L^2H^{s+1-\si_d}}^2\leq \||D|^{\si_d}g_0\|_{H^{s+1-\si_d}}^2+tC_1M_1^8\leq 4M_1^2.
\end{align*}
Hence, the estimate \eqref{para-bd0} follows.
\end{proof}

\begin{proof}[Proof of the bound \eqref{para-bdA} for connection $A$]
\ 

We assume that $\||D|^{\de_d}A\|_{H^{s-\de_d}}\leq 2M_1$. From \eqref{par-syst} and $\De_g=\nab^\mu\nab_\mu$, it suffices to consider the general form
\begin{align*}
\d_t A_\al-\d_\mu(g^{\mu\nu}\d_\nu A_\al)=\d_\mu(g^{-1}\Ga A)+\Ga(g^{-1}\nab A)+\nab(\la^2)+\la^2(A+V)=:N(A).
\end{align*}
The nonlinearity $N(A)$ is bounded by 
\begin{align*}
 \|N(A)\|_{H^{s-1}}&\les
 \|g^{-1}\|_{L^\infty \cap \dot H^s}^2\|\d g\|_{H^s}\||D|^{\de_d} A\|_{H^{s-\de_d}}\\
 &\quad +\|\la\|_{H^s}^2(1+\||D|^{\de_d} A\|_{H^{s-\de_d}}+\|g^{-1}\|_{L^\infty \cap \dot H^s}\|\d g\|_{H^s})
 \les  CM_1^4.
\end{align*}
Then similar to \eqref{EEst-s}, we obtain
\begin{align}  \label{EE-Alow}
&\ \frac{d}{dt}\||D|^{\de_d} A\|_{H^{s-\de_d}}^2+ 2c\|\d |D|^{\de_d}A\|_{H^{s-\de_d}}^2
\leq C_2 M_1^8.
\end{align}
Thus on the time interval $t\in [0, \frac{5}{4C_2M_1^6}]$, this yields the bound \eqref{para-bdA}.
 \end{proof}

\begin{proof}[Proof of \eqref{Para-met}]
    By \eqref{g_dt}, on $t\in[0,c_0(10C_3M_1^4)^{-1}]$ we have
\begin{align*}
    &|(g_{\al\be}(t)-g_{\al\be}(0))X^\al X^\be|\leq \int_0^t \|\d_\tau g_{\al\be}(\tau)\|_{L^\infty} d\tau |X|^2\\
    &\les\int_0^t\|\la\|_{H^s}^2+\|g^{-1}\|_{L^\infty}\|\d g\|_{H^{s+1}}+\|g^{-1}\|_{L^\infty}^2\|\d g\|_{H^s}^2\ d\tau |X|^2\\
    &\leq C( t M_1^4+\sqrt{t}M_1^2)|X|^2 \leq  t C_3 M_1^4\leq   \frac{c_0}{10} |X|^2.
\end{align*}
Then from $\frac{9}{10}c_0I\leq g(0)\leq \frac{11}{10}c_0^{-1}I$, we get 
\begin{align*}
    \frac{4}{5}c_0|X|^2\leq &\  g_{\al\be}X^\al X^\be=g_{\al\be}(0)X^\al X^\be+ (g_{\al\be}(t)-g_{\al\be}(0))X^\al X^\be
    \leq  \frac{6}{5}c_0^{-1}|X|^2.
\end{align*}
Thus the bound \eqref{Para-met} follows.
\end{proof}

\begin{proof}[Proof of \eqref{Para-vol}]
For the volume form, by \eqref{g_dt} and $V^\ga=g^{\al\be}\Ga_{\al\be}^\ga$ we have
\begin{align*}
    |\d_t\sqrt{\det g}|&=|\frac{1}{2}g^{\al\be}\d_t g_{\al\be}\sqrt{\det g}|=|\nab_\al V^\al \sqrt{\det g}| \\
    &\leq C(M_1^5+M_1^2\|\d g\|_{H^{s+1}}) \sqrt{\det g},
\end{align*}
Integrating over $[0,t]$, by \eqref{para-bd0} this yields
\begin{align*}
    e^{-tC_4 M_1^6} \sqrt{\det g(0)}\leq \sqrt{\det g(t)}\leq e^{t C_4 M_1^6} \sqrt{\det g(0)}.
\end{align*}
For any geodesic $\ga:[0,1]\rightarrow\Sigma_0$, we have
\begin{align*}
    \Big|\frac{d}{ds}l(\ga,s)\Big|\leq \|\d_s g\|_{L^\infty} l(\ga)\leq C(M_1^5+M_1^2\|\d g\|_{H^{s+1}})l(\ga)\,,
\end{align*}
which implies 
\begin{align*}
    d_s(x,y)\leq l(\ga,s)\leq e^{tC_4M_1^6} d_0(x,y).
\end{align*}
Then we obtain
\begin{align*}
    {\rm Vol}_{g(t)}(B_x(e^{tC_4 M_1^6}))=\int_{B_x(e^{tC_4 M_1^6},t)} 1\ dvol_{g(t)}\geq \int_{B_x(1,0)}  e^{-tC_4 M_1^6}dvol_{g(0)}
    = e^{-tC_4 M_1^6}v.
\end{align*}

In addition, by $\|\la\|_{L^\infty_T H^s}\les 2M_1$ and \eqref{Para-met}, we also have
\begin{align*}
    |\Ric_{\al\be}X^\al X^\be|\leq |\Ric|_g |X|_g^2\les \|\la\|_{L^\infty}^2 |X|_{g}^2\les CM_1^2 |X|_g^2.
\end{align*}
This completes the proof of \eqref{Para-vol}.
\end{proof}

\subsection{Energy estimates in the spaces $Y^{s+1}$ and $Z^s$} 
Here we focus on the energy estimates \eqref{Para-S} of parabolic system \eqref{par-syst} in our primary function spaces $Y^{s+1}$ and $Z^s$. By bootstrap argument, we assume that on some interval $[0,T_1]$ for $T_1\leq T$,
\begin{align*}
    \tri[g^{(h)}\tri_{s+1,g}\leq 8M_1,\quad \tri[A^{(h)}]\tri_{s,A}\leq 8M_1.
\end{align*}
Then by \eqref{Emb-la} we have
\begin{align*}
    \tri [\la^{(h)}]\tri_{s,ext}\leq 8C_{eq}M_1.
\end{align*}

\begin{proof}[Proof of the bound \eqref{Para-S} for metric $g$]
\ 

Since $\|\la^{(h)}\|_{L^\infty}\les\|\la^{(h)}\|_{H^s}\les \|\la\|_{s,\,int}$ and $\||D|^{\si_d}g^{(h)}\|_{H^{s+1-\si_d}}\leq M_1$, from \eqref{para-bd0} we have on the time interval $[0,CM_1^{-6}]$
\begin{align*}
    \||D|^{\si_d}g^{(h)}\|_{H^{s+1-\si_d}}+\||D|^{1+\si_d}g^{(h)}\|_{L^2H^{s+1-\si_d}}\leq 2M_1.
\end{align*}
Next, we bound the other terms respectively.

\emph{i) We bound the high frequency norm}
\begin{align}\label{EEhigh}
    \int_{h_0}^\infty 2^{2h(s-N)}\||D|^{\si_d}g\|_{H^{N+1-\si_d}}^2 dh+c\int_0^t\int_{h_0}^\infty 2^{2h(s-N)}\||D|^{1+\si_d}g\|_{H^{N+1-\si_d}}^2dhd\tau\leq 4M_1^2.
\end{align}

Here it suffices to bound the $\dot H^{N+1}$-norm of $g$
\begin{align*}
    &\ \frac{1}{2}\frac{d}{dt}\|g\|_{\dot H^{N+1}}^2=\int \d^{N+1}g\cdot \d^{N+1}(\d_\al(g^{\al\be}\d_\be g)+N(g))dx\\
    \leq &\ -c\|g\|_{\dot H^{N+2}}^2+\|g\|_{\dot H^{N+2}}(\|g\|_{\dot H^{N+1}}\|\d g\|_{L^\infty}+\|N(g)\|_{\dot H^N})\\
    \leq &\ -c\|g\|_{\dot H^{N+2}}^2+M_1^2\|g\|_{\dH^{N+1}}^2+\|N(g)\|_{\dot H^N}^2.
\end{align*}
The nonlinearity is bounded by
\begin{align*}
    \|N(g)\|_{\dot H^N}\les&\  \|\la\|_{\dH^N}\|\la\|_{L^\infty}+\|g^{-1}\|_{\dH^{N+1}}\|g^{-1}\|_{L^\infty}\|g\|_{L^\infty}\|\d g\|_{L^\infty}\\
    &+\|g^{-1}\|_{L^\infty}^2\|g\|_{\dH^{N+1}}\|\d g\|_{L^\infty}
    +\|\d g^{-1}\|_{L^\infty}\|g\|_{\dH^{N+1}}\\
    \leq  &\  C\|\la\|_{\dH^N}M_1+CM_1^{N+4}\|g\|_{\dH^{N+1}}.
\end{align*}
Then we obtain
\begin{align}  \label{EE-gHN}
	&\ \frac{1}{2}\frac{d}{dt}\|g\|_{\dH^{N+1}}^2 +c\|g\|_{\dH^{N+2}}^2
	\les M_1^{2N+8}\|g\|_{\dH^{N+1}}^2+\|\la\|_{\dH^{N}}^2 M_1^2.
\end{align}
Integrating over $[h_0,\infty)$, for $N>s+1$ this combined with \eqref{EE-glow} yields
\begin{align*}
    &\ \frac{1}{2}\frac{d}{dt}\int_{h_0}^\infty 2^{2h(s-N)}\||D|^{\si_d}g\|_{H^{N+1-\si_d}}^2 dh+c\int_{h_0}^\infty 2^{2h(s-N)}\||D|^{1+\si_d}g\|_{H^{N+1-\si_d}}^2dh\\
    \les &\ \int_{h_0}^\infty 2^{2h(s-N)}(C_1M_1^8+M_1^{2N+8}\|g\|_{\dH^{N+1}}^2+\|\la\|_{\dH^{N}}^2 M_1^2) dh\\
    \leq&\ C(M_1^{2N+10} +M_1^4)\leq C_5M_1^{2N+10}.
\end{align*}
Hence, the bound \eqref{EEhigh} follows on the time interval $t\in [0,3(2C_5M^{2N+8})^{-1}]$.

\emph{ii) We bound the linearized norm} 
\begin{align}  \label{EEg-Low}
    \int_{h_0}^\infty 2^{2sh} \|\d_h g(t)\|_{H^1}^2 dh+\int_0^t\int_{h_0}^\infty 2^{2sh} \|\d\d_h g\|_{H^1}^2 dhd\tau\leq 4M_1^2.
\end{align}

By the equations of $g$ and the nonlinearities in \eqref{par-syst}, we have
\begin{align*}
    &\ \frac{1}{2}\frac{d}{dt}\|\d_h g\|_{H^1}^2=\<\d_h g, \d_h \d_t g \>_{H^1}
    = \<\d_h g, \d_h (g^{\al\be}\d^2_{\al\be}g)\>_{H^1}+\<\d_h g, N(g))\>_{H^1}\\
    =&\ \<\d_h g, \d_h (g^{\al\be}\d^2_{\al\be}g)\>_{H^1}+\<\d_h g, \d_h(\la^2)\>_{H^1}\\
    &\ +\<\d_h g, \d_h((g^{-1})^2\d g\d g)\>_{H^1}+\<\d_h g, \d_h(\d g^{-1}\d g)\>_{H^1}\\
    =:&\  I_1+I_2+I_3+I_4.
\end{align*}

\emph{Estimates of $I_1$.}
We use integration by parts to rewrite the first term as 
\begin{align*}
    &I_1=\<\d_h g, \d_h (g^{\al\be}\d^2_{\al\be}g)\>_{L^2}+\<\d\d_h g, \d\d_h (g^{\al\be}\d^2_{\al\be}g)\>_{L^2}\\
    =&\ \<\d_h g, g^{\al\be}\d^2_{\al\be}\d_h g+\d_h g^{\al\be}\d^2_{\al\be}g\>_{L^2}+\<\d\d_h g, g^{\al\be}\d^2_{\al\be}\d\d_h g+\d g \d^2 \d_h g+\d (\d_h g^{\al\be}\d^2_{\al\be}g)\>_{L^2}\\
    \leq  &\ -\<\d_\al \d_h g,g^{\al\be}\d_\be \d_h g\>+|\<\d_h g, \d g^{-1}\d\d_h g+\d\d_h g^{-1}\d g\>|+|\<\d\d_h g,\d_h g^{-1}\d g\>|\\
    &\ -\<\d_\al \d \d_h g,g^{\al\be}\d_\be \d\d_h g\>-\<\d \d_h g,\d_\al g^{\al\be}\d_\be \d\d_h g\>\\
    &\ +\<\d \d_h g,\d g \d^2 \d_h g\>-\<\d^2 \d_h g,\d_h g^{\al\be}\d^2_{\al\be}g\>,
\end{align*}
By $(g^{\al\be})\geq cI$, this could be bounded by
\begin{align*}
    I_1\leq &\ -c\|\d\d_h g\|_{H^1}^2+\| \d_h g\|_{H^1}^2\|\d g\|_{L^\infty}
    +\|\d \d_h g\|_{H^1}\|\d_h g\|_{H^1}\|\d g\|_{H^s}\\
    \leq &\ -c\|\d \d_h g\|_{H^1}^2+\|\d_h g\|_{H^1}^2 \|\d g\|_{H^s}^2.
\end{align*}

\emph{Estimates of $I_2$.} We have
\begin{align*}
    I_2&=\<\d_h g, \d_h \la\cdot \la\>_{L^2}-\<\d^2 \d_h g,\d_h \la\cdot \la\>_{L^2}\\
    &\leq (\|\d_h g\|_{L^2}+\|\d^2 \d_h g\|_{L^2})\|\d_h \la\|_{L^2}\|\la\|_{L^\infty}\\
    &\leq \frac{1}{10}c\|\d \d_h g\|_{H^1}^2+\|\d_h g\|_{L^2}^2+C\|\d_h \la\|_{L^2}^2 \|\la\|_{H^s}^2.
\end{align*}

\emph{Estimates of $I_3$.} We have
\begin{align*}
	I_3&\leq \|\d_h( (g^{-1})^2\d g\d g)\|_{L^2}(\|\d_h g\|_{L^2}+\|\d^2 \d_h g\|_{L^2})\\
	& \leq (\|\d_h g^{-1}\|_{L^2}\|g^{-1}\d g\d g\|_{L^\infty}+\|(g^{-1})^2 \d g\|_{L^\infty}\|\d_h g\|_{H^1})(\|\d_h g\|_{L^2}+\|\d^2 \d_h g\|_{L^2})\\
	&\leq M_1^5\|\d_h g\|_{H^1}(\|\d_h g\|_{L^2}+\|\d^2 \d_h g\|_{L^2})\\
	&\leq \frac{1}{10}c\|\d \d_h g\|_{H^1}^2+M_1^{10}\|\d_h g\|_{H^1}^2 .
\end{align*}

\emph{Estimates of $I_4$.} We have
\begin{align*}
    I_4&=\<\d_h g, \d\d_h g \cdot \d g\>-\<\d^2 \d_h g,\d\d_h g\cdot \d g\>\\
    &\leq \|\d_h g\|_{H^1}^2 \|\d g\|_{H^s}+\frac{1}{10}c\|\d \d_h g\|_{H^1}^2+C\|\d \d_h g\|_{L^2}^2\|\d g\|_{H^s}^2\\
    &\leq \frac{1}{10}c\|\d \d_h g\|_{H^1}^2+CM_1^2\|\d_h g\|_{H^1}^2.
\end{align*}

From the above estimates, we obtain
\begin{align}   \label{EE-dhg}
    \frac{1}{2}\frac{d}{dt}\|\d_h g\|_{H^1}^2\leq &\ -\  \frac{1}{2}c\|\d \d_h g\|_{H^1}^2+CM_1^{10}\|\d_h g\|_{H^1}^2+CM_1^2\|\d_h \la\|_{L^2}^2.
\end{align}
Integrating over $[h_0,\infty)$ with respect to $h$, this also yields
\begin{align*}
    \frac{d}{dt}\int_{h_0}^\infty 2^{2hs}\|\d_h g\|_{H^1}^2 dh +c\int_{h_0}^\infty 2^{2hs} \|\d \d_h g\|_{H^1}^2   dh\leq C_6M_1^{12}.
\end{align*}
Hence, on the time interval $t\in[0,3(C_6M_1^{10})^{-1}]$ we obtain \eqref{EEg-Low}.

To conclude, on the time interval $t\in [0,CM_1^{-2N-8}]$, we obtain the improved bound $\|[g^{(h)}]\|_{s+1,g}\leq 6M_1$. Hence, the estimate \eqref{Para-S} for metric $g$ follows.
\end{proof}

\begin{proof}[Proof of the bound \eqref{Para-S} for connection $A$]

From \eqref{para-bdA}, we have on the time interval $[0,CM_1^{-6}]$
\begin{align*}
    \||D|^{\de_d}A\|_{H^{s-\de_d}}+c\||D|^{1+\de_d} A\|_{L^2 H^{s-\de_d}}\leq 2M_1.
\end{align*}
Next, it remains to bound the high frequency part and linearized part.

\emph{i) We bound the high frequency norm}
\begin{align} \label{EEA-Higher}
    \int_{h_0}^\infty 2^{2(s-N)h} \| |D|^{\de_d}A\|_{H^{N-\de_d}}^2 dh+\int_0^t\int_{h_0}^\infty 2^{2(s-N)h} \| |D|^{1+\de_d} A\|_{H^{N-\de_d}}^2 dhd\tau\leq 4M_1^2.
\end{align}

It suffices to consider the $\dH^N$-norm of $A$
\begin{align*}
&\frac{1}{2}\frac{d}{dt}\|A\|_{\dH^N}^2=\<A,\d_t A\>_{\dH^N}=\<A,\d_\mu(g^{\mu\nu}\d_\nu A)+N(A)\>_{\dH^N}\\
\leq &\ -c\| A\|_{\dH^{N+1}}^2+\|A\|_{\dH^{N+1}}(\|g^{-1}\|_{\dH^{N+1}}M_1+M_1^3\|A\|_{\dH^N}+\|N(A)\|_{\dH^{N-1}})\\
\leq &\ -c\| A\|_{\dH^{N+1}}^2+M_1^{2N+4}+M_1^6\|A\|_{\dH^N}^2+\|N(A)\|_{\dH^{N-1}}^2.
\end{align*}
The nonlinearities are bounded by 
\begin{align*}
	&\|N(A)\|_{\dH^{N-1}}\les \|g^{-1}\Ga A\|_{\dH^N}+\|\Ga g^{-1}\nab A\|_{\dH^{N-1}}+\|\la^2\|_{\dH^N}+\|\la^2(A+V)\|_{\dH^{N-1}}\\
	&\les \|(g^{-1},g)\|_{\dH^{N+1}}\|g\|_{L^\infty}\|A\|_{L^\infty}+\|g^{-1}\Ga\|_{L^\infty}\|A\|_{\dH^N}+\|\la\|_{\dH^N}M_1^3\\
	&\quad  +\|\la\|_{L^\infty}^2(\|A\|_{\dH^N}+\|\d A\|_{L^2}+\|g^{-1}\|_{\dH^{N+1}}\|g\|_{L^\infty}+\|g^{-1}\|_{L^\infty}\|g\|_{\dH^N}+\|g^{-1}\d g\|_{L^2})\\
	&\les M_1^{N+4}\|g\|_{\dH^{N+1}}+M_1^5\|A\|_{\dH^N}+M_1^3\|\la\|_{\dH^N}+M_1^4.
\end{align*}
Then we get
\begin{align}  \label{EE-AHN}
	\frac{d}{dt}\|A\|_{\dH^N}^2 +c\|A\|_{\dH^{N+1}}^2\leq M_1^{2N+4}+M_1^{2N+8}(\|g\|_{\dH^{N+1}}^2+\|(A,\la)\|_{\dH^N}^2).
\end{align}
Integrating over $[h_0,\infty)$, this combined with \eqref{EE-Alow} yields
\begin{align*}
	\frac{d}{dt}\int_{h_0}^\infty 2^{2h(s-N)}\| |D|^{\de_d}A\|_{H^{N-\de_d}}^2 dh+c\int_{h_0}^\infty 2^{2h(s-N)}\| |D|^{1+\de_d}A\|_{H^{N-\de_d}}^2 dh\leq C_7M_1^{2N+10}.
\end{align*}
Hence, we get the bound \eqref{EEA-Higher} on the interval $[0,3(C_7M_1^{2N+8})^{-1}]$.

\emph{ii) We bound the linearized norm}
\begin{align} \label{EEA}
    \int_{h_0}^\infty 2^{2sh} \|\d_h A\|_{L^2}^2 dh+\int_0^t\int_{h_0}^\infty 2^{2sh} \|\d\d_h A\|_{L^2}^2 dhd\tau\leq 4M_1^2.
\end{align}

By the equations of $A$ in \eqref{par-syst}, we have
\begin{align*}
    &\ \frac{1}{2}\frac{d}{dt}\|\d_h A\|_{L^2}^2=\int \d_h A\cdot \d_h (\d_\mu(g^{\mu\nu} \d_\nu  A)+\mathcal N(A)) dx \\
    =&\ -\int g^{\mu\nu} \d_\mu\d_h A \d_\nu \d_h A-\int \d_\mu\d_h A \d_h g^{\mu\nu}\d_\nu A +\int \d_h A\cdot \d_h N(A) dx \\
    \leq&\ -c\|\d \d_h A\|_{L^2}+\|\d\d_h A\|_{L^2}\|\d_h g^{-1}\|_{H^1}\||D|^{\de_d}A\|_{H^{s-\de_d}}+\int \d_h A\cdot \d_h N(A) dx
\end{align*}
The nonlinearity $N(A)$ is estimated by
\begin{align*}
    &\ |\int \d_h A\cdot \d_h \d(g^{-1}\Ga A+\la^2) dx |\\
    =&\ |\int \d\d_h A\cdot  (\d_h g^{-1}\Ga A+(g^{-1})^2 \d\d_h g A+g^{-1}\Ga \d_h A+\d_h \la \la)  dx|\\
    \les &\ \|\d\d_h A\|_{L^2}(\|\d_h g\|_{L^2}M_1^5+M_1^3\|\d\d_h g\|_{L^2}+M_1^3\|\d_h A\|_{L^2}+\|\d_h \la\|_{L^2}M_1)\\
    \leq &\ \frac{c}{10}\|\d\d_h A\|_{L^2}^2+CM_1^{10}(\|\d_h g\|_{H^1}+\|(\d_h A,\d_h \la)\|_{L^2})^2,
\end{align*}
and
\begin{align*}
    &\ \int \d_h A\cdot \d_h (\Ga g^{-1}\nab A+\la^2(A+V)) dx\\
    \les &\ \|\d_h A\|_{L^2}(\|\d_h g\|_{H^1}M_1^5+M_1^3\|\d \d_h A\|_{L^2}+\|\d_h \la\|_{L^2}M_1^5+M_1^2\|\d_h A\|_{L^2})\\
    \leq &\ \frac{c}{10}\|\d\d_h A\|_{L^2}^2+CM_1^{6}(\|\d_h g\|_{H^1}+\|(\d_h A,\d_h \la)\|_{L^2})^2.
\end{align*}
Then we obtain the estimates
\begin{align} \label{EE-dhA}
	\frac{d}{dt}\|\d_h A\|_{L^2}^2 + \|\d\d_h A\|_{L^2}^2
	\leq CM_1^{10} (\|\d_h g\|_{H^1}+\|(\d_h A,\d_h \la)\|_{L^2})^2.
\end{align}
Integrating over $[h_0,\infty)$, this yields
\begin{align*}
    &\ \frac{d}{dt}\int_{h_0}^\infty 2^{2hs} \|\d_h A\|_{L^2}^2 dh+\int_{h_0}^\infty 2^{2hs} \|\d\d_h A\|_{L^2}^2 dh\\
    &\leq CM^{10} \int_{h_0}^\infty 2^{2hs} (\|\d_h g\|_{H^1}+\|(\d_h A,\d_h \la)\|_{L^2})^2dh \leq C_8M_1^{12}.
\end{align*}
Hence, the bound \eqref{EEA} follows on the time interval $t\in[0, 3(C_8M_1^{10})^{-1}]$.

To conclude, on the time interval $[0,CM_1^{-2N-8}]$, we obtain the improved bound $\|[A^{(h)}]\|_{s,A}\leq 6M_1$. Hence, the estimate \eqref{Para-S} for connection $A$ follows.
\end{proof}

\begin{proof}[Proof of \eqref{DB-est1} and \eqref{HFB-est1}]
	The first bound \eqref{DB-est1} follows from \eqref{EE-dhg} and \eqref{EE-dhA}. The second bound \eqref{HFB-est1} is obtained by \eqref{EE-glow}, \eqref{EE-gHN}, \eqref{EE-Alow} and \eqref{EE-AHN}
	\begin{align*}
		&\ \frac{d}{dt}(\|\d g^{(j)}\|_{H^N}+\||D|^{\de_d}A^{(j)})\|_{H^{N-\de_d}}^2)\\
        &\leq M_1^{2N+4}+M_1^{2N+8}\big(\|(\d g^{(j)},\la^{(j)})\|_{H^N}^2+\||D|^{\de_d}A^{(j)}\|_{H^{N-\de_d}}^2\big)\\
		&\leq M_1^{2N+4}2^{2(N-s)j}c^2_j+M_1^{2N+8}\big(\|(\d g^{(j)},\la^{(j)})\|_{H^N}^2+\||D|^{\de_d}A^{(j)}\|_{H^{N-\de_d}}^2\big).
	\end{align*}
\end{proof}

\bigskip 
\section{Energy estimates for solutions}\label{Sec-EnEs}
This section is devoted to the energy estimates for the second fundamental form $\lambda$. More precisely, we aim to establish uniform control over the $X^s_{int}$ norm of the second fundamental form $\la$ by bootstrap argument. The key to this is to characterize these norms using intrinsic Sobolev norms with the natural metric as it evolves along the flow. In addition, we also prove the difference bounds and high frequency bounds for the regularized solutions, which will be used to establish the existence of rough solutions.

\subsection{Energy estimates for the second fundamental form $\la$}
Here we consider the quasilinear Schr\"odinger equation 
\begin{equation}\label{Lin-eq0}
\left\{
\begin{aligned}      
&\begin{aligned}
i(\d^B_t-V^\ga\nab^A_\ga )\la_{\al\be}+\De^A_g \la_{\al\be}
    =&\ i\la^{\ga}_{\al}\nab_{\be} V_{\ga}
+i\la_\be^\ga\nab_\al V_\ga
+\psi\Re(\la_{\al\de}\bar{\la}^\de_\be)\\
&\ -\Re(\la_{\si\de}\bar{\la}_{\al\be}-\la_{\si\be}\bar{\la}_{\al\de})\la^{\si\de} 
-\la_{\al\mu}\bar{\la}^\mu_\si\la^\si_\be\,,
\end{aligned}\\
&\la_{\al\be}(0)=\la_{\al\be,0}\,,
\end{aligned}\right.
\end{equation}
with the coefficients satisfying \eqref{par-syst}. Then under suitable assumptions on the coefficients,
we prove that the solution satisfies suitable energy bounds.

\begin{thm} \label{Thm-Es}
Let $N=[2s]+1$ and $M_1=C(M)$. Assume that the solutions $F^{(h)}$ of (SMCF) exist in some time interval $[0,T]$. 
If $\tri[g_0^{(h)}]\tri_{s+1,g}+\tri[A_0^{(h)}]\tri_{s,A}+\tri[\la^{(h)}_0]\tri_{s,ext}+\tri[\la^{(h)}_0]\tri_{s,int}\leq M_1$, then on the time interval $[0,\min\{T,CM_1^{-2N-8}\}]$ the solutions satisfy
\begin{equation}  \label{EE-H^s}
\tri [\la^{(h)}]\tri_{s,int}\leq 8M_1,\qquad \tri [\la^{(h)}]\tri_{s,ext}\leq 8C_{eq}M_1.
\end{equation}
Moreover, the solutions $\la^{(h)}$ and the orthonormal frames $m^{(h)}$ satisfy
\begin{align} \label{Hsla}
&\|\la^{(h)}\|_{H^s}\les C M_1,\\\label{mhs}
&\|\d^2 F^{(h)}\|_{H^s}+\|\d m^{(h)}\|_{\dot H^{2\de_d}\cap \dot H^s}\les C(M),\\\label{DB-est2}
&\begin{aligned}
\frac{d}{dt} \|\d_h \la^{(h)}\|_{L^2}^2&\leq  \ep \|(\d^2 \d_h g^{(h)},\d\d_h A^{(h)})\|_{L^2}^2 \\
&+ C(M)( \|\d_h g^{(h)}\|_{H^1}+\|(\d_h A^{(h)},\d_h \la^{(h)})\|_{L^2})^2.
\end{aligned}
\end{align}
\end{thm}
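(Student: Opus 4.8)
\textbf{Proof plan for Theorem~\ref{Thm-Es}.} The plan is to run a bootstrap argument in the intrinsic norm $\tri[\la^{(h)}]\tri_{s,int}$, using the structure of the covariant Schr\"odinger equation \eqref{Lin-eq0} together with the parabolic bounds already available from Theorem~\ref{Para-thmS}. First I would set up the bootstrap: assume on a subinterval $[0,T_1]$ that $\tri[\la^{(h)}]\tri_{s,int}\leq 8M_1$; by the equivalence \eqref{Equ-la} (whose hypotheses are supplied by Theorem~\ref{Para-thmS}(ii)) this gives $\tri[\la^{(h)}]\tri_{s,ext}\lesssim_M M_1$, and then \eqref{Emb-la}, \eqref{Emb-g}, \eqref{Emb-A} give the pointwise-type bounds $\|\la^{(h)}\|_{H^s}, \||D|^{\si_d}g^{(h)}\|_{H^{s+1-\si_d}}, \||D|^{\de_d}A^{(h)}\|_{H^{s-\de_d}}\lesssim_M M_1$ needed to control all coefficients $V=g\Ga$, $B=\nab^\al A_\al$, and the metric ellipticity via \eqref{Para-met}. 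The first two terms of $\tri[\la^{(h)}]\tri_{s,int}$ are the low-frequency pieces $\|\la^{(h_0)}\|_{\sfH^{[s]}}$ and $\|\la^{(h_0)}\|_{\sfH^{[s]+1}}$; these are controlled by propagating the integer intrinsic norms $\|\la^{(h_0)}\|_{\sfH^k}$ along the flow, which is exactly where the quasilinear/intrinsic formulation pays off — differentiating $\|\nab^{A,l}\la^{(h_0)}\|_{\sfL^2}^2$ in time, the principal term $\int \Re(i\De_g^A\nab^{A,l}\la\cdot\overline{\nab^{A,l}\la})\,dvol$ integrates by parts to a purely imaginary quantity and drops out, the commutators $[\nab^{A,l},\De_g^A]$, $[\d_t^B,\nab^{A,l}]$ are handled by \eqref{comm-nab}–\eqref{comm-dt}, and the lower-order and cubic terms are estimated by the Hamilton interpolation inequalities \eqref{HaInterpo}, \eqref{L2interpolation} plus the coefficient bounds, yielding $\frac{d}{dt}\|\la^{(h_0)}\|_{\sfH^k}^2\leq C(M_1)\|\la^{(h_0)}\|_{\sfH^k}^2$ and hence a Gr\"onwall bound on $[0,CM_1^{-2N-8}]$.

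Next I would treat the high-frequency integral $\int_{h_0}^\infty 2^{2h(s-N)}\|\la^{(h)}\|_{\sfH^N}^2\,dh$ for $N\in\{[2s],[2s]+1\}$. For each fixed $h$, $\la^{(h)}$ solves \eqref{Lin-eq0} with its own coefficients $g^{(h)},A^{(h)},V^{(h)},B^{(h)}$, so the same intrinsic energy computation as above gives $\frac{d}{dt}\|\la^{(h)}\|_{\sfH^N}^2\leq C(M_1)\|\la^{(h)}\|_{\sfH^N}^2 + (\text{forcing from coefficient regularity})$, where the forcing is controlled by the high-frequency parabolic bounds \eqref{Para-S}, \eqref{HFB-est1} and the frequency-envelope data $c_j$. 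Multiplying by $2^{2h(s-N)}$ and integrating in $h$, then applying Gr\"onwall, closes this piece with the factor $2^{2h(s-N)}$ ensuring convergence of the $h$-integral. For the linearized piece $\int_{h_0}^\infty 2^{2hs}\|\d_h\la^{(h)}\|_{\sfL^2}^2\,dh$, I would differentiate \eqref{Lin-eq0} in $h$: since $\mu^{(h)}=\d_h\la^{(h)}$ satisfies a linearized Schr\"odinger equation of the form \eqref{LinEq}-type but with inhomogeneous terms coming from $\d_h g^{(h)}, \d_h A^{(h)}, \d_h V^{(h)}, \d_h B^{(h)}$, the $\sfL^2$-energy estimate gives $\frac{d}{dt}\|\mu^{(h)}\|_{\sfL^2}^2\leq C(M)\|\mu^{(h)}\|_{\sfL^2}^2 + \ep\|(\d^2\d_h g^{(h)},\d\d_h A^{(h)})\|_{L^2}^2 + C(M)(\|\d_h g^{(h)}\|_{H^1}+\|\d_h A^{(h)}\|_{L^2}+\|\d_h\la^{(h)}\|_{L^2})^2$, which is precisely \eqref{DB-est2}; the $\ep$-term is absorbed when \eqref{DB-est2} is combined with the parabolic estimate \eqref{DB-est1} in the coupled Gr\"onwall argument, and integrating in $h$ against $2^{2hs}$ closes the last term of the intrinsic norm. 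The bounds \eqref{Hsla}, \eqref{mhs} then follow: \eqref{Hsla} from \eqref{Emb-la} and the just-proved $X^s$ bound; \eqref{mhs} by reconstructing $\d^2 F^{(h)}$ and $\d m^{(h)}$ from $\la^{(h)}, g^{(h)}, A^{(h)}$ exactly as in Section~\ref{Sec-Ini} (using \eqref{ineqfg} and the interpolation $\|\d m\|_{\dot H^s}\lesssim C(M)+\frac12\|\d m\|_{\dot H^s}$ absorption trick), which are uniform in $h$ because all the constituent norms are.

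\textbf{Main obstacle.} The delicate point is the coupling between the Schr\"odinger estimate for $\la$ and the parabolic estimates for $(g,A)$: the forcing terms in the $\la$-equation involve $V=g\Ga$ and $B=\nab^\al A_\al$, hence genuinely one derivative above the $Y^{s+1}\times Z^s$ regularity of $(g,A)$ in the worst spots, and in \eqref{DB-est2} the linearized $\mu^{(h)}$ equation sees $\d^2\d_h g^{(h)}$ and $\d\d_h A^{(h)}$ — terms that are \emph{not} controlled in $L^\infty_t$ but only in $L^2_t$ via the parabolic smoothing in \eqref{DB-est1}, \eqref{Para-S}. So the closure is not a single Gr\"onwall inequality but a coupled one, where the $\ep\|(\d^2\d_h g,\d\d_h A)\|_{L^2}^2$ losses in the Schr\"odinger estimate must be paid for by the parabolic gain $c\|(\d\d_h g,\d\d_h A)\|^2$ in \eqref{DB-est1}; choosing $\ep$ small relative to $c$ and adding the two estimates is what makes the argument work. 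A secondary technical nuisance, peculiar to $d=2$, is the weight $\si_d=1-\de$ on $g$ and $\de_d=\de$ on $A$, which forces one to carry fractional-derivative commutators (as in \eqref{comm-h}) and keep track of $|D|^{\de_d}$-type norms throughout, but this is bookkeeping rather than a genuine difficulty once the coupled Gr\"onwall structure is in place.
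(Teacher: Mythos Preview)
Your plan is correct in outline and matches the paper's bootstrap structure, but you are missing a key simplification that the paper exploits, and as written your treatment of the integer-index pieces of $\tri[\la^{(h)}]\tri_{s,int}$ is more entangled with the parabolic system than it needs to be.

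The paper isolates a clean lemma (Lemma~\ref{lem-En}): since the intrinsic norm $\|\la\|_{\sfH^k}$ is \emph{gauge-invariant}, one may compute its time derivative in the temporal gauge $V=0$, where the Schr\"odinger equation \eqref{Lin-eq0} has \emph{only} the cubic $\la^3$ source, and the commutators \eqref{comm-nab}--\eqref{comm-dt} reduce to $\la\ast\la$ and $\la\ast\nab^A\la$ with no $\nab^2 V$ contribution. This yields the closed estimate
\[
\frac{d}{dt}\|\la^{(h)}\|_{\sfH^k}^2\lesssim \|\la^{(h)}\|_{L^\infty}^2\|\la^{(h)}\|_{\sfH^k}^2
\]
for every integer $k$ and every $h$, with \emph{no forcing} from the coefficient regularity. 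Consequently the low-frequency pieces $\|\la^{(h_0)}\|_{\sfH^{[s]}},\|\la^{(h_0)}\|_{\sfH^{[s]+1}}$ and the high-frequency integral $\int_{h_0}^\infty 2^{2h(s-N)}\|\la^{(h)}\|_{\sfH^N}^2\,dh$ propagate by a \emph{pure} Gr\"onwall, decoupled from the parabolic bounds for $(g,A)$. Your version instead keeps the $V$-terms and claims a forcing ``controlled by the high-frequency parabolic bounds \eqref{Para-S}, \eqref{HFB-est1}''; this can be made to work (the $\d^{N+2}g^{(h)}$ losses are available in $L^2_t$ via parabolic smoothing and can be absorbed in a coupled Gr\"onwall), but it is unnecessarily heavy, and for the low-frequency piece $k=[s]+1$ your stated bound $\frac{d}{dt}\|\la^{(h_0)}\|_{\sfH^k}^2\leq C(M_1)\|\la^{(h_0)}\|_{\sfH^k}^2$ is not literally true in the heat gauge without either invoking the gauge-invariance cancellation or adding the $L^2_t$-controlled forcing.

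Where the coupled argument \emph{is} genuinely needed is exactly and only in the linearized piece $\int_{h_0}^\infty 2^{2hs}\|\d_h\la^{(h)}\|_{\sfL^2}^2\,dh$, because $\d_h$ breaks the gauge invariance; there your description (differentiate in $h$, get \eqref{DB-est2}, absorb the $\ep\|(\d^2\d_h g,\d\d_h A)\|_{L^2}^2$ term against the parabolic gain in \eqref{DB-est1}) matches the paper's computation leading to \eqref{mu-key}. Your treatment of \eqref{Hsla} and \eqref{mhs} is also the same as the paper's.
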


%\medskip

Here we start with the following energy estimates in intrinsic Sobolev spaces $\sfH^k$ \eqref{IntSob}. 

\begin{lemma}[Basic energy estimates]\label{lem-En}
Assume that the smooth solutions of (SMCF) exist in some time interval $[0,T]$. Then for each integer $k\geq 0$, there holds
\begin{align}  \label{Energy}
\frac{d}{dt}\|\la\|_{\sfH^k}^2\les \||\la|_g\|_{L^\infty}^2\|\la\|_{\sfH^k}^2\, .
\end{align}
\end{lemma}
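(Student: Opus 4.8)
\textbf{Proof proposal for Lemma~\ref{lem-En}.}

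The plan is to differentiate the quantity $\|\nab^{A,l}\la\|_{\sfL^2}^2$ in time for each $0\leq l\leq k$, expressing $\frac{d}{dt}$ in terms of the covariant time derivative $\d_t^B$ and the time-variation of the metric and connection. The key identity is that for a complex tensor $T$,
\[
\frac{d}{dt}\int_\MM |T|_g^2\, dvol = \int_\MM 2\Re\langle \d_t^B T, \bar T\rangle_g\, dvol + \int_\MM |T|_g^2\,\tfrac12 g^{\al\be}\d_t g_{\al\be}\, dvol + (\text{metric-variation terms}),
\]
where the metric-variation terms come from differentiating the $g_{\al\al'}$ and $g^{\be\be'}$ factors in the pointwise inner product $|T|_g^2$. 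Applying this with $T=\nab^{A,l}\la$, the main work is to commute $\d_t^B$ past the $l$ covariant derivatives $\nab^{A}$ to hit the Schr\"odinger equation \eqref{Sch-la} for $\la$, using the commutator formulas \eqref{comm-nab} and \eqref{comm-dt}: schematically $[\nab^A,\nab^A]\approx \la\ast\la$ and $[\nab^A,\d_t^B]\approx \la\ast\nab^A\la + \nab^2 V + \la^2 V$. So $\d_t^B \nab^{A,l}\la = \nab^{A,l}\d_t^B\la + (\text{lower order in } \nab^A\la \text{ with coefficients involving } \la, \nab V, \nab^2 V)$.

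First I would substitute \eqref{Sch-la} for $\d_t^B\la$, so that $\nab^{A,l}\d_t^B\la = -i\nab^{A,l}\De_g^A\la + i\nab^{A,l}(V^\ga\nab^A_\ga\la) + \nab^{A,l}(\text{cubic in }\la)$. The leading term $-i\int 2\Re\langle \nab^{A,l}\De^A_g\la, \overline{\nab^{A,l}\la}\rangle\, dvol$ must be handled by integration by parts: moving one $\nab^A$ off $\De^A_g = \nab^{A,\si}\nab^A_\si$ and pairing it appropriately, the principal part becomes $-i\int 2\Re\langle \nab^{A,l+1}\la, \overline{\nab^{A,l+1}\la}\rangle$-type expressions which are \emph{purely imaginary} after taking $2\Re(i\cdot|\text{real}|)$, hence vanish. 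This is the standard mechanism by which a Schr\"odinger flow conserves $L^2$-based norms; all surviving terms carry at least one commutator, which by \eqref{comm-nab} produces a factor of $\la\ast\la$. Similarly the transport term $i\int 2\Re\langle \nab^{A,l}(V^\ga\nab^A_\ga\la),\overline{\nab^{A,l}\la}\rangle$ integrates by parts to leave only $\nab V$-weighted terms of order $\leq l$, and — crucially — by the heat-coordinate relation $V^\ga = g^{\al\be}\Ga^\ga_{\al\be}$ together with the curvature identities \eqref{R-la} and \eqref{g_dt}, every occurrence of $\nab V$, $\nab^2 V$, $\d_t g$, $\d_t A$ is itself $\lesssim \la\ast\la$ or $\la\ast\nab^A\la$. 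Collecting everything, the right-hand side is a sum of integrals $\int |\la|_g^a |\nab^{A} \la|^{\cdots}\cdots$ with total weight $2$ derivatives' worth of $\la$ beyond $\sfL^2$ and at least two undifferentiated $\la$ factors; interpolating the intermediate $\nab^{A,i}\la$ factors via Hamilton's inequality \eqref{HaInterpo} (Corollary~\ref{Inter-lem}) against $\|\la\|_{\sfH^k}$ and $\||\la|_g\|_{L^\infty}$ yields the bound $\lesssim \||\la|_g\|_{L^\infty}^2\|\la\|_{\sfH^k}^2$.

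The main obstacle I anticipate is bookkeeping the integration by parts in the leading Laplacian term carefully enough to confirm that \emph{all} terms without a commutator cancel, and that the commutator terms genuinely reduce the derivative count so that the interpolation closes with exactly two factors of $\la$ in $L^\infty$ (and no factor of $\|\la\|_{L^\infty}$ to a fractional power needing extra derivatives). A secondary point requiring care is that the metric-variation terms and the $\frac12 g^{\al\be}\d_t g_{\al\be}$ volume term are genuinely lower order: since $\d_t g \approx \la\ast\la$ (by \eqref{g_dt} and $V\approx \la\ast\la$ through the heat gauge) these contribute $\lesssim \||\la|_g\|_{L^\infty}^2\|\la\|_{\sfH^k}^2$ directly without needing interpolation. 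Since all constants in \eqref{HaInterpo} are independent of $g$ and $\Ga$, the final estimate is uniform, as claimed.
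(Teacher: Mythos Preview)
Your overall structure is right, but there is a genuine gap in how you handle the advection field $V$. You work in the heat gauge $V^\ga = g^{\al\be}\Ga^\ga_{\al\be}$ and then assert that ``every occurrence of $\nab V$, $\nab^2 V$, $\d_t g$, $\d_t A$ is itself $\lesssim \la\ast\la$ or $\la\ast\nab^A\la$,'' and that ``$V\approx \la\ast\la$ through the heat gauge.'' This is not correct: in the heat gauge $V$ is a contracted Christoffel symbol, a purely coordinate-dependent object, and $\nab V$, $\nab^2 V$ involve up to third derivatives of the metric which are \emph{not} controlled by the second fundamental form alone. Likewise, with $V\neq 0$ the formula \eqref{g_dt} gives $\d_t g = 2\Im(\psi\bar\la) + \nab V + \nab V$, so the metric-variation and volume terms also carry $\nab V$ contributions. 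The commutator \eqref{comm-dt} explicitly records $\nab^2 V + \la^2 V$ on the right, and these do not reduce to $\la\ast\la$ under the heat-coordinate condition.

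The paper's key observation, which you are missing, is that the intrinsic norm $\|\la\|_{\sfH^k}$ is \emph{gauge-invariant} (independent of both the coordinate choice on $\Sigma$ and the normal frame). Hence one is free to compute $\frac{d}{dt}\|\la\|_{\sfH^k}^2$ in the temporal gauge $V=0$. In that gauge, \eqref{g_dt} becomes $\d_t g = 2\Im(\psi\bar\la)\approx\la\ast\la$, the commutator \eqref{comm-dt} reduces to $[\nab^A,\d_t^B]\approx\la\ast\nab^A\la$, and the Schr\"odinger equation \eqref{Sch-la} has no transport term at all. With these simplifications, the computation you outline (integration by parts kills the principal $i\De^A_g$ contribution, commutators introduce $\la\ast\la$ factors, interpolation via \eqref{HaInterpo} closes) goes through cleanly and yields exactly the stated bound. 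It is true that, since the final result is gauge-invariant, the $V$-dependent terms in your heat-gauge computation must ultimately cancel; but verifying this cancellation directly is substantially more work than simply setting $V=0$ from the outset, and your claim that each such term is individually $\lesssim\la\ast\la$ is the wrong mechanism.
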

\begin{proof}
The $\sfH^k$-norms are defined by the intrinsic Sobolev norm \eqref{IntSob}, which is independent on the choice of gauge. Hence, we can derive the energy estimates from \eqref{Lin-eq0} with the advection field $V=0$. Then the energy estimate \eqref{Energy} follows using \eqref{g_dt}, \eqref{comm-nab}, \eqref{comm-dt} and \eqref{HaInterpo}.
We can also refer to \cite[Lemma 2.7]{So19} for the proof.
\end{proof}

Next, we turn our attention to the proof of Theorem~\ref{Thm-Es}.

\begin{proof}[Proof of the energy estimate \eqref{EE-H^s}]
We assume that $\tri [\la^{(h)}]\tri_{s,int}\leq 8M_1$ on some interval $[0,T_1]$ with $T_1\leq T$, then we have the estimate \eqref{Para-S} on $[0,\min\{T_1,CM_1^{-2N-8}\}]$ for $g$ and $A$. Now it suffices to prove that on this time interval we have
\begin{align*}
    \tri[\la^{(h)}]\tri_{s,int}\leq 4M_1.
\end{align*}

By the energy estimates in Lemma~\ref{lem-En}, for any $h\geq h_0$ and $k\in \N$ we have
\begin{align*}
    \frac{d}{dt}\|\la^{(h)}(t)\|^2_{\sfH^{k}}\les \|\la^{(h)}\|_{L^\infty}^2 \|\la^{(h)}(t)\|^2_{\sfH^{k}}\les M_1^2 \|\la^{(h)}(t)\|^2_{\sfH^{k}}.
\end{align*}
Then we obtain
\begin{equation*}% \label{ES-1}
\begin{aligned}
    &\ \frac{d}{dt}\big(\sum_{k=[s],[s]+1}2^{2(s-k)h_0}\|\la^{(h_0)}(t)\|^2_{\sfH^{k}}+\int_{h_0}^\infty 2^{2(s-N)h}\|\la^{(h)}(t)\|^2_{\sfH^{N}}dh\big)\\
    \leq&\  CM_1^2 \big(\sum_{k=[s],[s]+1}2^{2(s-k)h_0}\|\la^{(h_0)}\|^2_{\sfH^{k}}+\int_{h_0}^\infty 2^{2(s-N)h}\|\la^{(h)}\|^2_{\sfH^{N}}dh\big).
\end{aligned}  
\end{equation*}
Hence, on the time interval $[0,\frac{3}{2CM_1^2}]$, it holds
\begin{equation*}% \label{la-part1}
    \sum_{k=[s],[s]+1}2^{2(s-k)h_0}\|\la^{(h_0)}(t)\|^2_{\sfH^{k}}+\int_{h_0}^\infty 2^{2(s-N)h}\|\la^{(h)}(t)\|^2_{\sfH^{N}}dh\leq 4M_1^2.
\end{equation*}

Next, we consider the estimates of $\int_{h_0}^\infty 2^{2sh}\|\d_h \la^{(h)}\|_{L^2}^2 dh$.
Formally, we define  
\[\mu^{(h)}_{\al\be}=\d_h \la^{(h)}_{\al\be},\qquad \mu^{(h)\al\be}=g^{(h)\al\si}g^{(h)\be\de}\mu^{(h)}_{\si\de}.\] 
For brevity, we omit the superscript $h$ of $\mu^{(h)}$, $\lambda^{(h)}$, $g^{(h)}$ for regularized manifold $\Sigma^{(h)}$.
Moreover, the metric and volume form satisfy
\begin{equation*} %\label{MVF}
    \frac{4}{5}c_0\leq g^{(h)}\leq \frac{6}{5}c_0^{-1},\qquad  \sqrt{\det g^{(h)}}\sim 1.
\end{equation*}

Applying $\frac{d}{dt}$ to $\|\mu\|_{L^2}^2$, we have
\begin{align*}
    &\ \frac{1}{2}\frac{d}{dt}\int |\mu|^2_g\ dvol=\int \Re(\d_t^B \mu_{\al\be} \overline{\mu}^{\al\be})+\Re(\d_t g^{\al\si} \mu_{\al\be}\bar{\mu}_\si^\be)+\frac{1}{4}|\mu|^2_g  g^{\al\be} \d_t g_{\al\be}\ dvol\\
    &=\Re \int -i[i(\d_t^B-V^\ga \nab^A_\ga)+\nab^A_\si \nab^{A,\si}]\mu_{\al\be} \bar{\mu}^{\al\be}\ dvol\\
    & + \int \Re[(V^\ga \nab^A_\ga+i\nab^A_\si \nab^{A,\si})\mu_{\al\be} \bar{\mu}^{\al\be}]+\frac{1}{4}|\mu|^2_g  g^{\al\be} \d_t g_{\al\be}\ dvol
    +\int \Re(\d_t g^{\al\si} \mu_{\al\be}\bar{\mu}_\si^\be) \  dvol\\
    &=:I_1+I_2+I_3\,.
\end{align*}
From the $\la$-equation \eqref{Lin-eq0}, the first integral $I_1$ is rewritten as 
\begin{align*}
    I_1=&\ \Re \int ([\d_t^B-V^\ga\nab^A_\ga-i\nab^A_\si\nab^{A,\si},\d_h]\la_{\al\be})\bar{\mu}^{\al\be}\\
    &\ +2(\mu_{\al\ga}\nab_\be V^\ga+\la_{\al\ga}[\d_h,\nab_\be]V^\ga+\la_{\al\ga}\nab_\be \d_h V^\ga)\bar{\mu}^{\al\be}+\d_h(\la\ast\la\ast \la)_{\al\be}\bar{\mu}^{\al\be}\ dvol.
\end{align*}
The second integral $I_2$ vanishes since
\begin{equation*}
    I_2=\int -\frac{1}{2}\nab_\ga V^\ga |\mu|^2_g+\Re i|\nab^{A}\mu|^2+\frac{1}{2}|\mu|^2 \nab^\al V_\al\ dvol=0\,.
\end{equation*}
Using the formula \eqref{g_dt}, the sum of the last integral $I_3$ together with the term $2\mu_{\al\ga}\nab_\be V^\ga \bar{\mu}^{\al\be}$ in $I_1$ is  bounded by 
\begin{align*}
    I_3+2\Re \int \mu_{\al\ga}\nab_\be V^\ga \bar{\mu}^{\al\be}\ dvol
    \les \|\mu\|_{L^2}^2 \|\la\|^2_{L^\infty}.
\end{align*}
The other terms in $I_1$ are estimated as follows.

a) By $B=\nab^\al A_\al$, we estimate the term 
\begin{align*}
    &\ \Re \int [\d_t^B, \d_h ]\la \bar{\mu}\ dvol=\Re \int \d_h B\la \bar{\mu}\ dvol=\int \d_h (\nab A)\la \bar{\mu}\ dvol\\
    =&\ \int (\d_h \Ga  A+\nab^\al \d_h A_\al)\la\bar{\mu}\ dvol\les (\|\d_h \Ga\|_{L^2}\| A\|_{L^\infty}+\|\nab \d_h A\|_{L^2})\|\la\|_{L^\infty}\|\mu\|_{L^2}\\
    \les&\ (\|\d_h \Ga\|_{L^2}+\| \d_h A\|_{H^1})\|( A,\Ga)\|_{L^\infty}\|\la\|_{L^\infty}\|\mu\|_{L^2}.
\end{align*}

b) We estimate the term 
\begin{align*}
    &\ \Re \int [V^\ga \nab^A_\ga, \d_h ]\la_{\al\be} \bar{\mu}^{\al\be}\ dvol=\Re \int (\d_h V^\ga \nab^A_\ga+V^\ga \d_h \Ga+iV^\ga \d_h A)\la \bar{\mu}\ dvol\\
    \les&\  \big(\|\d_h V \nab^A \la\|_{L^2}+\|V\|_{L^\infty}(\|\d_h \Ga\|_{L^2}+\|\d_h A\|_{L^2})\|\la\|_{L^\infty}\big)\|\mu\|_{L^2}.
\end{align*}
Using Sobolev embeddings and $s>d/2$, this is bounded by
\begin{align*}
    &\  \big(\|\d_h V\|_{H^1}(\| \la\|_{H^s}+\|(\Ga+A) \la\|_{L^\infty})+\|V\|_{L^\infty}(\|\d_h \Ga\|_{L^2}+\|\d_h A\|_{L^2})\|\la\|_{L^\infty}\big)\|\mu\|_{L^2}\\
    \les &\ \big(  \|\d_h V\|_{H^1}+ \|\d_h \Ga\|_{L^2}+\|\d_h A\|_{L^2} \big) \|(\Ga,A,V)\|_{L^\infty}\|\la\|_{H^s}\|\mu\|_{L^2}.
\end{align*}

c) We estimate the term 
\begin{align*}
    &\ \Re \int [\nab^A_\si \nab^{A,\si}, \d_h ]\la_{\al\be} \bar{\mu}^{\al\be}\ dvol=\Re \int (\d_h (\Ga+A) \nab^A \la+ \nab_\si \d_h (\Ga+A)\la ) \bar{\mu}\ dvol\\
    \les &\ \big(\|\d_h \Ga+\d_h A\|_{H^1}(\| \la\|_{H^s}+\|(\Ga+A) \la\|_{L^\infty})+\|\nab \d_h (\Ga+A)\|_{L^2}\|\la\|_{L^\infty}\big)\|\mu\|_{L^2}\\
    \les&\ \big(\|\d_h \Ga\|_{H^1}+\|\d_h A\|_{H^1}\big)
    \|(\Ga,A)\|_{L^\infty}\|\la\|_{H^s}\|\mu\|_{L^2}
\end{align*}

d) We estimate the term 
\begin{align*}
    \int \la_{\al\ga}[\d_h,\nab_\beta]V^\ga \bar{\mu}^{\al\be}\ dvol=\int \la_{\al\ga} \d_h \Ga^\ga_{\be\si} V^\si \bar{\mu}^{\al\be}\ dvol
    \les  \|\la\|_{L^\infty}\|\d_h \Ga\|_{L^2}\|V\|_{L^\infty}\|\mu\|_{L^2}.
\end{align*}

e) We estimate the term 
\begin{align*}
    \int \la_{\al\ga}\nab_\beta \d_h V^\ga \bar{\mu}^{\al\be}\ dvol=&\ \int \la_{\al\ga}\nab_\beta \d_h (g^{\si\de}\Ga_{\si\de}^\ga) \bar{\mu}^{\al\be}\ dvol\les \|\la\|_{L^\infty}\|\nab\d_h V\|_{L^2}\|\mu\|_{L^2}\\
    \les&\  \|\d_h V\|_{H^1}(1+\|\Ga\|_{L^\infty})\|\la\|_{L^\infty}\|\mu\|_{L^2},\\
    \int \d_h (\la\ast\la\ast\la)\bar{\mu}^{\al\be}\ dvol\les&\  \|\mu\|_{L^2}^2\|\la\|_{L^\infty}^2+\|\d_h g\|_{L^2}\|\la\|_{L^\infty}^3\|\mu\|_{L^2}.
\end{align*}

Hence, from the above computations, we obtain 
\begin{align*}
    \frac{1}{2}\frac{d}{dt}\|\mu\|_{L^2}^2\les&\  \big(  \|\d_h V\|_{H^1}+ \|\d_h \Ga\|_{H^1}+\|\d_h A\|_{H^1} \big) \|(\Ga, A,V)\|_{L^\infty}\|\la\|_{H^s}\|\mu\|_{L^2}\\
    &\ +\|\mu\|_{L^2}^2\|\la\|_{L^\infty}^2+\|\d_h g\|_{L^2}\|\la\|_{L^\infty}^3\|\mu\|_{L^2}.
\end{align*}
By $V^\ga=g_{\al\be}\Ga_{\al\be}^\ga$ and Theorem~\ref{Para-thmS}, we further bound this by 
\begin{align} \label{mu-key}
\frac{d}{dt}\|\mu\|_{L^2}^2\les (\|\d_h g\|_{H^2}+\|\d_h A\|_{H^1})M_1^5 \|\mu\|_{L^2}+M_1^2\|\mu\|_{L^2}^2.
\end{align}
Integrating over $[h_0,\infty)$ and by \eqref{Para-S} and the bootstrap assumption, this yields
\begin{align*}
    \frac{d}{dt}\int_{h_0}^\infty 2^{2sh} \|\mu\|^2_{L^2} dh
    &\les M_1^5(\tri[g^{(h)}]\tri_{s+1,g}+\tri[A^{(h)}]\tri_{s,A}) \tri[\la^{(h)}]\tri_{s,ext}\\
    &+ M_1^5\big(\int_{h_0}^\infty 2^{2hs}\|(\d^2 \d_h g,\d\d_h A)\|_{L^2}^2 dh\big)^{1/2} \tri[\la^{(h)}]\tri_{s,ext} +M_1^2 \tri[\la^{(h)}]\tri_{s,ext}^2\\
    &\les M_1^7+M_1^6\big(\int_{h_0}^\infty 2^{2hs}\|(\d^2 \d_h g,\d\d_h A)\|_{L^2}^2 dh\big)^{1/2}.
\end{align*}
Integrating over $[0,t]$, we obtain on the time interval $[0,\frac{9}{CM_1^{10}}]$
\begin{align*}
    \int_{h_0}^\infty 2^{2sh} \|\mu(t)\|^2_{L^2} dh\leq \int_{h_0}^\infty 2^{2sh} \|\mu_0\|^2_{L^2} dh+C t M_1^7+\sqrt{t}M_1^7\leq M_1^2+C \sqrt{t} M_1^7\leq 4M_1^2.
\end{align*}
Hence, the estimate \eqref{EE-H^s} follows. 
\end{proof}

\begin{proof}[Proof of \eqref{Hsla}-\eqref{DB-est2}]
By \eqref{EE-H^s} and the embedding~\eqref{Emb-la}, we get the estimate \eqref{Hsla}.
From \eqref{mu-key} and H\"older's inequality we obtain
\begin{align*}
    \frac{d}{dt}\|\mu\|_{L^2}^2\les C(M)(\|\d_h g\|_{H^1}+\|\d_h A\|_{L^2}+\|\mu\|_{L^2})\|\mu\|_{L^2}+\ep\|(\d^2 \d_h g,\d\d_h A)\|_{L^2}^2,
\end{align*}
which gives the estimate \eqref{DB-est2}. 

Next, we prove the estimate \eqref{mhs}. By \eqref{strsys-cpf} we have
\begin{align*}
    &\|\d^2 F\|_{L^2\cap L^\infty}\les \|\Ga\|_{L^2\cap L^\infty}\|\d F\|_{L^\infty}+\|\la\|_{L^2\cap L^\infty}\|\nu\|_{L^\infty}\les C(M),\\
    &\|\d \nu\|_{L^{\frac{2d}{d-2\de_d}}}\les \|A\|_{L^{\frac{2d}{d-2\de_d}}}+\|\la\|_{L^{\frac{2d}{d-2\de_d}}}\les C(M),
\end{align*}
and 
\begin{align*}
    \|\d \nu\|_{\dot H^{2\de_d}}&\les \|A\|_{\dot H^{2\de_d}\cap \dot H^{\de_d}}(\|\nu\|_{L^\infty}+\|\d \nu\|_{L^{\frac{2d}{d-2\de_d}}})+\|\la\|_{\dot H^{2\de_d}\cap \dot H^{\de_d}}(\|\d F\|_{L^\infty}+\|\d^2 F\|_{L^{\frac{2d}{d-2\de_d}}})\\
    &\les C(M).
\end{align*}
Then we can bound the high frequency part by
\begin{align*}
    \|\d^2 F\|_{\dot H^s}&\les \|\Ga\|_{H^s}(\|\d F\|_{L^\infty}+\|P_{>0}\d F\|_{\dot H^s})+\|\la\|_{H^s}(\|\nu\|_{L^\infty}+\|P_{>0} \nu\|_{\dot H^s})\\
    &\les C(M)+C(M)\|P_{>0}\d F\|_{L^2}^{\frac{1}{s+1}}\|P_{>0}\d F\|_{\dot H^{s+1}}^{\frac{s}{s+1}}+C(M)\|P_{>0}\nu\|_{L^2}^{\frac{1}{s+1}}\|P_{>0}\nu\|_{\dot H^{s+1}}^{\frac{s}{s+1}}\\
    &\leq C(M)+\frac{1}{2}(\|\d^2 F\|_{\dot H^s}+\|\d \nu\|_{\dot H^s}).
\end{align*}
and 
\begin{align*}
    \|\d \nu\|_{\dot H^s}&\les \|A\|_{\dot H^s}\|\nu\|_{L^\infty}+\|A\|_{\dot H^{\de_d}\cap L^\infty}(\|P_{\leq 0}\d \nu\|_{\dot H^{2\de_d}}+\|P_{>0}\nu\|_{\dot H^s})\\
    &+\|\la\|_{H^s}(\|\d F\|_{L^\infty}+\|P_{>0} \d F\|_{\dot H^s})\\
    &\les C(M)+C(M)\|P_{>0}\nu\|_{L^2}^{\frac{1}{s+1}}\|P_{>0}\nu\|_{\dot H^{s+1}}^{\frac{s}{s+1}}+C(M)\|P_{>0}\d F\|_{L^2}^{\frac{1}{s+1}}\|P_{>0}\d F\|_{\dot H^{s+1}}^{\frac{s}{s+1}}\\
    &\leq C(M)+\frac{1}{2}(\|\d^2 F\|_{\dot H^s}+\|\d \nu\|_{\dot H^s}).
\end{align*}
This gives $\|(\d^2 F,\d \nu)\|_{\dot H^s}\les C(M)$. Hence the estimate \eqref{mhs} follows.
\end{proof}

\subsection{The bounds for the regularized solutions}

As a corollary of Theorem~\ref{Thm-Es}, we have the following bounds.
\begin{lemma}
The family of solutions $\Sigma^{(h)}$ given in Theorem~\ref{Thm-Es} satisfies the estimates
\begin{align} \label{DB-t}
\int_{j}^{j+1}\|\d_h g^{(h)}\|_{H^1}+\|\d_h A^{(h)}\|_{L^2}+\|\d_h\la^{(h)}\|_{L^2}dh\les C(M)2^{-sj}c_j\,,\\ \label{HFB-t}
\|\d g^{(j)}\|_{H^{N}}+\||D|^{\de_d}A^{(j)}\|_{H^{N-\de_d}}+\|\la^{(j)}\|_{\sfH^N\cap H^N}\les C(M)2^{(N-s)j}c_j\,.
\end{align}
\end{lemma}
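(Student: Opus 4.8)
The statement combines two types of bounds --- the linearized (difference) bounds \eqref{DB-t} and the high-frequency bounds \eqref{HFB-t} --- both for the family of solutions $\Sigma^{(h)}$ produced in Theorem~\ref{Thm-Es}. The strategy is to combine the time-propagation estimates already established in Theorem~\ref{Para-thmS} and Theorem~\ref{Thm-Es} with the bounds that hold for the initial data, which were proved in Section~\ref{Sec-Ini}, namely the difference bounds \eqref{DB-01}--\eqref{DB-02} and the high-frequency bounds \eqref{HFB-02}--\eqref{HFB-01}, together with the fact that the frequency envelope $\{c_j\}$ is slowly varying, $c_k\le 2^{\de|k-j|}c_j$, with $\|c_j\|_{\ell^2}\approx \|\d^2 F\|_{H^s}+\|\d\nu\|_{H^s}\approx C(M)$.

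\emph{Difference bounds \eqref{DB-t}.} First I would fix $j\ge h_0$ and work on the dyadic window $h\in[j,j+1]$. The relevant differential inequalities are \eqref{DB-est1} from Theorem~\ref{Para-thmS} (for $\d_h g^{(h)}$ and $\d_h A^{(h)}$) and \eqref{DB-est2} from Theorem~\ref{Thm-Es} (for $\d_h\la^{(h)}$). Setting $E_h(t)=\|\d_h g^{(h)}\|_{H^1}^2+\|\d_h A^{(h)}\|_{L^2}^2+\|\d_h\la^{(h)}\|_{L^2}^2$, these two inequalities combine --- the $\ep\|(\d^2\d_h g^{(h)},\d\d_h A^{(h)})\|_{L^2}^2$ term in \eqref{DB-est2} being absorbed into the coercive $c(\|\d\d_h g^{(h)}\|_{H^1}^2+\|\d\d_h A^{(h)}\|_{L^2}^2)$ term in \eqref{DB-est1} for $\ep$ small --- to give $\frac{d}{dt}E_h\les C(M)E_h$, hence by Gr\"onwall $E_h(t)\les C(M)E_h(0)$ on $[0,T(M)]$. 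Integrating $\sqrt{E_h(t)}$ over $h\in[j,j+1]$, applying Cauchy--Schwarz in $h$, and using the initial data difference bound \eqref{DB-01}, i.e. $\int_j^{j+1}\sqrt{E_h(0)}\,dh\les_M 2^{-sj}c_j$, yields \eqref{DB-t}. (One can equivalently integrate the weighted quantity $\int_j^{j+1}2^{2sh}E_h(t)\,dh$ and use that on such a window $2^{2sh}\approx 2^{2sj}$.)

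\emph{High-frequency bounds \eqref{HFB-t}.} Here I would use the high-frequency differential inequalities: \eqref{HFB-est1} from Theorem~\ref{Para-thmS} for $\|\d g^{(j)}\|_{H^N}^2+\||D|^{\de_d}A^{(j)}\|_{H^{N-\de_d}}^2$, and for $\la^{(j)}$ the intrinsic energy estimate \eqref{Energy} of Lemma~\ref{lem-En}, which gives $\frac{d}{dt}\|\la^{(j)}\|_{\sfH^N}^2\les C(M)\|\la^{(j)}\|_{\sfH^N}^2$ since $\||\la^{(j)}|_g\|_{L^\infty}\les\|\la^{(j)}\|_{H^s}\les C(M)$ by \eqref{Hsla}. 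Coupling these (the right-hand side of \eqref{HFB-est1} involves $\|\la^{(j)}\|_{H^N}$, which in turn is controlled by $\|\la^{(j)}\|_{\sfH^N}$ plus lower-order terms via \eqref{nabl-Form}, and the source term $C(M)2^{2(N-s)j}c_j^2$), one obtains a closed system to which Gr\"onwall applies, giving the bound at time $t$ in terms of its value at $t=0$ plus the accumulated source $\int_0^t C(M)2^{2(N-s)j}c_j^2\,d\tau\les C(M)2^{2(N-s)j}c_j^2$. The initial-time bounds are exactly \eqref{HFB-01} (and \eqref{HFB-02} for $\d^2 F^{(j)}$, $m^{(j)}$), which state $\|\d g^{(j)}\|_{H^N}+\||D|^{\de_d}A^{(j)}\|_{H^{N-\de_d}}+\|\la^{(j)}\|_{\sfH^N\cap H^N}\les_M 2^{(N-s)j}c_j$. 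Finally, the conversion between $\sfH^N$ and $H^N$ norms of $\la^{(j)}$ is handled exactly as in the proof of the equivalence $X^s_{int}\sim X^s_{ext}$, using \eqref{nabl-Form}, \eqref{HaInterpo} and the bounds $\|\d g^{(j)}\|_{H^{N-1}}$, $\||D|^{\de_d}A^{(j)}\|_{H^{N-1-\de_d}}\les_M 2^{(N-s)j}c_j$ just obtained.

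\emph{Main obstacle.} The delicate point is closing the coupled Gr\"onwall argument for \eqref{HFB-t}: the estimate \eqref{HFB-est1} for $(g,A)$ contains $\|\la^{(j)}\|_{H^N}$ on its right, and the $\la$-estimate naturally controls $\|\la^{(j)}\|_{\sfH^N}$, not $\|\la^{(j)}\|_{H^N}$ directly, so one must carefully interleave the norm-conversion (which costs lower-order $\d g$, $A$ norms at level $N-1$) with the energy propagation, checking that the resulting feedback is consistent and that the envelope $c_j$ --- rather than $\|c_j\|_{\ell^2}$ --- is reproduced on the right. This is where the slowly-varying property of $c_j$ and the already-established $L^\infty_t$ bounds \eqref{para-bd0}, \eqref{para-bdA}, \eqref{Hsla} on the lower Sobolev norms are essential. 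Once this bookkeeping is arranged the remaining steps are routine applications of Gr\"onwall and Cauchy--Schwarz in the $h$ variable.
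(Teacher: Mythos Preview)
Your proposal is correct and follows essentially the same approach as the paper's own proof, which tersely cites \eqref{DB-01}, \eqref{DB-est1}, \eqref{DB-est2} for \eqref{DB-t}, and \eqref{HFB-est1}, \eqref{Energy}, \eqref{HFB-01} for \eqref{HFB-t}. Your more detailed write-up, including the absorption of the $\ep$-term from \eqref{DB-est2} into the coercive part of \eqref{DB-est1} and the $\sfH^N$/$H^N$ conversion bookkeeping, makes explicit what the paper leaves implicit; in particular, note that since \eqref{Energy} is self-contained, one may first close $\|\la^{(j)}\|_{\sfH^N}\les_M 2^{(N-s)j}c_j$ independently and then feed this (after the norm conversion via \eqref{nabl-Form}) into \eqref{HFB-est1}, which somewhat simplifies the coupled Gr\"onwall you describe.
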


\begin{proof}
From \eqref{DB-01} and the estimates \eqref{DB-est1} and \eqref{DB-est2}, we obtain
    \begin{align*}
    \int_j^{j+1} \|\d_h g\|_{H^1}+\|\d_h A\|_{L^2}+\|\d_h \la\|_{L^2}dh\les C(M) 2^{-sj}c_j\,.
    \end{align*}
The bound \eqref{HFB-t} is obtained immediately from \eqref{HFB-est1}, \eqref{Energy} and \eqref{HFB-01}.
\end{proof}

To gain the convergence of the solutions with regularized data in the strong topology, we will use the following lemma.
\begin{lemma}
For any $h\geq h_0$, the solutions $F^{(h)}$ and the orthonormal frame $m^{(h)}$ on $\Sigma^{(h)}$ satisfy
\begin{align} \label{DB-Fm}
    \|\d F^{(h+1)}-\d F^{(h)}\|_{H^1}+\|m^{(h+1)}-m^{(h)}\|_{H^1}&\les_M 2^{-sh}c_h\,,\\ \label{HFB-Fm}
    \|\d F^{(h)}\|_{\dot H^N}+\|m^{(h)}\|_{\dot H^N}+\|\d^2 F^{(h)}\|_{H^N}+\||D|^{2\de_d}\d m^{(h)}\|_{H^{N-2\de_d}}&\les_M 2^{(N-s)h}c_h\,.
\end{align}
\end{lemma}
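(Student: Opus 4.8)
\textbf{Proof plan for \eqref{DB-Fm}--\eqref{HFB-Fm}.}
The strategy is to reduce everything to bounds we already control: the linearized/difference bounds \eqref{DB-t} and high-frequency bounds \eqref{HFB-t} for $g^{(h)}$, $A^{(h)}$, $\la^{(h)}$, together with the structure equations \eqref{strsys-cpf} which express $\d^2 F$ and $\d m$ algebraically in terms of $\Ga$ (hence $\d g$, $g^{-1}$), $A$, $\la$ and the frame. The key point is that $F^{(h)}$ and $m^{(h)}$ are \emph{not} defined by frequency truncation of $F$, $m$, but rather by solving the flow with regularized data; so we cannot use $P_{<h}$ directly and must instead propagate the bounds dynamically. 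I would first note that $\d_t \d F^{(h)}$ and $\d^B_t m^{(h)}$ are given by \eqref{mo-frame}, so $\d_h$-differentiating these and using that $\d_h$ of the data is controlled by $c_h$ reduces the difference bound \eqref{DB-Fm} to a Gr\"onwall argument in $H^1$ for $\d_h F^{(h)}$ and $\d_h m^{(h)}$, with forcing terms of the form (coefficient difference)$\times$(smooth background), where the coefficient differences $\d_h \Ga^{(h)}$, $\d_h A^{(h)}$, $\d_h \la^{(h)}$, $\d_h V^{(h)}$ are exactly what \eqref{DB-t} bounds after integrating over $h\in[j,j+1]$ and using the slowly varying envelope $c_h$.

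More precisely, for \eqref{DB-Fm} I would write $\d F^{(h+1)}-\d F^{(h)} = \int_h^{h+1} \d_h \d F^{(l)}\, dl$ and likewise for $m$, then derive from \eqref{mo-frame} an equation of the schematic form
\[
\d_t (\d_h \d F^{(h)}) = \d_h\big(\la\ast\nab^A\psi\big)\cdot m + (\text{lower order})\cdot \d_h(\text{coeff}) + (\text{coeff})\cdot \d_h \d F^{(h)},
\]
and similarly $\d^B_t(\d_h m^{(h)}) = \d_h(\nab^{A}\psi + \la V)\cdot \d F + (\text{coeff})\cdot \d_h m^{(h)}$. Running the energy estimate in $H^1$ and absorbing the inhomogeneous terms using \eqref{para-bd0}, \eqref{para-bdA}, \eqref{Hsla}, \eqref{mhs}, the Gr\"onwall constant depends only on $M$ on the time interval $[0,T]$, and the source is bounded in $L^1_h$ by $C(M)2^{-sl}c_l$ via \eqref{DB-t} (and \eqref{DB-est1}, \eqref{DB-est2}). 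Integrating in $l$ over $[h,h+1]$ and using $c_l\le 2^\de c_h$ gives \eqref{DB-Fm}. One subtlety: at $h=h_0$ the "initial" difference is supplied by \eqref{DB-02}, which already has the right form $2^{-(s+1)j}c_j \lesssim 2^{-sj}c_j$; for $h>h_0$ it is purely the dynamical estimate.

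For the high-frequency bound \eqref{HFB-Fm} the approach is again to combine the static structure equations with \eqref{HFB-t}. From \eqref{strsys-cpf}, $\d^2 F^{(h)} = \Ga^{(h)}\ast \d F^{(h)} + \Re(\la^{(h)}\bar m^{(h)})$, so $\|\d^2 F^{(h)}\|_{H^N}$ is controlled once $\|\Ga^{(h)}\|_{H^N}$ (equivalently $\|\d g^{(h)}\|_{H^N}$, via \eqref{g_metric} and \eqref{Equi-h}-type bounds), $\|\la^{(h)}\|_{H^N}$ and $\|P_{>0}m^{(h)}\|_{\dot H^N}$ are controlled; and $\d m^{(h)} = -iA^{(h)}m^{(h)} - \la^{(h)}\ast \d F^{(h)}$, so $\|\d m^{(h)}\|_{\dot H^{N-1}}$, and after one more derivative $\||D|^{2\de_d}\d m^{(h)}\|_{H^{N-2\de_d}}$, reduce to $\||D|^{\de_d}A^{(h)}\|_{H^{N-\de_d}}$ and $\|\la^{(h)}\|_{H^N}$ together with $\|m^{(h)}\|_{\dot H^N}$. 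The last quantity $\|m^{(h)}\|_{\dot H^N}$ and $\|\d F^{(h)}\|_{\dot H^N}$ I would bootstrap exactly as in the proof of \eqref{mhs}: solve the fixed-point inequality $\|\d m^{(h)}\|_{\dot H^{N-1}} + \|\d^2 F^{(h)}\|_{\dot H^{N-1}} \le C(M)2^{(N-s)h}c_h + \tfrac12(\dots)$, using interpolation (Corollary~\ref{Inter-lem}) between the already-established $L^{2d/(d-2\de_d)}$ / $\dot H^{2\de_d}$ bounds in \eqref{mhs} and the top norm, and absorb. Feeding \eqref{HFB-t} (which gives $\|\d g^{(h)}\|_{H^N}, \||D|^{\de_d}A^{(h)}\|_{H^{N-\de_d}}, \|\la^{(h)}\|_{H^N}\lesssim_M 2^{(N-s)h}c_h$) then closes \eqref{HFB-Fm}.

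\textbf{Main obstacle.} The delicate point is that $F^{(h)}$, $m^{(h)}$ are genuinely dynamical objects, so one cannot simply invoke Littlewood-Paley orthogonality; every bound must be propagated along the flow with a $C(M)$ Gr\"onwall constant that is \emph{uniform in $h$}, and the frame equation for $m^{(h)}$ is only covariantly parabolic/Schr\"odinger-compatible, so care is needed that the gauge term $B^{(h)} = \nab^\al A^{(h)}_\al$ and the commutators $[\d_h,\nab^A]$, $[\d^B_t,\d^A]$ (controlled by \eqref{comm-nab}, \eqref{comm-dt}) do not cost extra derivatives; this is exactly why the top-order estimate for $\d m^{(h)}$ must be run at one derivative below and then upgraded by interpolation rather than head-on. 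A second, more technical nuisance is keeping track of the two regularity exponents $2\de_d$ versus $\de_d$ in dimension $d=2$ when differentiating the frame, but this is handled by the same Bernstein/interpolation bookkeeping already used for $\nu_\bb$ in Lemma~\ref{Lem-Gaugebb} and for $\nu$ in \eqref{ini}.
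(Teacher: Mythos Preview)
Your plan for \eqref{DB-Fm} is essentially the paper's. One refinement worth noting: the paper runs the Gr\"onwall only at the $L^2$ level for $(\d\de F,\de m)$, using the trick $\tfrac{d}{dt}\|\d\de F\|_{L^2}^2=-2\int\d^2\de F\cdot\d_t\de F$ and controlling both $\|\d^2\de F\|_{L^2}$ and $\|\d_t\de F\|_{L^2}$ via the structure equations \eqref{strsys-cpf} and the flow \eqref{sys-cpf} in terms of $\|(\d\de F,\de m)\|_{L^2}$ plus the $c_h$-sized sources from \eqref{DB-t}. The $H^1$ bound then follows \emph{statically} from the $L^2$ bound and the same structure equations. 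This avoids ever differentiating the evolution at the $H^1$ level.

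Your plan for \eqref{HFB-Fm} has a genuine gap: the purely static bootstrap cannot recover the frequency envelope factor $c_h$. In the structure equation $\d^2 F^{(h)}=\Ga^{(h)}\d F^{(h)}+\Re(\la^{(h)}\bar m^{(h)})$, the feedback term $\|\Ga^{(h)}\|_{L^\infty}\|\d F^{(h)}\|_{\dot H^{N-1}}$ carries coefficient $C(M)$, not $\tfrac12$. Absorbing it by interpolation between a low norm (size $O_M(1)$ from \eqref{mhs}) and the top norm $\dot H^N$, then applying Young's inequality, leaves a remainder of size $C(M)^{O(N)}$ coming from the low endpoint. That remainder carries no $c_h$, and since $c_h$ may be arbitrarily small it is in general not dominated by $2^{(N-s)h}c_h$; the $c_h$ is essential downstream (e.g.\ for the summation in \eqref{ConF}). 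The only place where $c_h$ enters at the level of $(F^{(h)},m^{(h)})$ is the initial data bound \eqref{HFB-02}, so one must propagate it dynamically. The paper does exactly this: from the structure equations one first gets
\[
\|\d^2 F^{(h)}\|_{H^N}+\|\d m^{(h)}\|_{\dot H^N}+\|\d_t F^{(h)}\|_{H^N}\les_M 2^{(N-s)h}c_h+\|(\d F^{(h)},m^{(h)})\|_{\dot H^N},
\]
and then, combining the first and third bounds with the evolution \eqref{mo-frame} for $m$, a Gr\"onwall estimate
\[
\frac{d}{dt}\|(\d F^{(h)},m^{(h)})\|_{\dot H^N}^2\les_M 2^{2(N-s)h}c_h^2+\|(\d F^{(h)},m^{(h)})\|_{\dot H^N}^2,
\]
which together with \eqref{HFB-02} closes with the correct $c_h$. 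Once $\|(\d F^{(h)},m^{(h)})\|_{\dot H^N}$ is in hand, the remaining pieces of \eqref{HFB-Fm} follow statically from the structure equations as you describe.
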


\begin{proof}
\emph{i) We prove the estimate \eqref{DB-Fm}.} For simplicity, we denote $\de F=F^{(h+1)}-F^{(h)}$ and $\de m=m^{(h+1)}-m^{(h)}$. Then by \eqref{strsys-cpf}, \eqref{mo-frame} and \eqref{DB-t}
we have
\begin{equation} \label{816}
\begin{aligned}
\|\d^2 \de F\|_{L^2}&=\|(\Ga^{(h+1)}\d F^{(h+1)}+\la^{(h+1)}m^{(h+1)})-(\Ga^{(h)}\d F^{(h)}+\la^{(h)}m^{(h)})\|_{L^2}\\
&\les \|\de \Ga\|_{L^2}C(M)+C(M)\|\d\de F\|_{L^2}+\|\de \la\|_{L^2}+C(M)\|\de m\|_{L^2}\\
&\les C(M)2^{-sh}c_h+C(M)\|(\d \de F,\de m)\|_{L^2},
\end{aligned}
\end{equation}
\begin{equation}\label{817}
\begin{aligned}
\|\d \de m\|_{L^2}&=\|(Am+\la \d F)^{(h+1)}-(Am+\la \d F)^{(h)}\|_{L^2}\\
&\les \|\de A\|_{L^2}+C(M)\|\de m\|_{L^2}+\|\de \la\|_{L^2}C(M)+C(M)\|\d \de F\|_{L^2}\\
&\les C(M)2^{-sh}c_h+C(M)\|(\d \de F,\de m)\|_{L^2},
\end{aligned}
\end{equation}
and 
\begin{equation}\label{818}
\begin{aligned}
\|\d_t \de F\|_{L^2}&=\|(\psi^{(h+1)}m^{(h+1)}+V^{(h+1)}\d F^{(h+1)})-(\psi^{(h)}m^{(h)}+V^{(h)}\d F^{(h)})\|_{L^2}\\
&\les C(M)2^{-sh}c_h+C(M)\|(\d \de F,\de m)\|_{L^2}.
\end{aligned}
\end{equation}
Then by integration by parts, \eqref{816} and \eqref{818}, we get
\begin{align*}
&\ \frac{1}{2}\frac{d}{dt}\|\d\de F\|_{L^2}^2=\int \d\de F\cdot \d \d_t \de F dx\leq \|\d^2 \de F\|_{L^2}\|\d_t \de F\|_{L^2}\\
&\les C(M)2^{-2sh}c_h^2+C(M)\|(\d \de F,\de m)\|_{L^2}^2,
\end{align*}
and
\begin{align*}
&\ \frac{1}{2}\frac{d}{dt}\|\de m\|_{L^2}^2=\int \de m\cdot  \d_t \de m dx\\
&=\int \de m\cdot \big[ (B^{(h+1)}m^{(h+1)}+(\d^{A^{(h+1)}} \psi^{(h+1)}+\la^{(h+1)} V^{(h+1)})\d F^{(h+1)})\\
&\qquad -(B^{(h)}m^{(h)}+(\d^{A^{(h)}} \psi^{(h)}+\la^{(h)} V^{(h)})\d F^{(h)}) \big] dx \\
&=\int \de m\cdot (\d A^{(h+1)}m^{(h+1)}-\d A^{(h)} m^{(h)}) dx\\
&\quad + \int \de m\cdot (\d \psi^{(h+1)} \d F^{(h+1)}-\d \psi^{(h)}\d F^{(h)}) dx\\
&\quad +\int \de m\cdot \big[ (\Ga m+(A \psi +\la V)\d F)^{(h+1)}-(\Ga m+(A \psi +\la V)\d F)^{(h)}\big] dx\\
&=: I_1+I_2+I_3.
\end{align*}
By integration by parts, \eqref{DB-t} and \eqref{817}, $I_1$ is bounded by
\begin{align*}
    |I_1| &\leq \int |\d \de m\cdot (A^{(h+1)}m^{(h+1)}-A^{(h)}m^{(h)})|+|\de m\cdot (A^{(h+1)}\d m^{(h+1)}-A^{(h)}\d m^{(h)})| dx \\
    &\les \|\d \de m\|_{L^2}(\|\de A\|_{L^2}+C(M)\|\de m\|_{L^2})+\|\de m\|_{L^2}(\|\de A\|_{L^2}\|\d m\|_{L^\infty}+C(M)\|\d\de m \|_{L^2})\\
    &\les C(M)(2^{-2sh}c_h^2+\|(\d \de F,\de m)\|_{L^2}^2).
\end{align*}
And similarly, we have
\begin{align*}
    |I_2|+|I_3|\les C(M)2^{-2sh}c_h^2+\|(\d \de F,\de m)\|_{L^2}^2.
\end{align*}
Hence, we obtain
\begin{align*}
    \frac{d}{dt}\|(\d \de F,\de m)\|_{L^2}^2\leq C(M)2^{-2sh}c_h^2+C(M)\|(\d \de F,\de m)\|_{L^2}^2.
\end{align*}
By Gr\"onwall's inequality and \eqref{DB-02}, this yields the difference bound on the time interval $[0,\min\{T,CM_1^{-2N-8}\}]$
\begin{align*}
    \|(\d \de F,\de m)\|_{L^2}^2\leq C(M)2^{-2sh}c_h^2.
\end{align*}
Thus from \eqref{816} and \eqref{817} we also have
\begin{align*}
    \|\d^2 F^{(h+1)}-\d^2 F^{(h)}\|_{L^2}+\|\d m^{(h+1)}-\d m^{(h)}\|_{L^2}\les C(M)2^{-sh}c_h.
\end{align*}

\emph{ii) We prove the estimate \eqref{HFB-Fm}.} By \eqref{strsys-cpf}, \eqref{sys-cpf} and the estimate \eqref{HFB-t}, we have
\begin{equation} \label{hfb-k1}
    \begin{aligned}
    \|\d^2 F^{(j)}\|_{H^N}&=\|\Ga^{(j)}\d F^{(j)}+\la^{(j)}m^{(j)}\|_{H^N}\\
    &\les \|\Ga^{(j)}\|_{H^N}C(M)+C(M)\|\d F^{(j)}\|_{\dot H^N}+\|\la^{(j)}\|_{H^N}+C(M)\|m^{(j)}\|_{\dot H^N}\\
    &\les C(M)(2^{(N-s)j}c_j +\|(\d F^{(j)},m^{(j)})\|_{\dot H^N}).
\end{aligned}
\end{equation}
\begin{equation} \label{hfb-k2}
    \|\d m^{(j)}\|_{\dot H^N}=\|A^{(j)}m^{(j)}+\la^{(j)}\d F^{(j)}\|\les C(M)(2^{(N-s)j}c_j +\|(\d F^{(j)},m^{(j)})\|_{\dot H^N}).
\end{equation}
and 
\begin{align} \label{hfb-k3}
    \|\d_t F^{(j)}\|_{H^N}=\|\psi^{(j)}m^{(j)}+V^{(j)}\d F^{(j)}\|_{H^N}\les C(M)(2^{(N-s)j}c_j +\|(\d F^{(j)},m^{(j)})\|_{\dot H^N}).
\end{align}
Then by integration by parts, \eqref{hfb-k1} and \eqref{hfb-k3}, we get
\begin{align*}
    \frac{1}{2}\frac{d}{dt}\|\d F^{(j)}\|_{\dot H^N}^2\leq \|\d^2 F^{(j)}\|_{\dot H^N}\|\d_t F^{(j)}\|_{\dot H^N}\les C(M)(2^{2(N-s)j}c_j^2+\|(\d F^{(j)},m^{(j)})\|_{\dot H^N}^2),
\end{align*}
and by \eqref{mo-frame}, \eqref{hfb-k1} and \eqref{hfb-k2}, we arrive at
\begin{align*}
    &\ \frac{1}{2}\frac{d}{dt}\|m^{(j)}\|_{\dot H^N}^2\\
    &=\int \d^N m^{(j)} \d^N\big(\d A^{(j)} m^{(j)}+\d \psi^{(j)}\d F^{(j)}+\Ga^{(j)}A^{(j)}m^{(j)}+(A^{(j)}\psi^{(j)}+\la^{(j)}V^{(j)})\d F^{(j)}\big) dx\\
    &\leq \int \d^{N+1}m^{(j)} \d^N(A^{(j)}m^{(j)}+\psi^{(j)}\d F^{(j)})+\d^{N}m^{(j)} \d^N(A^{(j)}\d m^{(j)}+\psi^{(j)}\d^2 F^{(j)})dx\\
    &\quad +\|m^{(j)}\|_{\dot H^N}C(M)(2^{(N-s)j}c_j+\|(\d F^{(j)},m^{(j)})\|_{\dot H^N})\\
    &\les C(M)(2^{2(N-s)j}c_j^2+\|(\d F^{(j)},m^{(j)})\|_{\dot H^N}^2).
\end{align*}
The above two estimates together with \eqref{HFB-02} yield
\begin{align*}
    \|\d F^{(j)}\|_{\dot H^N}^2+\|m^{(j)}\|_{\dot H^N}^2\les C(M)2^{2(N-s)j}c_j^2.
\end{align*}
In view of \eqref{hfb-k1} and \eqref{hfb-k2}, this also gives
\begin{align*}
    \|\d^2 F^{(j)}\|_{H^N}+\|\d m^{(j)}\|_{\dot H^N}\les C(M)(2^{(N-s)j}c_j+\|(\d F^{(j)},m^{(j)})\|_{\dot H^N})\les C(M)2^{(N-s)j}c_j.
\end{align*}
These together with \eqref{mhs} yield the estimate \eqref{HFB-Fm}. We complete the proof of the lemma.
\end{proof}

\bigskip 
\section{Construction of regular solutions}     \label{Sec-RegSol}

In this section, we construct  regular solutions for the (SMCF) flow using an Euler type  time discretization method.
Since the (SMCF) flow is a quasilinear system, we will work in intrinsic Sobolev spaces in order to favourably propagate  the bounds for the  second fundamental form, and avoid the nontrapping condition.
Here we will work directly at the level of the manifold rather than at the level of the second fundamental form $\la$. This is because the second fundamental form $\la$ must satisfy compatibility conditions, and iterating it directly  over time steps would cause a loss of these constraints.

Let the initial manifold $(\Sigma_0,g(0))$ be a complete Riemannian manifold of dimension $d$ embedded in $\R^{d+2}$, with bounded second fundamental form
\begin{equation}  \label{RegData}
\|\Lambda_0\|_{\sfH^k(\Sigma_0)}\leq M,\qquad k>\frac{d}{2}+5,
\end{equation}
bounded Ricci curvature and bounded metric, i.e. 
\begin{equation}  \label{GeoCond}
    |\Ric(0)|\leq  C_0, \qquad \inf_{x\in \Sigma_0} {\rm Vol}_{g(0)}(B_x(1))\geq v,
\end{equation}  
for some $C_0>0$ and $v>0$,
where ${\rm Vol}_{g(0)}(B_x(1),\Sigma_0)$ stands for the volume of ball $B_x(1)$ on $\Sigma_0$ with respect to $g(0)$. 

We also assume that there exists a global $\R^d$ parametrization of $\Sigma_0$ so that we have the uniform bound 
\begin{equation}\label{good-param}
c I \leq g(0) \leq CI.    
\end{equation}
This, in turn, combined with the
bound \eqref{RegData}, implies that the parametrization can be in effect chosen so that
\begin{equation}\label{better-data}
\| \partial F_0\|_{H^{k+1}_{uloc}} \lesssim_M 1,
\end{equation}
where the uniform local norm is defined as 
\[
\| \partial F_0\|_{H^{k+1}_{uloc}}= \sup_{x \in \R^d} \| \partial F_0\|_{H^{k+1}(B_x(1))}.
\]

To see this we refer for instance  to Breuning~\cite{Bre}, which shows that locally the surface $\Sigma_0$ has $H^{k+2}$ regularity, i.e. there exists $r$ depending
only on  $\|\Lambda\|_{\sfH^k}$ so that 
for each $p \in \Sigma_0$, the set $\Sigma_0 \cap B(p,r)$ 
is the graph of a $H^{k+2}$ function, again with a bound depending only on  $\|\Lambda\|_{\sfH^k}$. After applying 
an Euclidean isometry, this implies that we can choose 
local coordinate functions 
\[
x^p : \Sigma_0 \cap B(p,r) \to \R^d
\]
with $H^{k+2}$ regularity
which match $F_0$ linearly at $F_0^{-1}(p)$, i.e.
\[
x^p(p) = F_0^{-1}(p), \qquad Dx^p(p) = (DF_0(F^{-1}(p))^{-1}.
\]
Then the local coordinate functions on nearby balls must be $C^2$ close. Then they can be easily assembled together using an appropriate partition of unity associated to the covering of $\Sigma_0$ with balls of radius $r$. This yields a global map 
\[
\Sigma_0 \ni p \to x(p) \in \R^d,
\]
By construction this map is $C^1$ close to $F_0^{-1}$, 
so it is a global diffeomorphism into $\R^d$. Inverting it yields the desired coordinates satisfying \eqref{better-data}.

\medskip

Consider a small time step $\ep>0$. Then our objective  will be to produce a discrete approximate solution $\Sigma^\ep(j\ep)=F^\ep(j\ep,\R^d)$ for any $j\ll_M \ep^{-1}$ with the following properties:

(a) Ricci curvature bound and volume of balls on $\Sigma^\ep(j\ep)$:
\begin{gather*}
	|\Ric^\ep(j\ep)|\leq C C_0\,,\quad \inf_{x\in \Sigma^\ep(j\ep)} {\rm Vol}_{g(j\ep)}(B_x(e^{C(M)j\ep},\Sigma^\ep(j\ep)))\geq e^{-C(M)j\ep}v\,.
\end{gather*}  

(b) Norm bound for second fundamental form $\Lambda^\ep(j\ep)$ on $\Sigma^\ep(j\ep)$:
\begin{align*}
    \|\La^\ep(j\ep)\|_{\mathsf H^k(\Sigma(j\ep))}\leq CM\,.
\end{align*}

(c) Approximate solution:
\begin{gather}  \label{appsol}
    \|F^\ep((j+1)\ep)-F^\ep(j\ep)+\ep J(F^\ep(j\ep))\bH(F^\ep(j\ep))\|_{L^2}\les \ep^{3/2}\, ,
\end{gather}

(d) Bounds for the metric and coordinate map: 
\begin{align}\nonumber   %\label{Chris-Bd}
c c_1 I\leq & \ g(j\ep)\leq CC_1 I,\  
\\\nonumber
%\label{UnLip}
    \|\d F^\ep((j+1)\ep)&-\d F^\ep(j\ep)\|_{L^\infty}\les \ep\,. 
\end{align}
 Here we remark that the bounds in part (a) and (b) are geometric bounds, independent of the choice 
 of the parametrization of the manifolds $\Sigma^\ep(j\ep)$. However, the bounds in (c), (d) are relative to a well chosen choice 
 of parametrization.

To obtain the above approximate solution, it suffices to carry out a single step:
\begin{thm} \label{thm-reg}
Let $(\Sigma_0,g_0$ be a complete Riemannian manifold of dimension $d$ 
satisfying \eqref{RegData}, \eqref{GeoCond} and \eqref{good-param}.
Let $\epsilon \ll 1$. Then there exists an approximate one step iterate $\Sigo=\Fo(\R^d)$ with the following properties:

(a) Ricci curvature bound and volume of balls on $\Sigo$:
\begin{gather*}
	|\Ric(\Sigo)|\leq C_0(1+C(M)\ep)\,,\qquad \inf_{x\in \Sigo} {\rm Vol}_{g}(B_x(r_0 e^{C(M)\ep}))\geq e^{-C(M)\ep} v\,,
\end{gather*}

(b) Norm bound for second fundamental form $\Lamo$:
\begin{align*}   % \label{FS-norm}
    \|\Lamo\|^2_{\mathsf H^k(\Sigo)}\leq (1+C(M)\ep)\|\Lambda_0\|^2_{\mathsf H^k(\Sigma_0)}\,.
\end{align*}
    
(c) Approximate solution:
\begin{align*}  % \label{FS-App}
    \|\Fo- F_0+\ep\Im(\psi_0\bar{m}_0)\|_{L^2}\les \ep^{3/2}\,.
\end{align*}

(d) Bounds for the metric: 
\begin{gather*}
	(1-C(M)\ep)g_0 \leq \go \leq (1+C(M)\ep) g_0\,,\\
 \|\d\Fo- \d F_0\|_{L^\infty}\les \ep\,.
\end{gather*}
\end{thm}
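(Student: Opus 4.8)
The plan is to build $\Fo$ in two stages: a Willmore-type parabolic regularization step of duration $\ep$ (run with a spatial frequency truncation at scale $\ep^{-1/4}$), followed by an explicit Euler step for (SMCF). The regularization is needed because a naive Euler step applied directly to (SMCF) loses derivatives: to control the $L^2$ error in \eqref{appsol}, i.e.\ to estimate $\|\Fo-F_0+\ep\,\Im(\psi_0\bar m_0)\|_{L^2}$ by $\ep^{3/2}$, one needs an $\ep^{1/2}$ gain from the Taylor expansion of the flow, which requires controlling two extra derivatives of $F_0$ beyond the $\sfH^k$ regularity available. The $\ep^{-1/4}$ truncation is calibrated precisely so that: (i) the regularization moves $F_0$ by at most $O(\ep)$ in the relevant norms (since the truncated high-frequency tail is $O(\ep)$ small when measured one derivative below the top, by \eqref{RegData}--\eqref{better-data}), and (ii) the regularized map has $\sfH^{k+2}$-type bounds of size $O(\ep^{-1/2})$, which is exactly enough to absorb the second-order Euler error: $\ep\cdot\ep\cdot\ep^{-1/2}=\ep^{3/2}$.

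\textbf{Main steps.} First I would set up the Willmore-type heat flow $\partial_t F = -\Delta_g^2 F + (\text{lower order})$ with initial data $P_{<\ep^{-1/4}}F_0$ (using the good coordinates from \eqref{better-data}), run it for time $\ep$, and record: parabolic smoothing estimates giving $\|\partial^2 \tilde F\|_{\sfH^{k+2}}\lesssim_M \ep^{-1/2}$; the closeness bounds $\|\tilde F - F_0\|$ in $\sfH^k$-type norms and in $C^1$ being $O(\ep)$ (here the intrinsic energy estimate of Lemma~\ref{lem-En} and the parabolic estimates of Section~\ref{Sec-Para} for the auxiliary system are the natural tools, transcribed to the Willmore setting); and the propagation of the geometric bounds --- $|\Ric|\le C_0(1+C(M)\ep)$, the volume lower bound with the $e^{C(M)\ep}$ loss, and $\|\Lambda\|_{\sfH^k}^2\le(1+C(M)\ep)\|\Lambda_0\|_{\sfH^k}^2$ --- all of which follow from Gr\"onwall applied to the flow-differentiated identities, exactly as in the volume-comparison arguments \eqref{vol-reg}--\eqref{balls-reg} and \eqref{Para-vol}. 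Second, I would perform the Euler step $\Fo = \tilde F - \ep\,\Im(\tilde\psi\overline{\tilde m})$ (equivalently $\Fo=\tilde F+\ep\, J(\tilde F)\bH(\tilde F)$), using the structure equations \eqref{strsys-cpf}, \eqref{sys-cpf}, \eqref{mo-frame} to express all quantities and to Taylor-expand; since $\partial_t$ of the (SMCF) right-hand side costs two spatial derivatives of $F$, the remainder is $O(\ep^2 \|\partial^2\tilde F\|_{\sfH^{k+2}})=O(\ep^{3/2})$, giving (c). Third, I would check (d): $\|\d\Fo-\d\tilde F\|_{L^\infty}\lesssim \ep\|\d\,\Im(\tilde\psi\overline{\tilde m})\|_{L^\infty}\lesssim_M\ep$ by Sobolev embedding (using the bounded-geometry hypotheses so Theorem~\ref{thm-SobEm} applies), and combining with the Stage-1 $C^1$-closeness gives $\|\d\Fo-\d F_0\|_{L^\infty}\lesssim_M\ep$; the metric comparison $g_0(1-C(M)\ep)\le\go\le g_0(1+C(M)\ep)$ follows from $g=\d F\cdot\d F$ and these $C^1$ bounds, or alternatively from \eqref{g_dt} integrated over the step. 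Finally, (a) and (b) for $\Sigo$ follow from the Stage-1 bounds for $\tilde\Sigma$ together with the observation that a single Euler step of size $\ep$ perturbs $\Ric$, the volume, and $\|\Lambda\|_{\sfH^k}$ by at most $O(C(M)\ep)$ --- again via the structure equations and the interpolation inequalities \eqref{HaInterpo}, \eqref{L2interpolation}.

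\textbf{Main obstacle.} The crux is the bookkeeping in Stage 1: one must simultaneously (i) gain the $\ep^{-1/2}$ high-regularity bound from parabolic smoothing --- which is delicate because the Willmore operator is fourth order and quasilinear, so one needs an energy scheme in the intrinsic $\sfH^{k+2}$ spaces that does not see a derivative loss from the variable metric --- and (ii) keep \emph{all} the lower-order geometric quantities ($\Ric$, injectivity radius / volume of balls, $\|\Lambda\|_{\sfH^k}$, ellipticity of $g$) from degrading by more than $O(\ep)$, so that the one-step theorem can be iterated $\sim\ep^{-1}$ times to reach the fixed time $T=T(M,C_0,c_0,v)$ claimed in Theorem~\ref{RegSol-thm} without the constants blowing up. The frequency truncation at $\ep^{-1/4}$ is the device that reconciles (i) and (ii), but verifying that this single scale works for every quantity simultaneously --- in particular that the truncation error is $O(\ep)$ and not merely $o(1)$ in each of the relevant norms, using \eqref{RegData} and \eqref{better-data} --- is where the real work lies. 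The remaining estimates, (c) and (d), are then essentially Taylor expansion plus Sobolev embedding and should be routine given the Stage-1 output.
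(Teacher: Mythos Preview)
Your two–step architecture (geometric parabolic regularization followed by an explicit Euler step) is exactly the paper's plan, and your identification of the $\ep^{-1/4}$ spatial scale is correct. But the parameters you propose do not close. You run a \emph{fourth}-order flow $\partial_t F=-\Delta_g^2 F+\ldots$ for time $\ep$. Since the flow velocity is $O(1)$ in $L^2$ (it is built from $\Lambda$ and two covariant derivatives thereof, all controlled by $\|\Lambda_0\|_{\sfH^k}$), this produces a displacement $\|\tilde F-F_0\|_{L^2}=O(\ep)$. As $\Fo=\tilde F-\ep\,\Im(\tilde\psi\bar{\tilde m})$ exactly, the quantity in (c) splits as $(\tilde F-F_0)+\ep\bigl(\Im(\psi_0\bar m_0)-\Im(\tilde\psi\bar{\tilde m})\bigr)$; the first piece is $O(\ep)$, not $O(\ep^{3/2})$, so (c) fails. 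Shortening the fourth-order flow to time $\ep^{3/2}$ fixes (c) but then the smoothing frequency is only $(\ep^{3/2})^{-1/4}=\ep^{-3/8}$, and the two-derivative loss in the Euler step costs $\ep\cdot\ep^{-3/2}=\ep^{-1/2}$ in the $\sfH^k$ energy, breaking (b). A short computation shows that to get displacement $\lesssim\ep^{3/2}$ \emph{and} smoothing at frequency $\ep^{-1/4}$, the flow must be at least sixth order.

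This is precisely what the paper does: it uses the \emph{sixth}-order Willmore-type flow associated to $\mathcal W(F)=\int|\nabla^\perp\mathbf H|^2$, with Euler--Lagrange operator $(\Delta^\perp)^2\mathbf H+\ldots$, run for time $s=\ep^{3/2}$; then $(\ep^{3/2})^{-1/6}=\ep^{-1/4}$ gives the correct frequency scale and the displacement is $O(\ep^{3/2})$. No explicit Littlewood--Paley truncation is used in this step---the regularization bound \eqref{RegEs-HiReg} comes purely from parabolic smoothing, via the standard $s^j\nabla^{A,6j}$ weighted energy argument applied to the $\lambda$-equation $\partial_s\lambda-(\Delta_g^A)^3\lambda=\tilde{\mathcal L}$. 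Also, your stated reason for regularizing is slightly off: the $L^2$ error in (c) is actually insensitive to regularization (an Euler step from $F_0$ directly would give $O(\ep^2)$ there); the derivative loss you need to cure is in (b), the propagation of $\|\Lambda\|_{\sfH^k}$, where the Euler step produces a source term $G_s$ requiring control of $\partial F$ in $H^{k+1}_{uloc}$, supplied by the smoothing estimate.
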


Since a direct application of 
an Euler method looses derivatives, we instead construct our one step iterate $\Sigo$ in two steps: 
\begin{enumerate}[label= \roman*)]
\item  We use a Willmore-type flow to regularize the initial manifold $\Sigma_0$, in order to obtain a regularized manifold $\Sigma_\ep$, where we have good regularization estimates and norm bounds. This is a key step in order to deal with the derivative loss. 

\item  We use an Euler iteration, but starting with 
$\Sigma_\ep$ instead of $\Sigma_0$, in order  to construct the one step approximate solution $\Sigo$, where we also prove the properties in Theorem~\ref{thm-reg}.
\end{enumerate}

 To construct the one step iterate
we harmlessly initialize the coordinates on $\Sigma_0$  so that in our global $\R^d$ parametrization we have 
the optimal regularity \eqref{better-data}.
However this higher regularity is not uniformly propagated when iterating multiple steps. Instead, we obtain dynamical coordinates by simply propagating 
the initial choice of coordinates though each of the iterative steps. This will yield a short time solution $F$ in the temporal gauge, and with a loss of regularity.
This loss is rectified at the very end by switching to the heat gauge.

\subsection{Regularization of immersed submanifold}
Here we utilize a geometric Willmore-type flow in order to regularize the immersed manifold $\Sigma_0$. For an immersed submanifold $F:\Sigma\rightarrow \R^{d+2}$ we introduce the Willmore-type functional defined as
\begin{equation*}
    \mathcal W(F) = \int |\nabla^\perp \mathbf{H}|^2 dvol,
\end{equation*}
where $\mathbf H$ denotes the mean curvature, $\Lambda$ is the second fundamental form, $\nab^\perp$ is the covariant derivatives on normal bundle $\mathcal N\Sigma$ and $dvol$ is the induced volume form. The
associated Euler-Lagrange operator is as follows.
\begin{lemma}  \label{lem7.2}
The Euler-Lagrange operator of $\mathcal W(F)$ (or its variational derivative) is given by
\begin{align*}
W(F)=&\ -((\De^\perp)^2 \mathbf H+\Lambda^{\al\be}\<\Lambda_{\al\be},\De^\perp \bH\>-\Lambda^{\al\be}\<\nab^\perp_\al \bH,\nab^\perp_\be \bH\>+\frac{1}{2}\mathbf H |\nab^\perp\bH|^2\\
&\ +\nab^\perp_\si(\bH\<\Lambda^{\al\si},\nab^\perp_\al\bH\>+\nab^\perp_\al\bH\<\Lambda^{\al\si},\bH\>)).
\end{align*}
\end{lemma}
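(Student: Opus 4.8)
\textbf{Proof strategy for Lemma~\ref{lem7.2}.}
The plan is to compute the first variation of the Willmore-type functional
$\mathcal W(F)=\int |\nabla^\perp \mathbf H|^2\, dvol$ directly, by inserting a one-parameter family of immersions $F_s$ with $\partial_s F|_{s=0}=X$ and collecting all contributions to $\frac{d}{ds}\mathcal W(F_s)\big|_{s=0}$. Since the Euler--Lagrange operator $W(F)$ is a section of the normal bundle, it suffices to take $X$ purely normal; the tangential variations produce only reparametrizations and contribute nothing to the gradient. The three sources of variation are: the volume form $dvol$, the mean curvature $\mathbf H$, and the normal connection $\nabla^\perp$ entering $\nabla^\perp \mathbf H$. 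So I would write
\[
\frac{d}{ds}\mathcal W(F_s)\Big|_{s=0}
= \int 2\,\langle \nabla^\perp_s \nabla^\perp \mathbf H,\ \nabla^\perp \mathbf H\rangle\, dvol
+ \int |\nabla^\perp \mathbf H|^2\, \partial_s(dvol),
\]
and then expand each term.

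\textbf{Key steps in order.}
First I would recall (or rederive from the structure equations \eqref{strsys-cpf}) the standard first-variation formulas for a normal variation $X$: $\partial_s(dvol) = -\langle \mathbf H, X\rangle\, dvol$ (trace of the second fundamental form against $X$), and the Simons-type identity for the variation of the mean curvature, $\partial_s \mathbf H = \Delta^\perp X + \Lambda^{\al\be}\langle \Lambda_{\al\be}, X\rangle + (\text{curvature/lower order in }X)$, together with the commutator formula for $[\nabla^\perp_s, \nabla^\perp_\al]$ acting on normal sections, which produces terms of the schematic form $\Lambda * \Lambda * X$ and $(\nabla^\perp\Lambda)*X$. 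Second, I would substitute these into the expression above and integrate by parts repeatedly to move all derivatives off $X$, so that the integrand takes the form $\langle W(F), X\rangle\, dvol$ for the claimed $W(F)$; the leading term $\int 2\langle \Delta^\perp X, \Delta^\perp \mathbf H\rangle$ integrates by parts twice to give $-\int 2\langle X, (\Delta^\perp)^2 \mathbf H\rangle$, matching the top-order term. Third, I would carefully track the lower-order contractions: the term $\Lambda^{\al\be}\langle \Lambda_{\al\be}, \Delta^\perp \mathbf H\rangle$ comes from the $\Lambda*\Lambda$ piece of $\partial_s \mathbf H$; the terms $-\Lambda^{\al\be}\langle \nabla^\perp_\al \mathbf H, \nabla^\perp_\be \mathbf H\rangle$ and $\nabla^\perp_\si(\cdots)$ come from commuting $\nabla^\perp_s$ past $\nabla^\perp$ in $\nabla^\perp_s\nabla^\perp \mathbf H$ and then integrating by parts; and the term $\frac12 \mathbf H |\nabla^\perp \mathbf H|^2$ comes from $\partial_s(dvol) = -\langle \mathbf H, X\rangle\, dvol$ applied to the $|\nabla^\perp \mathbf H|^2$ factor. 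Finally I would collect all terms and check the signs.

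\textbf{Main obstacle.}
The genuine difficulty is purely computational bookkeeping: correctly deriving and applying the commutation identity $[\nabla^\perp_s, \nabla^\perp_\al]\Phi$ for normal sections $\Phi$ along the variation, which involves the normal curvature and the second fundamental form, and then ensuring that the repeated integrations by parts on a (possibly noncompact) manifold are justified and produce exactly the divergence term $\nabla^\perp_\si(\mathbf H\langle \Lambda^{\al\si}, \nabla^\perp_\al \mathbf H\rangle + \nabla^\perp_\al \mathbf H\langle \Lambda^{\al\si}, \mathbf H\rangle)$ with the right coefficients. On a noncompact $\Sigma = \R^d$ the integration by parts is legitimate provided the variation $X$ is compactly supported (which is all that is needed to characterize the gradient), so no decay hypotheses are required; the only real care is algebraic. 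I expect this to be a standard but somewhat lengthy computation, entirely parallel to the classical derivation of the Willmore Euler--Lagrange equation for surfaces (Weiner, Kuwert--Schätzle) adapted to higher codimension, and I would present the final identity with the intermediate integration-by-parts steps indicated but not all contractions written out in full.
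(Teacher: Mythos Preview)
Your overall strategy matches the paper's: take a normal variation, apply the standard first-variation identities, integrate by parts. However, your initial decomposition
\[
\frac{d}{ds}\mathcal W(F_s)\Big|_{s=0}
= \int 2\,\langle \nabla^\perp_s \nabla^\perp \mathbf H,\ \nabla^\perp \mathbf H\rangle\, dvol
+ \int |\nabla^\perp \mathbf H|^2\, \partial_s(dvol)
\]
is missing a term. The norm $|\nabla^\perp \mathbf H|^2 = g^{\al\be}\langle \nabla^\perp_\al \mathbf H, \nabla^\perp_\be \mathbf H\rangle$ also depends on the inverse metric $g^{\al\be}$, which varies under the flow; the paper records this separately as $\tfrac12\,\partial_\tau g^{\al\be}\langle\nabla_\al \mathbf H,\nabla_\be \mathbf H\rangle$. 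Using $\partial_\tau g^{\al\be}=2\langle \Lambda^{\al\be},V\rangle$, this term is exactly what yields $\Lambda^{\al\be}\langle \nabla^\perp_\al \mathbf H, \nabla^\perp_\be \mathbf H\rangle$ in the final expression.

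You attribute that contribution instead to the commutator $[\nabla^\perp_s,\nabla^\perp_\al]$, but the paper's commutator identity
\[
[\partial_\tau^\perp,\nabla_\al]\mathbf H=\Lambda_{\al\si}\langle \nabla^\si V,\mathbf H\rangle+\nabla^\si V\langle \Lambda_{\si\al},\mathbf H\rangle
\]
only involves $\nabla V$, not $\nabla\mathbf H$, and after one integration by parts produces precisely the divergence term $\nabla^\perp_\si(\mathbf H\langle\Lambda^{\al\si},\nabla^\perp_\al\mathbf H\rangle+\nabla^\perp_\al\mathbf H\langle\Lambda^{\al\si},\mathbf H\rangle)$ and nothing else. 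So with your decomposition as written the bookkeeping will not close: one term in $W(F)$ will be unaccounted for and the coefficients will not match. The fix is simply to add the $\partial_s g^{\al\be}$ contribution to your splitting before proceeding; after that, your plan is identical to the paper's proof.
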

\begin{proof}
    Let $F: \R^d\times I\rightarrow \R^{d+2},\ I=(\tau_1,\tau_2)\ni 0$ be a smooth variation with normal velocity field $V=\d_\tau F\in \mathcal N\Sigma$. Then the following formulas hold
    \begin{align*}
    &\d_\tau g^{\al\be}=2\<\Lambda^{\al\be},V\>,\qquad\qquad\qquad 
    \d_\tau (dvol)=-\<\bH,V\>dvol,\\
    &\d_\tau^\perp \bH=\De V+\Lambda^{\al\be}\<\Lambda_{\al\be},V\>,\qquad 
    [\d_\tau^\perp,\nab_\al]\bH=\Lambda_{\al\si}\<\nab^\si V,\bH\>+\nab^\si V\<\Lambda_{\si\al},\bH\>.
    \end{align*}
    Thus we obtain 
    \begin{align*}
        \frac{d}{d\tau}\mathcal W(F)=&\ \int g^{\al\be}\nab_\al \d_\tau^\perp \bH \nab_\be \bH+ g^{\al\be}[\d_\tau^\perp,\nab_\al]\bH \nab_\be \bH\\
        &\ +\frac{1}{2}\d_\tau g^{\al\be} \nab_\al \bH \nab_\be\bH-\frac{1}{2}|\nab\bH|^2\<\bH,V\>\ dvol\\
        =&\ \int -(\De V+\Lambda^{\al\be}\<\Lambda_{\al\be},V\>)\De \bH+ (\Lambda_{\al\si}\<\nab^\si V,\bH\>+\nab^\si V\<\Lambda_{\si\al},\bH\>) \nab^\al \bH\\
        &\ +\<\Lambda^{\al\be},V\> \<\nab_\al \bH, \nab_\be\bH\>-\frac{1}{2}|\nab\bH|^2\<\bH,V\>\ dvol\\
        =&\ \int \< V, -\De^2 \bH -\Lambda^{\al\be}\<\Lambda_{\al\be},\De \bH\>-\nab^\si(\bH\<\Lambda_{\al\si},\nab^\al\bH\>+\nab^\al\bH\<\Lambda_{\al\si},\bH\>)\\
        &\ +\Lambda^{\al\be}\<\nab_\al\bH,\nab_\be\bH\>-\frac{1}{2}\bH|\nab\bH|^2 \> \ dvol.
    \end{align*}
    Hence, the Euler-Lagrange operator $W(F)$ is obtained.
\end{proof}

The Willmore flow is the gradient flow of the Willmore functional.
Given the form of the Euler-Lagrange operator of $\mathcal W(F)$ in Lemma~\ref{lem7.2}, we obtain the Willmore-type flow, where the map $F(s,x):[0,T]\times \R^d \rightarrow \R^{d+2}$ evolves via the evolution equation
\begin{equation}   \label{WillFlow}
\left\{\begin{aligned}
   &(\d_s F)^{\perp}=(\De^\perp)^2 \mathbf H+\Lambda^{\al\be}\<\Lambda_{\al\be},\De^\perp \bH\>-\Lambda^{\al\be}\<\nab^\perp_\al \bH,\nab^\perp_\be \bH\>+\frac{1}{2}\mathbf H |\nab^\perp\bH|^2\\
   &\qquad\ \ \    +\nab^\perp_\si(\bH\<\Lambda^{\al\si},\nab^\perp_\al\bH\>+\nab^\perp_\al\bH\<\Lambda^{\al\si},\bH\>)\,,\\
   &F(s,\cdot)\big|_{s=0}=F_0\,.
\end{aligned}\right.
\end{equation}
This is a quasilinear sixth order evolution equation of parabolic type, in a suitable gauge. The manifold $\Sigma_0$ is regularized by evolving along the above Willmore-type flow, for which all we need is local solvability. 

Similar to the mean curvature flow, it is easy to check that the system \eqref{WillFlow} is a degenerate parabolic system. 
To bypass this difficulty we can adapt the DeTurck trick as introduced by Hamilton \cite{Hamilton}.
The DeTurck trick is nothing but  
gauge fixing for the group of time dependent changes of coordinates.
In practice this involves adding a tangential term to the geometric flow in order to break the geometric invariance of the equation. 
The modified flow is then strongly parabolic and the almost standard parabolic theory can now be employed in order to insure the short time existence of solutions for \eqref{WillFlow}.

Modifying the flow by adding a tangential term, we obtain a Willmore-DeTurck type flow,
\begin{equation}   \label{WillFlow-Mf}
\left\{\begin{aligned}
&\d_s F=U^\ga \d_\ga F+(\De^\perp)^2 \mathbf H+\Lambda^{\al\be}\<\Lambda_{\al\be},\De^\perp \bH\>-\Lambda^{\al\be}\<\nab^\perp_\al \bH,\nab^\perp_\be \bH\>\\
&\qquad\ \ \    +\frac{1}{2}\mathbf H |\nab^\perp\bH|^2
+\nab^\perp_\si(\bH\<\Lambda^{\al\si},\nab^\perp_\al\bH\>+\nab^\perp_\al\bH\<\Lambda^{\al\si},\bH\>)\,,\\
&F(s,\cdot)\big|_{s=0}=F_0\,.
\end{aligned}\right.
\end{equation}
Our choice of the field $U^{\gamma}$  corresponds to introducing  \emph{generalized parabolic coordinates}, where we require the coordinate functions $x^\gamma$ to be global Lipschitz solutions of the heat equations
\[
(\partial_t-\Delta_g^3  - U^\si \partial_\si) x^\gamma=0.
\]
Then, for fixed $\gamma$, the functions $U^\ga$ are given by
\begin{equation}\label{deTurck-gauge}
U^\ga = \Delta^2 (g^{\al\be}\Ga_{\al\be}^\ga).
\end{equation}
Now we consider the local well-posedness question for 
the Willmore-DeTurck flow \eqref{WillFlow-Mf} with 
the gauge choice \eqref{deTurck-gauge}.

\begin{thm} 
Consider a smooth initial immersion $F_0:\R^d \rightarrow \R^{d+2}$ with second fundamental form $\La_0$, metric $g_0$ and volume of balls satisfying
\eqref{RegData}, \eqref{GeoCond} and \eqref{good-param}.
Then there exists $T>0$ depending only on $M$, $C_0$, $v$, $c$ and $C$ such that \eqref{WillFlow-Mf} with the gauge choice \eqref{deTurck-gauge} has a unique smooth solution $F$ in $[0,T]$ satisfying
\[\|\d^2 F\|_{L^\infty H^k_{uloc}}+\|\d^2 F\|_{L^2 H^{k+3}_{uloc}}\leq \|\d^2 F_0\|_{H^k_{uloc}}.\]
Moreover, the solution $F_\ep:=F(\ep^{3/2})$ satisfies the regularization estimate
\begin{align} \label{reg-Fep}
    \|\d^j \d F_\ep\|_{H^{k+1}_{uloc}}\les \ep^{-\frac{j}{4}}\|\d F_0\|_{H^{k+1}_{uloc}},
\end{align}
and there exists a normal frame $m_\ep$ which has the same regularity
\begin{align}  \label{reg-mep}
    \| \partial^j m_\epsilon \|_{H^{k+1}_{uloc}} \lesssim \epsilon^{-\frac{j}4} \| \partial F_0\|_{H^{k+1}_{uloc}}.
\end{align}
\end{thm}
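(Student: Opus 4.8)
The plan is to treat the Willmore--DeTurck flow \eqref{WillFlow-Mf} with the gauge choice \eqref{deTurck-gauge} as a strongly parabolic quasilinear system of sixth order, and to run a standard parabolic existence-and-regularity scheme in uniformly local Sobolev spaces $H^k_{uloc}$, keeping track of how the bounds degenerate as the time variable $s \to 0$. First I would verify the parabolicity: after adding the DeTurck term $U^\ga\d_\ga F$ with $U^\ga = \De^2(g^{\al\be}\Ga^\ga_{\al\be})$, the principal part of the right-hand side acting on $F$ becomes $-\De_g^3 F$ up to lower-order terms (this is the whole point of the DeTurck trick, exactly as for Ricci/Willmore flow), so the linearization at any immersion with metric satisfying \eqref{good-param} is strongly parabolic of order six with coefficients controlled by $\|\d^2 F\|_{H^k_{uloc}}$ for $k > d/2 + 5$. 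The hypotheses \eqref{RegData}, \eqref{GeoCond}, \eqref{good-param} together with \eqref{better-data} give the uniform ellipticity $cI \le g_0 \le CI$ and the $H^{k+1}_{uloc}$ bound on $\d F_0$ needed to start.

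Second, I would set up the short-time existence. Working in the global $\R^d$ coordinates, rewrite \eqref{WillFlow-Mf} as $\d_s(\d F) + \De_{g}^3(\d F) = \mathcal N$, where $\mathcal N$ collects all terms of order $\le 5$ in $\d F$; by the algebra and interpolation inequalities in Section~\ref{Sec3} (Bernstein, \eqref{ineqfg}, the Hamilton interpolation inequalities \eqref{HaInterpo}, \eqref{L2interpolation}), $\mathcal N$ maps $H^{k+1}_{uloc}$ boundedly with a bound depending only on $M$, and the loss of derivatives is at most five, which is absorbed by the sixth-order smoothing. A contraction mapping argument on a small time interval $[0,T]$, $T = T(M,C_0,v,c,C)$, in the space $C([0,T];H^{k+1}_{uloc}) \cap L^2([0,T];H^{k+4}_{uloc})$ (applied to $\d F$) then yields a unique smooth solution, together with the energy inequality
\[
\|\d^2 F\|_{L^\infty_T H^k_{uloc}} + \|\d^2 F\|_{L^2_T H^{k+3}_{uloc}} \le \|\d^2 F_0\|_{H^k_{uloc}}.
\]
Uniqueness and smoothness for $s>0$ follow from parabolic bootstrapping: once a solution exists in $H^{k+1}_{uloc}$, parabolic smoothing propagates it into $H^{m}_{uloc}$ for every $m$, with $s$-dependent constants. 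Preservation of the ellipticity \eqref{good-param} on $[0,T]$ (after shrinking $T$) follows by integrating the evolution of $g$ in $L^\infty$, just as in the proofs of \eqref{Para-met} and \eqref{Reg-met}.

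Third comes the regularization estimate \eqref{reg-Fep}, which is the technical heart. The point is the parabolic scaling: the flow is sixth order, so running it for time $s = \ep^{3/2}$ gains roughly $\ep^{3/2 \cdot (1/6)} = \ep^{1/4}$ per derivative. Concretely I would run a Littlewood--Paley / frequency-envelope argument: test the equation for $P_j \d F$ against $|D|^{2j\cdot\text{(extra derivatives)}} P_j \d F$, use the sixth-order coercivity $\|P_j \d F\|_{L^2}^2$ gaining a factor $2^{6j}\int_0^s$, and balance against the nonlinear terms using the already-established uniform $H^{k+1}_{uloc}$ bound; this produces, for each $j \ge 1$,
\[
\|\d^j \d F_\ep\|_{H^{k+1}_{uloc}} \lesssim \ep^{-j/4}\|\d F_0\|_{H^{k+1}_{uloc}},
\]
the smoothing being limited only by the regularity of the data (which is merely $H^{k+1}_{uloc}$), not by that of the equation. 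The main obstacle here is controlling the uniformly-local norms rather than global ones — one must localize with cutoffs $\chi(x - x_0)$, commute them through $\De_g^3$, and show the commutator errors are lower order and finite-speed-of-propagation--type controlled; this is where most of the bookkeeping lies, but it is routine once the global estimate and the scaling heuristic are in place.

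Finally, for the normal frame bound \eqref{reg-mep}: having produced the smooth regularized immersion $F_\ep$ with the bounds \eqref{reg-Fep}, construct $m_\ep$ on $\Sigma_\ep = F_\ep(\R^d)$ exactly as in Section~\ref{Sec-Ini} — parallel transport inside a large ball, the near-constant exterior frame outside, glued via the lifting criterion, then rotated into the modified Coulomb gauge (Lemma~\ref{Lem-Gaugebb} and Lemma~\ref{Lemreg-tA}). Since $A_\ep$ solves the elliptic equation $\De A_\ep = \d\,\Im(\la_\ep \bar\la_\ep)$ and $\d m_\ep = A_\ep m_\ep + \la_\ep\,\d F_\ep$, each spatial derivative of $m_\ep$ is controlled by one more derivative of $\la_\ep \sim \d^2 F_\ep$, hence by \eqref{reg-Fep} it inherits exactly the same $\ep^{-j/4}$ growth:
\[
\|\d^j m_\ep\|_{H^{k+1}_{uloc}} \lesssim \ep^{-j/4}\|\d F_0\|_{H^{k+1}_{uloc}}.
\]
I expect the parabolic short-time existence in $H^k_{uloc}$ (handling the nonuniformity of the domain and the high order of the equation) and the precise scaling in the regularization estimate to be the two places needing genuine care; everything else reduces to the interpolation machinery and frame constructions already developed in the earlier sections.
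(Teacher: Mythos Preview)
Your overall strategy is sound and matches the paper's for the short-time existence part: write the flow as a sixth-order quasilinear parabolic system for $G=F-F_0$, identify the principal symbol as $(g^{\al\be}\xi_\al\xi_\be)^3$, and run standard parabolic machinery in $H^{k+1}_{uloc}$. There are, however, two places where your route diverges from the paper's, and in one of them your proposed tool does not quite fit.

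For the regularization estimate \eqref{reg-Fep}, the paper does not use a Littlewood--Paley or frequency-envelope argument. Instead it uses the classical time-weighted commutator method: apply $s^j\d^{6j+1}$ to the $F$-equation, observe that the commutator $[\d_s-(\d_\al g^{\al\be}\d_\be)^3,\,s^j\d^{6j+1}]$ produces $js^{j-1}\d^{6j+1}F$ plus lower-order nonlinear terms, and close an energy estimate for $s^j\d^{6j}\d F$ in $H^{k+1}_{uloc}$. Setting $s=\ep^{3/2}$ then gives $\|\d^{6j}\d F_\ep\|_{H^{k+1}_{uloc}}\les\ep^{-3j/2}\|\d F_0\|_{H^{k+1}_{uloc}}$, and interpolation yields the general $\ep^{-j/4}$. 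This is both shorter and more transparent than your frequency-space sketch, which as written conflates the frequency index and the derivative count.

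For the frame bound \eqref{reg-mep}, your plan to invoke the Section~\ref{Sec-Ini} machinery (parallel transport, gluing, modified Coulomb gauge via a global Poisson equation) is mismatched to the setting: that construction is designed for data with $\la\in H^s(\R^d)$ globally and yields global $\dot H^\si$ bounds, whereas here one only has $\d F_\ep\in H^{k+1}_{uloc}$ and needs uniformly local control. The paper instead constructs $m_\ep$ by a purely algebraic recipe---generalized cross products of the columns of $\d F_\ep$---which is local, explicit, and immediately inherits the regularity and the $\ep^{-j/4}$ scaling from \eqref{reg-Fep}. Your final heuristic ($\d m_\ep = A_\ep m_\ep + \la_\ep\,\d F_\ep$ so derivatives of $m_\ep$ track derivatives of $\d^2 F_\ep$) is correct, but the frame you feed into it should be built locally, not via the global Coulomb-gauge route.
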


\begin{proof}
Defining $G=F-F_0$, the function $G$ solves
\begin{equation}   \label{WillFlow-MfG}
\left\{\begin{aligned}
&\d_s G=\De_g^2 (\d_\al g^{\al\be}\d_\be )G+\sum_{j_1+\cdots+j_s\leq 5,\ 2\leq s\leq 5}(g^{-1})^5 (\d+\Ga)^{j_1}\d F\cdots (\d+\Ga)^{j_s}\d F\cdot (\d F)^2\\
&\quad\quad\ \   +\De_g^2 (\d_\al g^{\al\be}\d_\be )F_0\,,\\
&G(s,\cdot)\big|_{s=0}=0\,,
\end{aligned}\right.
\end{equation}
where the principal symbol of the leading term $-\De_g^2 (\d_\al g^{\al\be}\d_\be )$ is $(g^{\al\be}\xi_\al \xi_\be)^3$  and satisfies the property
\begin{align*}
    (g^{\al\be}\xi_\al \xi_\be)^3\geq C|\xi|^6.
\end{align*}
Hence the equation \eqref{WillFlow-MfG} is a sixth-order nondegenerate parabolic equation with lower order source terms.

Now we solve the equation \eqref{WillFlow-Mf} in the uniform local space $H^{k+2}_{uloc}$.
Using standard arguments involving Friedrichs smoothing
techniques and a bootstrap assumption 
\begin{align*}
    \|G\|_{L^\infty H^{k-3}_{uloc}}+\|G\|_{L^2 H^{k}_{uloc}}\leq C\epsilon,
\end{align*}
we deduce that for $\d F_0\in H^{k+1}_{uloc}$ with $k>\frac{d}{2}+5$, there exists a unique solution $G\in C([0,T_k],H^{k-3}_{uloc})\cap L^2([0,T_k],H^{k}_{uloc})$ for some sufficiently small $T_k>0$. From \eqref{WillFlow-Mf}, we can further improve the regularity of the map $F$ to
\begin{align*}
    \|\d F\|_{L^\infty H^{k+1}_{uloc}}+\|\d F\|_{L^2 H^{k+4}_{uloc}}\les \|\d F_0\|_{H^{k+1}_{uloc}}.
\end{align*}
\end{proof}

\begin{proof}[Proof of \eqref{reg-Fep}]
    Applying $s^j \d^{6j+1}$ for $j\geq 1$ to the equation of $F$, we get
    \begin{align*}  
    &\ (\d_s-(\d_\al g^{\al\be}\d_\be)^3)(s^j \d^{6j+1})F\\
    &=js^{j-1}\d^{6j+1}F-s^j[(\d_\al g^{\al\be}\d_\be)^3,\d^{6j+1}]F\\
    &+s^j \d^{6j+1}   \sum_{j_1+\cdots+j_s\leq 5,\ 2\leq s\leq 5}(g^{-1})^5 (\d+\Ga)^{j_1}\d F\cdots (\d+\Ga)^{j_s}\d F\cdot (\d F)^2  .
    \end{align*}
    Then by the partition of unity, energy estimates and interpolation inequality, we obtain 
    \begin{align*}
        \|s^j \d^{6j}\d F\|_{H^{k+1}_{uloc}}\les \|\d F_0\|_{H^{k+1}_{uloc}},
    \end{align*}
    which also implies for $s=\ep^{\frac{3}{2}}$
    \begin{align*}
        \|\d^{6j}\d F_\ep\|_{H^{k+1}_{uloc}}\les \ep^{-\frac{3j}{2}}\|\d F_0\|_{H^{k+1}_{uloc}}.
    \end{align*}
    Hence, the bound \eqref{reg-Fep} is obtained by interpolation.
\end{proof}

\begin{proof}[Proof of \eqref{reg-mep}]
Both $m_\epsilon$ and $\Lambda_\epsilon$ depend on the choice of the normal frame, but their product does not. So at the point where we obtain the $H^{k+1}_{uloc}$ regularity of $\partial F_\epsilon$ in local charts,
we should also point out that we can choose $m_\epsilon$ with the same regularity. 

Here we can construct the $m_\ep$ directly by $\d F_\ep$ as the graph case. For example, in dimensions $d=2$, we have $F=(F_1,\cdots,F_4):\R^2\rightarrow \R^4$ and
\[  \d_1 F=(\d_1 F_1,\cdots, \d_1 F_4),\qquad \d_2 F=(\d_2 F_1,\cdots, \d_2 F_4)    \]
Without loss of generality, denote $F'=(F_1,F_2,F_3)$ such that $\d_1 F'$ and $\d_2 F'$ are linearly independent. Using their cross product, we get
\[  \d_1 F'\times \d_2 F'  \perp \d_1 F',\ \d_2 F' \,,  \]
and obtain a normal vector
\[  ( \d_1 F'\times \d_2 F',0)  \perp \d_1 F,\ \d_2 F    \]
Thus one of the unit normal vectors is given by
\[ \nu_1:= \frac{( \d_1 F'\times \d_2 F',0)}{|\d_1 F'\times \d_2 F'|}\]
The other one unit normal vector $\nu_2$ can also be constructed using the generalized cross product. For general dimensions $d\geq 2$, we can construct them by generalized cross product directly.
Hence, the normal frame $m_\ep$ constructed as above has the same regularity and satisfies the estimate
\begin{align*}
    \|\d^j m_\ep\|_{H^{k+1}_{uloc}}\les \|\d^j F_\ep\|_{H^{k+1}_{uloc}}\les \ep^{-\frac{j}{4}}\|\d F_0\|_{H^{k+1}_{uloc}}.
\end{align*}
\end{proof}

The regularized manifold $\Sigma_\ep=F(\ep^{\frac 3 2},\R^d)$ is chosen by setting the Willmore time to be $s=\ep^{\frac 32}$. This time 
scale corresponds to a regularization on the $\epsilon^{-\frac14}$ spatial scale.

As in Section \ref{Sec-gauge} we define the complex orthonormal frame $m$, complex second fundamental form $\la$ and mean curvature $\psi$ as 
\[
m=\nu_1+i\nu_2,\qquad \la_{\al\be}=\Lambda_{\al\be}\cdot\nu_1+i\Lambda_{\al\be}\cdot\nu_2,\qquad \psi=g^{\al\be}\la_{\al\be},
\] 
with the same  gauge group given by the sections of an $SU(1)$ bundle.
Then we can do the following steps to rewrite the Willmore-type flow \eqref{WillFlow} in terms of these geometric parameters, in several steps:

\medskip

\emph{a) Rewrite the equation for the Willmore flow.} First, we derive the differential equations for second fundamental form. Since
\begin{align*}
    (\De^\perp )^2 \bH&=\Re((\De^A_g)^2\psi\bar{m}),\\
    \Lambda^{\al\be}\<\Lambda_{\al\be},\De^\perp \bH\>&=\Re(\la^{\al\be}\bar{m})\Re(\la_{\al\be}\overline{\De^A_g \psi}),\\
    -\Lambda^{\al\be}\<\nab^\perp_\al \bH,\nab^\perp_\be \bH\>&=-\Re(\la^{\al\be}\bar{m})\Re(\nab^A_\al\psi \overline{\nab^A_\be\psi}),\\
    \frac{1}{2}\bH|\nab^\perp\bH|^2&=\frac{1}{2}\Re(\psi\bar{m})|\nab^A\psi|^2,
\end{align*}
and 
\begin{align*}
    &\ \nab^\perp_\si(\bH\<\Lambda^{\al\si},\nab^\perp_\al\bH\>+\nab^\perp_\al\bH\<\Lambda^{\al\si},\bH\>)\\
    =&\ \Re(\nab^A_\si\psi\bar{m})\big(2\Re(\la^{\al\si}\overline{\nab^A_\al\psi})+\frac{1}{2}\nab^\si |\psi|^2 \big)+\Re(\psi\bar{m})\big(|\nab^A\psi|^2+\Re(\la^{\al\si}\overline{\nab^A_\si\nab^A_\al\psi})\big)\\
    &\ +\Re(\nab^A_\si\nab^A_\al\psi\bar{m})\Re(\la^{\al\si}\bar{\psi}).
\end{align*}
Then we obtain
\begin{equation} \label{dF}
    \d_s F=\Re (\mathcal L \bar{m})
\end{equation}
with $\mathcal L$ given by
\begin{equation*}%   \label{mL}
    \begin{aligned}
    \mathcal L=&\ (\De^A_g)^2\psi+\la^{\al\be}\Re(\la_{\al\be}\overline{\De^A_g \psi})-\la^{\al\be}\Re(\nab^A_\al\psi \overline{\nab^A_\be\psi})+\frac{3}{2}\psi|\nab^A\psi|^2\\
    &\ +\nab^A_\si\psi\big(2\Re(\la^{\al\si}\overline{\nab^A_\al\psi})+\frac{1}{2}\nab^\si |\psi|^2 \big)+\psi\Re(\la^{\al\si}\overline{\nab^A_\si\nab^A_\al\psi})+\nab^A_\si\nab^A_\al\psi\Re(\la^{\al\si}\bar{\psi}).
\end{aligned}
\end{equation*}

\medskip

\emph{b) The motion of the frame and commutators.}
Applying $\d_\al$ to formula \eqref{dF} and using the relation $m\perp \d_\al F$, we get
\begin{equation}   \label{Mot-fra}
	\left\{\begin{aligned}
		&\d_s F_\al 
		=\Re(\nab^A_\al \mathcal L \bar{m})-\Re(\mathcal L \overline{\la_\al^\ga })F_\ga\,,\\
		&\d_s^{A_0} m=-\nab^{A,\al} \mathcal L F_\al\,.
	\end{aligned}\right.
\end{equation}
This also gives the evolution equation of metric $g$
\begin{align}   \label{met-g}
    \d_s g_{\al\be}&= \d_s \<\d_\al F,\d_\be F\>=-2\Re(\mathcal L\overline{\la_{\al\be}}).
\end{align}

From the structure equations \eqref{strsys-cpf} and \eqref{Mot-fra}, we have
\begin{align*}
\d^{A_0}_s\d^A_{\al} m&=(-\d^{A_0}_s\la^{\ga}_{\al}+\la^{\si}_{\al}\Re(\mathcal L\bar{\la}^{\ga}_{\si}))F_{\ga}-\la^{\ga}_{\al} \Re(\nab^A_\ga \mathcal L \bar{m})\,,\\
\d^A_{\al}\d^{A_0}_s m&=-\nab^A_\al \nab^{A,\ga}\mathcal L F_\ga-\nab^{A,\ga}\mathcal L \Re(\la_{\al\ga}\bar{m})\,.
\end{align*}
By the above two formulas and the commutator
\begin{align*}
[\d_s^{A_0},\d^A_\al] m=i(\d_s A_\al-\d_\al A_0)m,
\end{align*}
equating the coefficients of the tangent vectors and
the normal vector $m$, we obtain the evolution equation for $\la$
\begin{equation}   \label{Evol-la}
\d^{A_0}_s\la^{\ga}_{\al}-\nab^A_{\al}\nab^{A,\ga}\mathcal L=\la^{\si}_{\al}\Re(\mathcal L\bar{\la}^{\ga}_{\si})
\end{equation}
and the compatibility conditions for the connection
\begin{align}    \label{conn-A}
    \d_s A_\al-\d_\al A_0=\Im\big(\overline{\la_\al^\ga} \nab^A_\ga\mathcal L\big),
\end{align}

In order to dynamically fix the gauge on the normal bundle along the Willmore-type flow, we will use the parallel transport relation $A_0=\d_s \nu_1\cdot\nu_2=0$, sometimes called the temporal gauge, which yields the main gauge condition
\begin{equation*}
    A_0=0.
\end{equation*}
Then we have the commutators
\begin{align*}
	[\d_s,\nab^A_\al]=[\d_s,\nab_\al]+i[\d_s,A_\al]=\nab^A\la\ast\mathcal L+\la\ast\nab^A\mathcal L.
\end{align*}

\medskip

\emph{c) The evolution equations of $\la$.} Using the compatibility conditions from  \eqref{la-commu} we have
\begin{align*}
    \nab^A_{\al}\nab^{A,\ga} \psi= \nab^A_\al \nab^{A,\si}\la_\si^\ga=\nab^{A,\si}\nab^A_\si \la_\al^\ga+[\nab^A_\al,\nab^{A,\si}]\la_\si^\ga,
\end{align*}
and 
\begin{align*}
    [\nab^A_\al,\nab^{A,\si}]\la_\si^\ga
    =&\ -\Re(\la_{\al\de}\Bar{\psi}-\la_{\al\mu}\Bar{\la}_{\de}^\mu)\la^{\de\ga}+\Re(\la_{\si\de}\bar{\la}_{\al}^\ga-\la_{\si}^\ga\bar{\la}_{\al\de})\la^{\si\de}+i\Im (\la_\al^\mu \overline{\la_{\si\mu}})\la^{\si\ga}.
\end{align*}
Then, under the gauge condition $A_0=0$, the evolution equations \eqref{Evol-la} for $\la$ are rewritten as 
\begin{equation}        \label{Eq-MCF}
\begin{aligned}
\d_s \la_\al^\ga-(\De^{A}_g)^3\la_\al^\ga= -(\De^A_g)^3\la_\al^\ga+\nab^A_{\al}\nab^{A,\ga}\mathcal L+\la^{\si}_{\al}\Re(\mathcal L\bar{\la}^{\ga}_{\si}):=\tilde{\mathcal L}\,,
\end{aligned}
\end{equation}
where the  nonlinearity $\tilde{\mathcal L}$ has the schematic form 
\begin{equation*}%\label{tildeL}
    \tilde{\mathcal L}=\sum_{k_1+k_2+k_3=4} \nab^{A,k_1}\la\ast \nab^{A,k_2}\la\ast \nab^{A,k_3}\la +\la^4\ast \nab^{A,2}\la+\la^3\ast \nab^A\la\ast \nab^A\la,
\end{equation*} 
and the connection $A$ and the metric $g$ satisfy \eqref{conn-A} and \eqref{met-g}, respectively.

\medskip

Now we turn our attention to the regularized manifold. As the submanifold $\Sigma$ evolves along the Willmore-type flow \eqref{WillFlow}, the desired regularized manifold $\Sigma_\ep$ is obtained at the Willmore time $s=\ep^{3/2}$:
\begin{equation}   \label{F-ep}
\Sigma_\ep:=\Sigma(s)\big|_{s=\ep^{3/2}}=F_\ep(\R^d):=F(s,\R^d)\big|_{s=\ep^{3/2}}
\end{equation}
We use the following notations to denote the metric, Christoffel symbols, normal vectors, and second fundamental form on $\Sigma_\ep$,
\begin{align}     \label{frame-reg}
	g_\ep\,,\quad \Ga_{\ep;\al\be}^\ga\,,\quad (\nu^\ep_1,\ \nu^\ep_2)\,,\quad m_\ep=\nu^\ep_1+i\nu^\ep_2\,,\quad \Lambda_\ep\, ,\quad \la_\ep=\Lambda_\ep \cdot m_\ep\,.
\end{align}
Then compared with the initial manifold $\Sigma_0$, we have the following properties.

\begin{prop}[Bounds for the regularized submanifold $\Sigma_\ep$]\label{Reg-Prop}
Let $(\Sigma_0,g(0))$ be a complete Riemannian manifold of dimension $d$ satisfying the assumptions \eqref{RegData}, \eqref{GeoCond}, \eqref{good-param}, with the initial choice of coordinates as in \eqref{better-data}. We regularize the initial manifold $\Sigma_0$ as $\Sigma_\ep$ in \eqref{F-ep}.
Denote the second fundamental forms of $\Sigma_0$ and $\Sigma_\ep$ as $\La_0$ and $\La_\ep$, respectively. Then we have the following properties:
    
(a) Ricci curvature bound and volume of balls:
\begin{gather}   \label{RegEst-Ricci}
	|\Ric_\ep|\leq (1+C(M)\ep)C_0\,,\qquad \inf_{x\in \Sigma_\ep} {\rm Vol}_{g_\ep}(B_x(e^{C(M)\ep}r_0))\geq e^{-C(M)\ep}v\,,\\   \label{RegEst-g}
	(1-C(M)\ep^{3/2})g_0\leq g_{\ep}\leq (1+C(M)\ep^{3/2})g_0\,.
\end{gather}

(b) Energy bound
\begin{align}    \label{RegEst-IM}
	\|\Lambda_\ep\|^2_{\mathsf H^k}\leq (1+C(M)\ep^{3/2})\|\Lambda_0\|^2_{\mathsf H^k}\,.
\end{align}
    
(c) Regularization:
\begin{align}    \label{RegEs-HiReg}
\|\Lambda_\ep\|_{\mathsf H^{k+m}}\les \ep^{-m/4}\|\Lambda_0\|_{\mathsf H^k}\,.
\end{align}
    
(d) Approximate solution:
\begin{align}   \label{RegEst-AppSol}
    \|\Lambda_\ep-\Lambda_0\|_{L^2}\les \ep^{3/2}\,,\qquad 
    \|\d F_\ep-\d F_0\|_{{L^\infty_{uloc}}}\leq \ep^{3/2}.
\end{align}
\end{prop}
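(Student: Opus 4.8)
\textbf{Proof plan for Proposition~\ref{Reg-Prop}.}
The plan is to read off all four sets of bounds from the evolution equations \eqref{met-g}, \eqref{Evol-la} (equivalently \eqref{Eq-MCF}), \eqref{conn-A} derived above for the geometric quantities along the Willmore--DeTurck flow \eqref{WillFlow-Mf}, integrated over the short Willmore time interval $s\in[0,\ep^{3/2}]$. The key preliminary input is the local-in-space regularity theory for \eqref{WillFlow-Mf}: by the short-time existence theorem, for $\d F_0\in H^{k+1}_{uloc}$ with $k>\tfrac{d}{2}+5$ one has a unique smooth solution on $[0,T]$ with $T=T(M,C_0,v,c,C)$, and in particular $\ep^{3/2}\ll T$, together with the propagated bound $\|\d^2 F\|_{L^\infty H^k_{uloc}}+\|\d^2 F\|_{L^2 H^{k+3}_{uloc}}\lesssim \|\d^2 F_0\|_{H^k_{uloc}}$ and the higher-regularity smoothing bound \eqref{reg-Fep}, $\|\d^j\d F_\ep\|_{H^{k+1}_{uloc}}\lesssim \ep^{-j/4}\|\d F_0\|_{H^{k+1}_{uloc}}$, with the companion frame bound \eqref{reg-mep}. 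Throughout I would use that the Willmore time scale $s=\ep^{3/2}$ corresponds to a spatial regularization scale $\ep^{-1/4}$, which is exactly what converts each extra derivative into a factor $\ep^{-1/4}$.

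First I would prove (d), the approximate-solution estimates, since they are the most delicate and pin down the choice of Willmore time. Writing $\La_\ep-\La_0=\int_0^{\ep^{3/2}}\d_s\La\,ds$ and using \eqref{Evol-la} (in the schematic form given by $\tilde{\mathcal L}$ in \eqref{Eq-MCF}), the integrand is bounded in $L^2$ by $\|(\De_g^A)^3\la\|_{L^2}+\text{l.o.t.}$. A naive bound would need six extra derivatives on $\la$, costing $\ep^{-6/4}=\ep^{-3/2}$, which exactly cancels the $\ep^{3/2}$ from the time integration and would only give $O(1)$. To gain the extra smallness I would instead exploit the smoothing: for $s\in[0,\ep^{3/2}]$ the solution at time $s$ already enjoys $\|\d^{6}\d F(s)\|\lesssim s^{-1}\|\d F_0\|$ only for $s$ bounded away from $0$, so one splits $\int_0^{\ep^{3/2}}=\int_0^{\ep^{3/2}/2}+\int_{\ep^{3/2}/2}^{\ep^{3/2}}$ and on the first piece uses that the flow has not yet moved far in $L^2$ (a direct energy estimate on $\|\d_s F\|_{L^2}$, using that $\mathcal L$ is a sixth-order expression in $\d^2 F$ which is uniformly bounded in $H^k_{uloc}$, gives $\|\d_s F(s)\|_{L^2}\lesssim 1$ hence $\|\d F(s)-\d F_0\|_{L^\infty_{uloc}}\lesssim s$, and similarly for the curvature difference in $L^2$). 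This yields $\|\La_\ep-\La_0\|_{L^2}\lesssim \ep^{3/2}$ and $\|\d F_\ep-\d F_0\|_{L^\infty_{uloc}}\lesssim \ep^{3/2}$. \textbf{This is the step I expect to be the main obstacle}, because one must carefully track the parabolic smoothing rates against the time integration and make sure the lower-order terms $\la^4\ast\nab^{A,2}\la$ etc.\ in $\tilde{\mathcal L}$ do not require more regularity than is available uniformly.

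Next I would dispatch the remaining, more routine, parts. For (c) the regularization estimate \eqref{RegEs-HiReg}: since $\la_\ep=\La_\ep\cdot m_\ep$, it follows from \eqref{reg-Fep} and \eqref{reg-mep} that $\|\d^{m}\la_\ep\|_{H^k_{uloc}}\lesssim \ep^{-m/4}\|\La_0\|_{H^k}$, and then one converts the uniform-local extrinsic Sobolev bound into the intrinsic $\sfH^{k+m}$ bound using the formula \eqref{nabl-Form} for $\nab^{A,l}$ together with the already-established control of $g_\ep,\Ga_\ep,A_\ep$; the metric is close to $g_0$ so the volume form is comparable and there is no issue passing between $\sfH$ and $H^{\cdot}_{uloc}$. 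For (b) the energy bound \eqref{RegEst-IM}: this is Lemma~\ref{lem-En}-type reasoning adapted to the sixth-order flow --- differentiate $\|\La\|_{\sfH^k}^2$ in $s$ along \eqref{Eq-MCF}, integrate by parts three times, use the commutator identities \eqref{comm-nab}--\eqref{comm-dt} (with their Willmore-flow analogues, namely $[\d_s,\nab^A]=\nab^A\la\ast\mathcal L+\la\ast\nab^A\mathcal L$) and the Hamilton interpolation inequalities \eqref{HaInterpo}, \eqref{L2interpolation} to absorb all top-order terms, obtaining $\tfrac{d}{ds}\|\La\|_{\sfH^k}^2\lesssim C(M)\|\La\|_{\sfH^k}^2$; integrating over $[0,\ep^{3/2}]$ and using Gr\"onwall gives the factor $(1+C(M)\ep^{3/2})$. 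Finally for (a): the metric comparison \eqref{RegEst-g} follows by integrating \eqref{met-g}, $|\d_s g_{\al\be}|\le 2|\mathcal L||\la|\lesssim C(M)$ in $L^\infty$, over $[0,\ep^{3/2}]$; then the Ricci bound and the volume-of-balls bound follow exactly as in the proofs of \eqref{SECs}/\eqref{Para-vol} --- $|\d_s\sqrt{\det g}|\le \tfrac12|g^{\al\be}\d_s g_{\al\be}|\sqrt{\det g}$ and $|\tfrac{d}{ds}\ln l(\ga,s)|\le \tfrac12\|\d_s g\|_{L^\infty}$ give comparability of volumes and of geodesic distances up to $e^{\pm C(M)\ep^{3/2}}\le e^{\pm C(M)\ep}$, and $|\Ric_\ep|\lesssim\|\la_\ep\|_{L^\infty}^2$ together with the $\sfH^k\hookrightarrow L^\infty$ embedding from Theorem~\ref{thm-SobEm} (valid because \eqref{RegEst-Ricci} is being bootstrapped and \eqref{RegEst-IM} controls $\|\La_\ep\|_{\sfH^k}$) gives $|\Ric_\ep|\le(1+C(M)\ep)C_0$. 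This closes the proposition.
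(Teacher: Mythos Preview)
Your plan for parts (a) and (b) is essentially the paper's approach: a bootstrap on the Ricci/volume bounds to ensure the Sobolev embeddings of Theorem~\ref{thm-SobEm} hold on $\Sigma_s$, then the energy identity \eqref{Eng-Willmore} (i.e.\ $\frac{d}{ds}\|\la\|_{\sfH^k}^2 + \|\nab^{A,3}\la\|_{\sfH^k}^2 \lesssim \|\la\|_{L^\infty}^6\|\la\|_{\sfH^k}^2$) together with Hamilton's interpolation, which after integration over $[0,\ep^{3/2}]$ yields \eqref{RegEst-IM} and closes the bootstrap for \eqref{RegEst-Ricci}, \eqref{RegEst-g}. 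For part (c) you take a slightly different route than the paper: you propose to pull the extrinsic smoothing bounds \eqref{reg-Fep}, \eqref{reg-mep} back to the intrinsic $\sfH^{k+m}$ norm via \eqref{nabl-Form}, whereas the paper reproves the smoothing intrinsically by establishing the weighted estimate $\|s^j\nab^{A,6j}\la\|_{L^\infty\sfH^k\cap L^2\sfH^{k+3}}\lesssim_j\|\la_0\|_{\sfH^k}$ directly from \eqref{Eq-MCF} (see \eqref{pre-2}) and then interpolating. Both work; your route is arguably shorter given that \eqref{reg-Fep}, \eqref{reg-mep} are already proved.

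Where you go wrong is part (d), which you flag as ``the main obstacle'' and for which you propose a delicate time-splitting to balance parabolic smoothing against the $\ep^{3/2}$ from time integration. This is a misreading of the setup: the hypothesis \eqref{RegData} is $\|\La_0\|_{\sfH^k}\leq M$ with $k>\tfrac{d}{2}+5$, and since $d\ge 2$ this gives $k>6$. Hence $\|(\De^A_g)^3\la\|_{L^2}\leq \|\la\|_{\sfH^6}\leq \|\la\|_{\sfH^k}$ is uniformly bounded on $[0,\ep^{3/2}]$ by \eqref{pre-1}; there is no $\ep^{-3/2}$ cost for the six derivatives. The paper's proof of \eqref{RegEst-AppSol} is accordingly a one-liner: $\|\la(\ep^{3/2})-\la(0)\|_{L^2}\leq \ep^{3/2}\|(\De^A_g)^3\la+\tilde{\mathcal L}\|_{L^\infty L^2}\leq \ep^{3/2}(\|\la\|_{\sfH^6}+\|\la\|_{L^\infty}^2\|\la\|_{\sfH^4})\leq C(M)\ep^{3/2}$, and similarly $\|\d F_\ep-\d F_0\|_{L^\infty}\leq \ep^{3/2}\|\nab^A\mathcal L\|_{L^\infty}+\cdots\lesssim \ep^{3/2}\|\la\|_{\sfH^{k}}^2$ since $\nab^A\mathcal L$ contains at most five covariant derivatives of $\la$ and $k_0+5\leq k$ for $k_0$ just above $d/2$. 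The regularity threshold $k>\tfrac{d}{2}+5$ in \eqref{RegData} is chosen precisely so that (d) requires no smoothing at all; your proposed splitting is unnecessary, and the surrounding reasoning (that six ``extra'' derivatives cost $\ep^{-3/2}$) does not apply here.
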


The rest of this subsection is devoted to the proof of the above theorem. 
\medskip

 We remark that, while parts (a,b,c) are covariant, the last part (d) depends on using the flow induced coordinates on $\Sigma_\epsilon$, which in turn depends 
 on the choice of the initial coordinates. Here we assume the improved regularity for the initial coordinates as in \eqref{better-data}, and as a consequence we also obtain the $\partial F_\epsilon$ regularity in the same local charts:
\begin{align*}%\label{RegEst-AppSol2} 
    \|\partial^j\d F_\ep\|_{H^{k+1}_{uloc}}\les \ep^{-\frac{j}4}.
\end{align*}
This is important as we will also use the same coordinates for the Euler step. However, we carefully 
note that we will not directly propagate this higher
regularity across iteration steps; instead, we reinitialize the coordinates to satisfy \eqref{better-data} at the beginning of each step. 

\

First we verify the conditions in \eqref{RegEst-Ricci} about Sobolev embeddings on the time interval $[0,\ep^{3/2}]$. These are proved using bootstrap argument and energy estimates.

\begin{proof}[Proof of \eqref{RegEst-Ricci}]

To prove the bound~\eqref{RegEst-Ricci} it is convenient to make the following bootstrap assumptions on the time interval $J=[0,T]\subset [0,\ep^{3/2}]$, 
\[|\Ric_s(X,X)|\leq (1+C(M)\ep) C_0|X|_{g_s}^2,\quad \inf_{x\in \Sigma_s} {\rm Vol}_{g_s}(B_x(r_0 e^{C(M)s}))\geq e^{-C(M)s}v.\] 
By \eqref{VolComp}, we still can bound the volume of balls
\begin{align*}
    {\rm Vol}_{g_s}(B_x(r_0))&\geq e^{-\sqrt{(d-1)2C_0}r_0 e^{C(M)s}} e^{-C(M)sd} {\rm Vol}_{g_s}(B_x(r_0 e^{C(M)s}))\\
    &\geq \exp\big\{-\sqrt{(d-1)2C_0}r_0 e^{C(M)s}-C(M)s(d+1)\big\} v.
\end{align*}
Hence, the Sobolev embeddings on $\Sigma_s$ still hold. This can be used to prove the energy estimates for $\la$, and then we close the bootstrap argument.
\medskip

\emph{(i)  Energy estimates for $\lambda$.} We claim that
\begin{equation}    \label{pre-1}
        \|\la\|_{L^\infty_s (J;\mathsf H^k)}+\|\la\|_{L^2_s (J;\mathsf H^{k+3})}\les \|\la_0\|_{\mathsf H^k}.
\end{equation}
Applying $\frac{d}{ds}$ to $\|\la\|_{\mathsf H^k}^2$, we have
\begin{align*}
&\ \frac{1}{2}\frac{d}{ds}\|\la\|_{\mathsf H^k}^2=
\frac{1}{2}\frac{d}{ds}\int |\nab^{A,k}\la|_g^2\ d\mu\\
&= \int_{\mathbb R^d} \Re \< \d_s\nabla^{A,k}\la, \overline{\nabla^{A,k}\la}\>_g + (\nabla_s g) (\nabla^{A,k}{\la}, \overline{\nabla^{A,k}\la}) +|\nab^{A,k}\la|^2 g^{\al\be}\d_s g_{\al\be}\ d\mu\\
&\les \int_{\mathbb R^d} \Re \< \d_s\nabla^{A,k}\la, \overline{\nabla^{A,k}\la}\>_g + \mathcal L \ast \la \ast \nabla^{A,k}{\la} \ast \overline{\nabla^{A,k}\la}.
\end{align*}
So by \eqref{Eq-MCF}, the  first term in the right-hand side reduces to
\begin{align*}
&\ \int_{\mathbb R^d} \Re g( \d_s\nabla^{A,k}\la, \overline{\nabla^{A,k}\la})\ d\mu\nonumber\\
=&\ \int_{\mathbb R^d}  \Re g((\De^A_g)^3 \nab^{A,k} \la, \overline{\nabla^{A,k}\la})+ \Re g((\partial_s-(\De^A_g)^3)\nabla^{A,k} \la, \overline{\nabla^{A,k}\la})\ d\mu\\
=&\ \int_{\mathbb R^d} - |\nab^{A,k+3}\la|^2+\Re g(\nabla^{A,k}\tilde{\mathcal L},\overline{\nab^{A,k}\la})+\Re g([\partial_s-(\De^A_g)^3,\nabla^{A,k}] \la, \overline{\nabla^{A,k}\la})\ d\mu\\
\leq &\ -\int_{\mathbb R^d} |\nab^{A,k+3}\la|^2 d\mu+\int \Re g(\nabla^{A,k}\tilde{\mathcal L},\overline{\nab^{A,k}\la})d\mu\\
&\ +\sum_{k_1+k_2+k_3=k+4}\int |\nab^{A,k_1}\la\ast \nab^{A,k_2}\la \ast \nab^{A,k_3}\la\ast \nab^{A,k}\la|\ d\mu\\
&\ +\sum_{k_1+\cdots+k_5=k+2}\int |\nab^{A,k_1}\la\ast \cdots\ast \nab^{A,k_5}\la\ast \nab^{A,k}\la|\ d\mu\,.
\end{align*}
We bound the worst term by
\begin{align*}
    &\ \int (\nab^A)^{k+4}\la\ast \la^2 \ast (\nab^A)^k \la + \nab^{A,k+3}\la\ast \nab^A\la\ast\la\ast \nab^{A,k}\la d\mu\\
    =&\ \int (\nab^A)^{k+3}\la\ast \big(\nab^A \la\ast \la \ast (\nab^A)^k \la+\la^2\ast (\nab^A)^{k+1}\la\big) d\mu\\
    \les &\ \|\nab^{A,k+3}\la\|_{L^2}\big( \|\nab^A\la\|_{L^{2(k+1)}}\|\la\|_{L^\infty}\|\nab^{A,k}\la\|_{L^{\frac{2(k+1)}{k}}}+\|\la\|_{L^\infty}^2\|\la\|_{H^k}^{2/3}\|\nab^{A,3}\la\|_{H^k}^{1/3}   \big)\\
    \les &\ \|\nab^{A,k+3}\la\|_{L^2}\big( \|\la\|_{L^\infty}^{\frac{k}{k+1}}\|\nab^{A,k+1}\la\|_{L^2}^{\frac{1}{k+1}}\|\la\|_{L^\infty}\|\la\|_{L^\infty}^{\frac{1}{k+1}}\|\nab^{A,k+1}\la\|_{L^2}^{\frac{k}{k+1}}\\
    &\ +\|\la\|_{L^\infty}^2\|\la\|_{H^k}^{2/3}\|\nab^{A,3}\la\|_{H^k}^{1/3}   \big)\\
    \leq &\ \de \|\nab^{A,3}\la\|_{H^k}^2+C_\de \|\la\|_{L^\infty}^6\|\la\|_{H^k}^2\,.
\end{align*}
For the other terms, by interpolation inequalities \eqref{HaInterpo} we have
\begin{align*}
	&\ \sum_{k_1+k_2+k_3=k+4;\max k_i\leq k+2}\int |\nab^{A,k_1}\la\ast \nab^{A,k_2}\la \ast \nab^{A,k_3}\la\ast \nab^{A,k}\la|\ d\mu\\
	\les &\ \|\nab^{A,k_1}\la\|_{L^{\frac{2(k+2)}{k_1}}}\|\nab^{A,k_2}\la\|_{L^{\frac{2(k+2)}{k_2}}}\|\nab^{A,k_3}\la\|_{L^{\frac{2(k+2)}{k_3}}}\|\nab^{A,k}\la\|_{L^{\frac{2(k+2)}{k}}}\\
	\les &\ \|\la\|_{L^\infty}^{2}\|\nab^{A,k+2}\la\|_{L^2}^2\\
 	\les &\ \|\la\|_{L^\infty}^{2}\|\nab^{A,k}\la\|_{L^2}^{2/3}\|\nab^{A,k+3}\la\|_{L^2}^{4/3}\\
	\leq &\ \de \|\nab^{A,3}\la\|_{H^k}^2+C_\de \|\la\|_{L^\infty}^{6}\|\la\|_{H^k}^2\,.
\end{align*}
and 
\begin{align*}
\sum_{k_1+\cdots+k_5=k+2}&\int |\nab^{A,k_1}\la\ast \cdots\ast \nab^{A,k_5}\la\ast \nab^{A,k}\la|\ d\mu
\les \prod_{j=1}^5 \|\nab^{A,k_j}\la\|_{L^{\frac{2(k+2)}{k_j}}}\|\la\|_{H^k}\\
&\les  \|\la\|_{L^\infty}^4 \|\nab^{A,k+2}\la\|_{L^2}\|\la\|_{H^k}
\les\|\la\|_{L^\infty}^4\|\nab^{A,k+3}\la\|_{L^2}^{2/3} \|\la\|_{H^k}^{4/3}\\
&\leq\de\|\nab^{A,k+3}\la\|_{L^2}^2+C_\de\|\la\|_{L^\infty}^6 \|\la\|_{H^k}^2\,.
\end{align*}
Hence, by Sobolev embedding we obtain the energy estimates
\begin{align}  \label{Eng-Willmore}
    \frac{1}{2}\frac{d}{ds}\|\la\|_{\mathsf H^k}^2+\|\nab^{A,3}\la\|_{\mathsf{H}^k}^2
    \leq &\ C_\de \|\la\|_{L^\infty}^6\|\la\|_{H^k}^2\les \|\la\|_{\mathsf H^{k_0}}^6 \|\la\|_{\mathsf H^k}^2.
\end{align}
Then we obtain the energy bound \eqref{pre-1}.

\medskip

\emph{(ii) Prove the improved bound for Ricci curvature.} For any $X$ and $s\in[0,\ep^{3/2}]$, we have
\begin{align}   \label{dsg}
	\Big|\frac{d}{ds}|X|_{g_s}^2\Big|=|\d_s g_{s,\al\be}X^\al X^\be|=|2\Re(\mathcal L \bar{\la}_{\al\be})X^\al X^\be|\leq C\|\la\|_{\sfH^{k_0+4}}^4 |X|^2_{g_s}.
\end{align}
This implies the equivalence $e^{-C(M)\ep^{3/2}} |X|_{g(0)}^2\leq |X|_{g(s)}^2\leq e^{C(M)\ep^{3/2}} |X|_{g(0)}^2$. Then we obtain
\begin{align*}
    |\Ric_{\al\be}(t)X^\al X^\be|&\leq |\Ric_{\al\be}(0)X^\al X^\be|+|\int_0^t \d_s \Ric_{\al\be}(s) X^\al X^\be ds|\\
    &\leq C_0 |X|_{g(0)}^2+t \|\la\|_{L^\infty} \|\d_s\la\|_{L^\infty} |X|_{g(s)}^2\\
    &\leq e^{C(M)\ep^{3/2}} C_0 |X|_{g(t)}^2+t C|X|_{g(t)}^2\\
    &\leq (1+\frac{C(M)}{2}\ep^{3/2})C_0|X|_{g(t)}^2.
\end{align*}

\emph{(iii) Prove the improved bound for the volume of balls.}

To bound the volume of a ball from below, we begin with the following two claims: 
\begin{align}  \label{vol}
    &e^{-sC(M)}dvol_{g(0)} \leq dvol_{g(s)}  \leq e^{sC(M)}dvol_{g(0)},\\ \label{balls}
    &B_x(r_0,\Sigma_0)\subset B_x(r_0 e^{C(M)s},\Sigma_s).
\end{align}

For the first claim \eqref{vol}, from Sobolev embedding and energy estimates, we know that
\begin{align*}
    |\d_s \sqrt{\det g}|=|-2\Re(\mathcal L\bar{\psi}) \sqrt{\det g}| \leq C(M) \sqrt{\det g},
\end{align*}
which implies that
\begin{align*}
    \sqrt{\det g(0)} e^{-sC(M)}\leq \sqrt{\det g(s)}\leq \sqrt{\det g(0)} e^{sC(M)}.
\end{align*}
Hence, by the volume form $dvol_{g(s)}=\sqrt{\det g(s)} dx$ we obtain \eqref{vol}.

We then prove the second claim \eqref{balls}. For any two points $x$ and $y$ in $\Sigma_0$, there exists a geodesic $\gamma:[0,1]\rightarrow \Sigma_0$ such that $\ga(0)=x,\ \ga(1)=y$. Then
\begin{align*}
    d(x,y)=l(\ga)=\int_0^1 |\dot{\gamma}(\tau)| d\tau=\int_0^1 \Big(g_{\al\be}\frac{\d \ga_\al}{\d\tau} \frac{\d \ga_\be}{\d\tau}\Big)^{1/2} d\tau.
\end{align*}
Since the metric $g_{\al\be}$ evolves along the mean curvature flow, then length of curve $\ga$ also change. Hence we have
\begin{align*}
    \frac{d}{ds}l(\ga,s)=\int_0^1 \frac{1}{2|\dot\ga|}\Big(\d_s g_{\al\be}\frac{\d \ga_\al}{\d\tau} \frac{\d \ga_\be}{\d\tau}\Big) d\tau=-\int_0^1 \frac{1}{|\dot\ga|}\Big(\Re(\mathcal{L}\bar{\la}_{\al\be})\frac{\d \ga_\al}{\d\tau} \frac{\d \ga_\be}{\d\tau}\Big) d\tau.
\end{align*}
which yields
\begin{equation*}
    \big|\frac{d}{ds}l(\ga,s) \big|\leq \|\mathcal L\|_{L^\infty}\|\la\|_{L^\infty}\int_0^1 |\dot \ga|^{-1} |\dot\ga|^2 d\tau \leq C(M) l(\ga).
\end{equation*}
Hence, we obtain that the distance between $x$ and $y$ at $s\in[0,\ep^{3/2}]$ have the bound
\begin{equation*}
    d_s(x,y)\leq l(\ga,s) \leq l(\ga,0) e^{C(M)s}=d_0(x,y) e^{C(M)s},
\end{equation*}
which implies the claim \eqref{balls}.

With the above two claims in hand,  we  obtain
\begin{equation*}
\begin{aligned}
    &{\rm Vol}_{g(s)}(B_x(r_0 e^{C(M)s}))= \int_{B_x(e^{C(M)s},s)} 1\  dvol_{g(s)}\geq \int_{B_x(r_0,0)}   e^{-sC(M)}dvol_{g(0)}\\
    =&\    e^{-sC(M)} {\rm Vol}_{g(0)}(B_x(r_0,0))
    \geq  e^{-sC(M)} v,
\end{aligned}
\end{equation*}
which for $s=\ep^{3/2}$ gives 
\begin{align*}
    {\rm Vol}_{g(\ep^{3/2})}(B_x(r_0 ))\geq (1- C(M)\ep^{3/2})v.
\end{align*}

Therefore, the Ricci curvature and volume of ball admit the improved bounds. This closes the bootstap argument, and hence the bounds in \eqref{RegEst-Ricci} are obtained.
\end{proof}

\begin{proof}[Proof of \eqref{RegEst-IM} and \eqref{RegEst-g}]
From \eqref{Eng-Willmore} we have
\begin{align*}
   \frac{d}{ds}\|\la\|_{\sfH^k}\leq C\|\la\|_{L^\infty}^6\|\la\|_{\sfH^k}\leq C(M) \|\la_0\|_{\sfH^k}.
\end{align*}
Integrating over the time interval $J=[0,\ep^{3/2}]$, we obtain that for any $s\in J$
\begin{align*}
    \|\la(\ep^{3/2})\|_{\mathsf{H}^k}\leq  (1+C(M)\ep^{3/2})\|\la(0)\|_{\mathsf{H}^k},
\end{align*}
where $\ep>0$ depending on initial data $\la(0)$ is sufficiently small. Moreover, 
the estimate \eqref{dsg} implies that
\begin{align*}
e^{-CM^4 s}|X|_{g(0)}^2\leq |X|_{g_s}^2\leq e^{CM^4 s}|X|_{g(0)}^2.
\end{align*}
Thus the bound \eqref{RegEst-g} is obtained when $s=\ep^{3/2}$.
\end{proof}

\begin{proof}[Proof of \eqref{RegEs-HiReg}]
First, we prove that for $j \geq 0$ we have the estimate
\begin{equation} \label{pre-2}
\|s^j\nab^{A,6j}\la\|_{L^\infty_s (J;\sfH^k)}+\|s^j\nab^{A,6j}\la\|_{L^2_s (J;\sfH^{k+3})}
\les_j \|\la_0\|_{\sfH^k},
\end{equation}  
which for $j=0$ is nothing but \eqref{pre-1}.
To prove \eqref{pre-2} for $j  \geq 1$, we need the commutator
\begin{align*}
&\ [\d_s,(-\De^A_g)^{3j}]\la= \sum_{k=0}^{6j-1} (\nab^A_g)^k[\d_s,\nab^A_g](\nab^A_g)^{6j-1-k}\la\\
=&\ \sum_{k=0}^{6j-1} (\nab^A_g)^k\big((\nab^A\la\ast \mathcal L+\la\ast \nab^A \mathcal L)\ast (\nab^A_g)^{6j-1-k}\la\big)\\
=&\ \la^2\ast (\nab^A)^{6j+4}\la+\sum_{k_1+k_2+k_3=6j+4;k_i\leq 6j+3} (\nab^A_g)^{k_1} \la \ast (\nab^A)^{k_2}\la\ast (\nab^A)^{k_3}\la\\
&\ +\sum_{k_1+\cdots+k_5=6j+2} (\nab^A_g)^{k_1} \la \ast\cdots\ast (\nab^A)^{k_5}\la
\end{align*}
Then we obtain
\begin{align}\nonumber
    &\ (\d_s-(\De^A_g)^3) (s^j\nab^{A,6j}\la)= [\d_s-(\De^A_g)^3,s^j\nab^{A,6j}]\la+s^j\nab^{A,6j} \tilde{\mathcal L}\\\nonumber
    =&\ js^{j-1}\nab^{A,6j}\la+s^j [\d_s-(\De^A_g)^3,\nab^{A,6j}]\la+s^j\nab^{A,6j} \tilde{\mathcal L}\\ \label{NonL}
    =&\ js^{j-1}\nab^{A,6j}\la+s^j\la^2\ast (\nab^A)^{6j+4}\la\\\nonumber
    &\ +s^j\sum_{k_1+k_2+k_3=6j+4;k_i\leq 6j+3} (\nab^A_g)^{k_1} \la \ast (\nab^A)^{k_2}\la\ast (\nab^A)^{k_3}\la\\\nonumber
    &\ +s^j\sum_{k_1+\cdots+k_5=6j+2} (\nab^A_g)^{k_1} \la \ast\cdots\ast (\nab^A)^{k_5}\la\,.
\end{align}
For a small number $\de>0$ to be chosen later, by \eqref{L2interpolation} we estimate the first term on the right as follows:
\begin{align*}
    &\ \|js^{j-1}\nab^{A,6j} \la\|_{L^2(J;\sfH^{k-3})}\les  j\|s^{j-1}\nab^{A,6(j-1)} \la\|_{L^2(J;\sfH^{k+3})} \\
    \les&\  j \|\la\|_{L^2 (J;\sfH^{k+3})}^{\frac{1}{j}}\|s^{j}\nab^{A,6j} \la\|_{L^2(J;\sfH^{k+3})}^{\frac{j-1}{j}}\\
    \les&\ \de^{-(j-1)}\|\la\|_{L^2(J;\sfH^{k+3})}+(j-1)\de \|s^{j}\nab^{A,6j} \la\|_{L^2(J;\sfH^{k+3})}.
\end{align*}
The other three terms in \eqref{NonL} are handled similarly to the nonlinear estimates in the proof of energy estimate \eqref{pre-1}. We apply \eqref{pre-1} to yield
\begin{align*}
    &\ \|s^j\nab^{A,6j}\la\|_{L^\infty_s(J;\sfH^k)}+\|s^{j}\nab^{A,6j} \la\|_{L^2(J;\sfH^{k+3})}\\
    \leq&\  C\de^{-(j-1)}\|\la\|_{L^2(J;\sfH^{k+3})}+C(j-1)\de \|s^{j}\nab^{A,6j} \la\|_{L^2(J;\sfH^{k+3})}\\
    &\ +C_\de \ep^{3/2} \|\la\|_{L^\infty L^\infty}^3\|s^j\nab^{A,6j}\la\|_{L^\infty_s \sfH^k}.
\end{align*}
If $\ep$ is small and $\la\in L^\infty L^\infty$ are finite, then the last term in the above is also absorbed. We obtain the bound \eqref{pre-2}.

Next, we turn to the proof of estimate \eqref{RegEs-HiReg}. By \eqref{pre-2}, we have
\begin{align*}
    \|s^j\nab^{A,6j}\la\|_{L^\infty_s(J;\sfH^k)}\leq&\ C\|\la_0\|_{\sfH^k}
\end{align*}
This implies that
\begin{align*}
    \|\nab^{A,6j}\la_\ep\|_{\sfH^k}\leq &\ C\ep^{-3j/2}\|\la_0\|_{\sfH^k}.
\end{align*}
By the interpolation inequality \eqref{L2interpolation} we obtain
\begin{align*}
    \|\nab^{A,m}\la\|_{\sfH^k}\les \|\la\|_{\sfH^k}^{\frac{6j-m}{6j}}\|\nab^{A,6j}\la_\ep\|_{\sfH^k}^{\frac{m}{6j}}\leq &\ C\ep^{-m/4}\|\la_0\|_{\sfH^k}.
\end{align*}
Hence the bound \eqref{RegEs-HiReg} follows.
\end{proof}

\begin{proof}[Proof of \eqref{RegEst-AppSol}]
For the first estimate in \eqref{RegEst-AppSol}, by \eqref{RegEst-g}, \eqref{Eq-MCF} and Sobolev embeddings we have
\begin{align*}
\|\la(\ep^{3/2})-\la(0)\|_{L^2}=&\ \|\int_0^{\ep^{3/2}} \d_s \la(s)ds\|_{L^2}\leq \ep^{3/2} \|(\De^A_g)^3 \la+\tilde{\mathcal L}\|_{L^\infty L^2}\\
\leq&\  \ep^{3/2}(\|\la\|_{\sfH^6}+\|\la\|_{L^\infty}^2\|\la\|_{\sfH^4})
\leq  C(M)\ep^{3/2}.
\end{align*}
By the equivalence \eqref{RegEst-g}, we easily have
\begin{align*}
    &\ \|\d_\al F_\ep-\d_\al F_0\|_{L^\infty}\leq \int_0^{\ep^{3/2}} \|\d_\al \d_s F\|_{L^\infty} ds\leq \int_0^{\ep^{3/2}} \|\nab^A_\al \mathcal L \bar{m}-\mathcal L \bar{\la}_\al^\ga \d_\ga F\|_{L^\infty} ds\\
    \les &\ \ep^{3/2}(\|\nab^A\mathcal L\|_{L^\infty}+\|\mathcal L\|_{L^\infty}\|\la\|_{L^\infty})\les \ep^{3/2} \|\la\|_{\sfH^{k_0+5}}\|\la\|_{\sfH^{k_0+3}}^3\les C(M)\ep^{3/2}.
\end{align*}
\end{proof}

\subsection{The Euler iteration}
We recall the formula \eqref{sys-cpf} derived from the original equation \eqref{Main-Sys},
\begin{equation*}          \label{sys-cpfre}
\d_t F=J(F)\mathbf{H}(F)=-\Im (\psi\bar{m})\, .
\end{equation*}
We will use this formula to construct the approximate solution $\Sigo=\Fo(\R^d)$ starting at the regularized manifold $\Sigma_\ep=F_\ep(\R^d)$.

Since the bound for second fundamental form is independent on the coordinates and gauge, we could work in a special gauge with the advection field $V=0$. Then the immersed submanifold and the associated immersed map at time $t=\ep$ are given by 
\begin{align}   \label{EulerIte}
\Sigo=\Fo(\R^d)\,,\quad\qquad  \Fo=F_\ep-\ep \Im (\psi_\ep \overline{m_\ep})\,,
\end{align}
where $F_\ep,\ \psi_\ep$ and $m_\ep$ are given in \eqref{F-ep} and \eqref{frame-reg} with respect to regularized manifold.
On the manifold $\Sigo$, we denote the metric as
\begin{align*}
	\go_{\al\be}=\<\d_\al \Fo,\d_\be \Fo\>\,,
\end{align*}
the normal vectors and the associated metric on normal bundle are denoted as
\begin{align*}
(\nu^{\mathbf{1}}_1,\ \nu^\mathbf{1}_2)\,,\quad \quad \go_{ij}=\<\nuo_i,\nuo_j\>\,,
\end{align*}
we denote the second fundamental form as
\begin{align*}
	\Lamo_{\al\be}=\Lamo(e_\al,e_\be)\,,\qquad \Lamo_{\al\be,j}=\<\Lamo(e_\al,e_\be),\nuo_j\>=\<\d^2_{\al\be}\Fo,\nuo_j\>\,.
\end{align*}

Compared with the initial manifold $\Sigma_0$, we have the properties.
\begin{prop} \label{EuIt-p}
For the approximate submanifold $\Sigo=\Fo(\R^d)$ given by \eqref{EulerIte}, we have the following properties:

(a) Ricci curvature, volume of balls and ellipticity: :
\begin{gather}   \label{AppM-Ricci}
	|\Ric(\Sigo)|\leq (1+C(M)\ep)C_0\,,\qquad \inf_{x\in \Sigo} {\rm Vol}(B_x(e^{C(M)\ep},\Sigo))\geq e^{-C(M)\ep}v\,,\\  \label{Ellpt-g}
	(1-C(M)\ep) g_0 \leq \go \leq (1+C(M)\ep) g_0. 
\end{gather}

(b) Norm bound:
\begin{equation}   \label{UB-EuIt}
\|\Lamo\|_{\sfH^k(\Sigo)}^2\leq (1+C(M)\ep)\|\La_0\|_{\sfH^k(\Sigma_0)}^2.
\end{equation}

(c) Approximate solution:
\begin{equation}   \label{Appsol-1}
\|\Fo-F_0+\ep \Im(\psi_0\bar{m}_0)\|_{L^2_{uloc}}\les \ep^{3/2}\,,\qquad 
\|\d \Fo-\d F_0\|_{L^\infty_{uloc}} \les \ep\,.
\end{equation}

\end{prop}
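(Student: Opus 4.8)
The plan is to derive all four conclusions in Proposition~\ref{EuIt-p} from the corresponding statements for the regularized manifold $\Sigma_\ep$ in Proposition~\ref{Reg-Prop}, treating the single Euler step \eqref{EulerIte} as a small $O(\ep)$ perturbation in the strong topology $H^{k+1}_{uloc}$. The basic observation is that $\Fo - F_\ep = -\ep\,\Im(\psi_\ep\bar m_\ep)$, and by the regularization bounds \eqref{RegEs-HiReg} together with \eqref{reg-mep}, the profile $\Im(\psi_\ep\bar m_\ep)$ obeys $\|\d^j \Im(\psi_\ep\bar m_\ep)\|_{H^{k+1}_{uloc}} \lesssim_M \ep^{-j/4}$, since $\psi_\ep = \tr\la_\ep$ and $\la_\ep,m_\ep$ each carry this regularization scaling. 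Consequently $\|\d(\Fo - F_\ep)\|_{H^{k+1}_{uloc}} \lesssim_M \ep$, $\|\d^2(\Fo - F_\ep)\|_{H^{k}_{uloc}} \lesssim_M \ep^{3/4}$, and more generally $\|\d^{j}(\Fo - F_\ep)\|_{H^{k+1}_{uloc}} \lesssim_M \ep^{1-j/4}$, which is $O(\ep^{3/4})$ for $j\le 1$ at the level of two derivatives of $F$. These are the only inputs needed.

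For part (c), the first estimate in \eqref{Appsol-1} is essentially a triangle inequality: write $\Fo - F_0 + \ep\Im(\psi_0\bar m_0) = (F_\ep - F_0) + \ep(\Im(\psi_0\bar m_0) - \Im(\psi_\ep \bar m_\ep))$. The first term is $O(\ep^{3/2})$ in $L^2_{uloc}$ by \eqref{RegEst-AppSol}. For the second, I would use the difference bound $\|\la_\ep - \la_0\|_{L^2} \lesssim \ep^{3/2}$ from \eqref{RegEst-AppSol}, together with the control $\|m_\ep - m_0\|_{L^\infty_{uloc}} \lesssim \ep^{3/2}$ coming from $\|\d F_\ep - \d F_0\|_{L^\infty_{uloc}}\le \ep^{3/2}$ and the explicit construction of the normal frame from $\d F$ (as in the proof of \eqref{reg-mep}); hence $\ep\|\Im(\psi_0\bar m_0) - \Im(\psi_\ep\bar m_\ep)\|_{L^2_{uloc}} \lesssim \ep\cdot\ep^{3/2}\ll \ep^{3/2}$. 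The second estimate of \eqref{Appsol-1}, $\|\d\Fo - \d F_0\|_{L^\infty_{uloc}} \lesssim \ep$, follows from $\|\d\Fo - \d F_\ep\|_{L^\infty_{uloc}} \lesssim_M \ep$ (the perturbation bound with $j=1$ and Sobolev embedding $H^{k+1}\subset L^\infty$ since $k>d/2+5$) plus $\|\d F_\ep - \d F_0\|_{L^\infty_{uloc}}\le\ep^{3/2}$ from \eqref{RegEst-AppSol}.

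For part (a), the metric perturbation $\go_{\al\be} - g_{\ep,\al\be} = \langle\d_\al(\Fo-F_\ep),\d_\be F_\ep\rangle + \langle\d_\al F_\ep,\d_\be(\Fo-F_\ep)\rangle + \langle\d_\al(\Fo-F_\ep),\d_\be(\Fo-F_\ep)\rangle$ is $O(\ep)$ in $L^\infty_{uloc}$; combining with \eqref{RegEst-g} and $c_0 I\le g_0\le c_0^{-1}I$ gives \eqref{Ellpt-g}. For the Ricci and volume bounds \eqref{AppM-Ricci}, I would express the Ricci curvature of $\Sigo$ via \eqref{R-la} (schematically $\Ric\approx\la\ast\la$ in the chosen gauge), control $\|\lao\|_{L^\infty}$ through $\|\d^2\Fo\|_{L^\infty_{uloc}}$ — which differs from $\|\d^2 F_\ep\|_{L^\infty_{uloc}}$ by $O(\ep^{3/4})$ — and then estimate $|\Ric(\Sigo)| \le |\Ric(\Sigma_\ep)| + C(M)\ep$, so that \eqref{RegEst-Ricci} upgrades to \eqref{AppM-Ricci} (possibly after adjusting the constant $C(M)$, since $\ep^{3/4}\gg\ep$ — here one must be slightly careful, but $\ep^{3/4}\le C\ep$ is false, so the correct statement is that the Ricci bound degrades by $O(\ep^{3/4})$; one absorbs this by noting the iteration is run with the coarser $O(\ep)$ bookkeeping and choosing the accumulated constant accordingly, or more cleanly by observing that the $\la\ast\la$ structure together with $\|\la_\ep\|_{L^\infty}\lesssim M$ makes the difference $O(\ep)\|\la\|_{L^\infty}^2 + \ep^{3/4}\cdot(\text{lower order})$, and the genuinely $\ep^{3/4}$ piece is multiplied by an extra power of $\ep^{1/4}$ from the frame regularization, restoring $O(\ep)$). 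The volume of balls then follows exactly as in the proof of \eqref{RegEst-Ricci} and \eqref{SECs}: the metric equivalence $(1-C(M)\ep)g_\ep\le\go\le(1+C(M)\ep)g_\ep$ controls the distortion of volume forms, $dvol_{\go}\ge e^{-C(M)\ep}dvol_{g_\ep}$, and of distances, $B_x(r,\Sigma_\ep)\subset B_x(re^{C(M)\ep},\Sigo)$, whence $\mathrm{Vol}_{\go}(B_x(e^{C(M)\ep})) \ge e^{-C(M)\ep}\mathrm{Vol}_{g_\ep}(B_x(1)) \ge e^{-C(M)\ep}v$ after composing with \eqref{RegEst-Ricci}.

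For part (b), the energy bound \eqref{UB-EuIt}, I would proceed through the second fundamental form directly: write $\Lamo = \Lambda_\ep + (\Lamo - \Lambda_\ep)$ and estimate the difference in the intrinsic space $\sfH^k(\Sigo)$ — or, since $\sfH^k$ depends on the metric, first transport everything to a fixed reference metric using the $O(\ep)$ metric equivalence \eqref{Ellpt-g}, which changes the norm only by a factor $1+C(M)\ep$. The difference $\d^2_{\al\be}\Fo - \d^2_{\al\be}F_\ep = -\ep\,\d^2_{\al\be}\Im(\psi_\ep\bar m_\ep)$ is bounded in $H^{k-1}_{uloc}$ by $\ep\cdot\ep^{-1/2}\lesssim\ep^{1/2}$, so $\|\Lamo - \Lambda_\ep\|_{\sfH^k} \lesssim_M \ep^{1/2}$; but this alone only gives $\|\Lamo\|_{\sfH^k}^2 \le \|\Lambda_\ep\|_{\sfH^k}^2 + C(M)\ep^{1/2}$, which is too weak. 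The right approach, which is the main obstacle, is to avoid the crude triangle inequality and instead observe that the Euler step \eqref{EulerIte} is the time-$\ep$ Euler approximant of the (SMCF) flow, for which the exact flow satisfies the sharp energy identity $\frac{d}{dt}\|\la\|_{\sfH^k}^2 \lesssim \||\la|_g\|_{L^\infty}^2\|\la\|_{\sfH^k}^2$ from Lemma~\ref{lem-En}. One writes $\Lamo$ in terms of $\Lambda_\ep$ and the evolution equation for $\la$ from \eqref{Sch-la}: $\la^\mathbf{1} = \la_\ep + \ep\,\d_t\la|_{t=\ep,\,\text{at }\Sigma_\ep} + O(\ep^2\cdot\text{higher derivatives})$, and the error term, although it loses derivatives, is controlled because the regularization \eqref{RegEs-HiReg} provides $\|\la_\ep\|_{\sfH^{k+m}}\lesssim\ep^{-m/4}$ — exactly the scale $\ep^{-1/4}$ built into $\Sigma_\ep$ — so that the $O(\ep^2)$ Taylor remainder, which involves at most four extra derivatives of $\la$ (the Schrödinger operator is second order, squared gives the worst term in the remainder), is $\ep^2\cdot\ep^{-1}\|\la\|_{\sfH^k}=\ep\|\la\|_{\sfH^k}$. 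Then computing $\|\la^\mathbf{1}\|_{\sfH^k}^2 = \|\la_\ep\|_{\sfH^k}^2 + 2\ep\,\mathrm{Re}\langle\la_\ep,\d_t\la\rangle_{\sfH^k} + O(\ep^2\cdot\text{controlled}) + O(\ep)\|\la\|_{\sfH^k}^2$ and recognizing that $2\mathrm{Re}\langle\la,\d_t\la\rangle_{\sfH^k}$ is precisely the quantity estimated by $C(M)\|\la\|_{\sfH^k}^2$ in Lemma~\ref{lem-En} (this is where one uses that the $\sfH^k$ norm is gauge-independent and the favorable commutator structure \eqref{comm-nab}, \eqref{comm-dt}), one obtains $\|\la^\mathbf{1}\|_{\sfH^k}^2 \le (1 + C(M)\ep)\|\la_\ep\|_{\sfH^k}^2$. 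Finally chaining with \eqref{RegEst-IM} gives \eqref{UB-EuIt}. The delicate point throughout part (b) is the bookkeeping of the derivative loss in the Euler remainder against the $\ep^{-1/4}$ regularization scale — one must verify that the truncation frequency $\ep^{-1/4}$ is chosen precisely so that the second-order Schrödinger operator applied twice (the leading remainder term $\sim\ep^2(\De_g^A)^2\la$ in $\sfH^k$) costs exactly $\ep^2\cdot\ep^{-1}=\ep$, matching the target $(1+C(M)\ep)$ loss; this is the content of the remark following the overview that ``a naive implementation of Euler's method loses derivatives.''
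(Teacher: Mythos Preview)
Your treatment of part (c) and of the ellipticity and volume bounds in part (a) is correct and matches the paper's argument. Two points deserve comment.

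\textbf{The $\ep^{3/4}$ issue in the Ricci bound is a red herring.} You invoke the regularization scaling $\|\d^j(\psi_\ep m_\ep)\|_{H^{k+1}_{uloc}}\lesssim\ep^{-j/4}$ and then worry that $\|\la^{\mathbf 1}-\la_\ep\|_{L^\infty}$ is only $O(\ep^{3/4})$. But for $L^\infty$ control you do not need the full $H^{k+1}$ norm: since $k>d/2+5$, one may take $k_0$ just above $d/2$ and use Sobolev directly, giving $\|(\nab^{A_\ep})^2\psi_\ep\|_{L^\infty}\lesssim\|\la_\ep\|_{\sfH^{k_0+2}}\le\|\la_\ep\|_{\sfH^k}\lesssim M$ with no $\ep$-loss. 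Hence $\|\Lamo-\Lambda_\ep\|_{L^\infty}\lesssim\ep M^3$ and the Ricci increment is genuinely $O(\ep)$. The paper organizes this via the linear interpolation $F_s=F_\ep-s\,\Im(\psi_\ep\bar m_\ep)$, $s\in[0,\ep]$, and a direct bound $\|\d_s\Lambda(s)\|_{L^\infty}\lesssim M^3$; integrating in $s$ gives the $O(\ep)$ increment cleanly.

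\textbf{Part (b) is handled by a genuinely different mechanism in the paper.} Rather than a discrete Taylor expansion of $\la^{\mathbf 1}$ about $\la_\ep$, the paper again uses the linear flow $F_s$ and observes that it satisfies a \emph{perturbed SMCF}: $\d_sF_s=-\Im(\psi_s\bar m_s)+G_s$ with source $G_s=\int_0^s\d_\tau\Im(\psi_\tau\bar m_\tau)\,d\tau$. One then derives the $\la$-equation along this flow (the Schr\"odinger equation \eqref{Sch-la} plus source terms $\langle\d G_s,m\rangle$, $\langle\d G_s,\d F\rangle$) and runs the energy estimate of Lemma~\ref{lem-En} with these sources present, obtaining $\frac{d}{ds}\|\la\|_{\sfH^k}^2\lesssim C(M)\|\la\|_{\sfH^k}^2+C(M)\|\la\|_{\sfH^k}$, where the source is bounded using $\|\d G_s\|_{H^{k+1}}\lesssim_M 1$ (here the regularization scaling is used, with $j=4$ giving the borderline exponent). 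This approach automatically accounts for the $s$-dependence of the metric, connection, and volume form in the $\sfH^k$ norm, which is the main technical annoyance your Taylor expansion must handle by hand: your identity $\|\la^{\mathbf 1}\|_{\sfH^k}^2=\|\la_\ep\|_{\sfH^k}^2+2\ep\,\Re\langle\la_\ep,\d_t\la\rangle_{\sfH^k}+\cdots$ compares two norms defined with different metrics $g_\ep$, $\go$ and different connections, and the cross term $2\Re\langle\la_\ep,\d_t\la\rangle_{\sfH^k}$ is not literally the quantity bounded in Lemma~\ref{lem-En} (that lemma bounds the full $\frac{d}{dt}\|\la\|_{\sfH^k}^2$, which also contains the metric/connection variation pieces). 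Your approach can be made to work after tracking these corrections, but the paper's ``SMCF plus source'' formulation is both shorter and structurally cleaner.
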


\medskip 
As before we remark that parts (a),(b) are covariant.
On the other hand part (c) depends on the coordinate 
flow map between $\Sigma_0$ and $\Sigo$, though  not
the chosen coordinates on $\Sigma_0$.

Before proceeding to the proof of the Proposition, we  begin by computing some geometric variables on $\Sigo$, which are the perturbations of those on $\Sigma_\ep$.
We recall the structure equations on $\Sigma_\ep$
\begin{align*}
&\d^2_{\al\be}F_\ep=(\Ga_\ep)_{\al\be}^\mu \d_\mu F_\ep+\Re\big((\la_\ep)_{\al\be}\overline{m_\ep}\big)\,,\\
&\d^{A_\ep}_\al m_\ep=-(\la_\ep)_{\al}^{\si} \d_\si F_\ep\,.
\end{align*}

\emph{i) Metrics and normal vectors.} Applying $\d_\al$ to the map $\Fo$, we have 
\begin{align}   \label{dF1}
\d_\al \Fo=\d_\al F_\ep+\ep\Im\big(\psi_\ep \overline{(\la_\ep)_{\al}^{\mu}}\big)\d_\mu F_\ep-\ep \Im(\d^{A_\ep}_\al \psi_\ep \overline{m_\ep})\,.
\end{align}
Then the metric on the manifold $\Sigo=\Fo(\R^d)$ is given by
\begin{align*}%\label{go}
&\go_{\al\be}=g_{\ep,\al\be}+2\ep\Im\big(\psi_\ep\overline{(\la_\ep)_{\al\be}}\big)+\ep^2 \Im\big(\psi_\ep\overline{(\la_\ep)_{\al}^\si}\big)\Im\big(\psi_\ep\overline{(\la_\ep)_{\be\si}}\big)+\ep^2 \Re(\d^{A_\ep}_\al\psi_\ep \overline{\d^{A_\ep}_\be\psi_\ep}).
\end{align*}
Since the manifold $\Sigo$ is a perturbation of the regularized manifold $\Sigma_\ep$, we would like to construct the normal vectors $(\nuoo,\nut)$ on $\Sigo$ by the unit normal vectors $(\nu^\ep_1,\nu^\ep_2)$ on $\Sigma_\ep$. By \eqref{dF1} and $\nu^\ep_j \perp \d_\al F_\ep$, the projections of $\nu^\ep_1,\ \nu^\ep_2$ on tangent vectors $\d_\al \Fo$ are given by
\begin{align*}
     \<\nu^\ep_1, \d_\al \Fo\>
     =-\ep \Im (\d^{A_\ep}_\al\psi_\ep),\qquad 
     \<\nu^\ep_2, \d_\al\Fo\>
     =\ep \Re (\d^{A_\ep}_\al\psi_\ep).
\end{align*}
Then the normal vectors $\nuoo$ and $\nut$ on $\Sigo$ can be constructed as
\begin{align*}\label{nuo}
	\nuoo=\nu^\ep_1+ \ep g^{\mathbf{1},\al\be} \Im (\d^{A_\ep}_\al\psi_\ep)\d_\be \Fo\,,\qquad 
	\nut=\nu^\ep_2-\ep  g^{\mathbf{1},\al\be} \Re (\d^{A_\ep}_\al\psi_\ep)\d_\be \Fo\,,
\end{align*}
which are almost orthonormal vectors. We can also obtain the metric $\go_{ij}=\<\nuo_i,\nuo_j\>$ on normal bundle $N \Sigo$
\begin{equation*}%\label{go-normal}
\begin{aligned}
	\go_{11}&=1-\ep^2 g^{\mathbf{1},\al\be} \Im(\d^{A_\ep}_\al\psi_\ep)\Im (\d^{A_\ep}_\be\psi_\ep)\,,\\
	\go_{22}&=1-\ep^2g^{\mathbf{1},\al\be}\Re(\d^{A_\ep}_\al\psi_\ep)\Re(\d^{A_\ep}_\be\psi_\ep)\,,\\
	\go_{12}&=\ep^2g^{\mathbf{1},\al\be}\Re(\d^{A_\ep}_\al\psi_\ep)\Im(\d^{A_\ep}_\be\psi_\ep)\,.
\end{aligned}
\end{equation*}
Hence, the metric $(g^{\mathbf{1}}_{ij})$ has the form $I_2+\ep^2 O(\d^A \psi_\ep)^2$.

\emph{ii) Second fundamental form $\Lamo$.} Since 
\begin{align*}
    &\ \quad \nab^\ep_\al \d_\be \Fo=  \nab^\ep_\al \d_\be(F_\ep-\ep\Im(\psi_\ep \bar{m}_\ep))\\
    &=\Re\big((\la_\ep)_{\al\be}\bar{m}_\ep\big)-\ep\Im(\nab^{A_\ep}_\al\nab^{A_\ep}_\be \psi_\ep \bar{m}_\ep-\nab^{A_\ep}_\be \psi_\ep \overline{(\la_\ep)_{\al}^\si} \d_\si F_\ep-\nab^{A_\ep}_\al \psi_\ep \overline{(\la_\ep)_{\be}^\si} \d_\si F_\ep\\
    &\quad -\psi_\ep \overline{\nab^{A_\ep}_\al (\la_\ep)_{\be}^\si}\d_\si F_\ep-\psi_\ep \overline{(\la_\ep)_{\be}^\si}\Re\big((\la_\ep)_{\al\si}\bar{m}_\ep)\big)
\end{align*}
and 
\begin{align*}
    &\nuo_1=\nu^\ep_{1}+\ep g^{\mathbf{1},\mu\nu}\Im(\nab^{A_\ep}_\mu \psi_\ep) (\d_\nu F_\ep-\ep\Im(\nab^{A_\ep}_\nu\psi_\ep \bar{m}_\ep-\psi_\ep \overline{(\la_\ep)_{\nu}^\de} \d_\de F_\ep)),\\
    &\nut=\nu^\ep_2-\ep  g^{\mathbf{1},\mu\nu} \Re (\nab^{A_\ep}_\mu\psi_\ep)(\d_\nu F_\ep-\ep\Im(\nab^{A_\ep}_\nu\psi_\ep \bar{m}_\ep-\psi_\ep \overline{(\la_\ep)_{\nu}^\de} \d_\de F_\ep))\,.
\end{align*}
Then we obtain the second fundamental form
\begin{align*}
  &\Lamo_{\al\be,1}=\<\Lamo(e_\al,e_\be),\nuoo\>=\<\nab^\ep_\al \d_\be \Fo +\big((\Ga_\ep)_{\al\be}^\ga-(\Ga_\mathbf{1})_{\al\be}^\ga\big)\d_\ga\Fo,\nuoo\>\\
  &=\Re\big((\la_\ep)_{\al\be}+i\ep\nab^{A_\ep}_\al\nab^{A_\ep}_\be \psi_\ep\big)+\ep\Im(\psi_\ep \overline{(\la_\ep)_{\be}^\si})\Re(\la_\ep)_{\al\si}+\Im(\nab^{A_\ep}_\mu\psi_\ep) \widehat{T}_{\al\be}^{\ \ \mu},
\end{align*}
and 
\begin{align*}
  &\Lamo_{\al\be,2}=\<\Lamo(e_\al,e_\be),\nuoo\>=\<\nab^\ep_\al \d_\be \Fo +\big((\Ga_\ep)_{\al\be}^\ga-(\Ga_\mathbf{1})_{\al\be}^\ga\big)\d_\ga\Fo,\nuoo\>\\
  &= \Im((\la_\ep)_{\al\be}+i\ep\nab^{A_\ep}_\al\nab^{A_\ep}_\be \psi_\ep)+\ep\Im(\psi_\ep \overline{(\la_\ep)_{\be}^\si})\Im(\la_\ep)_{\al\si}-\Re(\nab^{A_\ep}_\mu\psi_\ep) \widehat{T}_{\al\be}^{\ \ \mu},
\end{align*}
where $\widehat{T}_{\al\be}^{\ \ \mu}$ are denoted as
\begin{align*}
	&\widehat{T}_{\al\be}^{\ \ \mu}:=\ep^2\Im(\nab^{A_\ep}_\be \psi_\ep \overline{(\la_\ep)_{\al\nu}} +\nab^{A_\ep}_\al \psi_\ep \overline{(\la_\ep)_{\be\nu}} +\psi_\ep \overline{\nab^{A_\ep}_\al (\la_\ep)_{\be\nu}})\big(g^{\mathbf{1},\mu\nu}+\ep g^{\mathbf{1},\mu\de}\Im(\psi_\ep \overline{(\la_\ep)^\nu_{\de}})\big)\\
	&+ \ep^2 g^{\mathbf{1},\mu\nu}\big(\Im((\la_\ep)_{\al\be}\overline{\nab^{A_\ep}_\nu\psi_\ep})+\ep \Im(\psi_\ep \overline{(\la_\ep)_\be^\si})\Im((\la_\ep)_{\al\si}\overline{\nab^{A_\ep}_\nu\psi_\ep})+\ep\Re(\nab^{A_\ep}_\al \nab^{A_\ep}_\be \psi_\ep \overline{\nab^{A_\ep}_\nu}\psi_\ep) \big).
\end{align*}
Note that the leading order terms are as below
\begin{align*}
   \Lamo_{\al\be,1}\sim \Re((\la_\ep)_{\al\be}+i\ep \nab^{A_\ep}_\al\d^{A_\ep}_\be\psi_\ep)\,,\qquad \Lamo_{\al\be,2}\sim \Im((\la_\ep)_{\al\be}+i\ep \nab^{A_\ep}_\al\d^{A_\ep}_\be\psi_{\ep})\,.
\end{align*}
Then the $L^2$- norm of second fundamental form $\Lamo$ is exactly a perturbation of that for $\la_\ep$
\begin{align*}
&\ g^{\mathbf{1};\al\mu}g^{\mathbf{1};\be\nu} g^{\mathbf{1};ij}\Lamo_{\al\be,i}\Lamo_{\mu\nu,j}
\sim g_\ep^{\al\mu}g_\ep^{\be\nu} \de^{ij}\Lamo_{\al\be,i}\Lamo_{\mu\nu,j}\\
=&\ g_\ep^{\al\mu}g_\ep^{\be\nu} \big(\Re ((\la_\ep)_{\al\be}+i\ep \nab^{A_\ep}_\al\d^{A_\ep}_\be\psi_\ep)\Re ((\la_\ep)_{\mu\nu}+i\ep \nab^{A_\ep}_\mu\d^{A_\ep}_\nu\psi_\ep)\\
&\  +\Im ((\la_\ep)_{\al\be}+i\ep \nab^{A_\ep}_\al\d^{A_\ep}_\be\psi_\ep)\Im ((\la_\ep)_{\mu\nu}+i\ep \nab^{A_\ep}_\mu\d^{A_\ep}_\nu\psi_\ep)\big)\\
=&\ g_\ep^{\al\mu}g_\ep^{\be\nu} ((\la_\ep)_{\al\be}+i\ep \nab^{A_\ep}_\al\d^{A_\ep}_\be\psi_\ep) \overline{((\la_\ep)_{\mu\nu}+i\ep \nab^{A_\ep}_\mu\d^{A_\ep}_\nu\psi_\ep)}\\
=&\ ((\la_\ep)_{\al}^{\ \be}+i\ep \nab^{A_\ep}_\al\nab^{A_\ep,\be}\psi_\ep) \overline{((\la_\ep)_{\be}^{\al}+i\ep \nab^{A_\ep,\al}\nab^{A_\ep}_\be\psi_\ep)}\,.
\end{align*}
However, the higher-order norms of $\Lamo$ is more complicated, we should compute more carefully.

\smallskip 
Now we start our proof of Proposition \ref{EuIt-p}. For convenience, we will use the linear flow $\Sigma_s=F_s(\R^d)$ with $ F_s=F_\ep-s\Im(\psi_\ep \bar{m}_\ep)$.
Then the associated geometric variables on $\Sigma_s$ are simply given by those variables on $\Sigo$ with coefficient $\ep$ replaced by $s$.  These geometric variables on $\Sigma_s$ (for instance metric, covariant derivatives, Christoffel symbols, connection coefficients, second fundamenatal form, Ricci curvature and so on) are denoted as \begin{equation*}
g(s),\quad  \nab(s),\quad   \Ga_{\al\be}^\ga(s),\quad    A_{\al j}^i(s),\quad   \Lambda(s),\quad \Ric(s),\qquad s\in[0,\ep].
\end{equation*}

\begin{lemma} Let $s\in [0,\ep]$ with $\ep \les  M^{-2}$. Then we have
\begin{align}   \label{BD-dsg}
	\|\d_s g(s)\|_{L^\infty}&\les \|\la_0\|_{\sfH^k}^2,\qquad   
	\|\d_s g_{ij}(s)\|_{L^\infty}\les \ep^{1/2}\|\la_0\|_{\sfH^k}^2,\\ \label{BD-Lambda}
	\| \Lambda(s)\|_{L^\infty}&\les \|\la_0\|_{\sfH^{k}},\qquad \|\d_s \Lambda(s)\|_{L^\infty}\les \|\la_0\|_{\sfH^{k}}^3.
\end{align}
\end{lemma}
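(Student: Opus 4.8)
\textbf{Plan for proving the bounds \eqref{BD-dsg} and \eqref{BD-Lambda}.}
The idea is to exploit the explicit perturbative formulas computed just above the lemma, which express all geometric variables on the linear flow $\Sigma_s = F_s(\R^d)$, $F_s = F_\ep - s\Im(\psi_\ep \bar m_\ep)$, as polynomial perturbations (in $s$) of the corresponding variables on the regularized manifold $\Sigma_\ep$, with coefficients built out of $\la_\ep$, $\nab^{A_\ep}\psi_\ep$, $\nab^{A_\ep}\nab^{A_\ep}\psi_\ep$ and $g_\ep^{-1}$. Thus the whole argument reduces to two ingredients: (i) control of these building blocks in $L^\infty$ (and a few Sobolev norms), and (ii) control of the inverse metrics $g(s)^{-1}$ and $g_{ij}(s)^{-1}$ uniformly on $[0,\ep]$. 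For (i), I would first invoke the regularization estimates from Proposition~\ref{Reg-Prop}: by \eqref{RegEs-HiReg} and the Sobolev embedding Theorem~\ref{thm-SobEm} (whose hypotheses hold on $\Sigma_\ep$ thanks to \eqref{RegEst-Ricci}), one gets $\|\la_\ep\|_{\sfL^\infty} \les \|\la_\ep\|_{\sfH^{k_0}} \les \|\la_0\|_{\sfH^k}$ for $k_0$ slightly above $d/2$, and more generally $\|\nab^{A_\ep,m}\la_\ep\|_{\sfL^\infty} \les \ep^{-m/4}\|\la_0\|_{\sfH^k}$. Hence $\psi_\ep, \nab^{A_\ep}\psi_\ep, \nab^{A_\ep,2}\psi_\ep$ are bounded by $\|\la_0\|_{\sfH^k}$, $\ep^{-1/4}\|\la_0\|_{\sfH^k}$, $\ep^{-1/2}\|\la_0\|_{\sfH^k}$ respectively, which is exactly the right balance: the worst term in $\go_{ij} - I$ is $\ep^2 O(\nab^{A_\ep}\psi_\ep)^2 = O(\ep^{3/2})$, which is why one gains an extra $\ep^{1/2}$ in the normal-bundle metric estimate.

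For (ii), the metrics satisfy $g(s) = g_\ep + 2s\Im(\psi_\ep\overline{\la_\ep}) + s^2(\cdots)$ and $g_{ij}(s) = I_2 + s^2 O(\nab^{A_\ep}\psi_\ep)^2$, so for $s \le \ep \les M^{-2}$ the perturbation is $O(\ep\|\la_0\|_{\sfH^k}^2) + O(\ep^{3/2}\|\la_0\|_{\sfH^k}^2) \ll 1$ relative to the ellipticity constant of $g_\ep$ (which is comparable to that of $g_0$ by \eqref{RegEst-g}). Therefore $g(s), g_{ij}(s)$ remain uniformly elliptic on $[0,\ep]$, and a Neumann-series / algebra argument gives $\|g(s)^{-1}\|_{L^\infty} + \|g_{ij}(s)^{-1}\|_{L^\infty} \les C(M)$. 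Then the first bound in \eqref{BD-dsg} follows directly: $\d_s g_{\al\be}(s) = 2\Im(\psi_\ep\overline{(\la_\ep)_{\al\be}}) + s(\cdots)$, whose $L^\infty$ norm is $\les \|\la_0\|_{\sfH^k}^2$ (all terms involve at least two factors of $\la_\ep$ or $\psi_\ep$, the $s$-dependent correction being even smaller); the second bound uses that $\d_s g_{ij}(s) = 2s\,O(\nab^{A_\ep}\psi_\ep)^2 = O(\ep \cdot \ep^{-1/2}\|\la_0\|_{\sfH^k}^2) = O(\ep^{1/2}\|\la_0\|_{\sfH^k}^2)$.

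For \eqref{BD-Lambda}, I would plug into the explicit expressions for $\Lambda(s)_{\al\be,j}$ derived above: the leading terms are $\Re((\la_\ep)_{\al\be} + is\nab^{A_\ep}_\al\nab^{A_\ep}_\be\psi_\ep)$ and its imaginary counterpart, plus remainders $s\Im(\psi_\ep\overline{(\la_\ep)^\si_\be})\Re(\la_\ep)_{\al\si}$ and $\Im(\nab^{A_\ep}_\mu\psi_\ep)\widehat T_{\al\be}^{\ \ \mu}$ where $\widehat T = O(s^2)$ times products of $\nab^{A_\ep}\psi_\ep, \la_\ep, \nab^{A_\ep}\la_\ep$ and $g^{-1}$'s. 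The point is that $\|\Lambda(s)\|_{L^\infty} \les \|\la_\ep\|_{L^\infty} + s\|\nab^{A_\ep,2}\psi_\ep\|_{L^\infty} + \ldots \les \|\la_0\|_{\sfH^k} + \ep^{1/2}\|\la_0\|_{\sfH^k} + \ldots \les \|\la_0\|_{\sfH^k}$, since $s\ep^{-1/2} \le \ep^{1/2} \ll 1$; similarly each remainder carries enough powers of $s$ to absorb the negative powers of $\ep$ coming from \eqref{RegEs-HiReg}. For $\d_s\Lambda(s)$ one differentiates these formulas in $s$: the leading contribution is $\Im$ or $\Re$ of $\nab^{A_\ep}_\al\nab^{A_\ep}_\be\psi_\ep$ together with $\Im(\psi_\ep\overline{(\la_\ep)^\si_\be})\Re(\la_\ep)_{\al\si}$-type terms. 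The naive bound on the first of these is $\ep^{-1/2}\|\la_0\|_{\sfH^k}$, which is \emph{too big}; the resolution — and this is the main obstacle — is that $\d_s\Lambda$ must be measured \emph{intrinsically}, i.e. the genuinely geometric quantity is $\d_s$ applied to $\Lambda$ regarded as a section of the (moving) normal bundle with the (moving) metric, and after the correct covariant bookkeeping the $\nab^{A_\ep,2}\psi_\ep$ term recombines with the change of frame $\d_s\nuo_j$ (which also contains $\nab^{A_\ep}\psi_\ep$) so that the surviving expression is again cubic in $\la_\ep$ with no stray $\nab^{A_\ep,2}\psi$, giving $\|\d_s\Lambda(s)\|_{L^\infty}\les\|\la_0\|_{\sfH^k}^3$. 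Concretely this is the same cancellation already recorded in \eqref{g_dt}, \eqref{comm-nab}, \eqref{comm-dt}: the (SMCF)-type evolution of the second fundamental form is lower order than a raw second derivative of $\psi$. I would therefore carry out this step by re-deriving $\d_s\Lambda$ from the structure equations \eqref{strsys-cpf}, \eqref{mo-frame} restricted to the linear flow, rather than brute-force differentiating the perturbative formula, so that the cancellation is manifest. The remaining terms are then routine products estimated by Sobolev embedding exactly as above.
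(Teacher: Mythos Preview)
Your treatment of \eqref{BD-dsg} and of $\|\Lambda(s)\|_{L^\infty}$ is essentially what the paper does: plug the explicit perturbative formulas into Sobolev embedding on $\Sigma_\ep$ and use the regularization bounds from Proposition~\ref{Reg-Prop}. The ellipticity of $g(s)$, $g_{ij}(s)$ for $s\le\ep$ is handled exactly as you say.

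The gap is in your handling of $\|\d_s\Lambda(s)\|_{L^\infty}$. You flag the term $\nab^{A_\ep}_\al\nab^{A_\ep}_\be\psi_\ep$ as problematic, estimate it via \eqref{RegEs-HiReg} as $\ep^{-1/2}\|\la_0\|_{\sfH^k}$, and then propose a cancellation coming from the covariant structure. Both halves of this are wrong. First, the obstacle is spurious: recall that here $k>\tfrac d2+5$, so one may choose $k_0$ barely above $\tfrac d2$ with $k_0+2\le k$, and then by Sobolev embedding on $\Sigma_\ep$ together with the \emph{energy} bound \eqref{RegEst-IM} (not the regularization bound \eqref{RegEs-HiReg}) one has directly
\[
\|\nab^{A_\ep}_\al\nab^{A_\ep}_\be\psi_\ep\|_{L^\infty}\ \les\ \|\la_\ep\|_{\sfH^{k_0+2}}\ \les\ \|\la_0\|_{\sfH^{k}},
\]
with no $\ep$-loss whatsoever. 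This is precisely how the paper closes the estimate: it brute-force differentiates the explicit formula for $\Lambda_{\al\be,j}(s)$ and bounds each piece, arriving at $\|\d_s\Lambda(s)\|_{L^\infty}\les \|\la_0\|_{\sfH^{k_0+2}}(1+\|\la_0\|_{\sfH^{k_0}}^2)$. The regularization bound \eqref{RegEs-HiReg} is only needed when more derivatives hit $\la_\ep$ than the five-derivative margin $k-k_0$ can absorb; for two derivatives it is simply not invoked.

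Second, the cancellation you propose does not exist. The explicit formula for $\Lambda_{\al\be,j}(s)$ already incorporates the $s$-dependent frame $\nu_j(s)$, and the term $s\,\nab^{A_\ep}_\al\nab^{A_\ep}_\be\psi_\ep$ genuinely survives; differentiating in $s$ leaves it with coefficient one. The frame variation $\d_s\nu_j(s)$ contributes only \emph{first} derivatives of $\psi_\ep$ and cannot remove a second-order term. Nor should one expect otherwise from the SMCF structure: in \eqref{Sch-la} the evolution $\d^B_t\la$ contains $\De^A_g\la$ as its principal part, so the time derivative of the second fundamental form is of second order in $\la$, not zeroth order.
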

\begin{proof}
For the first bound \eqref{BD-dsg}, by Sobolev embeddings, \eqref{RegEst-IM}, \eqref{RegEs-HiReg} and $s\in [0,\ep]$, we have
\begin{align*}
\|\d_s g(s)\|_{L^\infty}&\les \|\psi_\ep \bar{\la}_{\ep}+s\psi_\ep\overline{\la_{\ep}}\psi_\ep\overline{\la_{\ep}}+s\d^{A_\ep}\psi_\ep \overline{\d^{A_\ep}\psi_\ep}\|_{L^\infty}\\
	&\les \|\la_0\|_{\sfH^k}^2+\ep \|\la_0\|_{\sfH^k}^4+\ep \ep^{-1/2}\|\la_0\|_{\sfH^k}^2\les \|\la_0\|_{\sfH^k}^2.
\end{align*}
We also have 
\begin{align*}
\|\d_s g_{ij}(s)\|_{L^\infty}&\les \|(\ep g^{\al\be}(s)+\ep^2 \d_s g^{\al\be}(s)) \d^{A_\ep}_\al\psi_\ep \d^{A_\ep}_\be\psi_\ep\|_{L^\infty}\\
&\les (\ep+\ep^2\|\la_0\|_{\sfH^k}^2)\|\nab^{A_\ep}\psi_\ep\|_{\sfH^k}^2 \les \ep^{1/2}\|\la_0\|_{\sfH^k}^2.
\end{align*}

By Sobolev embedding on $\Sigma_\ep$ and \eqref{RegEs-HiReg}, we have
\begin{align*}
\|\widehat{T}\|_{L^\infty}&\les \ep^2 \|\nab^{A_\ep} \la_\ep\|_{\sfH^{k_0}}\| \la_\ep\|_{\sfH^{k_0}}(1+\ep \| \la_\ep\|_{\sfH^{k_0}}^2)+\ep^3 \|(\nab^{A_\ep} )^2\la_\ep\|_{\sfH^{k_0}}\|\nab^{A_\ep} \la_\ep\|_{\sfH^{k_0}}\\
&\les \ep^{7/4}\| \la_0\|_{\sfH^{k_0}}\| \la_0\|_{\sfH^{k_0}}(1+\ep \| \la_0\|_{\sfH^{k_0}}^2),
\end{align*}
Then we obtain
\begin{align*}
\|\Lambda(s)\|_{L^\infty}&\les \|\la_\ep\|_{\sfH^{k_0}}+\ep \|(\nab^{A_\ep})^2\psi_\ep\|_{\sfH^{k_0}}+\ep \|\la_\ep\|_{\sfH^{k_0}}^3+\|\nab^{A_\ep} \psi_\ep\|_{\sfH^{k_0}} \ep^{7/4}\| \la_0\|_{\sfH^{k_0}}^2\les \|\la_0\|_{\sfH^{k_0}}
\end{align*}
and 
\begin{align*}
	\|\d_s \Lambda(s)\|_{L^\infty}&\les  \|(\nab^{A_\ep})^2\psi_\ep\|_{\sfH^{k_0}}+ \|\la_\ep\|_{\sfH^{k_0}}^3+\|\nab^{A_\ep} \psi_\ep\|_{\sfH^{k_0}} \ep^{3/4}\| \la_0\|_{\sfH^{k_0}}\| \la_0\|_{\sfH^{k_0}}\\
	&\les \|\la_0\|_{\sfH^{k_0+2}}(1+\| \la_0\|_{\sfH^{k_0}}^2).
\end{align*}
\end{proof}

First we prove the ellipticity condition\eqref{Ellpt-g}.
\begin{proof}[Proof of \eqref{Ellpt-g}]
By \eqref{BD-dsg}, we have for any $X$
\begin{align*}
\d_s |X|_{g(s)}^2 &\les |\d_s g_{\al\be}(s)X^\al X^\be|\les \|\la_0\|_{\sfH^{k_0}}^2|X|_{g(s)}^2
\leq CM^2 |X|_{g(s)}^2\,,
\end{align*}
which implies that
\begin{align*}
	&e^{-CM^2 \ep}|X|_{g_\ep}^2\leq |X|_{\go}^2\leq e^{CM^2 \ep}|X|_{g_\ep}^2\,.
\end{align*}
This together with \eqref{RegEst-g} yields the bound \eqref{Ellpt-g}.
\end{proof}

Next, we bound the Ricci curvature and volume of balls \eqref{AppM-Ricci}. 
\begin{proof}[Proof of \eqref{AppM-Ricci}]
By the standard computations, we have the curvature and Ricci curvature on manifold $\widetilde\Sigma_s$
\begin{align*}
&R_{\si\ga\al\be}(s)=\Lambda^{\ \ j}_{\be\ga}(s)\Lambda_{\al\si,j}(s)-\Lambda^{\ \ j}_{\al\ga}(s)\Lambda_{\be\si,j}(s)\,,\\
&\Ric_{\ga\be}(s)=\Lambda^{\ \ j}_{\be\ga}(s)\Lambda_{\ \al,j}^\al(s)-\Lambda^{\ \ j}_{\al\ga}(s)\Lambda_{\ \be,j}^\al(s)\,,
\end{align*}
where $\Lambda_{\al\be}^{\ \ j}(s)=\Lambda_{\al\be,k}(s) g^{jk}(s)$.
Then by \eqref{BD-Lambda} we have
\begin{align*}
    |\d_s \Ric_{\ga\be}(s) X^\ga X^\be|
    &\leq \|\d_s \Lambda(s)\|_{L^\infty} \|\Lambda(s)\|_{L^\infty} |X|^2_{ g(s)}\les M^4 |X|^2_{g(s)}.
\end{align*}
This, combined with $(1-CM^2 \ep)|X|_{g_\ep}^2\leq |X|_{\go}^2\leq (1+CM^2 \ep)|X|_{g_\ep}^2$, further implies that 
\begin{align*}
    &\ |\Ric^{\bf{1}}_{\ga\be} X^\ga X^\be|\leq |\Ric_{\ep,\ga\be} X^\ga X^\be|+ |\int_0^\ep \d_s \Ric_{\ga\be}(s) X^\ga X^\be ds|\\
    &\leq (1+C(M)\ep) C_0 |X|^2_{g_\ep}+ C\ep M^4 |X|^2_{g(s)}
    \leq (1+\tilde C(M)\ep)C_0|X|_{\go}^2.
\end{align*}

For the volume element, by \eqref{BD-dsg} we have
\begin{align*}
    |\d_s \sqrt{\det g(s)}|&=|g^{\al\be}(s)\d_s g_{\al\be}(s)|\sqrt{\det g(s)}
    \leq  C M^2 \sqrt{\det g(s)},
\end{align*}
which implies that
\begin{align*}
    &\sqrt{\det g_0} e^{-C(M)\ep^{3/2}}\leq \sqrt{\det g_\ep} e^{-C(M)\ep}\leq \sqrt{\det \go}\leq \sqrt{\det g_\ep} e^{C(M)\ep}\leq \sqrt{\det g_0} e^{C(M)\ep^{3/2}}.
\end{align*}
Moreover, the lengths of any curves evolving along the manifold $\Sigma_s$ would only change slightly
\begin{align*}
    \frac{d}{ds}l(\ga,s)=&\ \int_0^1 (2|\dot{\ga}|)^{-1} \d_s g_{\al\be}(s) \d_\tau \ga_\al \d_\tau \ga_\be\ d\tau \leq \|\d_s g(s)\|_{L^\infty(g(s))} l(\ga,s)
    \leq  CM^2 l(\ga,s),
\end{align*}
which implies that $B_x(r,\Sigma_\ep)\subset B_x(re^{\ep C(M)},\Sigo)$ for any $r$. Then we can bound the volume of balls from below
\begin{align*}
    &{\rm Vol}_{\go}(B_x(r_0 e^{C(M)\ep}))= \int_{B_x(e^{C(M)\ep},\Sigo)} 1\  dvol_{\go}\geq \int_{B_x(r_0,\Sigma_\ep)}   e^{-\ep C(M)}dvol_{g_\ep}\\
    =&\    e^{-\ep C(M)} {\rm Vol}_{g_\ep}(B_x(r_0,\Sigma_\ep))
    \geq  e^{-\ep C(M)} v,
\end{align*}
Hence, the estimates in \eqref{AppM-Ricci} are obtained.
\end{proof}

Next, we prove the norm bound \eqref{UB-EuIt}.

\begin{proof}[Proof of norm bound \eqref{UB-EuIt}]
We consider the linear flow $F_s=F_\ep-s \Im(\psi_\ep \bar{m}_\ep)$. This can be expressed as 
\begin{align*}
    \d_s F_s=-\Im(\psi_\ep \bar{m}_\ep)=-\Im(\psi_s \bar{m}_s)+\int_0^s \d_\tau \Im (\psi_\tau \bar{m}_\tau) d\tau=:-\Im(\psi_s \bar{m}_s)+G_s, 
\end{align*}
where $\psi_s$ and $m_s$ are the complex mean curvature and normal frame on $\Sigma_s=F_s(\R^d)$, respectively. Compared with original flow \eqref{sys-cpf}, here we add a source term $G_s$. 
Then using an argument similar to the one  in Section~\ref{Sec-gauge}, we can also derive an equation for $\la$
\begin{align*}
    i\d_s^B \la_\al^\si + \De_g^A \la_\al^\si =&\ i\nab_\al^A \<m,\d^\si G_s\>-i\la_\al^\mu \<\d_\mu G_s,\d^\si F\>-i\la_\al^\ga \Im(\psi\bar{\la}_\ga^\si)+\Re(\psi\bar{\la}_{\de\al})\la^{\si\de}\\
    &-\la_{\al}^\mu \bar{\la}_{\de\mu}\la^{\si\de}-\Re(\la_{\de\mu}\bar{\la}_\al^\si-\la^\si_\mu\bar{\la}_{\de\al})\la^{\de\mu}.
\end{align*}
This we can use in order to prove energy estimates,
\begin{align*}
     &\ \frac{1}{2}\frac{d}{ds}\|\la\|_{\sfH^k(\Sigma_s)}^2\\
    &=   \sum_{j\leq k}\int \Re\<\nab^{A,j}\d^B_s \la_\al^\si,\nab^{A,j}\la^\al_\si\> +\Re\< [\d^B_s,\nab^{A,j}]\la_\al^\si,\nab^{A,j}\la^\al_\si\>
    +|\nab^{A,j}\la|^2 \frac{1}{4}g^{\al\be}\d_s g_{\al\be}\ dvol\\
    &\lesssim  \|\<\d G_s,m\>\|_{\sfH^{k+1}}\|\la\|_{\sfH^k}+(\|\la\|_{L^\infty}+\|\<\d G_s,\d F\>\|_{L^\infty})^2\|\la\|_{\sfH^k}^2\\
    &\quad +(\|\<\d G_s,\d F\>\|_{L^\infty}+\|\<\d G_s,m\>\|_{L^\infty}+\|\la\|_{L^\infty})^2(\|\<\d G_s,\d F\>\|_{\sfH^k}+\|\<\d G_s,m\>\|_{\sfH^k})\|\la\|_{\sfH^k} .
\end{align*}
Then we'd like to bound $\<\d G_s,m\>$ and $\<\d G_s,\d F\>$ by $\|\d F_0\|_{H^{k+1}_{uloc}}$. 

In local charts, by \eqref{reg-Fep} and \eqref{reg-mep} we have 
\begin{align*}
\| \partial^j (\psi_\epsilon m_\ep) \|_{H^{k}_{uloc}}=\| \partial^j \big(g_\ep^{\al\be}(\d^2_{\al\be}F_\ep-\Ga_{\al\be}^\ga \d_\ga F_\ep)\cdot m_\ep\ m_\ep\big) \|_{H^{k}_{uloc}} \lesssim \epsilon^{-\frac{j}4} \| \partial F_0\|_{H^{k+1}_{uloc}}.  
\end{align*}
Then by the Euler iteration $F_s=F_\ep-s\Im(\psi_\ep\bar{m}_\ep)$ for $s\leq \ep$, we have
\begin{equation*}%\label{reg-dFs}
\begin{aligned}
    &\ \|\partial^j \d F_s\|_{H^{k+1}_{uloc}}\leq \|\partial^j \d F_\ep\|_{H^{k+1}_{uloc}}+\ep \|\partial^j \d (\psi_\ep m_\ep)\|_{H^{k+1}_{uloc}} \\
    &\lesssim \epsilon^{-\frac{j}4} \| \partial F_0\|_{H^{k+1}_{uloc}}+\ep^{1-\frac{j+2}{4}}\| \partial F_0\|_{H^{k+1}_{uloc}}\les \epsilon^{-\frac{j}4} \| \partial F_0\|_{H^{k+1}_{uloc}},
\end{aligned}
\end{equation*}
and 
\begin{align*}
    \|\d^j\d_s F_s\|_{H^{k}_{uloc}}= \|\d^j(\psi_\ep m_\ep)\|_{H^{k}_{uloc}}\les \epsilon^{-\frac{j}4} \| \partial F_0\|_{H^{k+1}_{uloc}}.
\end{align*}
A construction similar to \eqref{reg-mep} yields
\begin{align*}
    \|\d^j  m_s\|_{H^{k+1}_{uloc}}\les \|\d^j \d F_s\|_{H^{k+1}_{uloc}}\les \epsilon^{-\frac{j}4} \| \partial F_0\|_{H^{k+1}_{uloc}},
\end{align*}
and 
\begin{align*}
    \|\d^j  \d_s m_s\|_{H^{k-1}_{uloc}}\les \|\d^j \d\d_s F_s\|_{H^{k-1}_{uloc}}\les \epsilon^{-\frac{j}4} \| \partial F_0\|_{H^{k+1}_{uloc}}.
\end{align*}
Then we get the estimate 
\[
\| \partial^j \partial_\tau (\psi_\tau m_\tau)\|_{  H^{k-2}}   \lesssim \epsilon^{-\frac{j}4} \| \partial F_0\|_{H^{k+1}_{uloc}} \|\Lambda_0\|_{\sfH^k},
\]
where the $\ell^2$ summation with respect to local charts 
comes from the similar property for $\Lambda$ and thus for $\psi$. This implies 
\[
\|\partial^j \partial G_s\|_{H^{k-3}}=\|\d^j \d \int_0^s \d_\tau (\psi_\tau m_\tau) d\tau\|_{H^{k-3}}\les \ep \| \partial^j \partial_\tau (\psi_\tau m_\tau)\|_{H^{k-2}}    \lesssim \epsilon^{1-\frac{j}4} \| \partial F_0\|_{H^{k+1}_{uloc}} \|\Lambda_0\|_{\sfH^k} ,
\]
and in particular
\[
\| \partial G_s\|_{H^{k+1}}   \lesssim  \| \partial F_0\|_{H^{k+1}_{uloc}} \|\Lambda_0\|_{\sfH^k} .
\]

This in turn yields
\begin{align*}
    \|\<\d G_s,m\>\|_{L^\infty}+\|\<\d G_s,\d F\>\|_{L^\infty}\les \|\d G_s\|_{H^{k+1}}\les \| \partial F_0\|_{H^{k+1}_{uloc}}\|\Lambda_0\|_{\sfH^k} .
\end{align*}
\begin{align*}
    \|\<\d G_s,m\>\|_{\sfH^{k+1}}+\|\<\d G_s,\d F\>\|_{\sfH^{k+1}}\les \|\<\d G_s,m\>\|_{ H^{k+1}}\les \|\d F_0\|_{H^{k+1}_{uloc}}\|\Lambda_0\|_{\sfH^k}.
\end{align*}
Using this in the energy estimates above, we obtain
\begin{align*}
    \frac{d}{ds}\|\la\|_{\sfH^k(\Sigma_s)}^2\les C(M) \|\la\|_{\sfH^k(\Sigma_s)}^2+C(M)\|\la\|_{\sfH^k(\Sigma_s)}.
\end{align*}
This implies the norm bound \eqref{UB-EuIt} for $\Lamo$.
\end{proof}

\begin{proof}[Proof of \eqref{Appsol-1}]
By \eqref{EulerIte}, it suffices to show that
\begin{align*}
    &\ \|F_\ep-\ep\Im(\psi_\ep\bar{m}_\ep)-F_0+\ep\Im(\psi_0\bar{m}_0)\|_{L^2}\\
    \leq &\ \|\int_0^{\ep^{3/2}}\d_s F ds\|_{L^2}+\ep \|\Im ((\psi_\ep-\psi_0)\bar{m}_\ep)\|_{L^2}+\ep \|\Im(\psi_0(\overline{m_\ep-m_0}))\|_{L^2}\\
    \leq &\ \ep^{3/2}\|\nab \mathcal L\|_{L^2}+\ep \|\psi_\ep-\psi_0\|_{L^2}+\ep\|\psi_0\|_{L^\infty} \int_0^{3/2} \|\d_s m\|_{L^2} ds\\
    \leq &\ C(M) \ep^{3/2}+C(M) \ep^{5/2}+\ep^{5/2} \|\nab \mathcal L\|_{L^2}\\
    \leq &\ C(M) \ep^{3/2}\,.
\end{align*}
By the equivalence \eqref{Ellpt-g}, we can bound the difference of $\d F$ by
\begin{align*}
    &\ \|\d_\al \Fo-\d_\al F_\ep\|_{L^\infty}\leq \ep \|\psi_\ep \bar{\la}_{\ep,\al}^\mu \d_\mu F_\ep-\nab^{A_\ep}_\al\psi_\ep \bar{m}_\ep\|_{L^\infty} \\
    \les &\ \ep(\|\psi_\ep\|_{L^\infty}\|\la_\ep\|_{L^\infty}+\|\nab^A\psi_\ep\|_{L^\infty})\les \ep \|\la_0\|_{\sfH^{k_0+1}}^2.
\end{align*}
Hence, the bounds in \eqref{Appsol-1} are obtained.
\end{proof}

\subsection{Construction of regular exact solutions}
Here we use the approximate solutions above. Given initial manifold $\Sigma_0=F_0(\R^d)$ with the map $F_0: \R^d\rightarrow \R^{d+2}$ so that
\begin{align*}
    \|\Lambda_0\|_{\sfH^k}\leq M,\quad |\Ric(0)|\leq C,\quad \inf_{x\in\Sigma_0} {\rm Vol}_{g(0)}(B_x(1))\geq v,
\end{align*}
applying the successive iterations above we obtain approximate solutions $\Sigma^\ep(t)=F^\ep(t,\R^d)$ with $t\in\ep \mathbb N\cap [0,T(M)]$ defined
at $\ep$ steps, so that
\begin{align*}
&\|\Lambda^\ep((j+1)\ep)\|_{\sfH^k}\leq (1+C(M)\ep)\|\Lambda^\ep(j\ep)\|_{\sfH^k},\\
&|\Ric^\ep((j+1)\ep)|\leq (1+C(M)\ep)^{j+1} C_0,\\
&\inf_{x\in\Sigma_{j+1}} {\rm Vol}_{g(j+1)}(B_x(e^{C(M)(j+1)\ep}))\geq e^{-C(M)(j+1)\ep} v,
\end{align*}
In addition, choosing the coordinates on $\Sigma^\ep$ induced by our, single step construction,  we also have the relations
\[
(1-C(M)\ep)g(j\ep) \leq g((j+1)\ep) \leq (1+C(M)\ep) g(j\ep),
\]
\[
 \|\d F^\ep((j+1)\ep)- \d F^{\ep}(j\ep)\|_{L^\infty}\les \ep\,.
\]

By the discrete Gr\"onwall's inequality, it follows that these approximate solutions are defined
uniformly up to a time $T=T(M)$, with uniform bounds
\begin{align*}%\label{UnifBd}
    \|\Lambda^\ep(j\ep)\|_{\sfH^k}\leq (1+C(M)\ep)^{j}\|\la_0\|_{\sfH^k}\les_M 1.
\end{align*}
as well as 
\begin{equation*}
c(M) \leq g(j\ep) \leq C(M),
\end{equation*}
By Sobolev embeddings, the $\Lambda$ bound implies
\[
\|\Lambda \|_{L^\infty} \lesssim 1
\]
and a similar bound for $\psi$, which in turn by \eqref{appsol} shows that 
\[
\| F^{\ep}((j+1)\ep)- F^{\ep}(j\ep)\|_{L^\infty}\les \ep\,.
\]
Thus the functions $F^{\ep}$ are Lipschitz in time with values in $C^1$, 
uniformly in $\ep$. By Arzela-Ascoli applied on compact sets, this yields
a subsequence which converges uniformly on compact sets,
\[
F^\ep \to F.
\]

We now need to examine more closely the regularity of $F$, and in particular to show that $F$ solves the SMCF flow. This is more easily done locally, in cartesian coordinates. Near some point $p$ on $F_0$, we represent $\Sigma_0$ as a graph in a local cartesian frame, say, after a rotation,
\[
\Sigma_0 = \{ y'' = \FF_0(y')\}, \qquad y' = (y_1,\cdots, y_d),  \quad y''= (y_{d+1},y_{d+2}).
\]
Then for small $t$ we can represent our approximate solutions in the same frame,
\[
\Sigma_t^\epsilon = \{ y'' = \FF^\epsilon
(t,y')\}.
\]
By the above Lipschitz property of $F$, the time dependent change of coordinate map $x \to y=F^\ep(x)'$ is bilipschitz, 
and also Lipschitz in $t$. This in particular implies that the functions $\FF^\epsilon$ are also Lipschitz in $t$ and $y'$.
The advantage in using the extrinsic local coordinates is that the covariant $H^k$ bound on the second fundamental form
implies that we have the uniform local regularity
\[
\FF^\ep \in L^\infty_t H^{k+2}_y, \qquad \psi^\ep m^\ep \in L^\infty_t H^k_y.
\]
Using Sobolev embeddings and interpolating with the Lipschitz bound, for large enough $k$ we also get 
\[
\FF^\ep \in C^\frac12_t C^3_y,
\]
which in turn implies that 
\[
\psi^\ep m^\ep  \in C^\frac12_t C^1_y.
\]
This property we can return to the $(x,t)$ coordinates,
\[
\psi^\ep m^\ep  \in C^\frac12_t C^1_x.
\]
This in turn shows that 
\[
\FF^\ep \in C^{\frac32}_t C^0_x.
\]

Passing to the limit, we obtain that $F$ is bilipschitz, and so is the corresponding local representation $\FF$, 
with $\FF^\ep \to \FF$ uniformly on a subsequence. Taking weak limits in the extrinsic coordinates, all the above regularity properties transfer to $F$ and $\FF$. This allows us to upgrade the convergence to all weaker norms.
In particular we get on a subsequence
\[
\psi^\ep m^\ep  \to \psi m \in C^{\frac12-}_t C^1_y.
\]
Then we can pass to the limit in the relation 
\[
F^\ep((j+1)\ep) = F^\ep(j\ep) -\ep\Im(\psi^\ep \overline{m^\ep}) (\epsilon j) + O(\ep^\frac32)
\]
to obtain that 
\[
\partial_t F = -\Im(\psi \bar{m}),
\]
i.e. $F$ solves the SMCF equation. We can further upgrade the regularity of $F$ in the extrinsic coordinates. There, 
by a direct application of chain rule on $F=(y',\FF(t,y'))$, the SMCF equation is rewritten as 
\[
\partial_t \FF + W^j \partial_j \FF = -(\Im(\psi \bar{m}))'', \qquad W^j = -(\Im(\psi \bar{m}))_j,\ \text{for}\ j=1,\cdots,d,
\]
where $(\Im(\psi\bar{m}))''$ is the last two components of vector $\Im(\psi\bar{m})$, and we have the local regularity
\[
\FF \in L^\infty H^{k+2}, \qquad \Im(\psi \bar{m}) \in L^\infty H^k.
\]
This in particular shows that 
\[
\partial_t \FF \in L^\infty H^k, \qquad \partial_t^2 \FF \in L^\infty H^{k-2}.
\]

To return to the $x$ coordinates, we need to track $F'$, via the nonlinear ode 
\[
\partial_t F' = W(F'), \qquad F'(0,x) = F'_0(x),
\]
where the remaining component of $F$ is given by $ F'' = \FF(F')$.
Here the initial data $F_0$ has maximal local regularity
$\partial F_0 \in H^{k+1}$ but $W$ is less regular so we 
only obtain dynamically 
\[
\partial F \in L^\infty  H^{k-1}, \partial_t F  \in L^\infty  H^{k}.
\]
Hence we have produced a solution to the equation \eqref{Main-Sys}, which is unique by Theorem~\ref{Uniqueness-thm}. This solution has an apparent loss of regularity,
which is expected since our solution is constructed as a solution to \eqref{Main-Sys-re} in the temporal gauge 
$V=0$.
\bigskip

The remaining step of our construction is to move the
 solution to \eqref{Main-Sys-re}  constructed above to the heat gauge $V^\ga=g^{\al\be}\Ga_{\al\be}^\ga$. This corresponds to a change of coordinates $x\rightarrow y(t,x)$, where, defining $F(t,x)=\tF(t,y(t,x))$, $\tF$ is a solution of \eqref{Main-Sys-re} in the heat gauge.  Here we have
\begin{align*}
    \d_t F=\d_t \tF(t,y)+\d_t \varphi^k \d_k \tF(t,y)=J(\tF)\bH(\tF)+\tilde{V}^\ga \d_\ga \tF(t,y)+\d_t \varphi^k \d_k \tF(t,y).
\end{align*}
Since $\d_t F=J(F)\bH(F)$, this requires that $y(t,x)$ satisfies 
\begin{align*}
    \d_t y^\ga=-\tilde V^\ga(t,y)=-\tg^{\al\be}\tGa_{\al\be}^\ga (t,y)=\tg^{\al\be}(\d^2_{y_{\al}y_{\be}}-\tGa_{\al\be}^\si \d_{y_\si})y^\ga =\De_{\tg(y)}y^\ga,
\end{align*}
This can be rewritten as a linear parabolic equation
\begin{align*}
    \d_t y^\ga-\De_g y^\ga=0,
\end{align*}
with initial data $y(0,x)=x$, which is solvable in a short time and $y(t,x)$ is a diffeomorphism. Hence we obtain the regular solution of \eqref{Main-Sys-re} in the heat gauge.

\

\section{Rough solutions}\label{Sec-rough}
In this section we aim to construct rough solutions as limits of smooth solutions, and
conclude the proof of Theorem~\ref{LWP-thm}. In terms of a general outline, the argument here is
relatively standard, and involves the following steps:
(1) We regularize the initial manifold $\Sigma_0=F_0(\R^d)$.
(2) We prove uniform bounds for the regularized solutions.
(3) We prove convergence of the regularized solutions in a weaker topology.
(4) We prove convergence in the strong topology by combining the weak difference
bounds with the uniform bounds in a frequency envelope fashion.

\subsection{Regularization of initial data}
Given a rough initial submanifold $\Sigma_0=F_0(\R^d)$ satisfying \eqref{MetBdd} and \eqref{small-datad3}, 
then from Proposition~\ref{Global-harmonic}, there exist a gauge \eqref{Coulomb} in $N\Sigma_0$ such that 
\begin{align*}
    \|\la_0\|_{H^s}+ \||D|^{\de_d}A_0\|_{H^{s-\de_d}}+\||D|^{\si_d}g_0\|_{H^{s+1-\si_d}}\leq M_1.
\end{align*}

As in Section~\ref{Reg-sec}, under the assumptions \eqref{MetBdd} and \eqref{small-datad3}, we can construct an appropriate family of regularized data, depending smoothly on the regularization parameter $h$, as
\begin{align*}
    (\Sigma_0^{(h)}:=F^{(h)}_0(\R^d), g_0^{(h)}, A^{(h)}_0,\la^{(h)}_0),\qquad F^{(h)}_0=P_{<h}F_0,\ h\geq h_0,
\end{align*}
with the following properties:
\begin{align}  \label{Cond1}
&\|\la_0^{(h)}\|_{\sfH^k(\Sigma_0^{(h)})}
\leq C(M)2^{(k-s)h},\qquad k>\frac{d}{2}+5,\\ \label{Cond2}
&|\Ric^{(h)}_0|\leq CM_1^2,\quad \inf_{x\in\Sigma_0^{(h)}} {\rm Vol}_{g_0^{(h)}}(B_x(e^{C(M)2^{-h_0}}))\geq v e^{-C(M)2^{-h_0}},\\ \label{RegIni-4}
&\frac{9}{10}c_0\leq (g_0^{(h)})\leq \frac{11}{10}c_0^{-1}.
\end{align}
where the constant $M_1=C(M,c_0)$ depends on $M$ and $c_0$. 
Then by Theorem~\ref{RegSol-thm}, we obtain the regular solutions $F^{(h)}(t)$ for all $h\geq h_0$ on some time interval $[0,T(M,h)]$ depending on the $M$, $c_0$ and $h$.

\subsection{Uniform bounds and the lifespan of regular solutions}

Once we have the regularized manifold $\Sigma_0^{(h)}=F_0^{(h)}(\R^d)$ for $h\in[h_0,\infty)$ large, we consider the corresponding smooth solutions $\Sigma^{(h)}$ generated by the smooth data $\Sigma_0^{(h)}$. A priori these solutions exist on a time interval that depends on the $\sfH^k$-norm of the second fundamental form $\la^{(h)}_0$ and the Sobolev embeddings on $\Sigma_0^{(h)}$, and hence depends on $h$ and $M$. Instead, here we would like to have a lifespan bound which is independent of $h$.

We remark that the bound $\|\la\|_{\sfH^k}$ does not directly propagate unless $k>\frac{d}{2}$ is an integer. 
Indeed, in that case one could immediately close the bootstrap at the level of the $\sfH^k$-norm using the Standard Sobolev embedding and the equivalence $\|\la\|_{\sfH^k}\approx \|\la\|_{H^k}$. 
The goal of the argument that follows is to establish the $X^s\subset H^s$ bound for any $s>\frac{d}{2}$, by working only with energy estimates for integer indices.

From the construction in Section~\ref{Reg-sec}, the manifolds $\Sigma^{(h_1)}_0$ with $h_1\in [h_0,h]$ can also be seen as one of the regularizations of $\Sigma^{(h)}_0$. 
Moreover, by Proposition~\ref{Global-harmonic} ii), the smooth initial manifold $\Sigma^{(h)}_0$ for any $h\in[h_0,\infty)$ satisfies
\begin{align}\label{Cond3}
&\||D|^{\si_d}g_0^{(h)}\|_{H^{s+1-\si_d}}+\||D|^{\de_d}A_0^{(h)}\|_{H^{s-\de_d}}+\|\la_0^{(h)}\|_{H^s}\leq M_1,\\ \label{Cond4}
&\tri[\la_0^{(h)}]\tri_{s,int}+\tri[g_0^{(h)}]\tri_{s+1,g}+\tri[A_0^{(h)}]\tri_{s,A}\leq M_1,
\end{align}
We then prove that the lifespan of (SMCF) only depends on $M$ and $c_0$.

\begin{prop} \label{RegSol-prop}
    Assume that the smooth initial manifold $\Sigma^{(h)}_0$ satisfies 
    \eqref{Cond1}, \eqref{Cond2}, \eqref{Cond3}, \eqref{Cond4} and \eqref{RegIni-4},
    then the lifespan $[0,CM_1^{-2N-8}]$ of $\Sigma^{(h)}(t)$ evolving along skew mean curvature flow only depends on $M$ and $c_0$. 
\end{prop}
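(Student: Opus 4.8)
\textbf{Proof proposal for Proposition~\ref{RegSol-prop}.}

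The plan is to run a bootstrap argument entirely at the level of the function spaces $X^s$, $Y^{s+1}$, $Z^s$, using the regularized manifolds $\Sigma^{(h_1)}$ with $h_1\in[h_0,h]$ as a family of regularizations of $\Sigma^{(h)}$ along the flow. First I would record that, by Theorem~\ref{RegSol-thm}, each smooth datum $\Sigma^{(h)}_0$ generates a smooth solution on some (a priori $h$-dependent) interval; the whole point is to upgrade this to a uniform lifespan. Fix $h$ and consider the solution $\Sigma^{(h)}(t)$. For each $h_1\le h$, evolving $\Sigma^{(h_1)}_0$ produces a smooth solution $\Sigma^{(h_1)}(t)$ which, for the purpose of defining the $X^s$, $Y^{s+1}$, $Z^s$ norms of $\Sigma^{(h)}(t)$, plays exactly the role of the regularization $[\la^{(h_1)}]$, $[g^{(h_1)}]$, $[A^{(h_1)}]$ appearing in Definition~\ref{def-Xsla} and the subsequent definitions; this is the content of the regularization construction in Section~\ref{Reg-sec} and the fact that frequency truncation of the data, propagated by the flow, yields admissible elements of $\mathrm{Reg}(\cdot)$. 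So the quantities $\tri[\la^{(h_1)}(t)]\tri_{s,int}$, $\tri[g^{(h_1)}(t)]\tri_{s+1,g}$, $\tri[A^{(h_1)}(t)]\tri_{s,A}$ are exactly what must be controlled.

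The core of the argument is the bootstrap. Set $N=[2s]+1$ and make the bootstrap assumption
\begin{equation*}
\tri[\la^{(h_1)}(t)]\tri_{s,int}\le 8M_1,\qquad \tri[g^{(h_1)}(t)]\tri_{s+1,g}+\tri[A^{(h_1)}(t)]\tri_{s,A}\le 8M_1
\end{equation*}
on a time interval $[0,T_1]$. On this interval the embedding \eqref{Equ-la} gives $\tri[\la^{(h_1)}]\tri_{s,ext}\le 8C_{eq}M_1$, so the hypotheses of Theorem~\ref{Para-thmS} ii) and Theorem~\ref{Thm-Es} are met with the same $M_1$; these yield the improved bounds \eqref{Para-S} and \eqref{EE-H^s}, namely $\tri[g^{(h_1)}]\tri_{s+1,g}, \tri[A^{(h_1)}]\tri_{s,A}\le 8M_1$ and $\tri[\la^{(h_1)}]\tri_{s,int}\le 8M_1$, on $[0,\min\{T_1,CM_1^{-2N-8}\}]$ — but with a strictly better constant ($6M_1$, or $4M_1$ for the $\la$-piece, as in the proofs of those theorems). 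This closes the bootstrap on $[0,CM_1^{-2N-8}]$. Crucially, the time $CM_1^{-2N-8}$ depends only on $M_1=C(M,c_0)$, hence only on $M$ and $c_0$. One must also verify continuity of the bootstrap quantity in $t$ and that it holds at $t=0$: the latter is exactly \eqref{Cond3}, \eqref{Cond4}, and the former follows from the smoothness of $\Sigma^{(h)}$ together with the fact that the regularizations depend continuously (indeed smoothly) on $h_1$, so the $X^s$-type norms, being sup/integrals of continuous quantities against fixed weights, are continuous in $t$.

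A secondary point that must be carried along the bootstrap, rather than merely assumed, is that the Sobolev embedding conditions on the noncompact manifold persist: the ellipticity \eqref{Para-met}, the Ricci and volume bounds \eqref{Para-vol}, and their analogues for the regularized data \eqref{Cond2}, \eqref{RegIni-4}, which are needed for Theorem~\ref{thm-SobEm} to apply uniformly in $h$. These are exactly the bounds \eqref{Para-met}--\eqref{Para-vol} provided by Theorem~\ref{Para-thmS} i), valid on $[0,CM_1^{-6}]\supset[0,CM_1^{-2N-8}]$, so they are automatically part of the closed bootstrap. Finally, since the above bounds are uniform in $h_1\le h$ and in $h$, Theorem~\ref{RegSol-thm}'s continuation criterion (the solution extends as long as $\|\Lambda\|_{\sfH^k}$ and the geometry stay bounded, which here follows from \eqref{Cond1} propagated by Lemma~\ref{lem-En} together with the geometric bounds just discussed) shows the smooth solution $\Sigma^{(h)}$ genuinely exists on the full interval $[0,CM_1^{-2N-8}]$, independent of $h$. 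The main obstacle, and the step requiring the most care, is the bookkeeping that identifies the flow-evolved low-frequency solutions $\Sigma^{(h_1)}(t)$ with admissible regularizations in $\mathrm{Reg}(\cdot)$ uniformly in time — i.e. checking the defining convergence $\|\la^{(h)}(t)-\la^{(h_1)}(t)\|_{H^s}\to 0$ as $h_1\to\infty$ holds along the flow, not just at $t=0$ — which is where the difference bounds \eqref{DB-t}, \eqref{DB-Fm} and the high-frequency bounds \eqref{HFB-t}, \eqref{HFB-Fm} of Section~\ref{Sec-EnEs} enter.
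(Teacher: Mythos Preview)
Your proposal is correct and takes essentially the same approach as the paper: close the bootstrap on the $\tri\cdot\tri$ norms via Theorems~\ref{Para-thmS} and~\ref{Thm-Es}, then feed the resulting $L^\infty$ bound on $\la$ into Lemma~\ref{lem-En} to propagate $\|\La\|_{\sfH^k}$ and invoke the continuation criterion of Theorem~\ref{RegSol-thm}. Your final concern about verifying that the $\Sigma^{(h_1)}(t)$ remain admissible regularizations in $\mathrm{Reg}(\cdot)$ is more than this proposition requires, since Theorems~\ref{Para-thmS} and~\ref{Thm-Es} operate directly on the $\tri\cdot\tri$ quantities for a fixed family rather than on the infimum defining $X^s$; that convergence is only needed later in Section~\ref{Sec-rough} for the limit construction.
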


\begin{proof}
Since the (SMCF) with initial manifold $\Sigma_0^{(h)}$ has a unique smooth solution $\Sigma^{(h)}$ on $[0,T(M,h)]$, then under the conditions \eqref{Cond3}, \eqref{Cond4}, \eqref{RegIni-4} and by Theorem~\ref{Para-thmS} and~\ref{Thm-Es}, the solution $\Sigma^{(h)}$ on the time interval $[0,\min\{T(M,h),CM_1^{-2N-8}\}]$ satisfies
\begin{align*}
    \|[\la^{(h)}]\|_{X^s_{int}}\leq 8M_1,\qquad \|[\la^{(h)}]\|_{X^s_{ext}}\leq 8C_{eq}M_1,
\end{align*}
and 
\begin{gather}\nonumber
    \frac{4}{5}c_0I\leq (g(t))\leq \frac{6}{5}c_0^{-1}I,\\\label{RegCondre}
    {\rm Vol}_{g(t)}(B_x(e^{tC_4 M_1^6}))\geq  e^{-tC_4 M_1^6}v,\quad |\Ric|\leq CM_1^2.
\end{gather}
Thus $\|\la^{(h)}\|_{L^\infty}\les\|\la\|_{X^s_{ext}}\leq CC_{eq}M_1$, then by \eqref{Energy} the $\la^{(h)}$ for any $h\geq h_0$ is bounded by
\begin{align*}
    \|\la^{(h)}\|_{\sfH^k}\leq \|\la^{(h)}_0\|_{\sfH^k} e^{CC_{eq}M_1 t}\les 2^{(k-s)h}c_h,
\end{align*}
which means that $\|\la^{(h)}\|_{\sfH^k}$ is still bounded on $[0,T(M,h)]$ if $T(M,h)< CM_1^{-2N-8}$. From Theorem~\ref{RegSol-thm} and \eqref{RegCondre}, the solution $\Sigma^{(h)}$ can be extended to the time interval $[0,CM_1^{-2N-8}]$. Hence, the lifespan of the SMCF depends only on $M$ and $c_0$.
\end{proof}

\subsection{The limiting solution}

Our goal in this section is to construct rough solutions as limits of smooth solutions. Here we show that the limit 
\begin{align*}
    F=\lim_{h\rightarrow \infty} F^{(h)}
\end{align*}
exists, first in a weaker topology and then in the strong topology, where $F^{(h)}$ are the solutions of (SMCF) with initial data $F^{(h)}_0=P_{<h}F_0$ on a uniform time interval $[0,T(M)]$.

\begin{prop} \label{prop8.2}
    The smooth solutions $F^{(h)}$ for $h\geq h_0$ are convergent in $L^2$ as $h\rightarrow +\infty$. Moreover, the limiting solution $F=\lim\limits_{h\rightarrow +\infty} F^{(h)}$ satisfies
    \begin{align}\label{Fconv}
    \lim_{h\rightarrow\infty}\|F^{(h)}-F\|_{H^{s+2}}=0,\qquad \|\d^2 F\|_{H^s}\les C(M).
    \end{align}
and the orthonormal frame $m^{(h)}$ satisfies
\begin{align}\label{mconv}
    \lim_{h\rightarrow\infty}\|m^{(h)}-m\|_{H^{s+1}}=0,\qquad \|\d m\|_{\dot H^{2\de_d}\cap \dot H^s}\les C(M).
\end{align}
\end{prop}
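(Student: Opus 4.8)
The plan is to realize $F$ as a strong limit of the regularized solutions $F^{(h)}$ in two stages: first convergence in $L^2$, together with convergence of $\d F^{(h)}$ and $m^{(h)}$ in $H^1$; and then, via a frequency envelope argument, the strong convergence in $H^{s+2}$ (resp.\ $H^{s+1}$ for the frame). Everything is carried out on the uniform existence interval $[0,T]$, $T=CM_1^{-2N-8}$, furnished by Proposition~\ref{RegSol-prop}, on which the uniform, difference and high-frequency bounds of Sections~\ref{Sec-Para}--\ref{Sec-EnEs} hold; in particular we use freely \eqref{om-Energy0}, \eqref{om-Energy}, \eqref{U-Energy0}, \eqref{mhs}, the difference bounds \eqref{DB-t} and \eqref{DB-Fm}, the high-frequency bounds \eqref{HFB-t} and \eqref{HFB-Fm}, and the frequency envelope $\{c_j\}$ of the data.

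\textbf{Step 1 ($L^2$ convergence).} We view $\{F^{(h)}\}$ as a one-parameter family in $h$ and invoke the linearized analysis of Section~\ref{Sec-lin} with the parameter there set equal to $h$: writing $\d_h F^{(h)}=\Xi^{(h)}+(U^{(h)})^\ga\d_\ga F^{(h)}$ and $\om^{(h)}=\Xi^{(h)}\cdot m^{(h)}$, the estimates \eqref{om-Energy} and \eqref{U-Energy0} combined with Gr\"onwall give $\|\om^{(h)}(t)\|_{\sfH^1}\les_M\|\om^{(h)}(0)\|_{\sfH^1}$ and $\|U^{(h)}(t)\|_{\sfL^2}\les_M\|U^{(h)}(0)\|_{\sfL^2}+\int_0^T(\|\d_h g^{(h)}\|_{H^1}+\|\om^{(h)}\|_{\sfH^1})\,d\tau$. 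At $t=0$, $\d_h F^{(h)}_0$ is a Fourier multiplier localized at frequency $\sim 2^h$ applied to $F_0$, so $\|\om^{(h)}(0)\|_{\sfH^1}+\|U^{(h)}(0)\|_{\sfL^2}\les_M 2^{-(s+1)h}c_h$ by the definition of $\{c_j\}$. Integrating these bounds in $h$ over the dyadic blocks $[j,j+1]$ and using \eqref{DB-t} for the forcing term, together with $\|\d_h F^{(h)}(t)\|_{L^2}\approx\|\om^{(h)}(t)\|_{\sfL^2}+\|U^{(h)}(t)\|_{\sfL^2}$, we obtain
\begin{equation*}
\int_j^{j+1}\|\d_h F^{(h)}(t)\|_{L^2}\,dh\les_M 2^{-sj}c_j,\qquad j\ge h_0,\ t\in[0,T].
\end{equation*}
Since $\{c_j\}\in\ell^2$ and $s>0$, the right-hand side is summable in $j$; hence $\{F^{(h)}(t)\}$ is uniformly Cauchy in $C([0,T];L^2)$, and we denote its limit by $F$, with $\Sigma_t=F(t,\R^d)$. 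Telescoping \eqref{DB-Fm} in the same way shows $\d F^{(h)}\to\d F$ and $m^{(h)}\to m$ in $C([0,T];H^1)$; in particular $\d^2 F^{(h)}\to\d^2 F$ and $\d m^{(h)}\to\d m$ in $C([0,T];L^2)$, and the orthonormality relations pass to the limit so that $m$ is an orthonormal frame on $N\Sigma_t$. Passing to the limit in $\d_t F^{(h)}=-\Im(\psi^{(h)}\overline{m^{(h)}})+V^{(h),\ga}\d_\ga F^{(h)}$ then shows that $F$ solves \eqref{Main-Sys-re}.

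\textbf{Step 2 (strong convergence).} It remains to upgrade $\d^2 F^{(h)}\to\d^2 F$ from $L^2$ to $H^s$, and $\d m^{(h)}\to\d m$ from $L^2$ to $H^s$ (and to $\dot H^{2\de_d}$ near frequency zero when $d=2$). The crux is the uniform frequency-localized estimate
\begin{equation*}
2^{js}\|P_j\d^2 F^{(h)}(t)\|_{L^2}+2^{js}\|P_j\d m^{(h)}(t)\|_{L^2}\les_M c_j,\qquad h,j\ge h_0,\ t\in[0,T],
\end{equation*}
proved by separating the regimes $j\le h$ and $j>h$: when $j\le h$ one uses the high-frequency bounds \eqref{HFB-Fm} together with the slowly varying property $c_h\le 2^{\de|h-j|}c_j$ and $N-s>\de$; when $j>h$ one sums the increments \eqref{DB-Fm} from level $j$ up to level $h$ (again using the slowly varying property); the very low frequencies, and the $|D|^{2\de_d}$-weighted part in $d=2$, are handled directly from \eqref{HFB-Fm} and the uniform bound \eqref{mhs}. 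Granting this, fix $\ep>0$ and pick $J$ with $(\sum_{j\ge J}c_j^2)^{1/2}<\ep$; then $\|P_{\ge J}\d^2 F^{(h)}\|_{H^s}\les_M\ep$ and $\|P_{\ge J}\d m^{(h)}\|_{H^s}\les_M\ep$ uniformly in $h$ and $t$, while for $h,h'$ large enough Bernstein's inequality and the $L^2$ convergence of Step~1 give $\|P_{<J}(\d^2 F^{(h)}-\d^2 F^{(h')})\|_{H^s}+\|P_{<J}(\d m^{(h)}-\d m^{(h')})\|_{H^s}<\ep$. Hence $\{\d^2 F^{(h)}\}$ and $\{\d m^{(h)}\}$ are Cauchy in $C([0,T];H^s)$, which with Step~1 yields $F^{(h)}\to F$ in $H^{s+2}$ and $m^{(h)}\to m$ in $H^{s+1}$; the bounds $\|\d^2 F\|_{H^s}\les C(M)$ and $\|\d m\|_{\dot H^{2\de_d}\cap\dot H^s}\les C(M)$ then follow by lower semicontinuity of the norms together with \eqref{mhs}.

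\textbf{Main obstacle.} The genuinely delicate points are (i) turning the linearized estimates of Proposition~\ref{prop4.2} into the \emph{summable} block bound of Step~1, which requires carefully tracing how the $\ell^2$ envelope $\{c_j\}$ controls both the data increment $\d_h F^{(h)}_0$ and the parabolic forcing $\d_h g^{(h)}$; and (ii) the uniform-in-$h$ frequency-localized bound in Step~2, which must reconcile \eqref{HFB-Fm} (effective only for $j\gtrsim h$) with \eqref{DB-Fm} (effective only for $j\lesssim h$) and lean on the slowly varying property of $\{c_j\}$. The additional $\de_d$-discrepancy in the bounds available for $\d m^{(h)}$ when $d=2$ forces the separate weighted bookkeeping at low frequencies indicated above; by contrast, the remaining $\ep$--$J$ splitting is routine.
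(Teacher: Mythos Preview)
Your outline is correct and follows essentially the same route as the paper: linearized bounds on $(\omega^{(h)},U^{(h)})$ give $L^2$ convergence of $F^{(h)}$, and then the frequency envelope $\{c_j\}$ together with the difference bounds \eqref{DB-Fm} and high-frequency bounds \eqref{HFB-Fm} yield strong convergence.

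Two remarks. First, in Step~2 you have the two regimes swapped: when $j>h$ one uses the high-frequency bound \eqref{HFB-Fm} (since $\|P_j\d^2F^{(h)}\|_{L^2}\lesssim 2^{-Nj}\|\d^2F^{(h)}\|_{H^N}\lesssim 2^{-sj}2^{-(N-s)(j-h)}c_h\lesssim 2^{-sj}c_j$), while when $j\le h$ one telescopes $\d^2F^{(h)}=\d^2F^{(j)}+\sum_{l=j}^{h-1}(\d^2F^{(l+1)}-\d^2F^{(l)})$ and applies \eqref{HFB-Fm} to the base term and \eqref{DB-Fm} to the increments. As written, ``summing increments from level $j$ up to level $h$'' when $j>h$ is vacuous. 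Second, the paper bypasses your uniform frequency-localized bound and $\epsilon$--$J$ splitting entirely: it directly telescopes
\[
\d^2F-\d^2F^{(h)}=\sum_{j\ge h}\bigl(\d^2F^{(j+1)}-\d^2F^{(j)}\bigr)
\]
and bounds the $H^s$ norm of the tail by $\bigl(\sum_{j\ge h}c_j^2\bigr)^{1/2}$, using that each increment has $L^2$ size $\lesssim 2^{-sj}c_j$ and $H^N$ size $\lesssim 2^{(N-s)j}c_j$. This is a little more direct than your route, though both arguments ultimately rest on the same pair of bounds.
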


To prove the proposition, we consider the normal component and tangent component, respectively
\[
\omega^{(h)}: = \Xi^{(h)} \cdot m^{(h)},\qquad  \d_h F^{(h)}=\Xi^{(h)}+U^{(h)\ga}\d_\ga F^{(h)}.
\]
Then $\om^{(h)}$ and $U^{(h)}_\ga$ satisfy the two formulas \eqref{LinEq} and \eqref{dt-Tangent}.

\begin{lemma} 
On $[0,T(M)]$, the normal component $\om$ and tangent component $U$ satisfy the estimates
\begin{align} \label{om-Energy2}
    \int_{h_1}^\infty \|\om^{(h)}\|_{\sfH^1} dh&\les C(M) 2^{-(s+1)h_1},\\ \label{U-Energy}
    \int_{h_1}^\infty \|U^{(h)}\|_{L^2} dh  &\les C(M) 2^{-sh_1}. 
\end{align}
\end{lemma}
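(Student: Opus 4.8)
The plan is to obtain the bounds \eqref{om-Energy2} and \eqref{U-Energy} by combining the differential energy inequalities of Proposition~\ref{prop4.2} with the integrated initial-data bounds for the regularized manifolds established in Section~\ref{Sec-Ini} (the high-frequency and difference estimates \eqref{DB-01}--\eqref{HFB-01} and their propagated analogues \eqref{DB-t}--\eqref{HFB-t}). The point is that for each fixed $h$ the pair $(\omega^{(h)},U^{(h)})=(\partial_h F^{(h)}\cdot m^{(h)}, (\partial_h F^{(h)})^\top)$ is exactly the linearized variable associated to the one-parameter family $F^{(h)}$, so Lemma~\ref{Lin-Lem} applies and the coefficient hypotheses $\|\la^{(h)}\|_{H^s},\|A^{(h)}\|_{L^\infty},\|g^{(h)}\|_{W^{1,\infty}}\les M$ hold uniformly in $h$ on $[0,T(M)]$ by Proposition~\ref{RegSol-prop} and the embeddings \eqref{Hsla}, \eqref{mhs}.

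\textbf{Step 1: the $\omega$ bound.} I would first run the Gr\"onwall argument for \eqref{om-Energy} pointwise in $h$: from Proposition~\ref{prop4.2} we get $\|\omega^{(h)}(t)\|_{\sfH^1}\les_M \|\omega^{(h)}(0)\|_{\sfH^1}$ on $[0,T(M)]$, with constant uniform in $h$. It then remains to control the initial datum. At $t=0$, $\omega^{(h)}(0)$ is (the normal part of) $\partial_h F^{(h)}_0 = \partial_h P_{<h}F_0$, so $\|\omega^{(h)}(0)\|_{\sfH^1}\les_M \|\partial_h\partial F^{(h)}_0\|_{H^1}+\text{(lower order)}\les_M \|\partial P_h F_0\|_{H^1}$, using $cI\le g^{(h)}_0\le c^{-1}I$ to pass between intrinsic and extrinsic norms and \eqref{nuh523}-type control of $m^{(h)}$. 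Integrating in $h$ over $[h_1,\infty)$ and applying Cauchy--Schwarz against the weight $2^{2sh}$ gives
\begin{align*}
\int_{h_1}^\infty \|\omega^{(h)}(0)\|_{\sfH^1}\,dh
&\les_M \int_{h_1}^\infty \|\partial P_h F_0\|_{H^1}\,dh
\les_M 2^{-(s+1)h_1}\Big(\int_{h_1}^\infty 2^{2(s+1)h}\|\partial P_h F_0\|_{H^1}^2\,dh\Big)^{1/2}\\
&\les_M 2^{-(s+1)h_1}\|\partial^2 F_0\|_{H^s}\les_M 2^{-(s+1)h_1},
\end{align*}
where the last step uses \eqref{d2F}; since the Gr\"onwall constant is $h$-independent this yields \eqref{om-Energy2}. (One should be slightly careful that the energy inequality \eqref{om-Energy} is stated for $\|\omega\|_{\sfH^1}^2$, so the correct move is to apply it pointwise in $h$, then integrate the resulting $L^1_h$ bound rather than integrating the quadratic inequality.)

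\textbf{Step 2: the $U$ bound, and the main obstacle.} For $U^{(h)}$ the inequality \eqref{U-Energy0} reads $\frac{d}{dt}\|U^{(h)}\|_{\sfL^2}\le C(M)(\|\partial_h g^{(h)}\|_{H^1}+\|\omega^{(h)}\|_{\sfH^1}+\|U^{(h)}\|_{\sfL^2})$, which by Gr\"onwall gives, for each $h$,
\begin{align*}
\|U^{(h)}(t)\|_{\sfL^2}\les_M \|U^{(h)}(0)\|_{\sfL^2}+\int_0^{T(M)}\big(\|\partial_h g^{(h)}(\tau)\|_{H^1}+\|\omega^{(h)}(\tau)\|_{\sfH^1}\big)\,d\tau.
\end{align*}
Here $U^{(h)}(0)=(\partial_h F^{(h)}_0)^\top$ is controlled by $\|\partial P_h F_0\|_{L^2}$, the $\omega^{(h)}$ term is already handled by \eqref{om-Energy2} applied at each intermediate time (using that the constant in \eqref{om-Energy2} is uniform on $[0,T(M)]$), and the $\partial_h g^{(h)}$ term is controlled by the propagated difference bound \eqref{DB-t}. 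Integrating over $h\in[h_1,\infty)$, the dangerous term is $\int_{h_1}^\infty\!\!\int_0^{T(M)}\|\partial_h g^{(h)}\|_{H^1}\,d\tau\,dh$; by \eqref{DB-t} (summed over dyadic blocks $[j,j+1]$, $j\ge h_1$) this is $\les_M \sum_{j\ge h_1}2^{-sj}c_j\les_M 2^{-sh_1}$ since $\{c_j\}\in\ell^2$ is slowly varying, and likewise $\int_{h_1}^\infty\|\partial P_h F_0\|_{L^2}\,dh\les_M 2^{-sh_1}\|\partial^2 F_0\|_{H^s}\les_M 2^{-sh_1}$ and $\int_{h_1}^\infty\|\omega^{(h)}\|_{\sfH^1}\,dh\les_M 2^{-(s+1)h_1}\le 2^{-sh_1}$. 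Summing the three contributions gives \eqref{U-Energy}. The genuinely delicate point I would flag is the interchange of the $h$-integral with the Gr\"onwall/time-integration: one must verify that the constants $C(M)$ in \eqref{om-Energy}, \eqref{U-Energy0} are truly $h$-independent, which relies on the uniform coefficient bounds from Proposition~\ref{RegSol-prop} and the uniform Sobolev embedding conditions \eqref{RegCondre}; and one must make sure the $\partial_h g^{(h)}$ source in \eqref{U-Energy0} is estimated by the \emph{propagated} bound \eqref{DB-t} rather than only the initial-time bound \eqref{DB-01}, since $U^{(h)}$ is driven by $\partial_h g^{(h)}$ over the whole time interval. Modulo that bookkeeping, both estimates follow from Gr\"onwall plus the already-established $\ell^2$-type frequency-envelope bounds on the regularizations.
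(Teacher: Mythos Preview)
Your overall strategy matches the paper's proof exactly: Gr\"onwall for \eqref{om-Energy} pointwise in $h$, then integrate in $h$ with Cauchy--Schwarz against the frequency weight; then Gr\"onwall for \eqref{U-Energy0}, integrate in $h$, and feed in the propagated bounds for $\partial_h g^{(h)}$ and the already-proved \eqref{om-Energy2}. The paper controls $\int_{h_1}^\infty\|\partial_h g^{(h)}\|_{H^1}\,dh$ via the $Y^{s+1}$ linearized norm $\int 2^{2sh}\|\partial_h g^{(h)}\|_{H^1}^2\,dh\les C(M)$ (from \eqref{Para-S}) together with Cauchy--Schwarz, rather than through the dyadic envelope bound \eqref{DB-t}; your route through \eqref{DB-t} is an equivalent alternative.

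There is one derivative-counting slip in Step~1 that, as written, would break the argument. You bound $\|\omega^{(h)}(0)\|_{\sfH^1}$ by $\|\partial P_h F_0\|_{H^1}$, but $\omega^{(h)}(0)=P_hF_0\cdot m_0^{(h)}$ itself is zeroth order in $P_hF_0$, so the correct bound (exactly as the paper computes, writing out $\partial^A_\alpha\omega^{(h)}(0)$) is
\[
\|\omega^{(h)}(0)\|_{\sfH^1}\ \les_M\ \|P_hF_0\|_{H^1}\ \approx\ 2^{-h}\|\partial^2 P_h F_0\|_{L^2}.
\]
With this, Cauchy--Schwarz against the weight $2^{2sh}$ on $\|\partial^2 P_hF_0\|_{L^2}$ produces the claimed $2^{-(s+1)h_1}$ and the remaining factor is $\|\partial^2 F_0\|_{H^s}$, which is available by \eqref{d2F}. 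By contrast, your displayed inequality pairs $\|\partial P_h F_0\|_{H^1}\approx\|\partial^2 P_hF_0\|_{L^2}$ with the weight $2^{2(s+1)h}$, and the resulting square-integral is $\|\partial^2 F_0\|_{H^{s+1}}^2$, which you do not have. Once this off-by-one is fixed, your proof is the paper's proof.
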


\begin{proof}
Since $\om^{(h)}(0)=\d_h F_0^{(h)}\cdot m_0^{(h)}$ and $\d^{A^{(h)}_\al}\om^{(h)}(0)=\d_\al\d_h F_0^{(h)}\cdot m_0^{(h)}-\la_\al^{(h)\si}(0)\d_h F_0^{(h)}\cdot \d_\si F_0^{(h)}$ for $F_0^{(h)}=P_{<h}F_0$ and $m_0^{(h)}$ given in \eqref{nuhini}, then by \eqref{om-Energy} we arrive at
\begin{align*}
    &\ \int_{h_1}^\infty \|\om(t)\|_{\sfH^1} dh\les \int_{h_1}^\infty \|\om(0)\|_{\sfH^1} dh\les \int_{h_1}^\infty \|P_h F_0\|_{H^1}(1+\|\la_0^{(h)}\|_{\infty}\|\d F_0\|_{L^\infty}) dh\\
    &\les C(M) \big(\int_{h_1}^\infty 2^{2sh}\|\d^2 P_h F_0\|_{L^2}^2 dh\big)^{1/2} \big(\int_{h_1}^\infty 2^{-2(s+1)h} dh\big)^{1/2}
    \les C(M) 2^{-(s+1)h_1}.
\end{align*}
Thus the first bound \eqref{om-Energy2} follows.

By Gr\"onwall's inequality on $[0,T(M)]$, the estimate \eqref{U-Energy0} implies
\begin{align*}
    \|U^{(h)}(t)\|_{L^2}&\les C(M)\big(\|U^{(h)}(0)\|_{L^2}+ \int_0^t\|\d_h g^{(h)}\|_{H^1}+\|\om^{(h)}\|_{\sfH^1} ds\big). 
\end{align*}
Then integrating over $[h_1,\infty)$ with respect to $h$ and using $\int_{h_0}^\infty 2^{2sh}\|\d_h g^{(h)}\|_{H^1}^2 dh \les C(M)$ and the bound \eqref{om-Energy2}, this yields
\begin{align*}
    &\ \int_{h_1}^\infty \|U^{(h)}(t)\|_{L^2} dh \les C(M)\big(\int_{h_1}^\infty\|U^{(h)}(0)\|_{L^2}dh+\int_{h_1}^\infty\int_0^t (\|\d_h g^{(h)}\|_{H^1}+\|\om^{(h)}\|_{\sfH^1}) d\tau dh\big)\\
    & \leq C(M)\big(\int_{h_1}^\infty\|\d_h P_{<h}F_0\|_{L^2}\|\d F_0\|_{L^\infty}dh+t\sup_{s\in[0,t]} \int_{h_1}^\infty \|\d_h g^{(h)}(s)\|_{H^1}+\|\om^{(h)}(s)\|_{\sfH^1}  dh\big)\\
    &\leq C(M)2^{-(s+2)h_1}+C(M)(2^{-sh_1}+2^{-(s+1)h_1})
    \leq C(M)2^{-sh_1}.
\end{align*}
We obtain the bound \eqref{U-Energy}.
\end{proof}

Using the above two lemmas, we then finish the proof of Proposition \ref{prop8.2}.
\begin{proof}[Proof of Proposition \ref{prop8.2}]
\

\emph{i) We prove \eqref{Fconv}.}
    From \eqref{om-Energy2} and \eqref{U-Energy}, we obtain the uniform bound on $[0,T(M)]$ for any $h_2>h_1\geq h_0$
    \begin{align*}
        \|F^{(h_2)}-F^{(h_1)}\|_{L^2}&\leq \int_{h_1}^{h_2} \|\d_h F^{(h)}\|_{L^2} dh\leq \int_{h_1}^{h_2} \|\om^{(h)}\|_{L^2}+\|U^{(h)}\|_{L^2}\|g^{(h)}\d F^{(h)}\|_{L^\infty} dh\\
        &\leq C(M)2^{-(s+1)h_1}+C(M)2^{-sh_1}\leq C(M)2^{-sh_1}.
    \end{align*}
This means that $F^{(h)}-F^{(h_0)}\in L^2$ is a Cauchy sequence, and therefore it is convergent.

Since the $H^s$-norm of $\d^2 F^{(h)}$ is uniformly bounded,
\[  \|\d^2 F^{(h)}\|_{H^s}=\|\Ga^{(h)} \d F^{(h)}+\la^{(h)} m^{(h)}\|_{H^s}\les \|\Ga^{(h)}\|_{L^\infty}\|\d F^{(h)}\|_{L^\infty}+\|\la^{(h)}\|_{L^\infty}\les C(M),  \]
we obtain that the similar norm of the limiting solution $F=\lim_{h\rightarrow \infty} F^{(h)}$ is also bounded by $C(M)$. Moreover, by interpolation we have the convergence in $H^{\si+2}$ for any $\si<s$ as $h\rightarrow \infty$
\begin{align*}
    \|F^{(h)}-F\|_{H^{\si+2}}\les \|F^{(h)}-F\|^{\frac{s-\si}{s+2}}_{L^2}\|F^{(h)}-F\|^{\frac{\si+2}{s+2}}_{H^{s+2}}\les 2^{-\frac{s-\si}{s+2}sh}C(M)\rightarrow 0.
\end{align*}

Since 
\begin{align*}
    \d^2 F=\d^2 F^{(h)}+\sum_{j=h}^\infty (\d^2 F^{(j+1)}-\d^2 F^{(j)}),
\end{align*}
then by \eqref{DB-Fm} and \eqref{HFB-Fm} we obtain
\begin{equation}\label{ConF}
\begin{aligned}
&\ \|\d^2 F-\d^2 F^{(h)}\|_{H^s}^2\\
&\les \sum_{j=h}^\infty 2^{2sj}\|\d^2 F^{(j+1)}-\d^2 F^{(j)}\|_{L^2}^2+2^{2(s-N)j}\|\d^2 F^{(j+1)}-\d^2 F^{(j)}\|_{H^N}^2\\
&\les \sum_{j=h}^\infty c_j^2= \|c_{>h}\|_{\ell^2}^2 \rightarrow 0,\qquad {\rm as}\ h\rightarrow\infty 
\end{aligned}  
\end{equation}
Thus the solution $\d^2 F^{(h)}$ also converge to $\d^2 F$ in $H^s$ strongly.

\emph{ii) We prove \eqref{mconv}.}
From \eqref{DB-Fm}, we have for any $h_2>h_1\geq h_0$
\begin{align*}
    \|m^{(h_1)}-m^{(h_2)}\|_{L^2}\les \sum_{h_1\leq j\leq h_2} 2^{-sj}c_j\les \sum_{h_1\leq j\leq h_2}c_j^2\rightarrow 0,\qquad as\ h_1\rightarrow \infty.
\end{align*}
Then the limit $\lim_{h\rightarrow\infty}(m^{(h)}-m^{(h_0)})$ exists in $L^2$, and hence we obtain the frame $m$. By \eqref{DB-Fm} and \eqref{HFB-Fm}, we also have
\begin{align*}
    \|\d m-\d m^{(h)}\|_{H^{s+1}}^2&\les \sum_{j\geq h}2^{2sj}\big(\|\d m^{(j+1)}-\d m^{(j)}\|_{L^2}^2+2^{2(s-N)j}\|\d m^{(j+1)}-\d m^{(j)}\|_{H^N}^2\big)\\
    &\les \sum_{j\geq h} c_j^2\rightarrow 0,\quad as\ h\rightarrow\infty.
\end{align*}
Moreover, 
\begin{align*}
    &\ \|\d F\cdot m\|_{L^2}=\|\d F\cdot m-\d F^{(h)}\cdot m^{(h)}\|_{L^2}\\
    &\les \|\d F-\d F^{(h)}\|_{L^2}+\|\d F\|_{L^\infty}\|m-m^{(h)}\|_{L^2}\rightarrow 0,\quad h\rightarrow\infty.
\end{align*}
Hence, we obtain the orthonormal frame $m$ as the limit of $m^{(h)}$ in $\dot H^{1+2\de_d}\cap \dot H^{s+1}$. 
\end{proof}

Now we show that the limiting map $F$ is a solution of (SMCF). It suffices to check that for any $v\in C_0^\infty ([0,T(M)]\times \R^d)$, it holds
\begin{align}  \label{Aim-sol}
    \int_0^T\int \<\d_t F, v\> dxdt=\int_0^T \int \<J(F)H(F)+V^\ga \d_\ga F,v\> dxdt.
\end{align}
Since $F^{(h)}$ is the solution of (SMCF) with initial data $F^{(h)}_0$, then the above equality holds when replacing $F$ by $F^{(h)}$. Moreover, by $\lim_{h\rightarrow \infty}\|F-F^{(h)}\|_{L^\infty L^2}= 0$, we have
\begin{align*}
    &\ \int_0^T \int \<\d_t F^{(h)},v\>dxdt\\
    &=-\int_0^T \int \<F^{(h)},\d_t v\>dxdt+\int \<F^{(h)}(T),v(T)\>dx-\int \<F^{(h)}(0),v(0)\>dx\\
    &\longrightarrow -\int_0^T \int \<F,\d_t v\>dxdt+\int \<F(T),v(T)\>dx-\int \<F(0),v(0)\>dx=\int_0^T \int \<\d_t F,v\>dxdt
\end{align*}
For the source term, by \eqref{DB-t} and \eqref{DB-Fm} we have
\begin{align*}
    &\ \int_0^T \int  \<J(F)H(F)+V^\ga \d_\ga F,v\> -\<J(F^{(h)})H(F^{(h)})+V^{(h)\ga} \d_\ga F^{(h)},v\> dxdt\\
    &=\int_0^T \int  \<-\Im(\psi \bar{m}-\psi^{(h)}\bar{m}^{(h)})+V^\ga \d_\ga F-V^{(h)\ga} \d_\ga F^{(h)},v\>dxdt\\
    &\les \big(\|\psi-\psi^{(h)}\|_{L^\infty L^2}+C(M)\|m-m^{(h)}\|_{L^\infty L^2}+\|V-V^{(h)}\|_{L^\infty L^2}C(M)\\
    &\quad +C(M)\|\d F-\d F^{(h)}\|_{L^2}\big)\|v\|_{L^1 L^2}\\
    &\les C(M)2^{-sh}\|v\|_{L^1L^2}
    \rightarrow 0,\qquad h \rightarrow \infty.
\end{align*}
Then the equality \eqref{Aim-sol} holds. Thus $F$ is the solution of (SMCF).

In addition, as a consequence of Proposition~\ref{prop8.2}, we get the convergence of metric $g^{(h)}$, connection $A^{(h)}$ and the second fundamental form $\la^{(h)}$:
\begin{align*}
    \lim_{h\rightarrow\infty}\big(\|g-g^{(h)}\|_{H^{s+1}}+\|A-A^{(h)}\|_{H^s}+\|\la-\la^{(h)}\|_{H^s}\big)=0.
\end{align*}
This means that the solutions $\Sigma^{(h)}$ for $h\geq h_0$ are a family of regularizations of $\Sigma$ on the time interval $[0,CM_1^{-2N-8}]$. Hence, the rough solution $\Sigma(t)$ exists on $[0,CM_1^{-2N-8}]$, and from Theorem~\ref{Para-thmS} and ~\ref{Thm-Es} it satisfies the energy estimates 
\begin{align*}
\|g\|_{Y^{s+1}}+\|A\|_{Z^s}+\|\la\|_{X^s}\les C(M).
\end{align*}

\subsection{Continuous dependence}
Suppose there is a sequence of $F_{0n}$ converge to $F_0$ in $H^{s+2}$ with metric and mean curvature satisfying \eqref{MetBdd} and \eqref{small-datad3}. 
The difference of the corresponding solutions can be rewritten as 
\begin{align}\label{ConDep-1}
    \|\d^2 (F_n-F)\|_{H^s}&\les \|\d^2 (F_n- F^{(h)}_n)\|_{H^s}+\|\d^2( F^{(h)}_n-F^{(h)})\|_{H^s}+\|\d^2 (F^{(h)}- F)\|_{H^s},
\end{align}
where $F^{(h)}_n$ and $F^{(h)}$ are the solutions of (SMCF) with initial data $P_{<h}F_{0n}$ and $P_{<h}F_0$, respectively.

The convergence $F_{0n}\rightarrow F_0$ in $H^{s+2}$ implies that the sequence of corresponding frequency envelopes may be chosen so that it is   convergent in $l^2$, $c^{(n)}\rightarrow c$. Then we have 
\begin{align*}
    \lim_{n\rightarrow\infty} c^{(n)}_{\geq h}=c_{\geq h}.
\end{align*}
Hence, using the estimate \eqref{ConF}, for any $\ep>0$ there exists $n_\ep$ and $h_{\ep}$ such that for any $n>n_\ep$ and $h>h_\ep$, it holds
\begin{align*}
    \|\d^2 (F_n^{(h)}-F_n)\|_{H^s}\les c_{\geq h}^{(n)}\leq \ep/3,\qquad  \|\d^2 (F^{(h)}-F)\|_{H^s}\les c_{\geq h}\leq \ep/3.
\end{align*}
Now it remains to bound the second term $\|\d^2( F^{(h)}_n-F^{(h)})\|_{H^s}$ in \eqref{ConDep-1}.
Fix the $h\geq h_\ep$, we have uniform $H^N$ bounds for the sequences $F^{(h)}_n$, $n>n_\ep$ and $F^{(h)}$ by \eqref{HFB-Fm}. Moreover, we denote $F^{(h)}_0(s)=P_{<h}F_0+s(P_{<h}F_{0n}-P_{<h}F_0)$, by \eqref{om-Energy} and \eqref{U-Energy0} we obtain the convergence in $L^2$
\begin{align*}
    &\|F^{(h)}_n-F^{(h)}\|_{L^2}\leq \int_0^1 \|\d_s F^{(h)}(s)\|_{L^2} ds\les \sup_s \|\om(t;s)\|_{L^2}+\|U(t;s)\|_{L^2}\\
    &\les \sup_s \|\om(0;s)\|_{\sfH^1}+\|U(0;s)\|_{L^2}
    \les \|\d_s F^{(h)}(0;s)\|_{H^1}\les \|P_{<h}(F_n(0)-F(0))\|_{H^1}\rightarrow 0.
\end{align*}
Then there exists $\tilde n_\ep>n_\ep$ such that for any $n>\tilde n_\ep$ we arrive at
\begin{align*}
\|\d^2( F^{(h)}_n-F^{(h)})\|_{H^s}&\leq \|\d^2( F^{(h)}_n-F^{(h)})\|_{H^N}^{\frac{s+2}{N+2}}\|F^{(h)}_n-F^{(h)}\|_{L^2}^{\frac{N-s}{N+2}}\\
&\les (2^{(N-s)h}c_h)^{\frac{s+2}{N+2}}\|F^{(h)}_n-F^{(h)}\|_{L^2}^{\frac{N-s}{N+2}}\leq \frac{\ep}{3}.
\end{align*}
Hence, for any $\ep>0$, there exists $\tilde n_\ep$ such that for any $n>\tilde n_\ep$ it holds
$\|\d^2 (F_n-F)\|_{H^s}\leq \ep$.
This completes the proof of continuous dependence.

\

\noindent {\bf Acknowledgements.}
J. Huang is supported by National Key Research and Development Program of China (Grant No. 2024YFA1015300), National Natural Science Foundation of China (Grant No. 12301293), Guangdong Basic and Applied Basic Research Foundation (Grant No. 2025A1515011984) and Beijing Institute of Technology Research Fund Program for Young Scholars. 
D. Tataru was supported by the NSF grant DMS-2054975. by a Simons Investigator grant from the Simons Foundation, as well as by a Simons Fellowship.

\bigskip

\end{document}